%% file: main.tex
\documentclass[11pt,oneside,a4paper]{article} 

\input{preamble}

\author{Thibault Langlais\textsuperscript{$*$} and Viktor F. Majewski\textsuperscript{$\dagger$}}
\date{}

\title{Degenerations of exotic Calabi--Yau metrics through Atiyah's flop}

\begin{document}

\maketitle

\footnotetext[1]{Humboldt-Universität, Berlin, Germany. Email: \texttt{thibault.langlais[]hu-berlin.de}.}
\footnotetext[2]{University of Waterloo, ON, Canada. Email: \texttt{viktor.majewski[]uwaterloo.ca}.}

\begin{abstract}
   We construct new families of complete Calabi--Yau metrics with maximal volume growth on the small resolutions of the conifold $\mathcal{Z} = \{z_1^2 + z_2^2 + z_3^2 + z_4^2 = 0\} \subset \C^4$. These metrics have tangent cone $\C \times (\C^2 / \Z_2)$ at infinity and are parametrised by their Kähler class. As the Kähler class degenerates, the metrics converge in the pointed Gromov--Hausdorff sense to a Calabi--Yau metric on $\mZ$ with an isolated conical singularity modelled on the Stenzel metric at the ordinary double point and tangent cone at infinity $\C \times (\C^2/\Z_2)$, thereby providing a new metric realisation of the Atiyah flop.
\end{abstract}

\tableofcontents

\setcounter{footnote}{0}
\renewcommand{\thefootnote}{\arabic{footnote}}

\input{section1_introduction}
\input{section2_srfk}
\input{section3_asympsolutions}
\input{section4_newfamilies}
\input{section5_singularlimit}
\input{section6_discussion}

\appendix
\input{appendixA_moseriteration}

\addcontentsline{toc}{section}{References}

\small 

\bibliographystyle{amsplain}
\bibliography{literature}

\end{document}

%% file: preamble.tex
\usepackage[english]{babel}	
\usepackage[T1]{fontenc}		
\usepackage[utf8]{inputenc}	
\usepackage{amsmath}
\usepackage{amsthm}	
\usepackage{amssymb}
\usepackage{mathrsfs}
\usepackage{amssymb}
\usepackage{mathabx}
\usepackage{paralist}
\usepackage{stmaryrd}
\usepackage{mathtools}
\usepackage{geometry}			
\usepackage[colorlinks = true,
            linktocpage = true,
            linkcolor = blue,
            urlcolor  = blue,
            citecolor = blue,
            anchorcolor = black]{hyperref}
\usepackage[hang,flushmargin,symbol]{footmisc}
\usepackage{appendix}

\numberwithin{equation}{section}            % Numbering of equations within Sections
\newtheoremstyle{bfnote}            %% definition of Theorem-like environment
	{}{}
	{\itshape}{}
	{\bfseries}{.}
	{ }
	{\thmname{#1}\thmnumber{ #2}\thmnote{ (#3)}}

\newtheoremstyle{bfremark}          %% definition of Remark environment
	{}{}
	{}{}
	{\bfseries}{.}
	{ }
	{\thmname{#1}\thmnumber{ #2}\thmnote{ (#3)}}

\newtheorem{manualtheoreminner}{Theorem}    % maualtheorem
\newenvironment{manualtheorem}[1]{%
  \renewcommand\themanualtheoreminner{#1}%
  \manualtheoreminner
}{\endmanualtheoreminner}
	
\theoremstyle{bfnote}
\newtheorem{thm}{Theorem}[section]          % Theorem environment
\newtheorem{prop}[thm]{Proposition}         % Proposition environment
\newtheorem{lem}[thm]{Lemma}                % Lemma environment
\newtheorem{cor}[thm]{Corollary}            % Corollary environment

\theoremstyle{definition}
\newtheorem{Def}[thm]{Definition}           % Definition environment

\theoremstyle{bfremark}
\newtheorem{rem}[thm]{Remark}               % Remark environment

\AtEndEnvironment{rem}{\hfill$\lozenge$}    % puts a rombus at the end of the remark 

\newcommand{\N}{\mathbb{N}}             % natural numbers
\newcommand{\Z}{\mathbb{Z}}             % integers
\newcommand{\R}{\mathbb{R}}             % real numbers
\newcommand{\C}{\mathbb{C}}             % complex numbers
\renewcommand{\H}{\mathbb{H}}           % quaternions

\renewcommand{\P}{\mathcal{P}}          % U(1)-bundle P over M or Z
\newcommand{\M}{\mathcal{M}}            % universal EH space
\newcommand{\mZ}{\mathcal{Z}}           % space Z
\newcommand{\mC}{\mathcal{C}}           % exceptional curve

\newcommand{\iq}{\mathrm{i}}        % quaternionic and complex i
\newcommand{\jq}{\mathrm{j}}        % quaternionic j
\newcommand{\kq}{\mathrm{k}}        % quaternionic k

\newcommand{\mr}{\boldsymbol{\mathrm{r}}}           % radius function coming from hk reduction
\newcommand{\mt}{\boldsymbol{\mathrm{t}}}           % variable t used in asymp solutions
\newcommand{\mA}{\boldsymbol{\mathrm{A}}}           % operator A used in asump solutions
\newcommand{\mB}{\boldsymbol{\mathrm{B}}}           % operator B used in asump solutions

\newcommand{\dist}{\mathrm{dist}}                   % distance
\newcommand{\diff}{\mathrm{d}}                      % exterior differential

\newcommand{\mo}{\mathrm{O}}                        % O for comparisons
\newcommand{\ohat}{\hat{\mathrm{O}}}                % O with a hat for uniform comparisons                  

\DeclareMathOperator{\vol}{vol}         % volume form
\DeclareMathOperator{\Vol}{Vol}         % volume of a region

\DeclareMathOperator{\Un}{U}            % Unitary group 
\DeclareMathOperator{\SO}{SO}           % Special orthogonal group
\DeclareMathOperator{\SU}{SU}           % Special unitary group

\DeclareMathOperator{\Real}{Re}             % prep for re-defining Re for real part
\renewcommand{\Re}{\Real}                   % re-defining \Re for real part
\DeclareMathOperator{\Imaginary}{Im}        % prep for re-defining Im for imaginary part
\renewcommand{\Im}{\Imaginary}              % re-defining \Im for imaginary part

\newcommand{\m}[1]{{\mathrm{#1}}}         % shortcut for mathrm  

%% file: section1_introduction.tex
%%%%%%%%%%%%%%%%%%%%%%%%%%%%%%%%%%
%%%%% Beginning of Section 1 %%%%%

    \section{Introduction}

As a result of Yau’s proof of the Calabi conjecture \cite{yau1978ricci}, Ricci-flat Kähler metrics on a compact complex manifold with trivial canonical bundle are known to be in one-to-one correspondence with its Kähler classes. In the non-compact setting however, the classification of Calabi--Yau metrics is more subtle, since asymptotic conditions need to be taken into account. A case of particular interest concerns Calabi--Yau metrics with maximal (Euclidean) volume growth. The best understood examples of such metrics are Asymptotically Conical (AC) Calabi--Yau metrics, which are asymptotic to a Calabi--Yau cone with smooth link: such metrics have been classified by Conlon and Hein when the rate of convergence to the tangent cone at infinity is polynomial \cite{conlon2024classification}. In general however, the tangent cone at infinity may have singular link. Examples include Joyce's construction of Quasi-Asymptotically Locally Euclidean (QALE) Calabi--Yau manifolds (which are asymptotic to $\C^m / \Gamma$ for some finite subgroup $\Gamma \subset \SU(m)$) via crepant resolutions, and the generalisation thereof to Quasi-Asymptotically Conical (QAC) metrics by Conlon--Degeratu--Rochon \cite{conlon2019quasi}.

Over the past decade, there has been considerable interest in Calabi--Yau metrics with maximal volume growth and singular tangent cone at infinity exhibiting a new type of asymptotics. Independent work of Li \cite{li2019new}, Conlon--Rochon \cite{conlon2021new} and Székelyhidi \cite{szekelyhidi2019degenerations} showed that for any $m \geq 3$, $\C^m$ admits infinitely many exotic Calabi--Yau metrics with maximal volume growth and singular tangent cone at infinity whose volume form coincides with the Euclidean one, thereby disproving a conjecture attributed to Tian. These metrics are related to certain holomorphic fibrations of $\C^m$ and are asymptotically `semi Ricci-flat’; that is, they are asymptotic to a metric whose fibres are themselves Calabi--Yau. The interest in such metrics is notably motivated by \emph{adiabatic problems} in the compact setting: they provide local models for Calabi--Yau manifolds collapsing along a holomorphic fibration, after an appropriate rescaling \cite{li2018gluing}. These works have since been extended by Conlon--Rochon \cite{conlon2023degenerations} and Yan \cite{yan2024gluing} to more general classes of affine varieties, providing a great number of so-called `warped-QAC' Calabi--Yau metrics, including some singular ones. 

In the present paper, we extend the construction of exotic Calabi--Yau metrics with maximal volume growth in a different direction to the non-affine setting, and exhibit new relations with the degeneration of Kähler classes and the algebraic notion of flop. We shall focus on the specific example of the small resolutions of the ordinary double point in dimension $3$, where we can make use of symmetries in order to give a very precise description of the new Calabi--Yau metrics using relatively elementary techniques. 

Our construction is also motivated by an adiabatic problem originating from resolution methods for special-holonomy orbifolds \cite{joyce2017new,majewski2025spin7orbifoldresolutions}. For sufficiently regular gluing data, these methods notably yield families of Calabi--Yau threefolds degenerating towards an orbifold along a path which jointly deforms the Kähler class and the underlying complex structure. On the other hand, for certain choices of singular gluing data these constructions are expected to produce degenerating families of \emph{conically singular} Calabi--Yau manifolds, and the families of metrics constructed in the present paper aim to be a local adiabatic model for such a singular limit. We shall discuss this interpretation and the role of our metrics in singular gluing problems in more detail in \S\ref{subsec: motivating problem}.

\paragraph{Main results.} In this article, we shall consider the local model of the ordinary double point singularity in dimension $3$, that is, the hypersurface
\begin{equation*}
    \mZ = \{ x_1^2 + x_2^2 + x_3^2 + x_4^2 = 0 \} \subset \C^4 .
\end{equation*}
It is well-known that it admits two distinct small resolutions. From the algebraic perspective, these resolutions can be constructed by blowing up $\mZ$ at the ordinary double point $o$, which yields a resolution $\m{Bl}_o(\mZ) \to \mZ$ whose exceptional divisor is isomorphic to $\mathbb{P}^1 \times \mathbb{P}^1$, and contracting either of the two rulings. The resulting small resolutions $\pi_\pm \colon  \mZ_\pm \to \mZ$ have exceptional curves $\mC_\pm \coloneqq \pi_\pm^{-1}(o) \cong \mathbb{P}^1$, and the birational map $\mZ_- \dashrightarrow \mZ_+$ induced by the resolution maps is the so-called `Atiyah flop'. 

Even though the resolutions are distinct, both $\mZ_-$ and $\mZ_+$ are biholomorphic to the total space of $\mathcal{O}_{\mathbb{P}^1}(-1) \oplus \mathcal{O}_{\mathbb{P}^1}(-1)$. Let us introduce the line bundles $\mathcal{L}_+ = p_+^* \mathcal{O}_{\mathbb{P}^1}(1) \to \mZ_+$ and $\mathcal{L}_- = p_-^* \mathcal{O}_{\mathbb{P}^1}(-1) \to \mZ_-$, where $p_\pm \colon \mZ_\pm \to \mathbb{P}^1$ are the natural bundle projections. Moreover, we let $\vol \in \Omega^6(\mZ^\m{reg})$ be the volume form defined as $\vol = c \Omega \wedge \overline{\Omega}$, where $\Omega \coloneqq x_1^{-1} \diff x_2 \wedge \diff x_3 \wedge \diff x_4$ is the canonical holomorphic volume form of $\mZ$ and $c \in \C^*$ is a constant (see \S\ref{subsec: Relation to the flop} for our specific choice of normalisation). This volume form can be lifted to smooth volume forms $\vol_\pm \coloneqq \pi_\pm^* \vol \in \Omega^6(\mZ_\pm)$ on the small resolutions.

With these notations, the first main result of our paper may be stated as follows:

\begin{manualtheorem}A
    \label{thm: A}
    There exist families of complete Calabi--Yau metrics, $\{\omega_{\m{CY},s}\}_{s > 0}$ on $\mZ_+$ and $\{\omega_{\m{CY},s}\}_{s < 0}$ on $\mZ_-$, such that the following hold for any $s > 0$:
    \begin{enumerate}[(i)]
        \item $[\omega_{\m{CY},s}] = 2\pi s \cdot c_1(\mathcal{L}_+) \in \m{H}^2(\mZ_+)$ and $[\omega_{\m{CY},-s}] = - 2\pi s \cdot  c_1(\mathcal{L}_-) \in \m{H}^2(\mZ_-)$.
        \item $\omega_{\m{CY},\pm s}^3 = \vol_\pm$.
        \item $\omega_{\m{CY},\pm s}$ have maximal volume growth and tangent cone at infinity $\C \times (\C^2 / \Z_2)$.
    \end{enumerate}    
\end{manualtheorem}

The above theorem states the existence of a second family of complete Calabi--Yau metrics with maximal volume growth on $\mZ_\pm$, in addition to the metrics constructed by Candelas and de la Ossa \cite{candelas1990comments}. We will see during the course of the article that the new metrics are invariant under a cohomogeneity-two action of $\Un(1) \times \SO(3)$ and are asymptotic to a semi Ricci-flat metric with Eguchi--Hanson fibres at infinity. In particular, the metrics $\omega_{\m{CY},s}$ are warped-QAC in the terminology of Conlon--Rochon. The high degree of symmetry will allow us to give a very detailed description of the asymptotic expansion of $\omega_{\m{CY},s}$, analogously to the exotic Calabi--Yau metric on $\C^3$ constructed by Li \cite{li2019new}.

The second main result of our paper concerns the behaviour of the metrics $\omega_{\m{CY},s}$ when the Kähler class degenerates:

\begin{manualtheorem}B
\label{thm: B}
    Let us fix base-points $o_\pm \in \mC_\pm \subset \mZ_\pm$ and let $o \in \mZ$ be the ordinary double point. Then as $s \to 0$, $\omega_{\m{CY},s}$ converges locally smoothly to an incomplete Calabi--Yau metric $\omega_{\m{CY},0}$ on $\mZ^{\m{reg}} \cong \mZ_+ \setminus \mC_+ \cong \mZ_- \setminus \mC_-$, where $\mC_\pm \cong \mathbb{P}^1$ are the exceptional curves. Moreover, 
    \begin{enumerate}[(i)]
        \item  $\omega_{\m{CY},0}$ is conically singular in the following sense: there exist neighbourhoods $U,V$ of $o$ in $\mZ$, $\nu > 0$ and a biholomorphism $\Phi \colon U \to V$ such that
        \begin{equation*}
            |\nabla^k_{\omega_{\m{Stz}}}(\Phi^* \omega_{\m{CY},0} - \omega_{\m{Stz}})| = \mo(\varrho^{\nu-k})
        \end{equation*}
        as $\varrho \to 0$ for all $k \geq 0$, where $\omega_{\m{Stz}}$ is the Stenzel cone metric on $\mZ$ and $\varrho$ the associated radius function.

        \item $(\mZ_\pm,\omega_{\m{CY},\pm s},o_\pm)$ converges to a complete length space $(\mZ,d_0,o)$ isometric to the completion of $(\mZ^{\m{reg}},\omega_{\m{CY},0})$ in the pointed Gromov--Hausdorff sense as $s \to 0$.

        \item The tangent cone at infinity of $\omega_{\m{CY},0}$ is $\C \times (\C^2 / \Z_2)$.
    \end{enumerate}
\end{manualtheorem}

The limiting metric $\omega_{\m{CY},0}$ consequently exhibits a transition between two different tangent cones: the local tangent cone at the ordinary double point $o$, and the singular tangent cone $\C \times (\C^2/\Z_2)$ at infinity. Extending the parameter $s$ through $0$ contracts $\mC_+$, passes through the singular Calabi--Yau metric on $\mZ$, and produces $\mC_-$ on the other small resolution, giving a metric realisation of Atiyah's flop. This picture is consistent with the description of the limits of Calabi--Yau metrics along degenerations of Kähler classes \cite{tosatti2009limits,collins2015kahler} and the Gromov--Hausdorff continuity through flops \cite{rong2011continuity,song2015conjecture} which are known in the compact setting.

\paragraph{Comparison with related constructions.} It is interesting to relate our metrics to Conlon--Rochon's warped-QAC metrics and Li's exotic Calabi--Yau metric on $\C^3$.

Conlon--Rochon previously constructed examples of conically singular warped-QAC Calabi--Yau metrics in \cite{conlon2023degenerations}, including a singular Calabi--Yau metric on $\mZ$ exhibiting a transition between the same two tangent cones (the Stenzel cone at $o$ and $\C \times (\C^2 / \Z_2)$ at infinity), which confirmed a prediction of Li \cite{li2018gluing}. From the geometrical perspective, they obtain the singular metric by considering a family of Calabi--Yau metrics degenerating along the smoothings of $\mZ$, instead of contracting the Kähler class of the small resolutions $\mZ_\pm$. To the authors' knowledge, the family of metrics constructed in the present paper is the first family of warped-QAC Calabi--Yau metrics realising a degeneration of the Kähler class, and the first construction of such metrics on non-affine varieties (although in \cite[Remarks 1.8 and 5.6]{conlon2023degenerations}, it is mentioned that their methods could also be used in the non-affine setting). On the other hand, from the analytical perspective, the construction of Conlon--Rochon uses very different machinery (w-QAC analysis) and does not allow for an asymptotic description of the metrics as precise as what we are able to achieve in the present paper, which is crucial for applications to singular gluing problems. While it seems natural to conjecture that the metric $\omega_{\m{CY},0}$ coincides with the singular Conlon--Rochon metric on $\mZ$, the uniqueness of a metric on $\mZ$ satisfying the conditions of the above theorem remains an open question. We will return to this problem in more detail in Section \ref{sec: Discussion and open questions}.

There is also an interesting parallel to be drawn between the family of metrics constructed in the present article and the exotic Calabi--Yau metric on $\mathbb{C}^3$ first constructed by Li \cite{li2019new} and subsequently used in his description of adiabatically Lefschetz fibrations \cite{li2018gluing}. Li considers the standard Lefschetz fibration $(z_1,z_2,z_3) \in \mathbb{C}^3 \mapsto z_1^2+z_2^2+z_3^2 \in \C$, whose non-singular fibres are smoothings of the $A_1$ surface singularity $\C^2 / \Z_2$ and carry Stenzel metrics (which coincide with Eguchi--Hanson metrics), while the central fibre is singular. His metric is the local model which appears near a nodal fibre after rescaling a family of collapsing Calabi--Yau manifolds along a Lefschetz K3 fibration. By comparison, our family is governed by a signed resolution parameter rather than a smoothing parameter: its vanishing contracts the exceptional curve, while its sign distinguishes the two small resolutions. The metrics constructed here are therefore intended as local models for adiabatic resolutions of codimension-four Calabi--Yau orbifold singularities when the resolution data degenerates; see \S\ref{subsec: motivating problem}.

\paragraph{Outline of the paper.} By means of closing this introduction, we discuss our strategy of proof and emphasize the main novelties of our approach on the technical level.

Our starting point is Kronheimer's quotient construction of the hyperkähler ALE instantons, which allows us to construct natural families of semi Ricci-flat Kähler metrics $\omega^{\m{srf}}_s$ on the small resolutions of $\mZ$ in Section \ref{sec: sRfK metrics}, where $s \in \R$ essentially represents the (signed) area of the exceptional curves. Although the semi Ricci-flat metrics are asymptotically Calabi--Yau, this ansatz has two coupled defects. The first is that the Ricci potential decays too slowly at infinity for the non-compact Monge--Amp\`ere theory to apply directly. While this is a common phenomenon which also occurs in \cite{conlon2021new,li2019new,szekelyhidi2019degenerations}, there is in our case an additional tension between two requirements: firstly, the need to correct the semi Ricci-flat ansatz at infinity to a metric with fast decay of the Ricci potential; and secondly, the necessity to bound the correction terms uniformly with respect to the parameter $s$. The derivation of appropriate corrections terms fixing this defect will be carried out in Section \ref{sec: Asymptotic solutions of the MA equation}, where we exhibit asymptotic solutions whose decay properties are uniformly under control. This allows us to prove an effective version of Theorem \ref{thm: A} (Theorem \ref{thm: the new complete metrics}).

The second major defect of the semi Ricci-flat ansatz is that the Ricci curvature of $\omega^{\m{srf}}_s$ blows up near the exceptional curve as $s \to 0$, and in particular $\omega^{\m{srf}}_s$ fails to be approximately Calabi--Yau near the ordinary double point. This second defect is addressed in Section \ref{sec: New families of CY metrics}, where we interpolate the corrected semi Ricci-flat ansatz with an appropriate family of Candelas--de la Ossa metrics near the exceptional curves. We then establish a uniform control on the geometry of the resulting family of metrics, and in particular show that it satisfies a uniform version of Hein's $\mathsf{SOB}(6)$-property, which crucially allows us to control the relevant Sobolev constants in the limit where $s \to 0$. Hein's method for solving the Monge--Ampère equation on noncompact manifolds \cite{hein2010gravitational} can consequently be applied to the whole family of metrics with constants independent of $s$. The key point of working with a well-designed family of metrics is precisely this uniformity: the relevant constants in the Moser iteration and the $C^2$-estimates are tracked quantitatively rather than being allowed to depend on the individual member of the family (see also Appendix \ref{app: quantitative moser iteration}).

In Section \ref{sec: The singular limit}, these uniform estimates are used to control the family of exotic Calabi--Yau metrics as the Kähler class degenerates. We establish the smooth convergence on the regular locus (Theorem \ref{thm: convergence to omegaCY0}), pointed Gromov--Hausdorff convergence (Theorem \ref{thm: metric completion}) and prove polynomial convergence of the limiting metric to the Stenzel cone at the ordinary double point using the recent work of Zhang \cite{zhang2024polynomial} concerning the singular limits of non-polarised sequences of Calabi--Yau manifolds. 

Finally, Section \ref{sec: Discussion and open questions} discusses in more detail the adiabatic problem motivating the present construction and the role that the metrics $\omega_{\m{CY},s}$ are expected to play in the corresponding singular gluing problem. We also mention a number of related open questions: the relation with the Conlon--Rochon metrics, possible extensions of our construction, and the importance of such extensions for more general singular gluing problems.

\paragraph{Acknowledgements.} The authors would like to thank Jason Lotay and Spiro Karigiannis for valuable discussion, and Thomas Walpuski for providing helpful comments on an earlier draft of this paper. T.L. is supported by a fellowship from the Alexander-von-Humboldt Foundation.

%%%%% End of Section 1 %%%%%
%%%%%%%%%%%%%%%%%%%%%%%%%%%%

%% file: section2_srfk.tex
%%%%%%%%%%%%%%%%%%%%%%%%%%%%%%%%%%%%
%%%%% Beginning of section 2 %%%%%%%

    \section{Semi Ricci-flat Kähler metrics via hyperkähler reduction}   \label{sec: sRfK metrics}

In this section, we use Kronheimer's construction of the hyperkähler ALE instantons via hyperkähler reduction \cite{kronheimer1989construction} in order to construct a natural family of semi Ricci-flat Kähler metrics on $\mZ$ and its small resolutions $\mZ_\pm$, which are asymptotically Calabi--Yau. 

In \S\ref{subsec: HK reduction}, we describe the construction of a `universal Eguchi--Hanson space' $\M$ via hyperkähler reduction. In \S\ref{subsec: Complex point of view}, we use Kähler reduction to construct a $1$-parameter family of complex threefolds $\{\mZ_s\}_{s \in \R}$ foliating $\M$, and endow them with a natural family of semi Ricci-flat Kähler metrics $\{\omega_s^{\m{srf}}\}_{s \in \R}$ in \S\ref{subsec: A family of sRfK metrics}. We then study the equivariant aspects of these constructions in \S\ref{subsec: Equivariant aspects}, where we prove that the whole data is invariant under a natural action of the Lie group $\Un(2)$. Finally, in \S\ref{subsec: Relation to the flop} we show that the threefolds $\mZ_s$ can be identified with either of the small resolutions $\mZ_\pm$ depending on the sign of $s$ (when $s \neq 0$), while $\mZ_0$ is naturally biholomorphic to the regular part $\mZ^{\m{reg}}$ of $\mZ$.

    \subsection{The universal Eguchi--Hanson space}    \label{subsec: HK reduction}

Let $1,\iq,\jq,\kq$ be the standard basis for the space of quaternions $\H$, and let $\H^2$ be endowed with the hyperkähler complex structures $I,J,K \in \m{End}(\H^2)$ corresponding to the multiplication by $\iq,\jq,\kq$ \emph{on the left}. The standard flat metric on $\H^2$ may be written as
\begin{equation}
    g^{\m{st}} = \Re(\diff q_1 \diff \overline{q}_1 + \diff q_2 \diff \overline{q}_2) .
\end{equation}
The associated triple of standard Kähler forms defines an element $\underline{\omega}^{\m{st}} \in \bigwedge^2 (\H^2)^* \otimes \Im(\mathbb{H})^*$ defined as
\begin{equation}    \label{eq: omegazeta2}
    \langle \underline{\omega}^{\m{st}}, \lambda \rangle = \omega^{\m{st}}_\lambda = g^{\m{st}}(\lambda \cdot, \cdot) =  \Re(\lambda \diff q_1 \diff \overline{q}_1) + \Re(\lambda \diff q_2 \diff \overline{q}_2), ~~~\forall \lambda \in \Im(\H) .
\end{equation}
For simplicity, we will also denote by $\omega^{\m{st}}_{\iq} = \omega^{\m{st}}_1$, $\omega^{\m{st}}_{\jq} = \omega^{\m{st}}_2$ and $\omega^{\m{st}}_{\kq} = \omega^{\m{st}}_3$ the Kähler forms associated with $I$, $J$, $K$.

The Lie group $\Un(1)$ acts (on the right) on $\H^2$ by
\begin{equation}    \label{eq: Action of U(1)}
    (q_1,q_2) \cdot e^{\iq \theta}= (q_1 e^{\iq \theta}, q_2 e^{\iq \theta}).
\end{equation}
This action is generated by the vector field $\xi$ defined by
\begin{equation}        \label{eq: genvfxi}
    \xi(q_1,q_2) \coloneqq \partial_\theta =  (q_1 \iq,q_2\iq) .
\end{equation}
This action is free except at $(q_1,q_2) = (0,0)$. Hence we can see $\P = \H^2 \setminus \{ (0,0) \}$ as a circle bundle over $\M = \P / \Un(1)$, which is diffeomorphic to $(0,\infty) \times \mathbb{P}^3$. Moreover it is clear that the action of $\Un(1)$ preserves $g^{\m{st}}$ and $\underline{\omega}^{\m{st}}$. This action is even trihamiltonian:

\begin{lem}     \label{lem: Moment map mu}
    The action of $\Un(1)$ on $\H^2$ is trihamiltonian, with moment map given by
    \begin{equation*}
        \mu^{\m{st}}(q_1,q_2) \coloneqq - \frac{1}{2} \Im(q_1 \iq \overline{q}_1 + q_2 \iq \overline{q}_2) .
    \end{equation*}
    That is, if we write $\mu^{\m{st}}_\lambda = \langle \mu^{\m{st}}, \lambda \rangle$ for $\lambda \in \Im(\H)$, we have
    \begin{equation*}
        \diff\mu^{\m{st}}_\lambda = \omega^{\m{st}}_\lambda(\xi,\cdot) = g^{\m{st}}(\lambda \xi, \cdot) .
    \end{equation*}
\end{lem}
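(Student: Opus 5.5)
The plan is a direct computation, done one summand $q=q_a$ at a time since everything is additive in $a=1,2$. Two identities do all the work: the real part of a quaternion product is cyclic, $\Re(ab)=\Re(ba)$, and it is conjugation-invariant, $\Re(\overline{x})=\Re(x)$. I will use the metric in the form $g^{\m{st}}(u,v)=\Re(u_1\overline{v}_1+u_2\overline{v}_2)$, which is the polarisation of the given $g^{\m{st}}$. The pairing $\langle\cdot,\cdot\rangle$ on $\Im(\H)$ is taken to be $\langle x,\lambda\rangle=\Re(x\overline{\lambda})$.

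First I note that $q\iq\overline{q}$ is already purely imaginary, since its conjugate is $q\overline{\iq}\overline{q}=-q\iq\overline{q}$. So the $\Im$ in the definition of $\mu^{\m{st}}$ is harmless, and
\begin{equation*}
\mu^{\m{st}}_\lambda=-\tfrac12\textstyle\sum_a\Re(q_a\iq\overline{q}_a\overline{\lambda}).
\end{equation*}
The second equality $\omega^{\m{st}}_\lambda(\xi,\cdot)=g^{\m{st}}(\lambda\xi,\cdot)$ holds by the very definition of $\omega^{\m{st}}_\lambda$ in \eqref{eq: omegazeta2}. What remains is to show that $\diff\mu^{\m{st}}_\lambda(v)=\sum_a\Re(\lambda q_a\iq\overline{v}_a)$, which equals $g^{\m{st}}(\lambda\xi,v)$ by \eqref{eq: genvfxi}.

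Differentiating $\mu^{\m{st}}_\lambda$ in the direction $v$ gives
\begin{equation*}
-\tfrac12\textstyle\sum_a\bigl[\Re(v_a\iq\overline{q}_a\overline{\lambda})+\Re(q_a\iq\overline{v}_a\overline{\lambda})\bigr].
\end{equation*}
For the first term, I conjugate the product inside $\Re$, using $\overline{\lambda}=-\lambda$ and $\overline{\iq}=-\iq$:
\begin{equation*}
\Re(v\iq\overline{q}\,\overline{\lambda})=\Re(\lambda q\overline{\iq}\,\overline{v})=-\Re(\lambda q\iq\overline{v}).
\end{equation*}
For the second term, I move $\overline{\lambda}$ to the front by cyclicity:
\begin{equation*}
\Re(q\iq\overline{v}\,\overline{\lambda})=\Re(\overline{\lambda}q\iq\overline{v})=-\Re(\lambda q\iq\overline{v}).
\end{equation*}
Adding the two terms and multiplying by $-\tfrac12$ yields $\Re(\lambda q\iq\overline{v})$, as required.

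To complete trihamiltonicity, I also check $\Un(1)$-equivariance, which here means invariance since the group is abelian. It follows from
\begin{equation*}
(qe^{\iq\theta})\,\iq\,\overline{(qe^{\iq\theta})}=q\,e^{\iq\theta}\iq e^{-\iq\theta}\,\overline{q}=q\iq\overline{q},
\end{equation*}
because $e^{\iq\theta}$ commutes with $\iq$.

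The only real obstacle is bookkeeping. Because the quaternions do not commute, one must keep $\lambda$ on the left and $e^{\iq\theta}$ on the right, consistently with $I,J,K$ acting on the left and $\Un(1)$ on the right. One must also track the signs coming from $\overline{\lambda}=-\lambda$ and the normalisation of the pairing. If a different convention for $\langle\cdot,\cdot\rangle$ were intended, only the overall constant $-\tfrac12$ would change.
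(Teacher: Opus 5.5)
Your proof is correct and follows the paper's route: differentiate $\mu^{\m{st}}_\lambda$ directly to obtain $\Re(\lambda q_1\iq\,\diff\overline{q}_1+\lambda q_2\iq\,\diff\overline{q}_2)$, compare with the definitions of $\omega^{\m{st}}_\lambda$ and $\xi$, and note the $\Un(1)$-invariance. The only difference is that you spell out the conjugation and cyclicity steps for $\Re$, which the paper leaves implicit.
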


\begin{proof}
    The map $\mu^{\m{st}}$ is evidently invariant under the action of $\Un(1)$. Moreover, we can easily calculate that
    \begin{equation*}
        \diff \mu^{\m{st}}_\lambda = - \langle \Im(\diff q_1 \iq \overline{q}_1 + \diff q_2 \iq \overline{q}_2), \lambda \rangle = \Re(\lambda q_1 \iq \diff \overline{q}_1 + \lambda q_2 \iq \diff \overline{q}_2) .
    \end{equation*}
    Comparing with \eqref{eq: omegazeta2} and \eqref{eq: genvfxi} this yields the desired result.
\end{proof}

\begin{lem}     \label{lem: dmuhor}
    If $(q_1,q_2) \neq (0,0)$ then $(\diff \mu^{\m{st}})_{(q_1,q_2)}$ is surjective, and the kernel satisfies:
    \begin{equation*}
        \ker ((\diff \mu^{\m{st}})_{(q_1,q_2)}) = \{ I\xi, J\xi, K\xi \}^\perp .
    \end{equation*}
    Moreover, for any $\lambda \in \Im(\H)$ we have 
    \begin{equation*}
        \diff \mu^{\m{st}} (\lambda \xi) = \mr^2 \lambda
    \end{equation*}
    where $\mr^2 = g^{\m{st}}(\xi,\xi) = |q_1|^2 + |q_2|^2$.
\end{lem}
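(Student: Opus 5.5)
The argument is linear algebra at a point, built on the identity $\diff\mu^{\m{st}}_\lambda = g^{\m{st}}(\lambda\xi,\cdot)$ from Lemma \ref{lem: Moment map mu}. I will prove the evaluation formula first and then read off surjectivity and the kernel.

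\emph{Step 1: the formula $\diff\mu^{\m{st}}(\lambda\xi) = \mr^2\lambda$.} Fix $\lambda,\lambda' \in \Im(\H)$. By Lemma \ref{lem: Moment map mu},
\begin{equation*}
\langle \diff\mu^{\m{st}}(\lambda\xi), \lambda'\rangle = g^{\m{st}}(\lambda'\xi,\lambda\xi) = \Re\big(\lambda' q_1\iq\,\overline{\lambda q_1 \iq} + \lambda' q_2 \iq\,\overline{\lambda q_2\iq}\big) = \Re(\lambda' q_1\overline{q}_1\overline{\lambda} + \lambda' q_2\overline{q}_2\overline{\lambda}).
\end{equation*}
Since $q_a\overline{q}_a = |q_a|^2$ is real, this equals $\mr^2\,\Re(\lambda'\overline{\lambda})$, which is $\mr^2$ times the Euclidean inner product of $\lambda$ and $\lambda'$ on $\Im(\H)$. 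Identifying $\Im(\H)^*$ with $\Im(\H)$ through this inner product, as the paper implicitly does, gives $\diff\mu^{\m{st}}(\lambda\xi) = \mr^2\lambda$. Here $\mr^2 = g^{\m{st}}(\xi,\xi) = |q_1\iq|^2 + |q_2\iq|^2 = |q_1|^2+|q_2|^2$.

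\emph{Step 2: surjectivity.} If $(q_1,q_2)\neq(0,0)$, then $\mr^2>0$. The map $\lambda \mapsto \diff\mu^{\m{st}}(\lambda\xi) = \mr^2\lambda$ is therefore already onto $\Im(\H)$, so $\diff\mu^{\m{st}}$ is surjective.

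\emph{Step 3: the kernel.} By Lemma \ref{lem: Moment map mu}, $v \in \ker \diff\mu^{\m{st}}$ if and only if $g^{\m{st}}(\lambda\xi, v)=0$ for all $\lambda\in\Im(\H)$. Since $\lambda\xi$ is linear in $\lambda$, this is equivalent to $v$ being orthogonal to $I\xi, J\xi, K\xi$, the cases $\lambda = \iq,\jq,\kq$. This gives $\ker(\diff\mu^{\m{st}}) = \{I\xi,J\xi,K\xi\}^\perp$. As a consistency check, Step 1 shows these three vectors are pairwise orthogonal with norm $\mr$, so the orthogonal complement has codimension $3$, matching surjectivity onto the $3$-dimensional space $\Im(\H)$.

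There is no real obstacle here. The only point that needs care is the conjugation and ordering in the quaternionic computation of Step 1: left multiplication by $\lambda$ and right multiplication by $\iq$ must be handled so that the factor $\iq\overline{\iq}=1$ cancels correctly.
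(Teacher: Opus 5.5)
Your proof is correct and follows essentially the same route as the paper: both use $\diff\mu^{\m{st}}_\lambda = g^{\m{st}}(\lambda\xi,\cdot)$ from Lemma \ref{lem: Moment map mu} to identify the kernel as $\{I\xi,J\xi,K\xi\}^\perp$, and both compute $g^{\m{st}}(\lambda'\xi,\lambda\xi) = \mr^2\langle\lambda,\lambda'\rangle$. The only difference is that you deduce surjectivity from this evaluation formula, whereas the paper deduces it from the linear independence of $I\xi,J\xi,K\xi$ when $\xi\neq 0$; these amount to the same fact.
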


\begin{proof}
    For $\lambda^\prime \in \Im(\H)$, the dual vector of $\diff \mu^{st}_{\lambda^\prime}$ with respect to the metric $g^{\m{st}}$ is $\lambda^\prime \xi$ and $I\xi$, $J\xi$ and $K\xi$ are independent whenever $\xi \neq 0$, which proves that $\mu^{\m{st}}$ is a submersion if $(q_1,q_2) \neq (0,0)$. In addition, for $\lambda \in \Im(\H)$,
    \begin{align*}
        \langle \diff \mu^{\m{st}}(\lambda \xi) , \lambda^\prime \rangle & = g^{\m{st}} (\lambda^\prime \xi, \lambda \xi) = g^{\m{st}} (\overline{\lambda} \lambda^\prime \xi, \xi)  = \Re(\overline{\lambda} \lambda^\prime (|\xi_1|^2 + |\xi_2|^2) = \mr^2 \langle \lambda, \lambda^\prime \rangle
    \end{align*}
    which yields the second statement of the lemma.
\end{proof}

We shall denote by $\mathcal{H} = \{ \lambda \xi \mid \lambda \in \Im(\H)\}$ the horizontal distribution on $\mu^{\m{st}}\colon \P\rightarrow \m{Im}(\mathbb{H})$. Its orthogonal space is the kernel of $\diff\mu^{\m{st}}$ and it has a further orthogonal splitting $\ker(\diff\mu^{\m{st}}) = \mathcal{V} \oplus \mathcal{E}$, where $\mathcal{V}$ is spanned by $\xi$ and $\mathcal{E}$ is its $g^{\m{st}}$-orthogonal complement. Thus we obtain a splitting
\begin{equation*}
    T \P = \mathcal{V} \oplus \mathcal{E} \oplus \mathcal{H} .
\end{equation*}
Note that the quotient $\M$ inherits a metric induced by $g^{\m{st}}$, which makes the projection map $\P \rightarrow \M$ a Riemannian fibration. Since the above decomposition of $T\P$ is equivariant under the action of $\Un(1)$, there is an induced splitting $T \M = \mathcal{E} \oplus \mathcal{H}$. Moreover, as the map $\mu^\m{st}$ is invariant under the action of $\Un(1)$, it descends to a smooth fibration $\mu : \M \to \Im(\H)$ such that $\mathcal{E}$ is the tangent space of the fibres of $\mu$ and $\mathcal{H}$ is its orthogonal complement with respect to the quotient metric on $\M$. 

By hyperkähler reduction, the fibres $M_\lambda = \mu^{-1}(\lambda)$ of $\mu$ are hyperkähler $4$-manifolds, equipped with the metric $\hat{g}^\lambda$ and the triple of $2$-forms $\underline{\hat{\omega}}^\lambda$ obtained by restriction of $g^\m{st}$ and $\underline{\omega}^\m{st}$ to the distribution $\mathcal{E}$. When $\lambda \neq 0$, $M_\lambda$ is diffeomorphic to the total space of $T^*\mathbb{S}^2$ and the hyperkähler metric $\hat{g}^\lambda$ is the well-known Eguchi--Hanson metric, which is a complete ALE metric asymptotic to $\H/\Z_2$. Loosely speaking, the modulus $|\lambda|$ represents the size of the zero section $\mathbb{S}^2 \subset M_\lambda$. When $\lambda = 0$, $M_0 \cong (\H \setminus \{0\}) / \Z_2$ and the metric $\hat{g}^0$ is flat. The fibration $\mu : \M \to \Im(\H)$ can be thought of as a `universal Eguchi--Hanson space', e.g. the moduli space of ALE hyperkähler structures on the Eguchi--Hanson space \cite{kronheimer1989construction}.

We shall denote by $\hat{\underline{\omega}} \in \Gamma(\bigwedge^2 \mathcal{E}^* \otimes \Im(\H)^*)$ the section obtained by restriction of $\underline{\omega}^\m{st}$ to the distribution $\mathcal{E}$. We can consider $\hat{\omega}$ as a $\Un(1)$-invariant $2$-form on $\P$, or equivalently a $2$-form on the quotient $\M$, by requiring that $\hat{\omega}(v,\cdot) = 0$ for any section $v \in \Gamma(\mathcal{V} \oplus \mathcal{H})$. In particular, the restriction of $\hat{\omega}$ to the fibres of $M_\lambda$ coincides by definition with the hyperkähler triple $\hat{\omega}^\lambda$. We shall also use the notation $\hat{\omega}_1 = \hat{\omega}_{\iq}$, $\hat{\omega}_2 = \hat{\omega}_{\jq}$ and $\hat{\omega}_3 = \hat{\omega}_{\kq}$ for simplicity. For later purposes, we record the following result:

\begin{lem}     \label{lem: Decomposition of Kahler forms}
    If $(ijk)$ is a direct permutation of $(123)$, we have
    \begin{equation*}
        \omega_i^{\m{st}} = \hat{\omega}_i - \iq \mathfrak{a} \wedge \diff\mu^{\m{st}}_i + \frac{1}{\mr^2} \diff\mu^{\m{st}}_j \wedge \diff\mu^{\m{st}}_k
    \end{equation*}
    where $\mathfrak{a} = \mr^{-2} g^{\m{st}}(\xi, \cdot) \iq \in \Omega^1(\P,\iq \R) \cong \Omega^1(\P,\mathfrak{u}(1))$ is the connection $1$-form on $\P$, seen as an $\Un(1)$-bundle over $\M$, associated with the metric $g^{\m{st}}$.
\end{lem}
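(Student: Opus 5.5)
The identity is pointwise and linear in its arguments, so I will prove it by evaluating both sides on the orthogonal splitting $T\P = \mathcal{V} \oplus \mathcal{E} \oplus \mathcal{H}$. I will use the frame given by $\xi$, an arbitrary vector in $\mathcal{E}$, and the three horizontal vectors $I\xi, J\xi, K\xi$. Both sides are $2$-forms, so it suffices to check agreement on all pairs of frame vectors.

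First I record how each piece of the right-hand side evaluates on the frame.
\begin{itemize}
\item[(a)] By construction $\hat{\omega}_i$ vanishes whenever one argument lies in $\mathcal{V}\oplus\mathcal{H}$, and it agrees with $\omega^{\m{st}}_i$ on $\mathcal{E}\times\mathcal{E}$.
\item[(b)] The form $-\iq\mathfrak{a} = \mr^{-2} g^{\m{st}}(\xi,\cdot)$ equals $1$ on $\xi$ and vanishes on $\mathcal{E}\oplus\mathcal{H}$.
\item[(c)] By Lemma \ref{lem: dmuhor}, $\diff\mu^{\m{st}}$ vanishes on $\mathcal{V}\oplus\mathcal{E} = \ker \diff\mu^{\m{st}}$. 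Writing $\lambda_1 = \iq$, $\lambda_2 = \jq$, $\lambda_3 = \kq$, it satisfies $\diff\mu^{\m{st}}_l(\lambda_m\xi) = \mr^2\delta_{lm}$.
\end{itemize}

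Next I compute $\omega^{\m{st}}_i(u,v) = g^{\m{st}}(\lambda_i u, v)$ on the same pairs.
\begin{itemize}
\item[(1)] On $\mathcal{E}\times\mathcal{E}$ the two sides agree by (a), and the other two terms vanish by (b) and (c).
\item[(2)] For $u = \xi$ and $v \in \mathcal{E}$, we have $g^{\m{st}}(\lambda_i\xi, v) = 0$ because $\mathcal{E} \perp \mathcal{H}$. For $u \in \mathcal{E}$ and $v \in \mathcal{H}$, Lemma \ref{lem: Moment map mu} gives $\omega^{\m{st}}_i(v,u) = \diff\mu^{\m{st}}_i(v')$-type expressions, and more directly $\omega^{\m{st}}_i(\lambda_m\xi, u) = g^{\m{st}}(\lambda_i\lambda_m\xi, u)$. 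Here $\lambda_i\lambda_m\xi \in \mathcal{V}\oplus\mathcal{H}$, which is orthogonal to $\mathcal{E}$. So all mixed terms involving $\mathcal{E}$ vanish, consistently with the right-hand side.
\item[(3)] For the pair $(\xi, \lambda_m\xi)$, the left-hand side is $\omega^{\m{st}}_i(\xi,\lambda_m\xi) = \diff\mu^{\m{st}}_i(\lambda_m\xi) = \mr^2\delta_{im}$. On the right-hand side only the middle term contributes, giving $1\cdot \mr^2\delta_{im}$.
\item[(4)] For the pair $(\lambda_l\xi, \lambda_m\xi)$, the left-hand side is $g^{\m{st}}(\lambda_i\lambda_l\xi, \lambda_m\xi) = \mr^2\Re(\overline{\lambda_m}\lambda_i\lambda_l)$, using $g^{\m{st}}(\lambda\xi,\xi) = \Re(\lambda)\mr^2$ as in the proof of Lemma \ref{lem: dmuhor}. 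This is nonzero only when $\{l,m\} = \{j,k\}$. For $(l,m) = (j,k)$ with $(ijk)$ a direct permutation, $\lambda_i\lambda_j = \lambda_k$, so the value is $\mr^2$. The right-hand side gives $\mr^{-2}(\diff\mu^{\m{st}}_j\wedge\diff\mu^{\m{st}}_k)(\lambda_j\xi,\lambda_k\xi) = \mr^{-2}\mr^4 = \mr^2$.
\end{itemize}

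The only real obstacle is the bookkeeping of sign conventions. One must use the convention $(\alpha\wedge\beta)(u,v) = \alpha(u)\beta(v) - \alpha(v)\beta(u)$ and the orientation of the pairing $\omega_\lambda(\xi,\cdot) = \diff\mu_\lambda$ consistently. These determine the sign of the $\mathfrak{a}\wedge\diff\mu$ term, and I would double-check it on case (3).
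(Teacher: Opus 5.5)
Your proof is correct and follows essentially the same route as the paper. Both arguments use the orthogonal, quaternion-invariant splitting $\mathcal{E}\oplus(\mathcal{V}\oplus\mathcal{H})$ to reduce to the restriction on $\mathcal{V}\oplus\mathcal{H}$, and then compute there in the frame $\xi, I\xi, J\xi, K\xi$. The paper simply packages your pairwise checks (3)--(4) as the normal form $\omega^{\m{st}}_i|_{\mathcal{V}\oplus\mathcal{H}} = \xi^0\wedge\xi^i+\xi^j\wedge\xi^k$ in the coframe dual to $\mr^{-1}\xi,\mr^{-1}I\xi,\mr^{-1}J\xi,\mr^{-1}K\xi$, then identifies $\xi^0=-\mr\iq\mathfrak{a}$ and $\xi^\ell=\mr^{-1}\diff\mu^{\m{st}}_\ell$; your wedge convention agrees with it, so the sign in (3) comes out right.
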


\begin{proof}
    Since $\mathcal{E}$ and $\mathcal{V} \oplus \mathcal{H}$ are orthogonal and invariant under the hyperkähler structure, and the restriction of $\omega_i^{\m{st}}$ to $\mathcal{E}$ coincides with $\hat \omega_i$ by definition, we only need to determine the restriction of $\omega_i^{\m{st}}$ to $\mathcal{V} \oplus \mathcal{H}$. The vector space $\mathcal{V} \oplus \mathcal{H}$ has a global orthonormal frame $\xi_0 = \tfrac{1}{\mr}\xi$, $\xi_1 = \tfrac{1}{\mr} I\xi$, $\xi_2 = \tfrac{1}{\mr} J\xi$ and $\xi_3 = \tfrac{1}{\mr} K\xi$, and if $\xi^0,\xi^1,\xi^2,\xi^3$ is the dual co-frame (seen as $1$-forms on $\P$ by specifying $\left. \xi^\ell \right|_{\mathcal{E}} = 0$), we must have
    \begin{equation*}
        \left. \omega^{\m{st}}_i \right|_{\mathcal{V} \oplus \mathcal{H}} = \xi^0 \wedge \xi^i + \xi^j \wedge \xi^k .
    \end{equation*}
    On the other hand, Lemma \ref{lem: dmuhor} shows that $\xi^0 = \frac{1}{\mr}g^{\m{st}}(\xi,\cdot) = -\mr \iq \mathfrak{a}$ and $\xi^\ell = \tfrac{1}{\mr} \diff \mu^{\m{st}}_\ell$ for all $\ell = 1,2,3$, which yields the result.
\end{proof}

    \subsection{Complex point of view and Kähler reduction}     \label{subsec: Complex point of view}

There is an explicit identification $\C^2 \cong \mathbb{H}$ given by $(z_1,z_2) \mapsto z_1 + z_2 \jq$. This gives an isomorphism $\C^4 \cong \C^2 \times \C^2 \cong \mathbb{H}^2$. Using complex coordinates $(z_1,z_2,z_3,z_4)$ on $\C^4$ with $q_1 = z_1 + z_2 \jq$ and $q_2 = z_3 + z_4 \jq$, the action \eqref{eq: Action of U(1)} of $\Un(1)$ can be written as
\begin{equation*}
    (z_1,z_2,z_3,z_4) \cdot e^{\iq \theta} = (e^{\iq \theta} z_1, e^{-\iq \theta} z_2, e^{\iq \theta} z_3, e^{-\iq \theta} z_4) .
\end{equation*}
The trihamiltonian moment map $\mu^{\m{st}} \colon \H^2 \rightarrow \Im(\mathbb{H})$ splits into two parts, $h^{\m{st}} \colon \C^4 \rightarrow \R$ and $\zeta^{\m{st}} \colon \C^4 \rightarrow \C$. Indeed,
\begin{align*}
    \mu^{\m{st}}(z_1,z_2,z_3,z_4) & = - \frac{1}{2} \Im((z_1 + z_2 \jq)\iq(\overline{z}_1- \jq \overline{z}_2) + (z_3 + z_4 \jq)\iq(\overline{z}_3- \jq \overline{z}_4)) \\
        & = \frac{1}{2} (|z_2|^2 + |z_4|^2 - |z_1|^2 - |z_3|^2) \iq + \iq( z_1 z_2 + z_3 z_4) \jq
\end{align*}
and therefore $\mu^{\m{st}} = h^{\m{st}} \iq + \zeta^{\m{st}} \jq$ where
\begin{align}
    h^{\m{st}}(z_1,z_2,z_3,z_4) & \coloneqq \frac{1}{2} (|z_2|^2 + |z_4|^2 - |z_1|^2 - |z_3|^2) \in \R, \\
    \zeta^{\m{st}}(z_1,z_2,z_3,z_4) & \coloneqq \iq (z_1z_2 + z_3 z_4) \in \C = \R \oplus \iq \R  .
\end{align}
The maps $h^\m{st} \colon \C^4 \to \R$ and $\zeta^\m{st} \colon \C^4 \to \C$ are $\Un(1)$-invariant, and therefore induce smooth maps $h \colon \M \rightarrow \R$ and $\zeta \colon \M \rightarrow \C$ on the quotient. For any $s \in \R$, we shall denote by $\mZ_s \subset \M$ the level set $h^{-1}(s)$, and by $\zeta_s \colon \mZ_s \to \C$ the map induced by restriction of $\zeta$. Note that the map $h^{\m{st}}$ is a moment map for the action of $\Un(1)$ on $\C^4$ for the standard complex structure $I$ (which corresponds to left multiplication by $\iq$ under our identification $\C^4 \cong \H^2$) and the associated Kähler form
\begin{equation*}
    \omega^{\m{st}} \coloneqq \omega_{\iq}^{\m{st}} = \frac{\iq}{2} (\diff z_1 \wedge \diff \overline{z}_1 + \diff z_2 \wedge \diff \overline{z}_2 + \diff z_3 \wedge \diff \overline{z}_3 + \diff z_4 \wedge \diff \overline{z}_4) .
\end{equation*}
Using Kähler reduction, we can give $\mZ_s$ the structure of a Kähler manifold, endowed with the complex structure $I$ and the Kähler form $\omega_s^\m{red}$ induced by $(I,\omega^\m{st})$. Moreover, the horizontal distribution $\mathcal{H}$ can be decomposed as $\mathcal{H} = \R \cdot I \xi \oplus \widetilde{\mathcal{H}}$ where $\widetilde{\mathcal{H}} = \mathrm{span}\{J\xi,K\xi\} =\ker (\diff h)$, and this induces a splitting $T\mZ_s = \widetilde{\mathcal{H}}_s \oplus \mathcal{E}_s$.

\begin{prop}     \label{prop: Properties of reduction}
    For any $s \in \R$, let us denote by $(g^{\m{red}}_s,I_s,\omega^{\m{red}}_s)$ the Kähler structure on $\mZ_s$ obtained by Kähler reduction of $(g^{\m{st}},I,\omega^{\m{st}})$. Then the following properties hold:
    \begin{enumerate}[(i)]
        \item The map $\zeta_s \colon \mZ_s \rightarrow \C$ induced by $\zeta^{\m{st}}$ is a holomorphic fibration.
        
        \item With respect to the splitting $T\mZ_s = \widetilde{\mathcal{H}}_s \oplus \mathcal{E}_s$ into horizontal and vertical components, the Kähler form $\omega^\m{red}_s$ decomposes as
        \begin{equation*}
            \omega^{\m{red}}_s = \hat\omega_{\iq} + \frac{\iq}{2\mr_s^2} \diff \zeta_s \wedge \diff \overline{\zeta}_s
        \end{equation*}
        where $\hat\omega_{\iq}$ is the vertical Kähler form on the Eguchi--Hanson fibres associated with the complex structure induced by $I$, and $\mr^2_s$ the smooth function on $\mZ_s$ induced by the $\Un(1)$-invariant function $\mr^2 = |q_1|^2 + |q_2|^2$.

        \item For any $s \in \R$, 
        \begin{equation*}
            \omega_s^\m{red} =  s \iq \Theta_s + \frac{\iq}{2} \partial \overline{\partial}(\mr_s^2)
        \end{equation*}
        where $\Theta_s \coloneqq \diff \mathfrak{a}_s \in \Omega^{1,1}(\mZ_s,\iq \R)$ is the curvature of the connection $\mathfrak{a}_s \in \Omega^1(\P_s,\iq\R)$ on $\P_s$ obtained by restriction of $\mathfrak{a}$.

        \item If $s \neq 0$, $g^\m{red}_s$ is complete.
    \end{enumerate}
\end{prop}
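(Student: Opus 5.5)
I will prove the four items separately, working upstairs on the level set $\P_s = (h^{\m{st}})^{-1}(s) \cap \P$ and descending along the $\Un(1)$-quotient. Throughout I use the splitting $T\P = \mathcal{V}\oplus\mathcal{E}\oplus\mathcal{H}$ and Lemma \ref{lem: Decomposition of Kahler forms}.

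\textbf{Holomorphicity, item (i).} The function $\zeta^{\m{st}} = \iq(z_1z_2+z_3z_4)$ is a $\Un(1)$-invariant holomorphic polynomial on $(\C^4,I)$. By construction of the Kähler quotient, the complex structure $I_s$ on $\mZ_s$ is the one for which the projection $\P_s \to \mZ_s$ identifies $T\mZ_s$ with the $I$-invariant subspace $\{\xi, I\xi\}^{\perp} = \mathcal{E}\oplus\widetilde{\mathcal{H}}$. Since $\diff\zeta^{\m{st}}$ is $I$-complex-linear, the induced map $\zeta_s$ is holomorphic.

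For the fibration property, I use Lemma \ref{lem: dmuhor}. The vectors $J\xi,K\xi$ lie in $\widetilde{\mathcal{H}}$, and $\diff\mu^{\m{st}}(\lambda\xi)=\mr^2\lambda$. Hence $\diff\zeta_s$ maps $\widetilde{\mathcal{H}}_s$ isomorphically onto $\C$, so $\zeta_s$ is a submersion. The fibres are $\zeta_s^{-1}(w) = \mu^{-1}(s\iq + w\jq)$, which are the Eguchi--Hanson spaces $M_\lambda$. Properness of $(h,\zeta)$, i.e. of $\mu$ on $\M$, holds because $|\mu^{\m{st}}| = \tfrac12\mr^2$. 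Ehresmann's theorem then gives a locally trivial fibration.

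\textbf{Decomposition of the Kähler form, item (ii).} I restrict Lemma \ref{lem: Decomposition of Kahler forms} with $i=1$ to $\P_s$:
\begin{equation*}
    \omega^{\m{st}}_1 = \hat\omega_1 - \iq\mathfrak{a}\wedge\diff\mu^{\m{st}}_1 + \mr^{-2}\diff\mu^{\m{st}}_2\wedge\diff\mu^{\m{st}}_3 .
\end{equation*}
On $\P_s$ we have $\diff\mu_1^{\m{st}}=\diff h^{\m{st}}=0$, so the middle term vanishes. The restriction is basic, and it descends to $\omega_s^{\m{red}}$. Next write $\zeta=\mu_2+\iq\mu_3$ (up to the paper's sign conventions). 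Then
\begin{equation*}
    \frac{\iq}{2}\diff\zeta\wedge\diff\overline\zeta = \diff\mu_2\wedge\diff\mu_3 ,
\end{equation*}
which gives (ii). The only point needing care is the sign of this last identity.

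\textbf{Potential, item (iii).} I compute $\diff I\diff(\mr^2)$ upstairs. One has $\diff(\mr^2)$ equal to $2g^{\m{st}}(\text{position},\cdot)$, and $\frac{\iq}{2}\partial\overline\partial|z|^2 = \omega^{\m{st}}$ on $\C^4$. The quotient formula for Kähler reduction (Guillemin--Sternberg / Biquard--Gauduchon type) then says
\begin{equation*}
    \pi^*\Bigl(\tfrac{\iq}{2}\partial\overline\partial_{\mZ_s}\mr_s^2\Bigr) = \omega^{\m{st}}|_{\P_s} + \diff\bigl(\text{correction along } \mathfrak{a}\bigr).
\end{equation*}
Here the correction comes from $I\xi$-components of $I\diff(\mr^2)$ on $\P_s$. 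The potential descends $\Un(1)$-invariantly, but $\diff^c$ picks up a term proportional to $h\,\mathfrak{a}$ because $\diff h$ is dual to $I\xi$.

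Concretely, I will verify the identity
\begin{equation*}
    \diff^c(\mr^2)\big|_{\P_s} = \text{basic part} - 2s\,\iq\mathfrak{a}\cdot(\text{const}) .
\end{equation*}
I plan to do this by evaluating both sides on $\xi$, using $\diff(\mr^2)(I\xi)=\pm 2 h$-type identities (note $|z_2|^2+|z_4|^2-|z_1|^2-|z_3|^2 = 2h$). Taking $\diff$ then yields $\omega^{\m{red}}_s = s\iq\Theta_s + \frac{\iq}{2}\partial\overline\partial\mr_s^2$. I expect this bookkeeping of constants and signs to be the main obstacle. As a cross-check I will use $s=0$, where the quotient is the Kähler cone with potential $\mr^2/2$ scaled.

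\textbf{Completeness, item (iv).} I will show $\mr_s$ is proper on $\mZ_s$ and that the metric dominates $c\,\diff\mr_s^2$ at infinity. The space $\mZ_s$ is a closed subset of $\P_s/\Un(1)$. For $s\neq0$, the level set $\P_s$ avoids the origin, since $\mr^2\ge 2|s|$. Hence $\P_s$ is a closed submanifold of $\H^2$ and $g^{\m{st}}|_{\P_s}$ is complete. The Riemannian submersion $\P_s\to\mZ_s$ with compact fibres then descends completeness: horizontal lifts preserve length, and Cauchy sequences lift to bounded sets of a complete manifold.
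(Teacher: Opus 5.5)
Your arguments for (ii)--(iv), and the submersion part of (i), are the paper's own. For (i) you show that $\diff\zeta_s\colon\widetilde{\mathcal{H}}_s\to\C$ is an isomorphism. For (ii) you restrict Lemma~\ref{lem: Decomposition of Kahler forms} to $\P_s$, where $\diff h^{\m{st}}=0$, and use $\zeta=\mu_2+\iq\mu_3$. For (iii) you correct the lift of $\tfrac14\diff^c(\mr^2)$ by a multiple of $\mathfrak{a}_s$. For (iv) you descend completeness from the closed submanifold $\P_s\subset\H^2$.

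In (iii) there is no undetermined constant. The identity $\diff(\mr^2)(\lambda\xi)=4\mu^{\m{st}}_\lambda$ gives $\tfrac14\diff^c(\mr^2)(\xi)=-\tfrac14\diff(\mr^2)(I\xi)=-s$. So the horizontal lift of $\tfrac14\diff^c(\mr_s^2)$ is exactly $\tfrac14\diff^c(\mr^2)-\iq s\,\mathfrak{a}_s$, and its differential is $\omega^{\m{st}}-\iq s\,\diff\mathfrak{a}_s$.

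The Ehresmann step in (i), however, does not work.
\begin{itemize}
\item The paper only gives the inequality $|\mu^{\m{st}}|\le\tfrac12\mr^2$, with equality exactly on $\{\upsilon=0\}$ (the zero sections of the fibres). This bounds $\mu$ by $\mr$, not conversely.
\item In fact $\mu\colon\M\to\Im(\H)$ is not proper, since its fibres are non-compact Eguchi--Hanson spaces. So Ehresmann's theorem is not available.
\item The conclusion you aim for is false when $s=0$. There $\zeta_0^{-1}(0)=M_0\cong(\H\setminus\{0\})/\Z_2$, which under $\mZ_0\cong\mZ^{\m{reg}}$ is the set of nonzero symmetric rank-one matrices. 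But $\zeta_0^{-1}(w)\cong T^*\mathbb{S}^2$ for $w\neq0$. So $\zeta_0$ is not locally trivial, and your description of every fibre as an Eguchi--Hanson space also fails over $w=0$.
\end{itemize}
Thus ``fibration'' in (i) can only mean holomorphic submersion. Your $\widetilde{\mathcal{H}}_s$ argument already proves this, exactly as the paper does, so drop the Ehresmann step.

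If you want local triviality for $s\neq0$, replace properness by completeness of the Ehresmann connection $\widetilde{\mathcal{H}}_s$. The horizontal lifts of $\partial_{\Re\zeta}$ and $\partial_{\Im\zeta}$ are $J\xi/\mr^2$ and $K\xi/\mr^2$. Their $g^{\m{red}}_s$-length is $\mr_s^{-1}\le(2|s|)^{-1/2}$. Combined with (iv), horizontal lifts of paths in $\C$ then exist for all time, which gives local triviality.
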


\begin{proof}
    The first point is immediate, since $\zeta^{\m{st}}$ is holomorphic on $\H^2 \cong \C^4$ and the map $\mu = h \times \zeta \colon \M \rightarrow \R \times \C$ is a fibration, so that $\zeta$ is a fibration on the level sets of $h$. The second point follows from Lemma \ref{lem: Decomposition of Kahler forms}, since
    \begin{align*}
        \omega^{\m{st}}_{\iq} & = \hat \omega_{\iq} - \iq \mathfrak{a} \wedge \diff \mu^{\m{st}}_1 + \frac{1}{\mr^2} \diff \mu^{\m{st}}_2 \wedge \diff \mu^{\m{st}}_{3} \\
            & = \hat \omega_{\iq} - \iq  \mathfrak{a} \wedge \diff h^{\m{st}} + \frac{1}{\mr^2} \diff \Re(\zeta^{\m{st}}) \wedge \diff \Im(\zeta^{\m{st}}) \\
            & = \hat \omega_{\iq} - \iq  \mathfrak{a} \wedge \diff h^{\m{st}} + \frac{\iq}{2 \mr^2} \diff \zeta^\m{st} \wedge \diff \overline{\zeta^\m{st}}
    \end{align*}
    and $\diff h^{\m{st}} = 0$ on the level sets of $h^{\m{st}}$. 

    To prove point (iii), recall that $\tfrac{\iq}{2} \partial \overline{\partial} (\mr^2_s) = \tfrac{1}{4} \diff \diff^c (\mr^2_s)$ where $\diff^c F = \diff F \circ I^{-1} = - \diff F \circ I$ for a function $F$. The horizontal lift of $\frac{1}{4} \diff^c(\mr^2_s)$ on $\P_s$ is
    \begin{equation*}
        \frac{1}{4} \diff^c(\mr^2) + \frac{1}{4} \diff^c(\mr^2)(\xi) \cdot \iq \mathfrak{a}_s
    \end{equation*}
    and
    \begin{equation*}
        \frac{1}{4} \diff^c(\mr^2) (\xi) = - \frac{1}{4} \diff(\mr^2)(I\xi) = -\frac{1}{2}\Re(q_1 \iq \overline{q}_1\iq + q_2 \iq \overline{q}_2 \iq) = - s .
    \end{equation*}
    Thus it follows that the horizontal lift of $\tfrac{\iq}{2} \partial \overline{\partial}(\mr_s^2) = \frac{1}{4} \diff \diff^c(\mr^2_s)$ on $\P_s$ is
    \begin{equation*}
        \diff (\frac{1}{4} \mathrm{d}^c(\mr^2) - \iq s \mathfrak{a}_s ) = \frac{1}{4} \diff\diff^c(\mr^2) - \iq s \diff \mathfrak{a}_s = \omega^{\m{st}} - \iq s \diff \mathfrak{a}_s .
    \end{equation*}
    This equality implies $\frac{\iq}{2} \partial \overline{\partial}(\mr^2_s) = \omega^\m{red}_s - s \iq \Theta_s$ on $\mZ_s$.
    
    Finally, if $s \neq 0$, $\P_s \subset \H^2$ is a closed submanifold and thus $\P_s$ is complete with respect to the standard metric $g^\m{st}$, which implies that $\mZ_s = \P_s / \Un(1)$ is complete, equipped with the quotient metric $g^\m{red}_s$.
\end{proof}

\begin{rem}
    In terms of holomorphic line bundles, the connection $\mathfrak{a}_s$ can be seen as the Chern connection of the holomorphic line bundle $\mathcal{L}_s = \P_s \times_{\Un(1)} \C$ for the unique hermitian metric $H_s$ on $\mathcal{L}_s$ such that $\P_s \subset \mathcal{L}_s$ is the sub-bundle of unit vectors.
\end{rem}

    \subsection{A family of semi Ricci-flat Kähler metrics}     \label{subsec: A family of sRfK metrics}

Henceforth, we shall denote by $\hat \omega = \hat \omega_{\iq}$ the family of vertical Kähler forms on the Eguchi--Hanson fibres, and by $\hat \Omega = \hat \omega_{\jq} + \iq \hat \omega_{\kq}$ the family of vertical holomorphic volume forms (with respect to the complex structure $I$). Using the Ehresmann connection $T\M = \mathcal{H} \oplus \mathcal{E}$, we can regard $\hat \omega$ and $\hat \Omega$ as $2$-forms on $\M$. Moreover, since the decomposition $T\mZ_s = \widetilde{\mathcal{H}}_s \oplus \mathcal{E}_s$ is orthogonal and invariant by $I$, the restriction $\hat \omega_s$ of $\hat \omega$ to $\mZ_s$ defines a $(1,1)$-form, and the restriction $\hat \Omega_s$ of $\hat \Omega$ defines a $(2,0)$-form on $\mZ_s$. In particular, for any $s \in \R$, we can define the $(3,0)$-form $\Omega_s \in \Omega^{3,0}(\mZ_s)$ as
\begin{equation}
    \Omega_s \coloneqq \diff \zeta_s \wedge \hat \Omega_s .
\end{equation}

\begin{lem}
    For any $s \in \R$, $\Omega_s$ is a holomorphic volume form on $\mZ_s$. 
\end{lem}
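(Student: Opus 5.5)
We need to show two things: $\Omega_s$ is nowhere vanishing of type $(3,0)$, and it is closed, so that $\bar\partial\Omega_s = 0$ and hence $\Omega_s$ is holomorphic.

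\emph{Nondegeneracy.} Use the splitting $T\mZ_s = \widetilde{\mathcal{H}}_s \oplus \mathcal{E}_s$. By Proposition \ref{prop: Properties of reduction}(i), $\diff\zeta_s$ is a nonvanishing $(1,0)$-form. Since $\hat\Omega_s$ vanishes on $\widetilde{\mathcal{H}}_s$, we can evaluate $\Omega_s$ on $\widetilde{\mathcal{H}}_s^{1,0}\oplus \mathcal{E}_s^{1,0}$. It then factors as $\diff\zeta_s|_{\widetilde{\mathcal{H}}_s}\wedge \hat\Omega_s|_{\mathcal{E}_s}$. This is nonzero because:
\begin{itemize}
\item $\hat\Omega_s$ restricts on each fibre $M_\lambda$ to the holomorphic symplectic form $\hat\omega^\lambda_{\jq}+\iq\hat\omega^\lambda_{\kq}$ of the hyperkähler fibre;
\item $\diff\zeta_s$ is nonzero on $\widetilde{\mathcal{H}}_s=\mathrm{span}\{J\xi,K\xi\}$, since by Lemma \ref{lem: dmuhor} we have $\diff\zeta(J\xi)$, $\diff\zeta(K\xi)\in \mr^2\{\jq,\kq\}$-components.
\end{itemize}
So $\Omega_s$ is a nowhere vanishing $(3,0)$-form.

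\emph{Closedness.} The cleanest route is to lift everything to $\P_s\subset\C^4$ and compare with an explicit form. On $\C^4$ we have the holomorphic symplectic form
\[
\omega^{\m{st}}_{\jq}+\iq\omega^{\m{st}}_{\kq} = c'(\diff z_1\wedge \diff z_2+\diff z_3\wedge \diff z_4),
\]
which has the form $\diff\mu^{\m{st}}$-data in it. By Lemma \ref{lem: Decomposition of Kahler forms} applied with $i=2,3$,
\[
\omega^{\m{st}}_{\jq}+\iq\omega^{\m{st}}_{\kq} = \hat\Omega - \iq\mathfrak{a}\wedge \diff\zeta^{\m{st}}\cdot(\text{const}) + \mr^{-2}(\text{terms } \diff h^{\m{st}}\wedge \diff\zeta^{\m{st}}),
\]
after using $\mu_2+\iq\mu_3$ expressed through $\zeta^{\m{st}}$.

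Wedge this with $\diff\zeta^{\m{st}}$ and restrict to $\P_s$, where $\diff h^{\m{st}}=0$. All terms containing $\diff\zeta^{\m{st}}$ twice drop out, giving
\[
\diff\zeta^{\m{st}}\wedge(\omega^{\m{st}}_{\jq}+\iq\omega^{\m{st}}_{\kq})|_{\P_s} = \diff\zeta^{\m{st}}\wedge\hat\Omega,
\]
which is the pullback of $\Omega_s$. The left-hand side is a closed form on $\C^4$, since it is a wedge of closed forms, so its restriction to $\P_s$ is closed. The projection $\P_s\to\mZ_s$ is a submersion, so pullback is injective on forms; hence $\diff\Omega_s=0$.

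A closed $(3,0)$-form satisfies $\bar\partial\Omega_s=0$, i.e.\ it is holomorphic. For $s=0$, the same argument works on $\P_0=\P\cap\{h^{\m{st}}=0\}$.

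The main obstacle is the bookkeeping in the Lemma \ref{lem: Decomposition of Kahler forms} step: getting the correct combination of $\diff\mu_2,\diff\mu_3$ so that it is proportional to $\diff\zeta^{\m{st}}$ (or its conjugate) with the factor $\iq$ arising from $\zeta^{\m{st}}=\mu_2+\iq\mu_3$ up to sign. One has to check that no $\diff\overline{\zeta^{\m{st}}}$ term survives. Types argue this: the left-hand side is of type $(3,0)$, and so is $\diff\zeta\wedge\hat\Omega$, so any stray $\diff\bar\zeta$ terms must cancel. If that bookkeeping proves awkward, the fallback is to check directly that the mixed terms are of type $(2,1)$ and therefore vanish.
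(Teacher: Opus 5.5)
Your proof is correct and follows the paper's argument: both lift to $\P_s$ and use Lemma \ref{lem: Decomposition of Kahler forms} to see that the lift of $\hat\Omega_s$ is $\omega^{\m{st}}_{\jq}+\iq\omega^{\m{st}}_{\kq}+\iq\mathfrak{a}\wedge\diff\zeta^{\m{st}}$. The paper then computes $\diff\hat\Omega_s=\iq\diff\zeta_s\wedge\Theta_s$ and wedges with $\diff\zeta_s$, whereas you wedge with $\diff\zeta^{\m{st}}$ first so the correction drops out directly. The bookkeeping you flag is harmless: since $\mu_2+\iq\mu_3=\zeta^{\m{st}}$, the correction is exactly $-\iq\mathfrak{a}\wedge\diff\zeta^{\m{st}}+\iq\mr^{-2}\diff h^{\m{st}}\wedge\diff\zeta^{\m{st}}$, with no $\diff\overline{\zeta^{\m{st}}}$ terms.
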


\begin{proof}
    Since $\zeta_s$ is a holomorphic fibration, $\diff \zeta_s$ does not vanish and hence $\Omega_s$ is a nowhere vanishing $(3,0)$-form, so it remains to see that $\Omega_s$ is holomorphic, or equivalently, that it is closed. From Lemma \ref{lem: Decomposition of Kahler forms}, we can see that the horizontal lift of $\hat \Omega_s$ onto $\P_s$ is given by $\omega^{\m{st}}_2 + \iq \omega^{\m{st}}_3 + \iq \mathfrak{a} \wedge \diff \zeta^{\m{st}}$, so that $\diff \hat \Omega_s = \iq \diff \zeta_s \wedge \Theta_s$. This implies the closedness of $\Omega_s$.
\end{proof}

Let us now define
\begin{equation}    \label{eq: SKRF ansatz}
    \omega^{\m{srf}}_s \coloneqq \omega^{\m{red}}_s + \frac{\iq}{2} \diff \zeta_s \wedge \diff \overline{\zeta}_s = \hat \omega_s + (1+\mr_s^{-2}) \frac{\iq}{2}\diff \zeta_s \wedge \diff \overline{\zeta}_s .
\end{equation}
Since $\omega^{\m{red}}_s$ is a Kähler form on $\mZ_s$ and $\diff \zeta_s \wedge \diff \overline{\zeta}_s$ is closed (even exact) and semi-positive, the $(1,1)$-form $\omega^{\m{srf}}_s$ is a Kähler form on $\mZ_s$.

\begin{prop}        \label{prop: SKRF ansatz}
    The Kähler form $\omega^{\m{srf}}_s$ satisfies the equation
    \begin{equation*}
        \frac{1}{6} (\omega^{\m{srf}}_s)^3 = (1+ \mr_s^{-2}) \cdot \frac{\iq}{8} \cdot  \Omega_s \wedge \overline{\Omega}_s .
    \end{equation*}
\end{prop}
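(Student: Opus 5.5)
The plan is a pointwise linear-algebra computation using the orthogonal, $I$-invariant splitting $T\mZ_s = \widetilde{\mathcal{H}}_s \oplus \mathcal{E}_s$. Write $\omega^{\m{srf}}_s = \hat\omega_s + f\,\beta$, where $f = 1+\mr_s^{-2}$ and $\beta = \frac{\iq}{2}\diff\zeta_s\wedge\diff\overline{\zeta}_s$. The form $\hat\omega_s$ vanishes on $\widetilde{\mathcal{H}}_s$ by construction. The form $\beta$ vanishes whenever one argument lies in $\mathcal{E}_s$, because $\mathcal{E}_s$ is tangent to the fibres of $\zeta_s$. Hence $\hat\omega_s^3 = 0$, since $\mathcal{E}_s$ has real rank $4$. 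Also $\beta^2=0$, since $\diff\zeta_s\wedge\diff\zeta_s=0$. Expanding the cube gives
\[
(\omega^{\m{srf}}_s)^3 = 3 f\, \hat\omega_s^2\wedge\beta .
\]
So the claim reduces to the identity $\frac12 \hat\omega_s^2\wedge \beta = \frac{\iq}{8}\Omega_s\wedge\overline{\Omega}_s$.

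To prove this identity, expand $\Omega_s\wedge\overline{\Omega}_s = \diff\zeta_s\wedge\hat\Omega_s\wedge\diff\overline\zeta_s\wedge\overline{\hat\Omega}_s$. Moving the $1$-form $\diff\overline\zeta_s$ past the $2$-form $\hat\Omega_s$ costs no sign, so this equals $(\diff\zeta_s\wedge\diff\overline\zeta_s)\wedge\hat\Omega_s\wedge\overline{\hat\Omega}_s$. Then $\frac{\iq}{8}\Omega_s\wedge\overline\Omega_s = \frac14\,\beta\wedge\hat\Omega_s\wedge\overline{\hat\Omega}_s$, and it remains to show
\[
\hat\Omega_s\wedge\overline{\hat\Omega}_s = 2\hat\omega_s^2 \quad\text{modulo forms annihilated by wedging with } \beta .
\]
Since $\beta$ already saturates the horizontal directions, only the purely vertical parts of $\hat\Omega_s$ and $\hat\omega_s$ matter. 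By definition these are the restrictions of the hyperkähler triple $\hat\omega_{\iq},\hat\omega_{\jq},\hat\omega_{\kq}$ to the Eguchi--Hanson fibres. For any hyperkähler triple on a $4$-manifold one has $\hat\omega_{\jq}^2=\hat\omega_{\kq}^2=\hat\omega_{\iq}^2$ and $\hat\omega_{\jq}\wedge\hat\omega_{\kq}=0$. Therefore $\hat\Omega\wedge\overline{\hat\Omega} = \hat\omega_{\jq}^2+\hat\omega_{\kq}^2 = 2\hat\omega_{\iq}^2$.

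Combining, $\frac16(\omega^{\m{srf}}_s)^3 = \frac12 f\,\hat\omega_s^2\wedge\beta = \frac14 f\,\beta\wedge\hat\Omega_s\wedge\overline{\hat\Omega}_s = f\,\frac{\iq}{8}\Omega_s\wedge\overline\Omega_s$, which is the statement.

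The only delicate point, which I would check first, is the hyperkähler identities on the fibres. They follow from the flat model: the forms $\hat\omega_\lambda$ are restrictions of the standard triple to the $I,J,K$-invariant subspace $\mathcal{E}$, on which $g^{\m{st}}$ restricts to a quaternionic-Hermitian metric. So in an adapted orthonormal frame they take the standard form $e^{01}+e^{23}$, $e^{02}+e^{31}$, $e^{03}+e^{12}$, and the identities are immediate. Sign conventions (the orientation induced on $\mathcal{E}$ and the ordering of factors) need care, but they only affect constants already fixed in the statement.
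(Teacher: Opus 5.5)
Your proposal is correct and follows the paper's proof. The paper likewise expands the cube to get $\frac16(\omega^{\m{srf}}_s)^3 = \frac{\iq}{4}(1+\mr_s^{-2})\,\diff\zeta_s\wedge\diff\overline{\zeta}_s\wedge\hat\omega_s^2$, then applies the fibrewise quaternionic relation $\hat\omega_s^2 = \frac12\hat\Omega_s\wedge\overline{\hat\Omega}_s$; you just write out the vanishing of $\hat\omega_s^3$ and $\beta^2$ and the sign bookkeeping that the paper leaves as a ``straightforward calculation''.
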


\begin{proof}
    A straightforward calculation yields
    \begin{equation*}
        \frac{1}{6} (\omega^{\m{srf}}_s)^3 = \frac{\iq}{4} (1+ \mr_s^{-2}) \diff \zeta_s \wedge \diff \overline{\zeta}_s \wedge \hat{\omega}^2_s .
    \end{equation*}
    On the other hand, using quaternionic relations we have
    \begin{equation*}
        \hat{\omega}_s^2 = \frac{1}{2} \hat \Omega_s \wedge \overline{\hat \Omega}_s
    \end{equation*}
    whence the result follows.
\end{proof}

In particular, when $s \neq 0$, the function $\mr_s$ is bounded below away from $0$ on $\mZ_s$, and $\omega^{\m{srf}}_s$ is almost Calabi--Yau when $\mr_s \to \infty$. By construction, $\omega^{\m{srf}}_s$ is `semi Ricci-flat', in the sense that its restriction to the fibres of $\zeta_s$ are Calabi--Yau (in fact hyperkähler) metrics. This is analogous to the ansatz used by Li in \cite{li2019new}, except for the fact that he uses an ansatz which already decays faster than the naive semi Ricci-flat Kähler ansatz. In our case, the term $\mr_s^{-2}$ decays quadratically along the fibres of $\zeta_s$ but only like the inverse of the distance in the horizontal directions (cf. \S\ref{subsec: The distance function}), which is slower than the ansatz of Li. Improving our ansatz will require a substantial amount of work, to be carried out in Section \ref{sec: Asymptotic solutions of the MA equation}; the expressions that we will obtain for the correction of $\omega^{\m{srf}}_s$ to an approximate solution of the Monge--Amp\`ere equation with error term decaying faster than quadratically should make it clear that there is no simple way to improve the semi Ricci-flat ansatz at this stage.

When $s$ becomes very small, there is an additional difficulty since $\mr_s^2$ is only bounded below by $2s$, and therefore the factor $\mr_s^{-2}$ blows up in a compact region of $\mZ_s$ when $s \to 0$. Thus we will also need to modify our ansatz near the locus where $\mr_s^{-2}$ is very large in order to understand the limit $s \to 0$. This issue will be addressed in \S\ref{subsec: Interpolation asymptotic regimes}.

    \subsection{Equivariant aspects}    \label{subsec: Equivariant aspects}

Let us identify $\C^4$ with the space of $2 \times 2$ matrices $\mathfrak{gl}(2,\C)$ by defining for any tuple $\underline{z} = (z_1,z_2,z_3,z_4)$ the matrix
\begin{equation}
    Z(\underline{z}) = \begin{pmatrix} z_1 & -z_4 \\ z_3 & z_2 \end{pmatrix} .
\end{equation}
Under this identification, the maps $h^{\m{st}}$ and $\zeta^{\m{st}}$ are given by
\begin{equation*}
    h^{\m{st}}(\underline{z}) = \frac{1}{2}(|Z_2(\underline{z})|^2 - |Z_1(\underline{z})|^2), ~~ \zeta^{\m{st}}(\underline{z}) = \iq \det(Z(\underline{z}))
\end{equation*}
where we denote by $Z_1(\underline{z}), Z_2(\underline{z})$ the columns of the matrix $Z(\underline{z})$. The matrix representation also allows us to define an action of $\Un(2)$ on $\C^4$ by
\begin{equation}
    Z(A \cdot \underline{z}) = A Z(\underline{z}), ~~~~ \forall A \in \Un(2), \forall \underline{z} \in \C^4 .
\end{equation}
It satisfies the following properties:

\begin{lem}
    The action $\Un(2)$ on $\C^4$ satisfies the following properties:
    \begin{enumerate}[(i)]
        \item The action of $\Un(2)$ commutes with the action of $\Un(1)$.
        \item $h^{\m{st}} \circ A = h^{\m{st}}$, for any $A  \in \Un(2)$.
        \item $\zeta^{\m{st}} \circ A = \det(A) \cdot \zeta^{\m{st}}$, for any $A  \in \Un(2)$. 
        \item The action of $\Un(2)$ preserves the complex structure $I$ and the metric $g^{\m{st}}$.
        \item The action of $\SU(2) \subset \Un(2)$ preserves the full quaternionic structure of $\C^4 \cong \H^2$.
    \end{enumerate}
\end{lem}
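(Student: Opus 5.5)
The proof will be a direct verification in the matrix model $\underline{z} \mapsto Z(\underline{z})$. The one structural input is that the quaternionic structure is itself visible in this model. The identification $\C^4 \cong \H^2$ with $q_1 = z_1 + z_2\jq$, $q_2 = z_3 + z_4\jq$ means that the $\Un(1)$-action $e^{\iq\theta}$ sends $(z_1,z_2,z_3,z_4)$ to $(e^{\iq\theta}z_1, e^{-\iq\theta}z_2, e^{\iq\theta}z_3, e^{-\iq\theta}z_4)$. In matrix terms this reads
\begin{equation*}
    Z \mapsto Z \begin{pmatrix} e^{\iq\theta} & 0 \\ 0 & e^{-\iq\theta}\end{pmatrix},
\end{equation*}
i.e.\ right multiplication by a diagonal matrix. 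Since left and right matrix multiplication commute, (i) follows immediately.

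For (ii) and (iii) I will use the column description already recorded: $h^{\m{st}} = \tfrac12(|Z_2|^2 - |Z_1|^2)$ and $\zeta^{\m{st}} = \iq \det Z$. Left multiplication by $A \in \Un(2)$ sends each column $Z_j$ to $AZ_j$, and $|AZ_j| = |Z_j|$, which gives (ii). The identity $\det(AZ) = \det A \det Z$ gives (iii).

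For (iv), the action is $\C$-linear in $\underline{z}$, since the entries of $AZ$ are $\C$-linear combinations of the $z_i$, so it commutes with $I$. The standard metric is $\Re\sum |dz_i|^2 = \Re \operatorname{tr}(\diff Z\, \diff Z^*)$, where $|Z|^2 = \operatorname{tr}(ZZ^*)$ is the sum of the squared moduli of the entries, and this trace expression is invariant under $Z \mapsto AZ$ because $A^*A = 1$. So the metric is preserved as well.

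For (v), the main point is to express $J$ in the matrix model. Left multiplication by $\jq$ on $q = z + w\jq$ gives $\jq z + \jq w \jq = \bar z \jq - \bar w$, so $(z,w) \mapsto (-\bar w, \bar z)$. Applied to $q_1$ and $q_2$, this sends $(z_1,z_2,z_3,z_4)$ to $(-\bar z_2, \bar z_1, -\bar z_4, \bar z_3)$. I will then compute what this does to $Z = \begin{pmatrix} z_1 & -z_4 \\ z_3 & z_2\end{pmatrix}$, expecting a formula of the form $J(Z) = \epsilon \bar Z \epsilon'$ with $\epsilon, \epsilon'$ constant matrices. Concretely, the image matrix is $\begin{pmatrix} -\bar z_2 & -\bar z_3 \\ -\bar z_4 & \bar z_1\end{pmatrix}$. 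Comparing with $\bar Z$ and the standard $\mathrm{adj}/\epsilon$ manipulations, I expect $J(Z) = -\epsilon \bar Z \epsilon^{-1}$ or something similar, with $\epsilon = \begin{pmatrix}0&1\\-1&0\end{pmatrix}$ acting on the left, possibly combined with a right factor. Getting the exact expression is the step that needs care.

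Once $J$ is written this way, the left $\SU(2)$-action commutes with $J$ precisely because $\epsilon \bar A = A \epsilon$ for $A \in \SU(2)$. This identity fails for general $A \in \Un(2)$, where one only gets a factor $\det A$, consistent with (iii). Then $K = IJ$ is preserved too, and together with (iv) this shows that the full hyperkähler structure is preserved.

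An alternative route for (v) avoids the computation: left $\SU(2)$ acting on columns should correspond to right multiplication by a unit quaternion on $(q_1,q_2)$ viewed through a suitable change of coordinates. Since right quaternionic multiplication commutes with the left $I,J,K$, this would give (v) at once. The main obstacle in either route is the bookkeeping of conjugations and signs in identifying $J$ in matrix form.
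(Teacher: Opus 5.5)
Your proposal is correct and follows the same route as the paper. The paper dismisses (i)--(iv) as obvious and proves (v) by writing $J$ on $\mathfrak{gl}(2,\C)$ as $Z \mapsto \begin{pmatrix} -\bar z_2 & -\bar z_3 \\ -\bar z_4 & \bar z_1 \end{pmatrix}$, then checking $A\,J(Z) = J(AZ)$ for $A \in \SU(2)$ by direct multiplication; your identity $\epsilon \bar A = A\epsilon$ is a cleaner packaging of that same check.

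To close the step you left open, the image matrix you computed equals $-\epsilon\,\bar Z\,P$ with $P = \begin{pmatrix} 0 & 1 \\ 1 & 0 \end{pmatrix}$. Your tentative guess $-\epsilon \bar Z \epsilon^{-1}$ gets the signs of the second column wrong. Since only the left factor matters for commuting with $Z \mapsto AZ$, you get $J(AZ) = -\epsilon\bar A \bar Z P = -A\epsilon\bar Z P = A\,J(Z)$, and the argument goes through.
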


\begin{proof}
    The first four properties are obvious, so only point (v) requires a justification since the action of $\SU(2) \subset \Un(2)$ on $\H^2$ is not equivalent to the diagonal action under the identification $\SU(2) \cong \mathrm{Sp}(1)$. Since the action of $\Un(2)$ leaves $I$ invariant, it is enough to check that the action of $\SU(2)$ leaves $J$ invariant. The complex structure $J$ acts on $\mathfrak{gl}(2,\C)$ via the real-linear transformation
    \begin{equation*}
        J \begin{pmatrix} z_1 & - z_4 \\ z_3 & z_2\end{pmatrix} = \begin{pmatrix} - \overline{z}_2 & - \overline{z}_3  \\ -\overline{z}_4 & \overline{z}_1\end{pmatrix} \cdot
    \end{equation*}
    Thus if $\alpha, \beta \in \C$ and $|\alpha|^2+|\beta|^2 = 1$ we have
    \begin{align*}
        \begin{pmatrix}\alpha & \beta \\ - \overline{\beta} & \overline{\alpha} \end{pmatrix} J \begin{pmatrix} z_1 & - z_4 \\ z_3 & z_2 \end{pmatrix} & = \begin{pmatrix} -\alpha \overline{z}_2 - \beta \overline{z}_4 & -\alpha \overline{z}_3 + \beta \overline{z}_1 \\ \overline{\beta} \overline{z}_2 - \overline{\alpha} \overline{z}_4 &  \overline{\beta} \overline{z}_3 + \overline{\alpha} \overline{z}_1 \end{pmatrix} , \\
        J \begin{pmatrix}\alpha & \beta \\ - \overline{\beta} & \overline{\alpha} \end{pmatrix} \begin{pmatrix} z_1 & -z_4 \\ z_3 & z_2 \end{pmatrix} & =  J \begin{pmatrix} \alpha z_1 + \beta z_3 & -\alpha z_4 + \beta z_2 \\ -\overline{\beta}z_1 + \overline{\alpha} z_3 & \overline{\beta}z_4 + \overline{\alpha}z_2\end{pmatrix} \\
            & = \begin{pmatrix} -\alpha \overline{z}_2 - \beta \overline{z}_4 & -\alpha \overline{z}_3 + \beta \overline{z}_1 \\ \overline{\beta} \overline{z}_2 - \overline{\alpha} \overline{z}_4 &  \overline{\beta} \overline{z}_3 + \overline{\alpha} \overline{z}_1 \end{pmatrix}
    \end{align*}
    which proves point (v).
\end{proof}

From the commutation of the actions of $\Un(1)$ and $\Un(2)$, it follows that there is an induced action of $\Un(2)$ on $\M$, which makes $\mu = h \times \zeta \colon \M \rightarrow \R \times \C$ an equivariant fibration for the trivial action of $\Un(2)$ on $\R$ and the action of $\Un(2)$ on $\C$ given by the determinant. By point (iv) in the previous lemma, it induces by restriction an action of $\Un(2)$ on $\mZ_s$ by biholomorphisms for any $s \in \R$, with respect to which the holomorphic fibration $\zeta_s$ is equivariant. Moreover, the splitting $T\M = \mathcal{E} \oplus \mathcal{H}$ is equivariant under the action of $\Un(2)$. We list a few additional properties of this action:

\begin{lem}     \label{lem: Equivariance relations for rhos}
    For any $s \in \R$ and $A \in \Un(2)$,
    \begin{enumerate}[(i)]
        \item $\zeta_s \circ A = \det(A) \cdot \zeta_s$,
        \item $A^* \omega^{\m{srf}}_s = \omega^{\m{srf}}_s$,
        \item $A^* \Omega_s = \det(A)^2 \cdot \Omega_s$.
    \end{enumerate}
\end{lem}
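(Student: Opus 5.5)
The plan is to verify each identity upstairs on $\P_s \subset \C^4$, where the $\Un(2)$-action is linear and explicit, and then descend to the quotient $\mZ_s = \P_s/\Un(1)$, using that the two actions commute. Point (i) is immediate: by the preceding lemma, $\zeta^{\m{st}}\circ A = \det(A)\,\zeta^{\m{st}}$ on $\C^4$, since $\zeta^{\m{st}} = \iq \det Z$ and $\det(AZ) = \det(A)\det(Z)$. Both sides are $\Un(1)$-invariant, so the identity descends to $\zeta\circ A = \det(A)\,\zeta$ on $\M$. Since $A$ preserves $h$, it maps $\mZ_s$ to itself, and restricting gives (i).

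For (ii), the first step is to show $A^*\omega^{\m{red}}_s = \omega^{\m{red}}_s$. The action of $A$ on $\C^4$ preserves $g^{\m{st}}$ and $I$, hence preserves $\omega^{\m{st}}$. It commutes with $\Un(1)$, so it preserves $\xi$, and it preserves $h^{\m{st}}$. Therefore it maps $\P_s$ to itself and preserves the orthogonal complement of $\xi$ in $T\P_s$, which is the subspace on which $\omega^{\m{red}}_s$ is computed by restriction of $\omega^{\m{st}}$. By (i),
\begin{equation*}
    A^*(\diff\zeta_s\wedge\diff\overline{\zeta}_s) = |\det A|^2\,\diff\zeta_s\wedge\diff\overline{\zeta}_s = \diff\zeta_s\wedge\diff\overline{\zeta}_s,
\end{equation*}
since $|\det A| = 1$. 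Adding these gives (ii) from \eqref{eq: SKRF ansatz}.

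For (iii), write $\Omega_s = \diff\zeta_s\wedge\hat\Omega_s$. By (i), $A^*\diff\zeta_s = \det(A)\,\diff\zeta_s$, so it remains to show $A^*\hat\Omega_s = \det(A)\,\hat\Omega_s$. I would get this from the horizontal lift computed in the proof that $\Omega_s$ is holomorphic: $\hat\Omega_s$ lifts to $\omega^{\m{st}}_2 + \iq\omega^{\m{st}}_3 + \iq\mathfrak{a}\wedge\diff\zeta^{\m{st}}$. The connection form $\mathfrak{a}$ is $\Un(2)$-invariant, because $A$ preserves $g^{\m{st}}$ and $\xi$. The term $\diff\zeta^{\m{st}}$ scales by $\det A$.

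So the key claim is $A^*(\omega^{\m{st}}_2 + \iq\omega^{\m{st}}_3) = \det(A)\,(\omega^{\m{st}}_2 + \iq\omega^{\m{st}}_3)$ on $\C^4$. In complex coordinates, $\omega^{\m{st}}_2 + \iq\omega^{\m{st}}_3$ is the standard $I$-holomorphic symplectic form $\pm(\diff z_1\wedge\diff z_2 + \diff z_3\wedge\diff z_4)$; the sign and constant depend on the paper's conventions. Under the matrix identification this is $\pm\tfrac12\,\diff(\det Z)$ polarised, i.e. $\diff Z_{11}\wedge\diff Z_{22} - \diff Z_{21}\wedge\diff Z_{12}$. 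Equivalently, it is $\det$ applied to the pair of columns, $\diff Z_1 \wedge \diff Z_2$ paired via $\det$. Left multiplication $Z\mapsto AZ$ acts on each column by $A$, and the column pairing $(u,v)\mapsto\det(u,v)$ scales by $\det A$. This gives the claim.

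As a consistency check, the claim follows from two facts. First, $\SU(2)$ preserves $J$ and $K$ by point (v) of the previous lemma. Second, the scalar subgroup $e^{\iq\phi}\mathrm{Id}$ multiplies a $(2,0)$-form by $e^{2\iq\phi} = \det$; since $\Un(2) = \SU(2)\cdot\Un(1)$, this recovers the same result. Combining, $A^*\Omega_s = \det(A)^2\,\Omega_s$.

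The main obstacle is the bookkeeping in (iii). One has to confirm that the vertical projection used to define $\hat\Omega$ commutes with $A$; this holds because $A$ preserves the splitting $\mathcal{V}\oplus\mathcal{E}\oplus\mathcal{H}$, as noted before the statement. One also has to pin down the coordinate expression of $\omega^{\m{st}}_2 + \iq\omega^{\m{st}}_3$. I would use the $\SU(2)\cdot$scalars decomposition to avoid the sign conventions.
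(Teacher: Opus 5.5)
Your proof is correct. Parts (i) and (ii) follow the paper's argument: equivariance of $\zeta^{\m{st}} = \iq\det Z$, invariance of $\omega^{\m{st}}$ and $h^{\m{st}}$ passing through the Kähler reduction, and $|\det A| = 1$.

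In (iii) your main computation takes a somewhat different route. You identify $\omega^{\m{st}}_2 + \iq\omega^{\m{st}}_3$, which is up to normalisation $\diff z_1\wedge\diff z_2+\diff z_3\wedge\diff z_4$, with the column pairing $\det(\diff Z_1,\diff Z_2)$. Under $Z\mapsto AZ$ this picks up a factor $\det(A)$ for every $A\in\Un(2)$ at once. You then descend to $\mZ_s$ using the lifted expression $\omega^{\m{st}}_2+\iq\omega^{\m{st}}_3+\iq\mathfrak{a}\wedge\diff\zeta^{\m{st}}$ for $\hat\Omega_s$ and the $\Un(2)$-invariance of $\mathfrak{a}$.

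The paper instead uses exactly what you call a consistency check. Since $\SU(2)$ preserves $\zeta_s$ and the whole hyperkähler structure (point (v) of the preceding lemma), it suffices to treat $e^{\iq\theta}\mathrm{Id}$, which multiplies $\diff z_1\wedge\diff z_2+\diff z_3\wedge\diff z_4$ by $e^{2\iq\theta}$.

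The paper's route is shorter because it reuses (v). Yours does not need (v), and it makes explicit the passage from $\H^2$ to $\mZ_s$, which the paper leaves implicit.
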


\begin{proof}
    The first two properties are a direct consequence of the previous lemma. For the last point, since $\SU(2)$ preserves $\zeta_s$ and the whole hyperkähler structure of $\H^2$, it is enough to check that for $\theta \in \R$, $(e^{\iq \theta} \mathrm{Id})^* \hat \Omega_s = e^{2 \iq \theta} \hat \Omega_s$. But this is clear since on $\H^2$ we can write $\omega_{\jq} + \iq \omega_{\kq} = \diff z_1 \wedge \diff z_2 + \diff z_3 \wedge \diff z_4$ and thus $(e^{\iq \theta} \mathrm{Id})^*(\omega_{\jq} + \iq \omega_{\kq}) = e^{2\iq \theta} (\omega_{\jq} + \iq \omega_{\kq})$ for any $\theta \in \R$.
\end{proof}

\begin{rem}     \label{rem: group action}
    The action of $\Un(2)$ on $\M$ is not faithful; it is easy to check that its kernel is $\{\pm \mathrm{Id}\}$, so that there is an induced action $\Un(1) \times \SO(3) \cong \Un(2) / \{\pm \mathrm{Id}\}$.
\end{rem}

    \subsection{Relation to Atiyah's flop}      \label{subsec: Relation to the flop}

Let $\mZ \subset \C^4$ be the subvariety defined as
\begin{equation}
    \mZ \coloneqq \{(w_{11},w_{12},w_{21},w_{22}) \in \C^4 \mid w_{11} w_{22} - w_{21} w_{12} = 0 \} .
\end{equation}
It is well-known that under a linear transformation of $\C^4$, it can be identified with the nodal cone $\{w_1^2 + w_2^2 + w_3^2 + w_4^2 = 0\}$. An interesting feature of $\mZ$ is that it admits two distinct small crepant resolutions $\mZ_+$ and $\mZ_-$. In order to describe them, we identify $\C^4$ with $\mathfrak{gl}(2,\C)$ using the matrices
\begin{equation*}
    W(\underline{w}) = \begin{pmatrix} w_{11} & w_{12} \\ w_{21} & w_{22} \end{pmatrix} \cdot
\end{equation*}
Then we can regard $\mZ$ as the variety $\mZ = \{ W \in \mathfrak{gl}(2;\C) \mid \det(W) = 0\}$, whose smooth locus $\mZ^{\mathrm{reg}} = \mZ \setminus \{0\}$ is the set of rank-one matrices. Let us define the following two subvarieties of $\mathfrak{gl}(2,\C) \times \mathbb{P}^1$
\begin{equation}        \label{eq: def of small resolutions}
    \begin{aligned}
        \mZ_+ & \coloneqq \{ (W,\Pi) \in \mathfrak{gl}(2,\C) \times \mathbb{P}^1 \mid \det(W) = 0, ~\Pi \subset \ker(W) \} , \\
        \mZ_- & \coloneqq \{ (W,\Pi) \in \mathfrak{gl}(2,\C) \times \mathbb{P}^1 \mid \det(W) = 0, ~\mathrm{im}(W) \subset \Pi \} .
    \end{aligned}
\end{equation}
The subvarieties $\mZ_\pm \subset \mathfrak{gl}(2,\C) \times \mathbb{P}^1$ are in fact smooth and biholomorphic to the total space of the holomorphic vector bundle $\mathcal{O}_{\mathbb{P}^1}(-1) \oplus \mathcal{O}_{\mathbb{P}^1}(-1)$ over $\mathbb{P}^1$, where the bundle projections $p_\pm \colon \mZ_\pm \to \mathbb{P}^1$ are given by the projections onto the second component. Moreover, the projections onto $\mathfrak{gl}(2,\C)$ define small crepant resolutions $\pi_\pm \colon \mZ_\pm \to \mZ$; in particular $\pi_\pm^{-1}(0) = \mC_\pm$ are embedded rational curves. Note in addition that the action of $\Un(2)$ on $\mZ$ by $A \cdot W = AWA^T$ lifts uniquely to $\mZ_+$ and $\mZ_-$ in a way that makes the resolution maps $\pi_+$ and $\pi_-$ equivariant. However, $(\mZ_+,\pi_+)$ and $(\mZ_-,\pi_-)$ are distinct resolutions of $\mZ$; the induced birational map $\mZ_+ \dashrightarrow \mZ_-$ is a \emph{flop}, first discovered by Atiyah \cite{atiyah1958analytic}. 

Consider now the map $p \colon \C^4 \to \mZ$ defined as
\begin{equation*}
    p(\underline{z}) \coloneqq Z_1(\underline{z}) Z_2(\underline{z})^T = \begin{pmatrix} -z_1z_4 & z_1 z_2 \\ -z_3 z_4 &  z_3 z_2 \end{pmatrix} \cdot
\end{equation*}
This map is manifestly holomorphic and invariant under the action of $\Un(1)$, and therefore it descends to a smooth map $\pi \colon \M \to \mZ$ whose restrictions $\pi_s \colon \mZ_s \to \mZ$ to the fibres of $h$ are holomorphic for all $s \in \R$. It is moreover clear that the map $\pi$ is equivariant under the action of $\Un(2)$. 

\begin{prop}
\label{prop:identification-hyperkahler-quotients}
    The holomorphic maps $\pi_s \colon \mZ_s \to \mZ$ satisfy:
    \begin{enumerate}[(i)]
        \item $\pi_0 \colon \mZ_0 \to \mZ$ defines a biholomorphism of $\mZ_0$ onto the smooth locus $\mZ^{\mathrm{reg}} = \mZ \setminus \{0\}$.

        \item If $s > 0$, $\pi_s \colon \mZ_s \to \mZ$ lifts uniquely to a biholomorphism $\pi^+_s \colon \mZ_s \to \mZ_+$. Moreover, the line bundle $\mathcal{L}_s = \P_s \times_{\Un(1)} \C \cong (\pi^+_s)^* \mathcal{L}_+$ where $\mathcal{L}_+ = p_+^*\mathcal{O}_{\mathbb{P}^1}(1)$, and under this identification $\mathfrak{a}_s$ is the Chern connection of the metric $H_s = (s+\sqrt{R^2+s^2}) H_{\mathrm{can}}$ where $H_{\mathrm{can}} = H_{\mathrm{taut}}^{\vee}$ is the canonical metric of the line bundle $\mathcal{O}_{\mathbb{P}^1}(1) \cong \mathcal{O}_{\mathbb{P}^1}(-1)^\vee$ over $\mathbb{P}^1$ and $R^2 = |W|^2$ for $W \in \mathfrak{gl}(2,\C)$.

        \item If $s < 0$, $\pi_s \colon \mZ_s \to \mZ$ lifts uniquely to a biholomorphism $\pi^-_s \colon \mZ_s \to \mZ_-$. Moreover, the line bundle $\mathcal{L}_s = \P_s \times_{\Un(1)} \C \cong (\pi^-_s)^* \mathcal{L}_-$ where $\mathcal{L}_- = p_-^* \mathcal{O}_{\mathbb{P}^1}(-1)$, and under this identification $\mathfrak{a}_s$ is the Chern connection of the metric $H_s = (-s+\sqrt{R^2+s^2})^{-1}H_{\m{taut}}$.
    \end{enumerate}
\end{prop}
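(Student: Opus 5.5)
We will work directly with the explicit description $p(\underline{z}) = Z_1 Z_2^T$ on the level sets $\P_s = (h^{\m{st}})^{-1}(s)\setminus\{0\}$ and reduce everything to linear algebra on the two columns $Z_1 = (z_1,z_3)^T$, $Z_2 = (-z_4,z_2)^T$. Here $h^{\m{st}} = \frac12(|Z_2|^2-|Z_1|^2)$, and the $\Un(1)$-action multiplies $Z_1$ by $e^{\iq\theta}$ and $Z_2$ by $e^{-\iq\theta}$.

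For (i), we first check that $\pi_0$ maps into $\mZ^{\m{reg}}$. On $\P_0$ we have $|Z_1| = |Z_2| \neq 0$, so $Z_1Z_2^T$ has rank one. For surjectivity, any rank-one $W$ factors as $W = uv^T$ with $u,v \neq 0$. After rescaling $u \mapsto tu$, $v \mapsto t^{-1}v$ with $t>0$ we may take $|u| = |v|$, and this normalises $h^{\m{st}}=0$. For injectivity, the remaining ambiguity in the factorisation is $u \mapsto \lambda u$, $v\mapsto \lambda^{-1} v$ with $\lambda \in \C^*$, and the constraint $|u|=|v|$ forces $|\lambda|=1$, which is exactly the $\Un(1)$-action. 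So $\pi_0$ is a holomorphic bijection onto the complex manifold $\mZ^{\m{reg}}$, hence a biholomorphism. Alternatively, one can check that $\diff\pi_0$ is injective: the differential of $(u,v)\mapsto uv^T$ has kernel $\{(\lambda u,-\lambda v)\}$, which is transverse to $T\P_0$ modulo $\xi$, since the real part of $\lambda$ changes $h$.

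For (ii), take $s>0$. On $\P_s$ we have $|Z_2|^2 = |Z_1|^2 + 2s > 0$, so $Z_2 \neq 0$ everywhere. Since $p(\underline{z}) = Z_1 Z_2^T$, we have $\ker(W) \supset Z_2^\perp$, where $Z_2^\perp$ is the annihilator of $Z_2$ under the bilinear pairing. We define the lift as $\pi_s^+([\underline{z}]) = (Z_1Z_2^T, [\text{line annihilated by } Z_2^T])$; equivalently, record the line $[Z_2]\in\mathbb{P}^1$, identifying $\mathbb{P}^1$ with its dual. This lift is holomorphic and $\Un(1)$-invariant, and it is unique because $\mZ_+\to\mZ$ is an isomorphism off $\mC_+$ and $\mZ_s\setminus\pi_s^{-1}(0)$ is dense. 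Bijectivity is proved as in (i). Given $(W,\Pi)\in\mZ_+$, $\Pi$ determines $Z_2$ up to $\C^*$, and $W = Z_1Z_2^T$ determines $Z_1$ once $Z_2$ is fixed; this works also when $W=0$, where $Z_1=0$. The condition $|Z_2|^2-|Z_1|^2 = 2s$ then fixes $|Z_2|$ uniquely, since with $R = |W| = |Z_1||Z_2|$ we get the equation $x - R^2/x = 2s$ for $x=|Z_2|^2$, whose positive root is $x = s+\sqrt{R^2+s^2}$. The phase ambiguity is again the $\Un(1)$-orbit. Holomorphic bijectivity then gives a biholomorphism.

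For the line bundle in (ii), note that $Z_2\in\C^2\setminus\{0\}$ transforms by weight $-1$. The assignment $[\underline z, c]\mapsto c\cdot Z_2$ identifies $\mathcal{L}_s$ either with the pullback of the tautological bundle or with its dual, depending on the weight convention, and we choose the convention that gives $\mathcal{O}(1)$ as stated. The connection $\mathfrak{a}_s$ comes from the metric $g^{\m{st}}$, so it is the Chern connection of the hermitian metric for which $\P_s$ is the unit circle bundle; this follows from the remark above, since the horizontal distribution is the $g^{\m{st}}$-orthogonal complement of $\xi$. The corresponding metric is given by $|Z_2|^2$, measured against the canonical metric on $\mathcal{O}(\pm1)$. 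Substituting $|Z_2|^2 = s+\sqrt{R^2+s^2}$ gives $H_s = (s+\sqrt{R^2+s^2})H_{\m{can}}$.

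Case (iii) is symmetric, with $Z_1$ and $Z_2$ exchanged. Now $|Z_1|^2 = -s+\sqrt{R^2+s^2}$, the line $[Z_1]$ equals $\mathrm{im}(W)$ when $W\neq0$, and $Z_1$ has weight $+1$. This produces $\mathcal{O}(-1)$ with metric $(-s+\sqrt{R^2+s^2})^{-1}H_{\m{taut}}$, where the inverse arises from dualising. The main obstacle is bookkeeping rather than conceptual: checking the sign and weight conventions so that the line bundle is $\mathcal{O}(1)$ versus $\mathcal{O}(-1)$, and that the metric factor enters as stated rather than inverted. We would verify this in a single fibre using the relation $\diff^c$-computation $\tfrac14\diff^c(\mr^2)(\xi)=-s$ from Proposition \ref{prop: Properties of reduction}.
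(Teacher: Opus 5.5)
Your arguments that $\pi_0$, $\pi^+_s$ and $\pi^-_s$ are biholomorphisms follow the paper's proof, and they are fine. The ingredients are the same: the factorisation $W = Z_1Z_2^T$, normalising $|Z_1|,|Z_2|$ by $h^{\m{st}}$, lifting via the line annihilated by $Z_2^T$ (resp.\ $[Z_1]$), and $|Z_2|^2 = s+\sqrt{R^2+s^2}$. Your density argument for uniqueness of the lift is a useful addition.

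The gap is in the line-bundle half of (ii) and (iii). You write that $[\underline z,c]\mapsto cZ_2$ identifies $\mathcal{L}_s$ with the tautological bundle or its dual ``depending on the weight convention'', and then choose the convention that gives $\mathcal{O}(1)$. That convention is not yours to choose. The statement requires $\mathfrak{a}_s$ itself, not $-\mathfrak{a}_s$, to be the induced connection on $\mathcal{L}_s = \P_s\times_{\Un(1)}\C$ for both signs of $s$. This means $[\underline z\cdot e^{\iq\theta},c] = [\underline z,e^{\iq\theta}c]$. With that convention your map is not even well defined on $\mathcal{L}_s$, because $Z_2$ has weight $-1$. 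So whether one gets $\mathcal{O}(1)$ or $\mathcal{O}(-1)$, which is exactly what (ii) and (iii) assert, is assumed rather than proved.

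Your metric bookkeeping inherits the problem. The phrase ``$|Z_2|^2$ measured against the canonical metric on $\mathcal{O}(\pm1)$'' is only correct on the $\mathcal{O}(1)$ side; against $H_{\m{taut}}$ the factor is $|Z_2|^{-2}$. In (iii), the inverse in $(-s+\sqrt{R^2+s^2})^{-1}H_{\m{taut}}$ does not come from dualising.

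The missing step is a direct invariance check, which is what the paper does.
\begin{itemize}
\item For $s>0$, the map $(\underline z,\Lambda)\mapsto\Lambda Z_2^\perp$, with $Z_2^\perp = (z_2,z_4)^T$ of weight $-1$, is invariant under $(\underline z,\Lambda)\cdot e^{\iq\theta} = (\underline z\cdot e^{-\iq\theta},e^{-\iq\theta}\Lambda)$. This is the action defining the \emph{dual} bundle. Hence $\mathcal{L}_s^\vee\cong(\pi^+_s)^*p_+^*\mathcal{O}_{\mathbb{P}^1}(-1)$, i.e.\ $\mathcal{L}_s\cong(\pi^+_s)^*\mathcal{L}_+$.
\item The unit vectors $\Lambda = 1$ are sent to $Z_2^\perp$, which has $H_{\m{taut}}$-length $|Z_2|$. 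So $-\mathfrak{a}_s$ is the Chern connection of $|Z_2|^{-2}H_{\m{taut}}$ on $\mathcal{L}_s^\vee$, and dualising gives $H_s = |Z_2|^2H_{\m{can}} = (s+\sqrt{R^2+s^2})H_{\m{can}}$.
\item For $s<0$, the map $(\underline z,\Lambda)\mapsto\Lambda Z_1$ is invariant under the standard action $(\underline z\cdot e^{\iq\theta},e^{-\iq\theta}\Lambda)$, since $Z_1$ has weight $+1$. So $\mathcal{L}_s$ itself is the tautological pullback, and $H_s = |Z_1|^{-2}H_{\m{taut}}$. The inverse appears because unit vectors have $H_{\m{taut}}$-length $|Z_1|$.
\end{itemize}

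The $\diff^c$-identity you mention could also be turned into a sign check. Integrating point (iii) of Proposition \ref{prop: Properties of reduction} over the exceptional curve gives $s\int_{\mC_s}\iq\Theta_s = \int_{\mC_s}\omega^{\m{red}}_s>0$. However, reading off $c_1(\mathcal{L}_s)$ from $\Theta_s$ again requires the convention fixed above, and you do not carry this out.
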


\begin{proof}
    For point (i): if $W \in \mZ \setminus \{0\}$ is a rank-one matrix, there exist two column vectors $Z_1,Z_2$ such that $W = Z_1 Z_2^T$; moreover $Z_1$ and $Z_2$ are unique up to multiplying $Z_1$ by $\Lambda$ and $Z_2$ by $\Lambda^{-1}$ for some $\Lambda \in \C^*$. If moreover we specify $|Z_1| = |Z_2|$, then the vectors are unique up to multiplying $Z_1$ by $e^{\iq \theta}$ and $Z_2$ by $e^{-\iq \theta}$ for some $\theta \in \R$. Since the real moment map is given by $h^{\m{st}}(\underline{z}) = \frac{1}{2}(|Z_2|^2-|Z_1|^2)$, this proves that $\pi_0$ defines a bijection of $\mZ_0$ onto $\mZ \setminus \{0\}$. Moreover, the inverse $\pi_0^{-1}$ is smooth since $\ker(W)$ and $\mathrm{im}(W)$ vary smoothly with $W \in \mZ \setminus \{0\}$.   

    For point (ii): since $h^{\m{st}}(\underline{z}) = \frac{1}{2}(|Z_2|^2-|Z_1|^2)$, the column vector $Z_2(\underline{z})$ does not vanish on $\P_s = (h^{\m{st}})^{-1}(s)$ when $s > 0$. Thus $\pi_s$ lifts to the holomorphic map $\pi^+_s \colon \mZ_s \to \mZ_+$ defined as
    \begin{equation*}
        \pi_s^+([\underline{z}]) \coloneqq (Z_1(\underline{z})Z_2(\underline{z})^T, [Z_2(\underline{z})^\perp]) \in \mZ \times \mathbb{P}^1, ~~ \text{where} ~ Z_2(\underline{z})^\perp = \begin{pmatrix} z_2 \\ z_4 \end{pmatrix} \in \ker(Z_1Z_2^T) .
    \end{equation*}
    As for point (i), it is clear that $\pi^+_s$ is bijective and has a smooth inverse, so that it is a biholomorphism. In order to prove the second part of the assertion, we define an augmentation of the previous map:
    \begin{equation*}
        \P_s \times \C \to \mathfrak{gl}(2,\C) \times \mathcal{O}_{\mathbb{P}^1}(-1), ~~ (\underline{z},\Lambda) \mapsto  (Z_1(\underline{z})Z_2(\underline{z})^T, [Z_2(\underline{z})^\perp], \Lambda Z_2(\underline{z})^\perp) .
    \end{equation*}
    This map is invariant under the right action of $\Un(1)$ on $\P_s \times \C$ defined as $(\underline{z},\Lambda) \cdot e^{\iq \theta} = (\underline{z} \cdot e^{-\iq\theta}, e^{- \iq \theta} \Lambda)$, and thus induces an identification $\mathcal{L}_s^\vee \cong (\pi_s^+)^* p_+^* \mathcal{O}_{\mathbb{P}^1}(-1)$. Moreover, $-\mathfrak{a}_s$ (seen as a connection form on $\mathcal{L}_s^\vee$) is the Chern connection of the bundle metric $H_s$ such that $\mathcal{P}_s \subset \mathcal{L}_s^\vee$ is the bundle of unit vectors. Thus $H_s = |Z_2|^{-2} H_{\mathrm{taut}}$. To express $|Z_2|^2$ in terms of $R^2 = |W|^2 = |Z_1 Z_2^T|^2$, note that $R^2 = |Z_1|^2|Z_2|^2 = (|Z_2|^2-2s)|Z_2|^2$, and when $s > 0$ the only positive solution is $|Z_2|^2 = s + \sqrt{R^2+s^2}$. By duality, it follows that $\mathfrak{a}_s$ is the Chern connection of $H_s^\vee = (s+\sqrt{R^2+s^2})H_{\mathrm{can}}$.

    For point (iii): when $h^{\m{st}}(\underline{z}) = s < 0$, the vector $Z_1(\underline{z})$ does not vanish, and thus $\pi_s$ lifts to the smooth map $\pi_s^- \colon \mZ_s \to \mZ_-$ defined as
    \begin{equation*}
        \pi_s^-([\underline{z}]) \coloneqq (Z_1(\underline{z})Z_2(\underline{z})^T, [Z_1(\underline{z})]) \in \mZ \times \mathbb{P}^1, ~~ \text{where} ~ Z_1 \in \mathrm{im}(Z_1Z_2^T) .
    \end{equation*}
    As before, this defines a biholomorphism. To prove the assertion on the line bundle $\mathcal{L}_s$ we proceed as for the previous point: we define a map
    \begin{equation*}
        \P_s \times \C \to \mathfrak{gl}(2,\C) \times \mathcal{O}_{\mathbb{P}^1}(-1), ~~ (\underline{z},\Lambda) \mapsto  (Z_1(\underline{z})Z_2(\underline{z}), [Z_1(\underline{z})], \Lambda Z_1(\underline{z})) .
    \end{equation*}
    This map is invariant under the right action of $\Un(1)$ on $\P_s \times \C$ defined as $(\underline{z},\Lambda) \cdot e^{\iq \theta} = (\underline{z}\cdot e^{\iq \theta},e^{-\iq \theta} \Lambda)$ and thus induces an identification $\mathcal{L}_s \cong (\pi^-_+)^* p_-^*\mathcal{O}_{\mathbb{P}^1}(-1)$. Moreover, $\mathfrak{a}_s$ is identified with the Chern connection of the metric $H_s = |Z_1|^{-2} H_{\mathrm{taut}}$, and $R^2 = |Z_1|^2 |Z_2|^2 = |Z_1|^2(|Z_2|^2+2s)$ yields $|Z_1|^2 = -s + \sqrt{R^2+s^2}$.
\end{proof}

\begin{rem}
    From these identifications, we obtain embedded rational curves $\mC_s \subset \mZ_s$ for any $s \neq 0$, defined as $\mC_s = (\pi_s^{+})^{-1}(\mC_+)$ if $s > 0$ and $\mC_s = (\pi_s^-)^{-1}(\mC_-)$ if $s < 0$.
\end{rem}

It is well-known that $\mZ$ admits a holomorphic volume form, which can be defined on the open subset $\{w_{11} \neq 0\} \subset \mZ$ as
\begin{equation*}
    \Omega \coloneqq w_{11}^{-1} \diff w_{11} \wedge \diff w_{21} \wedge \diff w_{12} .
\end{equation*}
It is not difficult to check that $A^* \Omega = \det(A)^2 \Omega$, the same equivariance relation as $\Omega_0$ on $\mZ_0$. In particular, this means that under the identification $\pi_0 \colon \mZ_0 \to \mZ^{\mathrm{reg}}$, we have $\Omega_0 = \psi \Omega$ for some nowhere vanishing holomorphic function $\psi$ on $\mZ^{\mathrm{reg}}$, and Lemma \ref{lem: Equivariance relations for rhos} implies that $\psi$ must be invariant under the action of $\Un(2)$. It is not difficult to see that the $\Un(2)$-invariance forces $\psi = F(\mr_0^2,|\zeta_0|^2)$ to be a function of $\mr_0^2$ and $|\zeta_0|^2$ only, but since the $(0,1)$-forms $\overline{\partial}(\mr_0^2)$ and $\zeta_0 \diff \overline{\zeta}_0$ are independent on an open subset of $\mZ_0$ the equation $\overline{\partial}\psi = 0$ forces $\psi$ to be a constant. We define the volume form $\vol = \vol_0 = \frac{3}{4}\Omega_0 \wedge \overline{\Omega}_0 = \frac{3|\psi|^2}{4} \Omega \wedge \overline{\Omega}$, which agrees with $\Omega_0 \wedge \overline{\Omega}_0$ up to a positive multiplicative constant. Since $\Omega$ lifts to holomorphic volume forms $\Omega_{\pm}$ on $\mZ_\pm$, we can lift $\vol$ to volume forms $\vol_{\pm}$.

\begin{prop}       \label{prop: Explicit form of srf ansatz}
    Under the identifications $\pi_0 \colon \mZ_0 \to \mZ^{\mathrm{reg}}$ and $\pi_s \colon \mZ_s \to \mZ_{\pm}$,
    \begin{align*}
        \omega^{\m{srf}}_0 & = \iq \partial \overline{\partial} \left( R + \frac{|w_{12}-w_{21}|^2}{2} \right) , \\
        \omega^{\m{srf}}_s & = s \widetilde{\omega}^+_{\m{FS}} + \iq \partial \overline{\partial} \left(\sqrt{R^2+s^2} - s \log(s+ \sqrt{R^2+s^2}) + \frac{|w_{12}-w_{21}|^2}{2} \right), ~~ (s > 0),\\
        \omega^{\m{srf}}_{s} & = - s \widetilde{\omega}^-_{\m{FS}} + \iq \partial \overline{\partial} \left(\sqrt{R^2+s^2} + s \log(-s+ \sqrt{R^2+s^2}) + \frac{|w_{12}-w_{21}|^2}{2} \right), ~~ (s < 0).
    \end{align*}
    where $\widetilde{\omega}^\pm_{\m{FS}} = p_\pm^*\omega_{\m{FS}}$ is the pull-back of the Fubini--Study metric on $\mathbb{P}^1$, normalised so that $[\omega_{\m{FS}}] = 2\pi c_1(\mathcal{O}_{\mathbb{P}^1}(1)) \in \m{H}^2(\mathbb{P}^1)$, that is, $\omega_{\m{FS}}$ is the curvature of the metric $H_{\m{can}}$ on $\mathcal{O}_{\mathbb{P}^1}(1)$. Moreover, we can identify the volume forms:
    \begin{equation*}
        \vol_s = \vol_+ ~~ \text{(when $s > 0$)}, ~~ \text{and} ~~ \vol_s = \vol_- ~~ \text{(when $s < 0$)} .
    \end{equation*}
\end{prop}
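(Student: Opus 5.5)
The plan is to split $\omega^{\m{srf}}_s = \omega^{\m{red}}_s + \frac{\iq}{2}\diff\zeta_s\wedge\diff\overline{\zeta}_s$ and treat the two summands separately, using Proposition \ref{prop: Properties of reduction}(iii) for the reduced part and the relation between $\zeta$ and the coordinates $w_{ij}$ for the horizontal part.

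\textbf{Horizontal part.} Since $\zeta^{\m{st}} = \iq(z_1z_2+z_3z_4)$ and $p(\underline{z})$ has entries $w_{12} = z_1z_2$ and $w_{21} = -z_3z_4$, we get $\zeta_s = \iq(w_{12}-w_{21})\circ\pi_s$. As $\zeta_s$ is holomorphic,
\begin{equation*}
    \frac{\iq}{2}\diff\zeta_s\wedge\diff\overline{\zeta}_s = \iq\partial\overline{\partial}\Big(\frac{|w_{12}-w_{21}|^2}{2}\Big).
\end{equation*}

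\textbf{Reduced part.} By Proposition \ref{prop: Properties of reduction}(iii), $\omega^{\m{red}}_s = s\iq\Theta_s + \frac{\iq}{2}\partial\overline{\partial}(\mr_s^2)$, so two things must be expressed in terms of $W$.

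First, $\mr^2 = |Z_1|^2+|Z_2|^2$ with $|Z_2|^2-|Z_1|^2 = 2s$ and $|Z_1|^2|Z_2|^2 = R^2$. Hence $\mr_s^2 = 2\sqrt{R^2+s^2}$, and $\frac{\iq}{2}\partial\overline{\partial}\mr_s^2 = \iq\partial\overline{\partial}\sqrt{R^2+s^2}$. When $s = 0$ this is $\iq\partial\overline{\partial}R$.

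Second, $\Theta_s$ is the curvature of $\mathfrak{a}_s$. For $s>0$, Proposition \ref{prop:identification-hyperkahler-quotients}(ii) identifies $\mathfrak{a}_s$ with the Chern connection of $H_s = (s+\sqrt{R^2+s^2})H_{\m{can}}$ on $\mathcal{L}_+$. Its curvature is $\overline{\partial}\partial\log$ of the metric, so
\begin{equation*}
    \iq\Theta_s = \widetilde{\omega}^+_{\m{FS}} - \iq\partial\overline{\partial}\log(s+\sqrt{R^2+s^2}).
\end{equation*}
Multiplying by $s$ gives the stated term $s\widetilde{\omega}^+_{\m{FS}} - \iq\partial\overline{\partial}\big(s\log(s+\sqrt{R^2+s^2})\big)$.

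For $s<0$ we use $H_s = (-s+\sqrt{R^2+s^2})^{-1}H_{\m{taut}}$ on $\mathcal{L}_-$. The curvature of $H_{\m{taut}}$ on $\mathcal{O}(-1)$ is $-\omega_{\m{FS}}$, so $\iq\Theta_s = -\widetilde{\omega}^-_{\m{FS}} + \iq\partial\overline{\partial}\log(-s+\sqrt{R^2+s^2})$. Multiplying by $s$ gives the stated formula.

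The main obstacle is bookkeeping of signs and normalisations: the convention $\Theta = \overline{\partial}\partial\log|\cdot|^2_H$ for the Chern curvature, the factor $\iq$ since $\mathfrak{a}$ is $\iq\R$-valued, and the dualisation $\mathcal{L}_s^\vee$ used in case (ii). I would check these once against the $s>0$ case restricted to $\mC_+$. There $R = 0$ and $H_s = 2sH_{\m{can}}$, so $\omega^{\m{srf}}_s|_{\mC_+} = s\omega_{\m{FS}}$, which is positive, consistent with Kählerness. Then I would propagate the same conventions to $s<0$.

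\textbf{Volume forms.} It remains to show $\vol_s = \vol_\pm$, where $\vol_s$ presumably denotes $\frac34\Omega_s\wedge\overline{\Omega}_s$.
\begin{enumerate}[(i)]
\item Define $\psi_s$ by $\Omega_s = \psi_s\,(\pi_s^\pm)^*\Omega_\pm$.
\item Both forms transform by $\det(A)^2$ under $\Un(2)$ (Lemma \ref{lem: Equivariance relations for rhos}), so $\psi_s$ is a $\Un(2)$-invariant nowhere-vanishing holomorphic function on $\mZ_s$.
\item Arguing as in the paragraph before the statement, $\psi_s$ depends only on $(\mr_s^2,|\zeta_s|^2)$, and holomorphy forces it to be a constant $c_s$.
\item To pin down $c_s$: both forms are restrictions of smooth objects on $\M$ (resp.\ on the family), so $c_s$ depends continuously on $s$ away from $\mC_\pm$. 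Alternatively, the dilation $\underline{z}\mapsto t\underline{z}$ maps $\mZ_s$ to $\mZ_{t^2s}$ and scales both forms by the same power, so $c_s = c_{\pm1}$.
\item Comparing with $c_0 = \psi$ on the common open set $\{R\gg|s|\}$, where both families converge as $s\to0$, gives $c_s = \psi$. This yields $\vol_s = \vol_\pm$ with the normalisation $\vol = \frac34\Omega_0\wedge\overline{\Omega}_0$.
\end{enumerate}
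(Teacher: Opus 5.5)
Your derivation of the three metric formulas is the same as the paper's. You split off $\frac{\iq}{2}\diff\zeta_s\wedge\diff\overline{\zeta}_s$, apply Proposition \ref{prop: Properties of reduction}(iii) with $\mr_s^2=2\sqrt{R^2+s^2}$, and read off $s\,\iq\Theta_s$ from the Chern curvature of $H_s$ in Proposition \ref{prop:identification-hyperkahler-quotients}. Your check on $\mC_\pm$ is a sensible way to fix the single overall sign: the restriction there must be $|s|\,\omega_{\m{FS}}>0$, since $\omega^{\m{red}}_s$ is Kähler.

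Where you genuinely differ is in pinning down the constant $c_s$.
\begin{itemize}
\item The paper fixes $s$ and lets $R\to\infty$. From the explicit formulas it obtains $|\omega^{\m{srf}}_s-\omega^{\m{srf}}_0|_{\omega^{\m{srf}}_0}=\mo(R^{-1})$. This uses $|\diff R|_{\omega^{\m{srf}}_0}=\mo(R^{1/2})$, which is only justified later, and $|\widetilde{\omega}_{\m{FS}}|_{\omega^{\m{srf}}_0}=\mo(R^{-1})$. Proposition \ref{prop: SKRF ansatz} together with $\mr_s^{-2}\to0$ then forces $\vol_s=\vol_\pm$.
\item You use the dilation $\underline{z}\mapsto t\underline{z}$. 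It intertwines $\pi_s$ and $\pi_{t^2s}$ with $W\mapsto t^2W$, and multiplies both $\Omega_s$ and the lift of $\Omega$ by $t^4$, so $c_s=c_{t^2s}$. You then use continuity of $(s,W)\mapsto\pi_s^{-1}(W)$ on $\R\times\mZ^{\m{reg}}$ at $s=0$.
\end{itemize}

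Through the dilation these are the same comparison: $R\to\infty$ at fixed $s$ versus $s\to0$ at fixed $R$. Your version is purely structural. It needs no metric estimates or forward references, and it gives $\psi_s=\psi$ exactly rather than only $|\psi_s|=|\psi|$. The paper's route, in exchange, yields Lemma \ref{lem: asymptotic equivalence of skrf metrics} as a byproduct, and that lemma is used later.

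One correction to your step (iv): continuity and the dilation are not alternatives. Continuity alone only gives $\lim_{s\to0}c_s=\psi$. It is the dilation that makes $c_s$ independent of $s$ on each side, so you need both to conclude $c_s=\psi$ for every $s\neq0$.
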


\begin{proof}
    The expressions for $\omega^{\m{srf}}_s$ follow directly from the previous proposition since $\omega_{\m{FS}}$ is the curvature of $H_{\mathrm{can}}$ on $\mathcal{O}_{\mathbb{P}^1}(1)$ and $r_s^2 = |Z_1|^2+|Z_2|^2 = 2 \sqrt{R^2+s^2}$ on $\mZ_s$, by the calculations of $|Z_1|^2$ and $|Z_2|^2$ made in the previous proof. 
    
    For the volume forms, we can argue as for the case $s = 0$: $\Omega_s = \psi_s \Omega_\pm$ for some nowhere vanishing holomorphic function $\psi_s$ on $\mZ_s$. By Lemma \ref{lem: Equivariance relations for rhos}, $\psi_s$ must be invariant under the action of $\Un(2)$, and as before we deduce that $\psi_s \in \C^*$ is a constant. In particular if $s > 0$, $\vol_s = c_s \cdot \vol$ for some constant $c_s > 0$. To prove that $c_s = 1$, we can use the expression of $\omega_s$ and identify $\mathcal{Z_+} \setminus \mC_+$ with $\mZ^{\mathrm{reg}}$: indeed as $R \to \infty$ we have
    \begin{equation*}
        |\partial \overline{\partial} (\sqrt{R^2+s^2} - \log(s+\sqrt{R^2+s^2}))-\partial \overline{\partial}(R)|_{\omega^{\m{srf}}_0} = \mo(R^{-1}) .
    \end{equation*}
    This follows from the bound $|\partial \overline{\partial}R|_{\omega_0^{\m{srf}}} = \mo(1)$, which is obvious, and $|\diff R|_{\omega_0^{\m{srf}}} = \mo(R^{1/2})$, which we will see in the proof of Proposition \ref{prop: interpolation metric}. Similarly, because $\widetilde{\omega}_{\m{FS}}$ has local potentials of order $\log(R)$, we have $|\widetilde{\omega}_{\m{FS}}|_{\omega_0^{\m{srf}}} = \mo(R^{-1})$ and thus $|\omega^{\m{srf}}_s - \omega^{\m{srf}}_0|_{\omega_0} = \mo(R^{-1})$. By Proposition \ref{prop: SKRF ansatz} this implies that $c_s = 1$.
\end{proof}

The above proposition has an interesting consequence. Let $\iota \colon \mZ \to \mZ$ be the involution induced by the transposition map $W \mapsto W^T$ in $\mathfrak{gl}(2,\C)$. It is clear that $\iota$ leaves invariant the functions $R$, $|w_{12}-w_{21}|^2$ and the volume form $\vol$ on $\mZ$. On the other hand, $\iota$ can be lifted to a $\Un(2)$-equivariant biholomorphism $\jmath \colon \mZ_- \to \mZ_+$ between the two small resolutions of $\mZ$, which under the embeddings $\mZ_\pm \subset \mathfrak{gl}(2,\C) \times \mathbb{P}^1$ of \eqref{eq: def of small resolutions} can be obtained by restriction of the map $(W,\Pi) \mapsto (W^T,\Pi^\perp)$ (where as before, $\Pi^\perp$ is the orthogonal complement of $\Pi \in \mathbb{P}^1$ for the canonical complex quadratic form on $\C^2$). From this description, it is immediate to see that $\perp\circ \,p_- = p_+ \circ \jmath$, which implies that for any $s > 0$, 
\begin{equation}
    \jmath^* \omega^{\m{srf}}_s = \omega^{\m{srf}}_{-s} .
\end{equation}
For this reason, we will usually choose to work on $\mZ_+$ with positive values of the moment map parameter $s$ in the remainder of this article, since we can pull everything back to $\mZ_-$ for negative values of $s$ using the equivariant biholomorphism $\jmath \colon \mZ_- \to \mZ_+$.

In the course of the previous proof, we have also proved the following result, which will play an important role later:

\begin{lem}     \label{lem: asymptotic equivalence of skrf metrics}
    After identifying $\mZ^{\m{reg}} \cong \mZ_+ \setminus \mC_+$, for any $s_0 > 0$ there exists $R_0, C_0 > 0$ such that for all $s \in [0,s_0]$,
    \begin{equation*}
        |\omega_s^{\m{srf}}-\omega_0^{\m{srf}}|_{\omega_0^{\m{srf}}} \leq C_0 R^{-1}, ~~~~ \text{if} ~ R \geq R_0.
    \end{equation*}
\end{lem}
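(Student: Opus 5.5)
We work on $\mZ^{\m{reg}} \cong \mZ_+ \setminus \mC_+$ and use the explicit formulas of Proposition \ref{prop: Explicit form of srf ansatz}. For $s \in (0,s_0]$ they give
\begin{equation*}
    \omega^{\m{srf}}_s - \omega^{\m{srf}}_0 = s\, \widetilde{\omega}^+_{\m{FS}} + \iq \partial\overline{\partial}\big( f_s(R) \big), \qquad f_s(R) \coloneqq \sqrt{R^2+s^2} - R - s\log\big(s+\sqrt{R^2+s^2}\big).
\end{equation*}
The common term $\tfrac12|w_{12}-w_{21}|^2$ cancels. The case $s=0$ is trivial, so it suffices to bound each of the two terms by $C R^{-1}$, uniformly in $s \in (0,s_0]$, on $\{R \geq R_0\}$. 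The bound will be obtained from pointwise estimates of the radial building blocks $\diff R$, $\iq\partial\overline{\partial} R$ and $\widetilde\omega^+_{\m{FS}}$ with respect to $\omega^{\m{srf}}_0$. These are exactly the estimates quoted in the proof of Proposition \ref{prop: Explicit form of srf ansatz}:
\begin{equation*}
    |\iq\partial\overline{\partial}R|_{\omega_0^{\m{srf}}} = \mo(1), \qquad |\diff R|_{\omega_0^{\m{srf}}} = \mo(R^{1/2}), \qquad |\widetilde{\omega}^+_{\m{FS}}|_{\omega_0^{\m{srf}}} = \mo(R^{-1}).
\end{equation*}

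First I justify these three bounds. The first is immediate from $\omega_0^{\m{srf}} \geq \iq\partial\overline{\partial} R$, which holds because the second potential is plurisubharmonic. For the other two I use the homogeneity of the cone metric $\iq\partial\overline{\partial}R$, which is the Kähler cone metric with radius $\sqrt{2R}$. This gives $|\diff R|_{\iq\partial\overline{\partial}R} = \mo(R^{1/2})$, and since $\omega_0^{\m{srf}}$ dominates this cone metric, the bound transfers to $\omega_0^{\m{srf}}$. For $\widetilde\omega^+_{\m{FS}}$, the key point is that it is pulled back from $\mathbb{P}^1$ and is invariant under the real scaling $W \mapsto tW$, while the cone metric scales like $t$. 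Hence $|\widetilde\omega^+_{\m{FS}}|_{\iq\partial\overline{\partial}R}$ is homogeneous of degree $-1$ in $R$. Since this quantity is continuous on the link of the cone, it is $\mo(R^{-1})$, and the bound again transfers to $\omega_0^{\m{srf}}$ by domination. This also disposes of the first term: $|s\,\widetilde\omega^+_{\m{FS}}| \leq C s_0 R^{-1}$.

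For the second term, $f_s$ is a radial function, so
\begin{equation*}
    \iq\partial\overline{\partial} f_s(R) = f_s'(R)\, \iq\partial\overline{\partial}R + f_s''(R)\, \iq\,\partial R\wedge\overline{\partial}R .
\end{equation*}
Differentiating,
\begin{equation*}
    f_s'(R) = \frac{R}{\sqrt{R^2+s^2}} - 1 - \frac{sR}{\sqrt{R^2+s^2}\,\big(s+\sqrt{R^2+s^2}\big)} .
\end{equation*}
Expanding for $R \geq R_0 \geq 1$ and $s \leq s_0$ gives $|f_s'(R)| \leq C(s^2R^{-2} + sR^{-1}) \leq CR^{-1}$. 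Differentiating once more gives $|f_s''(R)| \leq C s R^{-2}$, with $C$ depending only on $s_0$ and $R_0$. Combining these with the three bounds above yields
\begin{equation*}
    |\iq\partial\overline{\partial}f_s|_{\omega_0^{\m{srf}}} \leq C R^{-1} + C R^{-2}\cdot R = \mo(R^{-1}),
\end{equation*}
uniformly in $s \in (0,s_0]$.

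The main obstacle is the gradient bound $|\diff R|_{\omega_0^{\m{srf}}} = \mo(R^{1/2})$. The $f_s''$ term is borderline: the factor $R$ coming from $|\partial R|^2$ has to be exactly compensated by the decay $R^{-2}$ of $f_s''$. Any loss in either exponent would break the $R^{-1}$ rate. If the domination argument above turns out to be insufficient, I would instead verify the bound directly in the hyperkähler coordinates. There $R = \tfrac12\mr_0^2$, so $\diff R = \mr_0\,\diff \mr_0$, and Lemma \ref{lem: dmuhor} shows that $\diff\mr_0$ has bounded norm with respect to $g^{\m{st}}$, hence with respect to $\omega^{\m{red}}_0 \leq \omega^{\m{srf}}_0$.
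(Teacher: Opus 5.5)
Your proof is correct and follows essentially the paper's argument. Like the paper, you split $\omega^{\m{srf}}_s-\omega^{\m{srf}}_0$ into $s\,\widetilde{\omega}^+_{\m{FS}}$ plus $\iq\partial\overline{\partial}$ of a radial function of $R$, and combine the bounds $|\iq\partial\overline{\partial}R| = \mo(1)$, $|\diff R| = \mo(R^{1/2})$ and $|\widetilde{\omega}^+_{\m{FS}}| = \mo(R^{-1})$ with the decay of $f_s'$ and $f_s''$. The only difference is in how you justify the two auxiliary bounds: you use the domination $\omega^{\m{srf}}_0 \geq \iq\partial\overline{\partial}R = \omega^{\m{red}}_0$ together with the dilation-homogeneity of this cone metric, whereas the paper bounds $|\diff R|$ via $|\diff\upsilon|$ and $|\diff\eta|$, and bounds $|\widetilde{\omega}^+_{\m{FS}}|$ via its logarithmic local potentials or dilation invariance; your justification is, if anything, cleaner and more self-contained.
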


\begin{rem}
    The bound $|\widetilde{\omega}_{\m{FS}}|_{\omega_0^{\m{srf}}} = \mo(R^{-1})$ could also be justified as follows. The Stenzel metric on $\mZ^{\m{reg}}$ is conical with potential proportional to $R^{4/3}$ and radius function $R^{2/3}$. Using the estimates on $|\diff R|_{\omega_0^{\m{srf}}}$, one can deduce that the Stenzel metric is bounded above by $R^{1/3} \omega_0^{\m{srf}}$; hence the norm of dilation-invariant vectors grows at least at order $R^{1/2}$ with respect to $\omega_0^{\m{srf}}$, and since $\widetilde{\omega}_{\m{FS}}$ is a dilation-invariant $2$-form it must decay at order at least $R^{-1}$ with respect to $\omega_0^{\m{srf}}$.
\end{rem}

\begin{rem}     \label{rem: algebraic aspects}
    In this section, we have adopted a very differential-geometric point of view on the flop, because it is well-adapted for the construction of the family of semi Ricci-flat metrics $\omega^{\m{srf}}_s$. From a more algebraic perspective, $\mathcal{Z}$ is the affine three-dimensional ordinary double point, or equivalently the affine cone over the Segre embedding of $\mathbb{P}^1\times\mathbb{P}^1$. Blowing up its singular point produces an exceptional divisor $\mathbb{P}^1\times\mathbb{P}^1$, and contracting either of its two rulings gives the two small resolutions. The resulting birational transformation between them is precisely the Atiyah flop. 
    
    This picture can equivalently be understood through variation of GIT. The sign of the moment-map parameter determines which resolution is obtained, while the zero level lies on the wall and corresponds to the singular space $\mathcal{Z}$. Away from the exceptional curves, the line bundles introduced above agree naturally, whereas the preferred ample directions on the two resolutions are opposite. Thus the passage from $s>0$ to $s<0$ in Proposition \ref{prop: Explicit form of srf ansatz} is a metric counterpart of crossing the wall of the Atiyah flop.
\end{rem}
   
%%%%% End of section 2 %%%%%
%%%%%%%%%%%%%%%%%%%%%%%%%%%%

%% file: section3_asympsolutions.tex
%%%%%%%%%%%%%%%%%%%%%%%%%%%%%%%%%%
%%%%% Beginning of Section 3 %%%%%

\section{Asymptotic solutions of the Monge--Amp\`ere equation}      \label{sec: Asymptotic solutions of the MA equation}

In this section, we seek to improve the semi Ricci-flat ansatz $\omega^{\m{srf}}_s$ to an asymptotic solution of the Monge--Amp\`ere equation on $\mZ_\pm$ with error term decaying faster than quadratically. Given the symmetry between $\omega^{\m{srf}}_{s}$ and $\omega^{\m{srf}}_{-s}$, we may decide to work on $\mZ_+$ with $s \geq 0$ (where we shall implicitly identify $\mZ^{\m{reg}}$ with the locus $\mZ_+ \setminus \mC_+$ when $s = 0$ without further comment). As it turns out, the explicit form of $\omega_s^{\m{srf}}$ derived in Proposition \ref{prop: Explicit form of srf ansatz} is not easy to work with, so we shall instead work on $\mZ_s \subset \M$ and exploit the hyperkähler reduction picture in order to build a convenient frame and write the Monge--Amp\`ere equation in a convenient way. The global $\Un(2)$-symmetry and the moment map relations will allow us to choose a set of variables where the Monge--Amp\`ere equation looks relatively tractable, and eventually to construct approximate solutions whose asymptotic properties are uniformly under control with respect to the parameter $s$. 

Let us briefly outline the structure of this section. In \S\ref{subsec: A convenient frame}, we construct an $s$-dependent frame  which is well-adapted to $\omega_s^{\m{srf}}$ on a dense open subset of $\mZ_+$, identify two independent $\Un(2)$-invariant functions that will serve as variables for the Monge--Amp\`ere equation, and derive a few fundamental identities that will play a key role throughout this article. In \S\ref{subsec: The distance function}, we derive an approximate expression for the distance function of $\omega_s^{\m{srf}}$ at infinity and study the asymptotic behaviour of various useful functions. Based on this preparatory work, we derive in \S\ref{subsec: The MA equation} an explicit expression for the Monge--Amp\`ere equation restricted to an admissible class of $\Un(2)$-invariant potentials, in terms of our chosen set of variables. The following parts, \S\ref{subsec: Improvement I}, \S\ref{subsec: Improvement II} and \S\ref{subsec: Interpolation III}, are dedicated to the construction of an admissible family of potentials solving the Monge--Amp\`ere equation up to an error term with faster than quadratic decay at infinity, uniformly with respect to $s$. Finally, \S\ref{subsec: Higher-order estimates} establishes higher-order estimates on the approximate solutions, both in terms of the $\Un(2)$-invariant variables and intrinsically with respect to the Levi--Civita connection. 

For the convenience of the reader, the main outcome of the present section is summarised in the following proposition:

\begin{prop}
    There exists a family of $\Un(2)$-symmetric potentials $\{\phi_s\}_{s \in [0,\infty)}$, defined in the region $\{R \geq 1\} \subset \mZ_+$, such that the following properties hold:
    \begin{enumerate}[(i)]
        \item For any $k \geq 1$ and $s_0 \in (0,\infty)$, there exists a constant $C_{k,s_0} > 0$ such that for any $s \in [0,s_0]$,
        \begin{equation*}
            |\nabla_{\omega_s^{\m{srf}}}^k \phi_s| \leq C_{k,s_0} \mr_s^{-k} .
        \end{equation*}

        \item $(\omega_s^{\m{srf}}+\frac{\iq}{2} \partial\overline{\partial}\phi_s)^3 = (1+\psi_s)\vol_+$, where for any $k \geq 0$, $s \in (0,\infty)$ and $\varepsilon \in (0,1)$, there exists a constant $C^\prime_{k,s_0,\varepsilon} > 0$ such that for any $s \in [0,s_0]$,
        \begin{equation*}
            |\nabla^k_{\omega_s^{\m{srf}}} \psi_s| \leq C^\prime_{k,s_0,\varepsilon} \mr_s^{-k} \dist_{\omega_s^{\m{srf}}}(o_+,\cdot)^{-3+\varepsilon} .
        \end{equation*}
    \end{enumerate}
\end{prop}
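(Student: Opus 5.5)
The plan follows the outline of the section: reduce the Monge--Amp\`ere equation for $\Un(2)$-invariant potentials to a PDE in two invariant variables, solve it iteratively at infinity, and then translate the bounds into the intrinsic ones stated in (i) and (ii).

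\textbf{Frame and invariant variables.} On $\mZ_s$ I will work with an $s$-dependent frame adapted to the splitting $T\mZ_s = \widetilde{\mathcal{H}}_s \oplus \mathcal{E}_s$. By Lemma~\ref{lem: dmuhor}, the horizontal part is spanned by $J\xi, K\xi$, which are dual to $\diff\zeta_s$ up to the factor $\mr_s^2$. The vertical part is spanned by the Eguchi--Hanson directions. The natural $\Un(2)$-invariant functions are $\mr_s^2$ and $|\zeta_s|^2$, or better a normalised pair such as $\mt = |\zeta_s|/\mr_s^2$ together with $\mr_s$. For an invariant potential $\phi = F(\mr_s^2, |\zeta_s|^2)$, I will compute $\partial\overline{\partial}\phi$ in this frame. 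I expect this to use Proposition~\ref{prop: Properties of reduction}(iii), together with the identities $\diff^c(\mr^2)(\xi) = -4s$ and $|\partial \mr_s^2|^2 \sim \mr_s^2$.

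\textbf{Reduced equation and the error of $\omega^{\m{srf}}_s$.} I will then write
\begin{equation*}
    (\omega^{\m{srf}}_s + \tfrac{\iq}{2}\partial\overline{\partial}\phi)^3 \big/ \vol_+
\end{equation*}
as a nonlinear second-order expression in $F$. By Proposition~\ref{prop: SKRF ansatz}, the error at $\phi = 0$ is exactly $\mr_s^{-2}$. I also need the approximate distance: $\dist \sim |\zeta|^{1/2}$ horizontally (up to constants) and $\dist \sim \mr$ along the fibres, so that $\mr_s^{-2}$ decays like $\dist^{-1}$ horizontally. The target is therefore to kill the error down to order $\dist^{-3+\varepsilon}$.

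\textbf{Iterative correction.} The linearisation splits into a fibrewise Laplacian (Eguchi--Hanson, $\Delta_{\hat\omega}$) and a weighted horizontal Laplacian $(1+\mr^{-2})^{-1}\Delta_\zeta$.

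In the first step, I solve the leading equation to remove $\mr_s^{-2}$. One option is to choose $\phi^{(1)}$ depending on $|\zeta|$ only, so that it modifies $\frac{\iq}{2}\diff\zeta\wedge\diff\overline\zeta$. The other option is a fibrewise correction via the explicit radial Green-type solutions on Eguchi--Hanson in the variable $\mr$, e.g.\ terms like $\log \mr$ or $\mr^2$ times functions of $|\zeta|$. The aim is to cancel the error modulo terms of order $\mr^{-4}$ or $\mr^{-2}|\zeta|^{-1}$.

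In the second step, I repeat the procedure on the remaining error. This should gain one power of $\dist^{-1}$ each time.

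In the third step, I cut off and interpolate the corrections between the region where $\mr^2 \gg |\zeta|$ and the region where $\mr^2 \sim |\zeta|$, using the variable $\mt$. This also needs to handle the near-diagonal behaviour where $\zeta$ is small.

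Since $\mr_s^2 = 2\sqrt{R^2+s^2} \geq 2s$ and all the explicit formulas are smooth in $s$ for $R \geq 1$, the constants are uniform for $s \in [0,s_0]$.

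\textbf{Higher-order estimates.} Estimates in $\mt$, $\mr$ of symbol type, meaning that each derivative in $\mr$ costs $\mr^{-1}$ and each derivative in the scale-invariant variable costs nothing, translate into the intrinsic bounds (i) and (ii). The translation uses the frame: covariant derivatives of frame elements are $\mo(\mr_s^{-1})$, which can be shown by scaling from the $\Un(2)$-equivariance and the hyperkähler homogeneity under $q\mapsto \lambda q$. This is where the factor $\mr_s^{-k}$ comes from.

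\textbf{Main obstacle.} I expect the hardest step to be the interpolation in the third step. The corrections solving the leading fibrewise and horizontal problems have different homogeneities, and gluing them must not reintroduce error slower than $\dist^{-3+\varepsilon}$. It must also keep uniformity as $s \to 0$, where $\mr_s^{-2}$ is unbounded near $\mC_+$; for $R \geq 1$ this is harmless, since there $\mr_s^{2} \geq 2$. To handle it, I would first try to find exact separable solutions in the variable $\mt$, reducing to ODEs, and estimate the leftover terms explicitly.
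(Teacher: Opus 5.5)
\textbf{Verdict: there is a genuine gap.} The ingredients you list are the right ones: invariant potentials, a reduced equation, a $\log$-type fibrewise correction in one regime, an ODE in a scale-invariant fibre variable in the other, and scaling arguments for (i)--(ii). But the step you flag as the main obstacle is set up in the wrong place, and as stated the construction does not reach $\dist^{-3+\varepsilon}$.

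\textbf{Where the gap comes from.} The root cause is your distance comparison. Horizontally $\dist\asymp|\zeta|=\eta^{1/2}$, not $|\zeta|^{1/2}$. By Proposition~\ref{prop: distance estimate}, $\dist_{\omega^{\m{srf}}_s}(o_+,\cdot)$ is uniformly comparable to $\upsilon^{1/4}+\eta^{1/2}\asymp\rho=(\upsilon+\eta^2)^{1/4}$, where $\upsilon=R^2-\eta$. Fibre distance and base distance therefore balance at $\upsilon\sim\eta^2$, i.e.\ $\mr\sim|\zeta|$. Your dividing locus $\mr^2\sim|\zeta|$ is instead $\upsilon\sim\eta$, which is the scale of the Eguchi--Hanson cores. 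No cutoff in your variable $|\zeta|/\mr_s^2$ can produce the correct split. That variable is essentially a function of the scale-invariant ratio $\upsilon/\eta$, whereas $\upsilon/\eta^2$ is not invariant under the dilations $q\mapsto\lambda q$: both $\upsilon$ and $\eta$ scale by $\lambda^4$, because the base metric $\frac{\iq}{2}\diff\zeta\wedge\diff\overline{\zeta}$ does not scale like the fibres.

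\textbf{Why the $\log$ correction fails near your cutoff.} The fibrewise correction $F^{(0)}=-\frac12\log(\upsilon+\eta+s^2)$ leaves
$\mA_s(F^{(0)})+\mB_s(F^{(0)})+\mr_s^{-2}=-4\eta\,\mr_s^{-6}+\ohat(\mr_s^{-4})$.
\begin{itemize}
\item Near $\upsilon\sim\eta$ this is of order $\eta^{-1/2}\asymp\rho^{-1}$, no better than the original error.
\item It becomes $\ohat(\rho^{-3})$ only once $\upsilon\gtrsim\eta^{5/3}$.
\item On $\{\upsilon\geq\frac12\eta^2\}$ it is $\ohat(\rho^{-4})$ (Lemma~\ref{lem: Potential in region I}).
\end{itemize}

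\textbf{What the ODE regime actually requires.} The ODE ansatz in $t=\upsilon/(\eta+s^2)$ must be used far beyond bounded $t$. In the paper it is used on all of $\{\upsilon\leq 2\eta^2\}$, where $t$ ranges up to order $\eta\to\infty$. This needs:
\begin{itemize}
\item the solution that is regular at $t=0$;
\item uniform control of it as $t\to\infty$, e.g.\ $H_1=a(t)/(1+t)$ with $a$ bounded;
\item two further corrections, $F^{(2)}=-\frac{1}{2\sqrt{\eta+s^2}}F^{(1)}$ and $F^{(3)}$, which remove the $\ohat(\rho^{-2})$ terms produced by the horizontal part of the operator. Here $F^{(3)}$ comes from the same ODE with source $\alpha(t)(1+t)^{-1/2}$, and its solution satisfies $(1+t)^{3/2}|H_3|\lesssim 1+\log(1+t)$.
\end{itemize}

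\textbf{Why it cannot go past $\upsilon\sim\eta^2$.} The $\eta$-derivatives of these corrections carry powers of $(\eta+s^2)^{-1/2}$. For example, $F^{(2)}_\eta+\eta F^{(2)}_{\eta\eta}$ has size $\eta^{-3/2}$ up to a logarithm. This is $\ohat(\rho^{-3+\varepsilon})$ essentially only while $\rho\lesssim\eta^{1/2}$, i.e.\ $\upsilon\lesssim\eta^2$.

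\textbf{How the gluing must be done.} The two regimes must be joined across $\{\frac12\eta^2\leq\upsilon\leq 2\eta^2\}$ with the cutoff $\chi(\upsilon/\eta^2)$. There $F^{(\mathrm{II})}-F^{(0)}=G(t)+F^{(2)}+F^{(3)}=\ohat(\rho^{-1+\varepsilon})$, with derivative bounds good enough that the cutoff error is $\ohat(\rho^{-3+\varepsilon})$. A single cutoff at bounded $|\zeta|/\mr_s^2$ misses both the correct locus and the need for large-$t$ control.

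\textbf{Two smaller points.}
\begin{itemize}
\item Your first option, a potential depending on $|\zeta|$ alone, cannot cancel $\mr_s^{-2}$: $\mA_s(F(\eta))=F_\eta+\eta F_{\eta\eta}$ does not depend on $\upsilon$, while $\mr_s^{-2}$ does.
\item Smoothness in $s$ does not by itself give uniform decay constants on a non-compact region. Uniformity comes from writing every estimate through $\mr_s\asymp\mr$ and, on $\{\upsilon\leq 2\eta^2\}$, through $(\eta+s^2)^{-1/2}\leq\eta^{-1/2}\asymp\rho^{-1}$.
\end{itemize}
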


    \subsection{A convenient frame}     \label{subsec: A convenient frame}

Our first task is to fix a convenient frame on $\M$. In fact, since $T\M$ is not globally trivial, we can only fix a frame on an open dense subset of $\M$, which is enough for our purpose. Equivalently, we may seek an $\Un(1)$-invariant frame on $T \P = \mathcal{V} \oplus \mathcal{E} \oplus \mathcal{H}$. Using the vector field $\xi$ generating the $\Un(1)$-action, we have already constructed an $\Un(1)$-invariant (and $g^\m{st}$-orthonormal) frame of $\mathcal{V} \oplus \mathcal{H}$, $\xi_0 = \tfrac{1}{\mr} \xi$, $\xi_1 = I\xi_0$, $\xi_2 = J \xi_0$, $\xi_3 = K \xi_0$, so we only need to fix a frame of $\mathcal{E}$. We will use the following observation:

\begin{lem}
    The functions $|\mu^{\m{st}}|$ and $r$ satisfy the inequality
    \begin{equation*}
        |\mu^{\m{st}}| \leq \frac{1}{2} \mr^2 .
    \end{equation*}
    Moreover, on the open dense set $\{ |\mu^\m{st}| < \tfrac{1}{2} r^2\} \subset \H^2$, the vector field $\partial_{\mr} = \tfrac{1}{\mr} (q_1,q_2)$ is linearly independent of $\xi_0,\xi_1,\xi_2,\xi_3$. 
\end{lem}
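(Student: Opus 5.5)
The plan is to do both parts at once with a single Bessel-inequality computation. The unit vector field $\partial_{\mr}=\tfrac{1}{\mr}(q_1,q_2)$ will be projected onto the $g^{\m{st}}$-orthonormal frame $\xi_0,\xi_1,\xi_2,\xi_3$ of $\mathcal{V}\oplus\mathcal{H}$, which is already available from the proof of Lemma \ref{lem: Decomposition of Kahler forms}.

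\textbf{Inner products with the frame.} For $\lambda\in\Im(\H)\cup\{1\}$, I compute directly from $g^{\m{st}}=\Re(\diff q_1\diff\overline{q}_1+\diff q_2\diff\overline{q}_2)$:
\begin{equation*}
    g^{\m{st}}\big(\partial_{\mr},\tfrac{1}{\mr}\lambda\xi\big)=\frac{1}{\mr^2}\Re\big(q_1\overline{\lambda q_1\iq}+q_2\overline{\lambda q_2\iq}\big)=-\frac{1}{\mr^2}\Re\big(\overline{\lambda}\,(q_1\iq\overline{q}_1+q_2\iq\overline{q}_2)\big).
\end{equation*}
The quaternion $q\iq\overline{q}$ is purely imaginary. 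So for $\lambda=1$ this vanishes, that is, $\partial_{\mr}\perp\xi_0$.

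For $\lambda\in\Im(\H)$, by the formula for $\mu^{\m{st}}$ in Lemma \ref{lem: Moment map mu}, the same expression equals $\tfrac{2}{\mr^2}\langle\mu^{\m{st}},\lambda\rangle$ up to a sign that is irrelevant here. Hence $g^{\m{st}}(\partial_{\mr},\xi_\ell)=\pm\tfrac{2}{\mr^2}\mu^{\m{st}}_\ell$ for $\ell=1,2,3$. This is consistent with Lemma \ref{lem: dmuhor}, since $\diff\mu^{\m{st}}(\lambda\xi)=\mr^2\lambda$ and $\mu^{\m{st}}$ is homogeneous of degree $2$.

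\textbf{Bessel inequality.} Since $|\partial_{\mr}|=1$ and the $\xi_j$ are orthonormal, Bessel's inequality gives
\begin{equation*}
    \sum_{j=0}^3 g^{\m{st}}(\partial_{\mr},\xi_j)^2=\frac{4|\mu^{\m{st}}|^2}{\mr^4}\leq 1 .
\end{equation*}
This is exactly $|\mu^{\m{st}}|\leq\tfrac12\mr^2$.

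Equality in Bessel's inequality holds precisely when $\partial_{\mr}$ lies in $\mathrm{span}\{\xi_0,\dots,\xi_3\}$. So on $\{|\mu^{\m{st}}|<\tfrac12\mr^2\}$ the component of $\partial_{\mr}$ orthogonal to $\mathcal{V}\oplus\mathcal{H}$ is nonzero, and $\partial_{\mr}$ is linearly independent of $\xi_0,\dots,\xi_3$.

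\textbf{Open and dense.} The set is open by continuity. For density, note that its complement is the equality locus of the triangle inequality $|q_1\iq\overline{q}_1+q_2\iq\overline{q}_2|\leq|q_1|^2+|q_2|^2$. This is the real-algebraic set where the imaginary quaternions $q_1\iq\overline{q}_1$ and $q_2\iq\overline{q}_2$ are non-negatively proportional, and it is a proper subvariety. For example, it excludes $q_1=1$, $q_2=\jq$, where these two quaternions are $\iq$ and $-\iq$. A proper real-algebraic subset has empty interior, so its complement is dense.

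The only step needing care is the sign and conjugation bookkeeping in the first inner product. Everything else follows formally from Bessel's inequality.
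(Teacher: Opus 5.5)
Your proof is correct and is essentially the paper's argument. The paper also notes $\partial_{\mr}\perp\mathcal{V}$, obtains the components along $\mathcal{H}$ from $g^{\m{st}}(\lambda\xi,\partial_{\mr})=\diff\mu^{\m{st}}_\lambda(\partial_{\mr})=2\mu^{\m{st}}_\lambda/\mr$ to get $|\partial_{\mr}^{\mathcal{H}}|^2=4|\mu^{\m{st}}|^2/\mr^4$, and compares this with $|\partial_{\mr}|=1$. The minor differences are that the paper proves $|\mu^{\m{st}}|\le\frac12\mr^2$ directly by the triangle inequality, which also covers the origin where your Bessel argument is undefined but the inequality is trivial, and that your real-algebraic density argument justifies something the paper leaves implicit.
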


\begin{proof}
    The inequality is straightforward since
    \begin{equation*}
        |\mu^{\m{st}}| = \frac{1}{2}|\Im(q_1\iq \overline{q}_1 + q_2 \iq \overline{q}_2)| \leq \frac{1}{2}(|q_1|^2+|q_2|^2).
    \end{equation*}
    Moreover, it is clear that $\partial_{\mr}$ is $g^\m{st}$-orthogonal to $\mathcal{V}$, and the $g^\m{st}$-orthogonal projection of $\partial_{\mr}$ onto $\mathcal{H}$ reads
    \begin{align*}
        \partial_{\mr}^\mathcal{H} & = \tfrac{1}{\mr^2} (g^\m{st}(I\xi,\partial_{\mr})I\xi +  g^\m{st}(J\xi,\partial_{\mr})J\xi +  g^\m{st}(K\xi,\partial_{\mr})K\xi) \\ 
        & = \tfrac{1}{\mr^2} (\omega_1^\m{st}(\xi,\partial_{\mr}) I\xi + \omega_2^\m{st}(\xi,\partial_{\mr}) J\xi + \omega_3^\m{st}(\xi,\partial_{\mr}) K\xi) \\
        & = \tfrac{1}{\mr^2} (\diff \mu^\m{st}_1(\partial_{\mr}) I\xi + \diff \mu^\m{st}_2(\partial_{\mr}) J\xi + \diff \mu^\m{st}_3(\partial_{\mr}) K\xi)
    \end{align*}
    using the fact that $\mu^\m{st}$ is a hyperkähler moment map. On the other hand, the computations of Lemma \ref{lem: Moment map mu} show that for any $\lambda \in \Im(\H)$,
    \begin{equation*}
        \diff \mu^\m{st}_\lambda(\partial_{\mr}) = - \frac{1}{\mr}\langle \Im( q_1 \iq \overline{q}_1 + q_2 \iq  \overline{q}_2), \lambda \rangle = \frac{2\mu^\m{st}_\lambda}{\mr} \cdot
    \end{equation*}
    Thus the $g^\m{st}$-orthogonal projection of $\partial_{\mr}$ onto $\mathcal{H}$ is
    \begin{equation}        \label{eq: Projection of eta onto H}
        \partial_{\mr}^\mathcal{H} = \frac{2\mu_1}{\mr^3} I\xi + \frac{2\mu_2}{\mr^3} J\xi + \frac{2\mu_3}{\mr^3} K\xi
    \end{equation}
    which satisfies $|\partial_{\mr}^\mathcal{H}|^2 = \tfrac{4|\mu^\m{st}|^2}{\mr^4}$, whilst $|\partial_{\mr}|^2 = 1$. The lemma follows.
\end{proof}

Let us define the function 
\begin{equation}
    \upsilon \coloneqq \tfrac{1}{4}\mr^4 - |\mu^\m{st}|^2 .
\end{equation}
By the previous lemma, the $g^\m{st}$-orthogonal projection $\partial_{\mr}^\mathcal{E}$ of $\partial_{\mr}$ onto $\mathcal{E}$ has $|\partial_{\mr}^\mathcal{E}|^2 = \tfrac{4\upsilon}{\mr^4}$, and on the open domain $\{ \upsilon > 0\}$ we obtain a global $g^\m{st}$-orthonormal frame of $\mathcal{E}$ defined as $\sigma_0 = \tfrac{\mr^2}{2\sqrt{\upsilon}}\partial_{\mr}^\mathcal{E}$, $\sigma_1 = I \sigma_0$, $\sigma_2 = J\sigma_0$, $\sigma_3 = K \sigma_0$. If $\sigma^0 = g^\m{st}(\sigma_0,\cdot)$, $\sigma^1 = g^\m{st}(\sigma_1,\cdot)$, $\sigma^2 = g^\m{st}(\sigma_2,\cdot)$, $\sigma^3= g^\m{st}(\sigma_3,\cdot)$ is the dual co-frame, then on $\{\upsilon > 0\}$ we can write
\begin{align*}
    \omega^\m{st}_i & = \sigma^0 \wedge \sigma^i + \sigma^j \wedge \sigma^k + \xi^0 \wedge \xi^1 + \xi^2 \wedge \xi^3 \\
        & = \sigma^0 \wedge \sigma^i + \sigma^j \wedge \sigma^k - \iq \mathfrak{a} \wedge \diff \mu^{\m{st}}_i + \tfrac{1}{r^2} \diff \mu^\m{st}_j \wedge \diff \mu^\m{st}_k
\end{align*}
for any direct permutation $(ijk)$ of $(123)$. We want to identify the $1$-forms $\sigma^\ell$, $\ell = 0,1,2,3$. In order to do this, we will use the following result:

\begin{lem}
    The $1$-form $\diff \upsilon$ vanishes on $\mathcal{V} \oplus \mathcal{H}$; and on the open set $\{\upsilon > 0\}$ we have 
    \begin{equation*}
        \sigma^0 = \frac{1}{2\mr \sqrt{\upsilon}} \diff \upsilon .
    \end{equation*}
\end{lem}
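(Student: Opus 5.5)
We will compute $\diff \upsilon$ directly from the definition $\upsilon = \tfrac14 \mr^4 - |\mu^\m{st}|^2$ and evaluate it on the frame $\xi_0,\xi_1,\xi_2,\xi_3,\sigma_0,\dots,\sigma_3$. First, $\diff \upsilon = \tfrac12 \mr^2 \diff(\mr^2)/1 \cdot \tfrac12 \cdot 2$, more precisely
\begin{equation*}
    \diff \upsilon = \tfrac12 \mr^2 \, \diff(\mr^2)\cdot \tfrac12 \cdot 2 - 2\sum_{\ell=1}^3 \mu^\m{st}_\ell \diff \mu^\m{st}_\ell = \mr^3 \diff \mr - 2\sum_{\ell} \mu^\m{st}_\ell \diff\mu^\m{st}_\ell ,
\end{equation*}
with $\diff \mr = g^\m{st}(\partial_{\mr},\cdot)$ since $\mr$ is the Euclidean radius.

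\emph{Vanishing on $\mathcal{V}\oplus\mathcal{H}$.} On $\xi$: $\diff\mr(\xi)=g^\m{st}(\partial_\mr,\xi)=0$ (rotation is tangent to spheres) and $\diff\mu^\m{st}(\xi)=0$ by $\Un(1)$-invariance of $\mu^\m{st}$, so $\diff\upsilon(\xi)=0$. On $\lambda\xi$, $\lambda\in\Im(\H)$: Lemma \ref{lem: dmuhor} gives $\diff\mu^\m{st}_\ell(\lambda\xi)=\mr^2\lambda_\ell$, while $\diff\mr(\lambda\xi) = g^\m{st}(\partial_\mr,\lambda\xi) = -g^\m{st}(\lambda\partial_\mr,\xi) = -\diff\mu^\m{st}_\lambda(\partial_\mr)\cdot$(appropriate sign) which, by the computation in the previous lemma ($\diff\mu^\m{st}_\lambda(\partial_\mr)=2\mu^\m{st}_\lambda/\mr$, equivalently the projection formula \eqref{eq: Projection of eta onto H}), equals $g^\m{st}(\partial_\mr^{\mathcal H},\lambda\xi)=\tfrac{2}{\mr}\langle\mu^\m{st},\lambda\rangle$. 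Hence $\diff\upsilon(\lambda\xi)=\mr^3\cdot\tfrac{2}{\mr}\langle\mu^\m{st},\lambda\rangle-2\mr^2\langle\mu^\m{st},\lambda\rangle=0$. Getting the sign conventions consistent here is the only delicate point; I would simply use $g^\m{st}(\partial_\mr,\lambda\xi)=g^\m{st}(\partial_\mr^{\mathcal H},\lambda\xi)$ with \eqref{eq: Projection of eta onto H} and $g^\m{st}(I\xi,\lambda\xi)=\mr^2\lambda_1$ etc., which avoids any sign bookkeeping.

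\emph{Restriction to $\mathcal{E}$.} On $\mathcal{E}=\ker\diff\mu^\m{st}\cap\xi^\perp$ the second term vanishes, so $\diff\upsilon|_{\mathcal E} = \mr^3 g^\m{st}(\partial_\mr,\cdot)|_{\mathcal E} = \mr^3 g^\m{st}(\partial_\mr^{\mathcal E},\cdot)|_{\mathcal E}$. Since $\partial_\mr^{\mathcal E} = \tfrac{2\sqrt\upsilon}{\mr^2}\sigma_0$, this is $2\mr\sqrt{\upsilon}\,\sigma^0|_{\mathcal E}$. As $\sigma^0$ also vanishes on $\mathcal V\oplus\mathcal H$ (being dual to a vector in $\mathcal E$ in an orthogonal splitting), both sides agree on all of $T\P$, giving $\sigma^0=\frac{1}{2\mr\sqrt\upsilon}\diff\upsilon$ on $\{\upsilon>0\}$.
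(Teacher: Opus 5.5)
Your proof is correct and follows essentially the same route as the paper. In both arguments, $\Un(1)$-invariance (or your termwise check) handles $\mathcal{V}$, the identity $\diff\mu^{\m{st}}_\lambda(\partial_{\mr}) = 2\mu^{\m{st}}_\lambda/\mr$ makes $\diff(\tfrac14\mr^4)$ and $\diff|\mu^{\m{st}}|^2$ cancel on $\mathcal{H}$, and on $\mathcal{E}$ only $\mr^3 g^{\m{st}}(\partial_{\mr}^{\mathcal{E}},\cdot)$ survives; the paper evaluates this last piece on $\sigma_0,\dots,\sigma_3$ instead of writing it as a $1$-form identity.

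The sign you flag as delicate is immediate: Lemma \ref{lem: Moment map mu} gives $\diff\mu^{\m{st}}_\lambda = g^{\m{st}}(\lambda\xi,\cdot)$, hence $g^{\m{st}}(\partial_{\mr},\lambda\xi) = \diff\mu^{\m{st}}_\lambda(\partial_{\mr})$ directly, which is exactly how the paper argues.
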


\begin{proof}
    Since the function $\upsilon$ is invariant under the action of $\Un(1)$, $\diff \upsilon$ vanishes on $\mathcal{V}$. On the other hand, $\diff (\mr^4) = 2 \mr^2 \diff(\mr^2)$ and
    \begin{equation*}
        \diff (\mr^2) (\lambda \xi) = 2 g^\m{st}(\mr\partial_{\mr},\lambda \xi) = 2 \mr\diff \mu^\m{st}_\lambda (\partial_{\mr}) = 4 \mu^\m{st}_\lambda = \tfrac{4 \mu^\m{st}_\lambda}{\mr^2} \diff\mu^\m{st}_\lambda(\lambda \xi)
    \end{equation*}
    for any $\lambda \in \Im(\H)$. This shows that $4\diff \upsilon = \diff(\mr^4) - 8 \mu^\m{st}_1 \diff \mu^\m{st}_1 - 8 \mu^\m{st}_2 \diff \mu^\m{st}_2 - 8 \mu^\m{st}_3 \diff \mu^\m{st}_3$ vanishes on the horizontal distribution $\mathcal{H}$. To prove the second assertion, we calculate using the fact that $\diff \upsilon$ coincides with $\tfrac{1}{4}\diff (\mr^4)$ along the distribution $\mathcal{E}$:
    \begin{align*}
        \diff \upsilon (\sigma_0) = \frac{1}{4|\partial_{\mr}^\mathcal{E}|_\m{st}} \cdot 2 \mr^2 \diff(\mr^2)(\partial_{\mr}^\mathcal{E}) = \frac{\mr^3}{|\partial_{\mr}^\mathcal{E}|_\m{st}} g^\m{st}(\partial_{\mr},\partial_{\mr}^\mathcal{E}) = \mr^3 |\partial_{\mr}^\mathcal{E}|_\m{st} = 2 \mr \sqrt{\upsilon}
    \end{align*}
    and if $\ell \in \{1,2,3\}$, we similarly obtain
    \begin{equation*}
        \diff \upsilon (\sigma_\ell) = \frac{\mr^3}{|\partial_{\mr}^\mathcal{E}|} g^\m{st}(\partial_{\mr},\sigma_\ell) = \frac{\mr^3}{|\partial_{\mr}^\mathcal{E}|} g^\m{st}(\partial_{\mr}^\mathcal{E},\sigma_\ell) = 0
    \end{equation*}
    since $\sigma_\ell \perp \partial_{\mr}^\mathcal{E}$ for $\ell = 1,2,3$. 
\end{proof}

\begin{cor}
    On the open set $\{\upsilon > 0\}$, 
    \begin{equation*}
        \omega^\m{st}_i = \frac{1}{4\mr^2\upsilon} (\diff \upsilon \wedge \diff^c_i \upsilon  + \diff^c_j \upsilon \wedge \diff^c_k \upsilon) - \iq \mathfrak{a} \wedge \diff \mu^\m{st}_i + \frac{1}{\mr^2} \diff \mu^\m{st}_j \wedge \diff \mu^\m{st}_k
    \end{equation*}
    for any direct permutation $(ijk)$ of $(123)$, where we use the notations $\diff^c_1 \upsilon = \diff \upsilon \circ I^{-1}$, $\diff^c_2 \upsilon = \diff \upsilon \circ J^{-1}$, $\diff^c_3 \upsilon = \diff \upsilon \circ K^{-1}$. 
\end{cor}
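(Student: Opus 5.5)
The plan is to substitute the explicit co-frame of $\mathcal{E}$ into the decomposition of $\omega^\m{st}_i$ already written just above the statement. On $\{\upsilon>0\}$ we have
\begin{equation*}
    \omega^\m{st}_i = \sigma^0\wedge\sigma^i + \sigma^j\wedge\sigma^k - \iq\mathfrak{a}\wedge\diff\mu^\m{st}_i + \tfrac{1}{\mr^2}\diff\mu^\m{st}_j\wedge\diff\mu^\m{st}_k .
\end{equation*}
Only the $\mathcal{E}$-part needs rewriting, so it suffices to express $\sigma^1,\sigma^2,\sigma^3$ in terms of $\diff\upsilon$. The previous lemma already gives $\sigma^0 = \frac{1}{2\mr\sqrt{\upsilon}}\diff\upsilon$.

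The key step is the identity $\sigma^\ell = \sigma^0\circ L_\ell^{-1}$ for $\ell=1,2,3$, where $L_1=I$, $L_2=J$, $L_3=K$. To prove it, recall that $\sigma_\ell = L_\ell\sigma_0$ and that $g^\m{st}$ is $L_\ell$-invariant. Hence for any vector $v$,
\begin{equation*}
    \sigma^\ell(v) = g^\m{st}(L_\ell\sigma_0,v) = g^\m{st}(\sigma_0,L_\ell^{-1}v) = \sigma^0(L_\ell^{-1}v).
\end{equation*}
This holds as an identity of $1$-forms on all of $T\P$, not merely on $\mathcal{E}$, because the inner product is taken in $g^\m{st}$ on the whole tangent space.

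The one point needing care is the convention that the $\sigma^\ell$ vanish on $\mathcal{V}\oplus\mathcal{H}$. Since $\sigma_0\in\mathcal{E}$ and $\mathcal{E}$ is $L_\ell$-invariant, $g^\m{st}(\sigma_\ell,\cdot)$ indeed annihilates $\mathcal{V}\oplus\mathcal{H}$. Separately, the lemma gives that $\diff\upsilon$ vanishes on $\mathcal{V}\oplus\mathcal{H}$, and that subspace is $L_\ell$-invariant, so $\diff^c_\ell\upsilon = \diff\upsilon\circ L_\ell^{-1}$ also vanishes there. The two conventions are therefore consistent.

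Combining these gives $\sigma^\ell = \frac{1}{2\mr\sqrt{\upsilon}}\diff^c_\ell\upsilon$. Each wedge product of two $\sigma$'s then carries the factor $\frac{1}{4\mr^2\upsilon}$, and substituting yields exactly the stated formula. I expect no real obstacle; the only things to keep straight are the sign convention $\diff^c_\ell\upsilon = \diff\upsilon\circ L_\ell^{-1}$, which is taken directly from the statement, and the orthogonality convention just checked.
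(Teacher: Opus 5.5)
Your proposal is correct and is essentially the paper's argument. The paper states this corollary without a separate proof because it follows immediately from substituting $\sigma^0 = \frac{1}{2\mr\sqrt{\upsilon}}\diff\upsilon$ and $\sigma^\ell = \sigma^0\circ L_\ell^{-1}$ (which comes from $\sigma_\ell = L_\ell\sigma_0$ and the $L_\ell$-invariance of $g^{\m{st}}$) into the frame decomposition of $\omega^{\m{st}}_i$; your check that all the relevant $1$-forms vanish on $\mathcal{V}\oplus\mathcal{H}$ is exactly the bookkeeping needed to make this an identity on all of $T\P$.
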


We can also use this frame to conveniently write the curvature $\diff \mathfrak{a}$ of the connection form $\mathfrak{a} = \xi^0\iq  = \tfrac{1}{\mr^2} g^\m{st}(\xi,\cdot) \iq \in \Omega^1(\P,\iq \R)$.

\begin{prop}
    On the open set $\{\upsilon > 0\} \subset \M$, the curvature $\diff \mathfrak{a}$ satisfies
    \begin{align*}
        - \iq \diff \mathfrak{a} = \diff \xi^0 = ~ & ~ \frac{4}{\mr^6}(\mu_1 \diff \mu_2 \wedge \diff \mu_3  + \mu_2 \diff \mu_3  \wedge \diff \mu_1  + \mu_3 \diff \mu_1 \wedge \diff \mu_2) \\
            & - \frac{2}{\mr^6} (\diff \mu_1 \wedge \diff^c_1 \upsilon + \diff \mu_2 \wedge \diff^c_2 \upsilon + \diff \mu_3 \wedge \diff^c_3 \upsilon) \\ 
            & + \frac{1}{\upsilon \mr^6} \diff \upsilon \wedge (\mu_1 \diff^c_1 \upsilon + \mu_2 \diff^c_2 \upsilon + \mu_3 \diff^c_3 \upsilon ) \\
            & - \frac{1}{\upsilon \mr^6} (\mu_1 \diff^c_2 \upsilon \wedge \diff^c_3 \upsilon + \mu_2 \diff^c_3 \upsilon \wedge \diff^c_1 \upsilon + \mu_3 \diff^c_1 \upsilon \wedge \diff^c_2 \upsilon) .
    \end{align*} 
\end{prop}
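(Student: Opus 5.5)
The plan is to compute $\diff\xi^0$ upstairs on $\P$, where everything is explicit, and then read off the result in the frame $\{\sigma^\ell,\xi^\ell\}$. Since $\mathfrak{a}$ is a connection form, $\diff\mathfrak{a}$ is basic. So it suffices to compute $\diff\xi^0$ modulo terms containing $\xi^0$ and then discard them. This is consistent as long as the final expression has no $\xi^0$-component, which serves as a check.

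\textbf{Step 1: the differential of $\eta \coloneqq g^{\m{st}}(\xi,\cdot)$.} We have $\eta = \Re(q_1\iq\,\diff\overline{q}_1 + q_2\iq\,\diff\overline{q}_2)$ up to a conjugation convention. Hence $\diff\eta = 2\omega^R$, where $\omega^R \coloneqq g^{\m{st}}(R_{\iq}\cdot,\cdot)$ is the constant Kähler form of the complex structure $R_{\iq}$ given by \emph{right} multiplication by $\iq$. Since $\xi^0 = \mr^{-1}\eta$, we get
\begin{equation*}
    \diff\xi^0 = -\mr^{-2}\,\diff\mr\wedge\eta + 2\mr^{-1}\omega^R .
\end{equation*}
The first term is a multiple of $\xi^0$ and will be discarded. (As written, this would give $\diff\xi^0$ rather than $-\iq\diff\mathfrak{a}=\diff(\mr^{-1}\xi^0)$. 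I will keep track of the $\diff\mr\wedge\xi^0$ term and check that it cancels after the projection; equivalently I will work with $\mathfrak{a}$ directly, getting $-\iq\diff\mathfrak{a} = -\mr^{-4}\diff(\mr^2)\wedge\eta + 2\mr^{-2}\omega^R$.)

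\textbf{Step 2: express $\omega^R$ in the frame.} The key structural fact is that $R_{\iq}$ commutes with $I,J,K$. Moreover $R_{\iq}\xi = -\mr\,\partial_\mr$ and $R_{\iq}\partial_\mr = \mr^{-1}\xi$. From \eqref{eq: Projection of eta onto H} and the definition of $\sigma_0$,
\begin{equation*}
    \partial_\mr = \frac{2\sqrt{\upsilon}}{\mr^2}\,\sigma_0 + \sum_\ell \frac{2\mu_\ell}{\mr^2}\,\xi_\ell .
\end{equation*}
Applying $I,J,K$ gives $R_{\iq}$ on the whole frame $\{\xi_0,\dots,\xi_3,\sigma_0,\dots,\sigma_3\}$ over $\{\upsilon>0\}$. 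Concretely, $R_{\iq}\xi_\ell = -\lambda_\ell\partial_\mr$ and $R_{\iq}(\lambda_\ell\partial_\mr) = \xi_\ell$. Consequently
\begin{equation*}
    \omega^R = -\sum_{\ell=0}^3 \xi^\ell\wedge(\lambda_\ell\partial_\mr)^\flat
\end{equation*}
up to a global sign, to be fixed by evaluating on $(\xi_0,\partial_\mr)$.

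\textbf{Step 3: substitute the duals.} By Lemma \ref{lem: dmuhor} and the lemma on $\sigma^0$, the duals are $\xi^\ell = \mr^{-1}\diff\mu_\ell$ for $\ell=1,2,3$, $\sigma^0 = (2\mr\sqrt\upsilon)^{-1}\diff\upsilon$, and $\sigma^\ell = (2\mr\sqrt\upsilon)^{-1}\diff^c_\ell\upsilon$. Then $(\lambda_\ell\partial_\mr)^\flat$ becomes a combination of $\diff^c_\ell\upsilon$ (with coefficient $\sqrt\upsilon\cdot\frac{1}{\mr\sqrt\upsilon}$) and of $\mu_m\,\diff\mu_n$-type terms, using the quaternion multiplication table for $\lambda_\ell\lambda_m$. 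The last substitution is $\diff(\mr^2) = 2\mr\,\partial_\mr^\flat$.

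\textbf{Step 4: expand and match.} Expanding the wedge products, the contributions sort as follows:
\begin{itemize}
    \item $\xi^\ell\wedge\xi^m$ with $\mu$-coefficients give the $\frac{4}{\mr^6}\mu_1\diff\mu_2\wedge\diff\mu_3$ line;
    \item $\xi^\ell\wedge\sigma^\ell$ gives the $\diff\mu_\ell\wedge\diff^c_\ell\upsilon$ line;
    \item $\sigma^0\wedge\sigma^\ell$ and $\sigma^m\wedge\sigma^n$, weighted by $\mu$ through the cross-term $\sigma\cdot\xi\mu$ from $\diff(\mr^2)\wedge\eta$ and from $\omega^R$, give the two $\upsilon^{-1}$ lines.
\end{itemize}
All $\xi^0$ terms must cancel, which fixes the normalisation.

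The main obstacle is the sign and factor bookkeeping in Steps 2–4: the orientation of right versus left multiplication, the convention $\diff^c_\ell\upsilon = \diff\upsilon\circ I_\ell^{-1}$, and the factors of $2$ between $\mu$ and $\mr$. I would fix these by testing the final identity at a point such as $q_2=0$, $q_1=1+\epsilon$, where all quantities are explicit.
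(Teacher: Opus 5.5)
Your Step 1 is correct and is also the paper's starting point. With $\omega^R = g^{\m{st}}(R_{\iq}\cdot,\cdot)$ one has $-\iq\diff\mathfrak{a} = -\mr^{-4}\diff(\mr^2)\wedge g^{\m{st}}(\xi,\cdot) + 2\mr^{-2}\omega^R$. Since $\iota_\xi\diff\mathfrak{a}=0$, the curvature is $2\mr^{-2}\omega^R$ restricted to $\mathcal{H}\oplus\mathcal{E}$. You are also right that in this proposition the paper's $\xi^0$ means $\mr^{-2}g^{\m{st}}(\xi,\cdot)=-\iq\mathfrak{a}$, and the relations $R_{\iq}\xi_\ell=-\lambda_\ell\partial_{\mr}$ are what the paper uses to compute $\iota_{\lambda\xi}\diff\mathfrak{a}$.

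\textbf{The gap is Step 2.} The identity $\omega^R = -\sum_\ell \xi^\ell\wedge g^{\m{st}}(\lambda_\ell\partial_{\mr},\cdot)$ would require the eight vectors $\xi_\ell,\lambda_\ell\partial_{\mr}$ to be orthonormal, and they are not. For instance $g^{\m{st}}(\xi_1,\partial_{\mr}) = \mr^{-1}\diff\mu_1(\partial_{\mr}) = 2\mu_1/\mr^2$, and in general $g^{\m{st}}(\xi_m,\lambda_\ell\partial_{\mr})=\pm 2\mu_n/\mr^2$ for $\ell\neq m$.

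For an orthonormal frame $\{e_a\}$ the correct identity is $\omega^R=\tfrac12\sum_a e^a\wedge\iota_{e_a}\omega^R$. Your formula keeps only the contractions with the $\xi_\ell$. It therefore doubles the $(\mathcal{V}\oplus\mathcal{H})\wedge(\mathcal{V}\oplus\mathcal{H})$ block, keeps the mixed block, and discards the $\mathcal{E}\wedge\mathcal{E}$ block. Concretely:
\begin{itemize}
\item It gives $\omega^R(\xi_2,\xi_3)=4\mu_1/\mr^2$ instead of $2\mu_1/\mr^2$, hence $8\mu_1/\mr^6$ in the first line.
\item Every term contains some $\xi^\ell$, which vanishes on $\mathcal{E}$, so the whole expression vanishes on $\mathcal{E}\times\mathcal{E}$.
\item The two $\upsilon^{-1}$ lines are exactly the $\sigma^a\wedge\sigma^b$ components, i.e.\ the curvature of the Eguchi--Hanson fibres, which is nonzero once $\mu\neq 0$. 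They cannot come out of your Step 4.
\item The term $\diff(\mr^2)\wedge g^{\m{st}}(\xi,\cdot)$ is a multiple of $g^{\m{st}}(\xi,\cdot)$, so it cannot produce them either.
\end{itemize}
Your $\xi^0$-cancellation check would in fact fail, leaving $2\mr^{-6}\,\diff|\mu^{\m{st}}|^2\wedge g^{\m{st}}(\xi,\cdot)$. No choice of global sign or normalisation repairs this.

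\textbf{What is missing} is a separate computation of the fibre block. There are two ways to do it.
\begin{itemize}
\item Compute the missing contractions directly. From $\partial_{\mr}^{\mathcal{H}}=2\mr^{-2}\sum_m\mu_m\xi_m$ one gets $R_{\iq}\sigma_0=\frac{\mr^2}{2\sqrt{\upsilon}}\bigl(\xi_0+2\mr^{-2}\sum_m\mu_m\lambda_m\partial_{\mr}\bigr)$. Hence $\omega^R(\sigma_0,\sigma_1)=-\omega^R(\sigma_2,\sigma_3)=2\mu_1/\mr^2$, and cyclically. Multiplying by $2\mr^{-2}$ and substituting $\sigma^0=(2\mr\sqrt{\upsilon})^{-1}\diff\upsilon$, $\sigma^\ell=(2\mr\sqrt{\upsilon})^{-1}\diff^c_\ell\upsilon$ gives exactly the third and fourth lines.
\item Argue as the paper does. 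The contractions with $\lambda\xi$ give the first two lines. Next, $\iota_{\partial_{\mr}}\diff\mathfrak{a}=0$, because $R_{\iq}\partial_{\mr}=\xi_0$ is killed on $\mathcal{H}\oplus\mathcal{E}$. This gives $\iota_{\partial^{\mathcal{E}}_{\mr}}\diff\mathfrak{a}|_{\mathcal{E}}=-\iota_{\partial^{\mathcal{H}}_{\mr}}\diff\mathfrak{a}|_{\mathcal{E}}$, which is read off from the mixed components and yields the $\sigma^0\wedge\sigma^\ell$ terms. Finally, anti-self-duality of the curvature along the fibres determines the $\sigma^j\wedge\sigma^k$ terms.
\end{itemize}
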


\begin{proof}
    On $\P$, $\xi^0 = \tfrac{1}{\mr^2}(\Re(q_1 \iq d\overline{q}_1 + q_2 \iq \diff \overline{q}_2))$ and thus 
    \begin{equation*}
        \diff \xi^0 = - \frac{1}{\mr^2} \diff (\mr^2) \wedge \xi^0 + \tfrac{1}{\mr^2} \Re(\diff q_1 \iq \wedge \diff \overline{q}_1 + \diff q_2 \iq \wedge \diff \overline{q}_2)
    \end{equation*}
    But since $\iota_\xi \diff \mathfrak{a} = 0 = \iota_\xi \diff \xi^0$, we deduce that
    \begin{equation*}
        \diff \xi^0 = \frac{1}{\mr^2} \left. (\Re(\diff q_1 \iq \wedge \diff \overline{q}_1 + \diff q_2 \iq \wedge \diff \overline{q}_2) \right|_{\mathcal{H} \oplus \mathcal{E}} .
    \end{equation*}
    Notice that if $\dot q = (\dot q_1, \dot q_2)$ we have 
    \begin{equation*}
        \iota_{\dot q}  \Re(\diff q_1 \iq \wedge \diff \overline{q}_1 + \diff q_2 \iq \wedge \diff \overline{q}_2) = 2 \Re(\dot{q}_1 \iq \diff \overline{q}_1 + \dot{q}_2 \iq \diff \overline{q}_2).
    \end{equation*}
    From this observation and \eqref{eq: genvfxi}, we deduce that if $\lambda \in \Im(\H)$, 
    \begin{align*}
        \iota_{\lambda \xi} \diff \xi^0 & = - \frac{2}{\mr^2} \left. \Re(\lambda q_1 \diff \overline{q}_1 + \lambda q_2 d\overline{q}_2) \right|_{\mathcal{E}\oplus \mathcal{H}} \\
            & = - \frac{2}{\mr} \left.  g^\m{st}(\lambda \partial_{\mr}, \cdot)\right|_{\mathcal{E}\oplus \mathcal{H}} \\
            & = - \frac{2}{\mr}(g^\m{st}(\lambda \partial_{\mr}^\mathcal{E}, \cdot) + g^\m{st}((\lambda \partial_{\mr})^\mathcal{H}, \cdot)) .
    \end{align*}
    On the one hand,
    \begin{equation*}
        g^\m{st}(\lambda \partial_{\mr}^\mathcal{E}, \cdot) = \frac{2\sqrt{\upsilon}}{\mr^2} \sigma^0 \circ \lambda^{-1} = \frac{1}{\mr^3} \diff^c_\lambda \upsilon,
    \end{equation*}
    and on the other hand, one sees from \eqref{eq: Projection of eta onto H} that
    \begin{align*}
        (I\partial_r)^\mathcal{H} & = \frac{2}{\mr^3}(\mu^\m{st}_2 K\xi - \mu_3^\m{st} J \xi) , \\
        (J\partial_r)^\mathcal{H} & = \frac{2}{\mr^3}(\mu^\m{st}_3 I\xi - \mu_1^\m{st} K \xi) , \\
        (K\partial_r)^\mathcal{H} & = \frac{2}{\mr^3}(\mu^\m{st}_1 J\xi - \mu_2^\m{st} I \xi) .
    \end{align*}
    These computations account for the first two lines of the expression of $\diff \xi^0$. For the third and fourth lines, remark that 
    \begin{equation*}
        \iota_{\partial_{\mr}} \diff \xi^0 = \frac{2}{\mr^3} \left. \Re(q_1 \iq \diff \overline{q}_1 + q_2 \iq \diff \overline{q}_2) \right|_{\mathcal{H} \oplus \mathcal{E}} = \frac{2}{\mr^3} \left . g^\m{st}(\xi,\cdot) \right|_{\mathcal{H} \oplus \mathcal{E}} = 0 
    \end{equation*}
    and therefore 
    \begin{align*}
        \left. \iota_{\partial_{\mr}^\mathcal{E}} \diff \xi^0 \right|_{\mathcal{E}} = - \left. \iota_{\partial_{\mr}^\mathcal{H}} \diff \xi^0 \right|_{\mathcal{E}} & = - \frac{2}{\mr^3} \left. (\mu_1 \diff \xi^0(I\xi,\cdot) + \mu_2 \diff \xi^0(J\xi,\cdot) + \mu_3 \diff \xi^0(K\xi,\cdot) ) \right|_{\mathcal{E}} \\
        & = \frac{4}{\mr^7} (\mu_1 \diff^c_1 \upsilon + \mu_2 \diff^c_2 \upsilon + \mu_3 \diff^c_3 \upsilon) .
    \end{align*}
    Since $\partial_{\mr}^\mathcal{E} = \tfrac{2\sqrt{\upsilon}}{\mr^2} \sigma_0$ and $\sigma^0 = \tfrac{1}{2\mr \sqrt{\upsilon}} \diff \upsilon$ we deduce the third line. Finally, the restriction of $\diff \xi^0 = - \iq \diff \mathfrak{a}$ to $\mathcal{E}$ is anti-self-dual\footnote{This is because the restriction of $\diff \mathfrak{a}$ to the Eguchi--Hanson fibres must be of type $(1,1)$ with respect to all the quaternionic complex structures, which amounts to saying that it is anti-self-dual for the associated metric.}, which yields the desired result.
\end{proof}

We will only need the restriction to $\mZ_s \cong \mZ_+$ of the previous expressions, where in particular $\mu_1 = h = s$ and $\diff \mu_1 = 0$ (where we assume $s \geq 0$ throughout). We have seen in the previous section that under the identification of $\mZ_s$ with $\mZ_+$, the fibration $\zeta_s$ can be identified with the fixed holomorphic fibration $\zeta \colon \mZ_+ \to \C$ which maps $W \in \mZ_+ \setminus \mC_+ \cong \mZ^{\m{reg}} \subset \mathfrak{gl}(2,\C)$ to $\zeta(W) = \iq (w_{12}-w_{21})$. Following Li's notations in \cite{li2019new}, we will also denote by $\eta$ the squared modulus of $\zeta$:
\begin{equation}
    \eta \coloneqq |\zeta|^2 .
\end{equation}
Conveniently, the restriction of the function $\upsilon$ to $\mZ_s$ turns out to also be independent of $s$ under the identification $\mZ_s \cong \mZ_+$. Indeed, we have seen in \S\ref{subsec: Relation to the flop} that $\mr_s^2 = 2 \sqrt{R^2+s^2}$, and therefore $4R^2 + 4s^2 = \mr_s^4 = 4(\upsilon+\eta+s^2)$, whence 
\begin{equation*}
    \upsilon = R^2-\eta
\end{equation*}
is also independent of $s$. In the remainder of this section, it will be convenient to keep denoting by $\mr_s$ the function satisfying $\mr^2_s =  2 \sqrt{\upsilon+\eta +s^2}$, which we now consider as a family of functions on $\mZ_+$ depending on the parameter $s$.

By the above computations, we have obtained on the open set $\{\upsilon > 0\} \subset \mZ_+$ an $s$-dependent family of co-frames, $\diff \upsilon, \diff^c \upsilon, \sigma^2_s, \sigma^3_s, \diff (\Re(\zeta)), \diff (\Im(\zeta))$ (where under the identification $\mZ_s \cong \mZ_+$, $\sigma_s^\ell$ corresponds to the restriction of $\sigma^\ell$ to $\mZ_s$), or after complexification, $\partial \upsilon, \overline{\partial} \upsilon, \sigma_s, \overline{\sigma}_s, \diff \zeta, \diff \overline{\zeta}$, where $\sigma_s = \sigma^2_s + \iq \sigma^3_s$. Let us emphasize that while $\partial \upsilon,  \overline{\partial}\upsilon, \diff \zeta, \diff \overline{\zeta}$ do not depend on $s$, the $1$-forms $\sigma_s, \overline{\sigma}_s$ genuinely do, because the hyperkähler complex structures $J$ and $K$ on the fibres of $\zeta$ depend on which Eguchi--Hanson space they are identified with. In any case, with these notations we can rewrite
\begin{equation*}
    \hat{\omega}_s = \sigma^0_s \wedge \sigma^1_s + \sigma^2_s \wedge \sigma^3_s = \frac{1}{4r^2_s \upsilon} \diff \upsilon \wedge \diff^c \upsilon + \sigma^2_s \wedge \sigma^3_s = \frac{\iq}{2r^2_s \upsilon} \partial \upsilon \wedge \overline{\partial} \upsilon + \frac{\iq}{2} \sigma_s \wedge \overline{\sigma}_s
\end{equation*}
since $\partial \upsilon = \tfrac{1}{2} (\diff \upsilon + \iq \diff^c \upsilon)$ and $\overline{\partial} \upsilon = \tfrac{1}{2}(\diff \upsilon - \iq \diff^c \upsilon)$. In particular,

\begin{cor}     \label{cor: Writing the forms conveniently}
	For any $s \geq 0$, the following identities hold on $\{\upsilon > 0\} \subset \mZ_+$:
    \begin{enumerate}[(i)]
        \item The Kähler form $\omega^{\m{srf}}_s$ satisfies
        \begin{equation*}
            \omega^{\m{srf}}_s = \frac{\iq}{2} \left( \frac{1}{\upsilon \mr_s^2} \partial \upsilon \wedge \overline{\partial} \upsilon + \sigma_s \wedge \overline{\sigma}_s + (1+\mr_s^{-2}) \diff \zeta \wedge \diff \overline{\zeta} \right) .
        \end{equation*}

        \item The curvature $\iq \Theta_s$ satisfies
        \begin{align*}
            \iq \Theta_s  & = - \frac{2\iq s}{\upsilon \mr^6_s} \partial \upsilon \wedge \overline{\partial} \upsilon + \frac{2 \iq s}{\mr^4_s} \sigma_s \wedge \overline{\sigma}_s - \frac{2 \iq s}{\mr^6_s} \diff \zeta \wedge \diff \overline{\zeta}  \\
    	    & ~~~~ + \frac{2\sqrt{\upsilon}}{\mr^5_s} (\diff \zeta \wedge \overline{\sigma}_s + \diff \overline{\zeta} \wedge \sigma_s) - \frac{2}{\sqrt{\upsilon}\mr^5_s} (\zeta \cdot  \partial \upsilon \wedge  \overline{\sigma}_s +  \overline{\zeta} \cdot \overline{\partial} \upsilon \wedge  \sigma_s).
        \end{align*}

        \item The function $\upsilon$ satisfies
        \begin{align*}
	       \iq \partial \overline{\partial} \upsilon & = \frac{\iq}{2\upsilon \mr_s^2} \left( \mr_s^2 + \frac{4\upsilon}{\mr_s^2} + \frac{4s^2}{\mr_s^2} \right) \partial \upsilon \wedge \overline{\partial} \upsilon + \frac{\iq}{2} \left( \mr_s^2 - \frac{4s^2}{\mr_s^2} \right) \sigma_s \wedge \overline{\sigma}_s \\
            & ~~~~ - \frac{2\upsilon \iq}{\mr_s^4} \diff \zeta \wedge \diff \overline{\zeta} + \frac{2 \iq}{\mr_s^4}(\zeta \partial \upsilon \wedge \diff \overline{\zeta} + \overline{\zeta} \diff \zeta \wedge \overline{\partial} \upsilon) \\
            & ~~~~ - \frac{2 s \sqrt{\upsilon}}{\mr_s^3} (\diff \zeta \wedge \overline{\sigma}_s + \diff \overline{\zeta} \wedge \sigma_s) + \frac{2s}{\sqrt{\upsilon}\mr_s^3} (\zeta \cdot  \partial \upsilon \wedge  \overline{\sigma}_s +  \overline{\zeta} \cdot \overline{\partial} \upsilon \wedge  \sigma_s) .
	\end{align*}
    \end{enumerate}
\end{cor}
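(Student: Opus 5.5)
We restrict the identities obtained on $\{\upsilon > 0\} \subset \M$ (the expression of $\omega^{\m{st}}_i$ in the corollary above, and the formula for $\diff \xi^0 = -\iq\diff\mathfrak{a}$ in the preceding proposition) to the level set $\mZ_s = \{\mu_1 = s\}$. We then express everything in terms of the complex co-frame $\partial\upsilon, \overline{\partial}\upsilon, \sigma_s, \overline{\sigma}_s, \diff\zeta, \diff\overline{\zeta}$. On $\mZ_s$ we have $\diff\mu_1 = 0$ and $\mu_2 + \iq\mu_3 = \zeta$. Since $\zeta$ is $I$-holomorphic, $\diff\mu_2 \wedge \diff\mu_3 = \frac{\iq}{2}\diff\zeta\wedge\diff\overline{\zeta}$. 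We also have $\sigma^0 = \frac{1}{2\mr_s\sqrt{\upsilon}}\diff\upsilon$ and $\sigma^1 = \sigma^0\circ I^{-1} = \frac{1}{2\mr_s\sqrt\upsilon}\diff^c\upsilon$, so $\diff^c_2\upsilon = 2\mr_s\sqrt\upsilon\,\sigma^2$ and $\diff^c_3\upsilon = 2\mr_s\sqrt\upsilon\,\sigma^3$, i.e.\ $\diff^c_2\upsilon + \iq \diff^c_3\upsilon = 2\mr_s\sqrt\upsilon\,\sigma_s$.

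For (i), we substitute $\hat\omega_s = \frac{\iq}{2\mr_s^2\upsilon}\partial\upsilon\wedge\overline\partial\upsilon + \frac{\iq}{2}\sigma_s\wedge\overline\sigma_s$, already computed just before the statement, into the definition \eqref{eq: SKRF ansatz}. This gives (i) immediately.

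For (ii), we take each line of the curvature formula in turn and set $\diff\mu_1=0$, $\mu_1 = s$:
\begin{itemize}
\item[--] The first line reduces to $\frac{4s}{\mr_s^6}\diff\mu_2\wedge\diff\mu_3 = \frac{2\iq s}{\mr_s^6}\diff\zeta\wedge\diff\overline\zeta$.
\item[--] The second line becomes $-\frac{4\sqrt\upsilon}{\mr_s^5}(\diff\mu_2\wedge\sigma^2 + \diff\mu_3\wedge\sigma^3)$. We rewrite it as a real combination of $\diff\zeta\wedge\overline\sigma_s$ and its conjugate using $\Re(\diff\zeta\wedge\overline\sigma_s) = \diff\mu_2\wedge\sigma^2 + \diff\mu_3\wedge\sigma^3$.
\item[--] The third line is $\frac{1}{\upsilon\mr_s^6}\diff\upsilon\wedge(s\,\diff^c\upsilon + 2\mr_s\sqrt\upsilon\,\Re(\overline\zeta\sigma_s))$. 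We use $\diff\upsilon\wedge\diff^c\upsilon = 2\iq\,\partial\upsilon\wedge\overline\partial\upsilon$.
\item[--] The fourth line involves $\diff^c_2\upsilon\wedge\diff^c_3\upsilon = 4\mr_s^2\upsilon\,\sigma^2\wedge\sigma^3 = 2\iq\mr_s^2\upsilon\,\sigma_s\wedge\overline\sigma_s$ for the $\mu_1 = s$ term. The remaining terms are $\mu_2\diff^c_3\upsilon\wedge\diff^c_1\upsilon + \mu_3\diff^c_1\upsilon\wedge\diff^c_2\upsilon$, which combine into $\zeta\,\diff^c\upsilon\wedge\overline\sigma_s$-type expressions.
\end{itemize}
We then convert $\diff\upsilon\wedge\sigma$ and $\diff^c\upsilon\wedge\sigma$ into $\partial\upsilon\wedge\overline\sigma_s$, $\overline\partial\upsilon\wedge\sigma_s$. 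The $(2,0)$ and $(0,2)$ parts must cancel, since $\Theta_s$ is of type $(1,1)$ on $\mZ_s$. This cancellation serves as a check on signs, and the surviving terms give (ii), after multiplying by $\iq\cdot\iq$ as required by $\iq\Theta_s = \iq\diff\mathfrak{a}_s = -\diff\xi^0$.

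For (iii), we use Proposition \ref{prop: Properties of reduction}(iii) in the form $\frac{\iq}{2}\partial\overline\partial(\mr_s^2) = \omega^{\m{red}}_s - s\iq\Theta_s$. The function $\upsilon$ satisfies $\mr_s^4 = 4(\upsilon+\eta+s^2)$, so $\upsilon = \frac14\mr_s^4 - \eta - s^2$. Hence
\[
\iq\partial\overline\partial\upsilon = \tfrac{\iq}{2}\mr_s^2\,\partial\overline\partial(\mr_s^2) + \tfrac{\iq}{2}\partial(\mr_s^2)\wedge\overline\partial(\mr_s^2) - \iq\,\diff\zeta\wedge\diff\overline\zeta .
\]
Here $\partial(\mr_s^2) = \frac{2}{\mr_s^2}(\partial\upsilon + \overline\zeta\diff\zeta)$, obtained by differentiating $\mr_s^4$. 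We insert (ii) and the expression $\omega_s^{\m{red}} = \hat\omega_s + \mr_s^{-2}\frac{\iq}{2}\diff\zeta\wedge\diff\overline\zeta$, then expand and collect coefficients in the co-frame. For example, the $\partial\upsilon\wedge\overline\partial\upsilon$ coefficient is
\[
\frac{\mr_s^2}{2\upsilon\mr_s^2} + \frac{2s^2\mr_s^2}{\upsilon\mr_s^6} + \frac{2}{\mr_s^4} = \frac{1}{2\upsilon\mr_s^2}\Bigl(\mr_s^2 + \frac{4s^2}{\mr_s^2} + \frac{4\upsilon}{\mr_s^2}\Bigr),
\]
as claimed. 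The $\diff\zeta\wedge\diff\overline\zeta$ coefficient simplifies using $\eta = \frac14\mr_s^4 - \upsilon - s^2$.

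The main obstacle is the bookkeeping in (ii). The identification of $\mu_2\diff^c_3\upsilon\wedge\diff^c_1\upsilon + \mu_3\diff^c_1\upsilon\wedge\diff^c_2\upsilon$, together with the third line, in terms of $\zeta\,\partial\upsilon\wedge\overline\sigma_s$ requires careful tracking of the relations $\diff^c_j = \diff\circ J^{-1}$ and of quaternionic signs. We will verify it by evaluating both sides on the frame $\sigma_0,\dots,\sigma_3$ and $J\xi, K\xi$.
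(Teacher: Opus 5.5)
Your proposal is correct and follows the paper's proof. For (i) and (ii), the paper only says these are a rewriting of the frame expressions for $\hat\omega$ and $\diff\xi^0$ restricted to $\mZ_s$; you carry out that restriction explicitly, and your bookkeeping does produce the stated terms (for instance $\diff\upsilon\wedge\Re(\overline\zeta\sigma_s)+\diff^c\upsilon\wedge\Im(\overline\zeta\sigma_s)=\zeta\,\partial\upsilon\wedge\overline\sigma_s+\overline\zeta\,\overline\partial\upsilon\wedge\sigma_s$). For (iii), your argument is exactly the paper's: it uses $\upsilon=\tfrac14\mr_s^4-\eta-s^2$, the formula for $\partial(\mr_s^2)$, and $\tfrac{\iq}{2}\partial\overline\partial(\mr_s^2)=\omega^{\m{red}}_s-s\iq\Theta_s$.
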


\begin{proof}
	The first two points are just rewriting the previous expressions under the identification $\mZ_+ \cong \mZ_s$. For the proof of the third point, we have on $\mZ_s$
	\begin{equation*}
		\partial \overline{\partial} \upsilon = \partial \overline{\partial} (\tfrac{1}{4} \mr^4_s - |\zeta|^2 - s^2) = \tfrac{1}{4} \partial \overline{\partial} (\mr^4_s) - \diff \zeta \wedge \diff \overline{\zeta} .
	\end{equation*}
	Moreover,
	\begin{equation*}
		\partial \overline{\partial} (\mr^4_s) = 2 \partial(\mr^2_s) \wedge \overline{\partial}(\mr^2_s) + 2 \mr^2_s \partial \overline{\partial} (\mr^2_s) ,
	\end{equation*}
	and since $\mr^2_s = 2 \sqrt{\upsilon + |\zeta|^2 + s^2}$ we obtain
	\begin{equation*}
		\partial(\mr^2_s) = \frac{2}{\mr^2_s} (\partial \upsilon + \overline{\zeta} \diff \zeta), ~~ \overline{\partial}(\mr^2_s) = \frac{2}{\mr^2_s} (\overline{\partial} \upsilon + \zeta \diff \overline{\zeta})
	\end{equation*}
	From this we deduce that
	\begin{align*}
		\iq \partial \overline{\partial} \upsilon & = \mr_s^2 \frac{\iq}{2} \partial \overline{\partial}(\mr^2_s) + \frac{2\iq}{\mr_s^4} (\partial \upsilon + \overline{\zeta} \diff \zeta) \wedge (\overline{\partial} \upsilon + \zeta \diff \overline{\zeta}) - \iq \diff \zeta \wedge \diff \overline{\zeta} .
	\end{align*}
	Now by Proposition \ref{prop: Properties of reduction}, $\frac{\iq}{2}\partial \overline{\partial} (\mr^2_s) = -s\iq \Theta_s + \omega^\m{red}_s$, and thus the proposition follows after adding up all the terms.
\end{proof}

One may deduce several useful identities from this proposition. First, the volume form $\vol_+ = (1+\mr_s^{-2})^{-1} (\omega^{\m{srf}}_s)^3$ is given by
\begin{equation}    \label{eq: Volume form in invariant frame}
    \vol_+ = - \frac{3\iq}{4 \upsilon \mr_s^2}\partial \upsilon \wedge \overline{\partial} \upsilon \wedge \diff \zeta \wedge \diff \overline{\zeta} \wedge \sigma_s \wedge \overline{\sigma}_s .
\end{equation}
In addition, the square of the Kähler form reads
\begin{multline}        \label{eq: Square of Kahler form}
    \omega^{\m{srf}}_s\wedge \omega^{\m{srf}}_s = -\frac{1}{2} \Big( \frac{1}{\upsilon \mr_s^2} \partial \upsilon \wedge \overline{\partial} \upsilon \wedge \sigma_s \wedge \overline{\sigma}_s + \left( \frac{1}{\upsilon \mr_s^2} + \frac{1}{\upsilon \mr^4_s}\right) \partial \upsilon \wedge \overline{\partial} \upsilon \wedge \diff \zeta \wedge \diff \overline{\zeta}  \\  + \left(1 + \frac{1}{\mr_s^2}\right) \diff \zeta \wedge \diff \overline{\zeta} \wedge \sigma_s \wedge \overline{\sigma}_s  \Big) .
\end{multline}
Examining the expression of $\iq \partial \overline{\partial}\upsilon$, we see that the wedge product of the last three terms with $(\omega^{\m{srf}}_s)^2$ vanish. Thus only the first three terms contribute to $(\omega^{\m{srf}}_s)^2 \wedge \iq \partial \overline{\partial}\upsilon$, and we see that
\begin{multline}        \label{eq: Product of omega squared and iddbar upsilon}
    \omega_s^{\m{srf}} \wedge \omega_s^{\m{srf}}  \wedge \iq \partial \overline{\partial} \upsilon =  \frac{1}{3} \left(-\frac{4\upsilon}{\mr^4_s} + \left(1 + \frac{1}{\mr_s^2} \right) \left( \mr^2_s + \frac{4\upsilon}{\mr^2_s} + \frac{4s^2}{\mr^2_s} + \mr^2_s - \frac{4s^2}{\mr_s^2}\right) \right) \vol_+ \\
        = \frac{2}{3} \left( \mr_s^2 + 1 + \frac{2\upsilon}{\mr^2_s} \right) \vol_+.
\end{multline}
Another straightforward computation yields
\begin{align*}
    (\iq \partial \overline{\partial}\upsilon)^2 = & - \frac{4\upsilon+2\eta}{\upsilon \mr^2_s} \partial \upsilon \wedge \overline{\partial} \upsilon \wedge \sigma_s \wedge \overline{\sigma}_s + \frac{4}{\mr^4_s} \partial \upsilon \wedge \overline{\partial} \upsilon \wedge \diff \zeta \wedge \diff \overline{\zeta} \\
        & + \frac{2\upsilon}{\mr^2_s} \sigma_s \wedge \overline{\sigma}_s \wedge \diff \zeta \wedge \diff \overline{\zeta} - \frac{4s \iq }{\sqrt{\upsilon}\mr^3_s} \partial \upsilon \wedge \overline{\partial} \upsilon \wedge (\diff \zeta \wedge \overline{\sigma}_s + \diff \overline{\zeta} \wedge \sigma_s ) \\
        & - \frac{2}{\mr^2_s} \sigma_s \wedge \overline{\sigma}_s \wedge (\zeta \partial \upsilon \wedge \diff \overline{\zeta} + \overline{\zeta} \diff \zeta \wedge \overline{\partial} \upsilon ) .
\end{align*}
and we can deduce that
\begin{equation}        \label{eq: Wedge prod of omega and ddbar upsilon squared}
    \omega^{\m{srf}}_s \wedge (\iq \partial \overline{\partial}\upsilon)^2  = \frac{4}{3} \left((2\upsilon+\eta) + \frac{\eta-\upsilon}{\mr^2_s} \right) \vol_+ .
\end{equation}

    \subsection{The distance function and uniform comparisons}      \label{subsec: The distance function}

Let us pick a base-point $o_+ \in \mC_+ \subset \mZ_+$.

\begin{prop}    \label{prop: distance estimate}
    If $s > 0$, $\omega^{\m{srf}}_s$ is a complete Kähler metric on $\mZ_+$. Moreover, there exist constants $\kappa > 1$ and $D > 0$ independent of $s$ such that, on the domain $\{\upsilon + \eta \geq 1\}$, 
    \begin{equation*}
        \kappa^{-1} (\eta^{1/2} + \upsilon^{1/4}) - D s^{1/2} \leq \mathrm{dist}_{\omega^{\m{srf}}_s}(o_+, \cdot) \leq \kappa (\eta^{1/2} + \upsilon^{1/4}) + D s^{1/2}.
    \end{equation*}
\end{prop}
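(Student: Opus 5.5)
Completeness for $s>0$ follows from Proposition \ref{prop: Properties of reduction}(iv): $g^{\m{red}}_s$ is complete and $\omega^{\m{srf}}_s \geq \omega^{\m{red}}_s$, so any curve of finite $\omega^{\m{srf}}_s$-length has finite $g^{\m{red}}_s$-length. For the distance bounds I will first reduce to $s=0$ on the region far from the exceptional curve, and then compare with explicit model functions using the frame of Corollary \ref{cor: Writing the forms conveniently}.

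\textbf{Length of the gradients.} From Corollary \ref{cor: Writing the forms conveniently}(i), the dual metric gives
\begin{equation*}
|\diff\upsilon|^2_{\omega^{\m{srf}}_s} \approx \upsilon\,\mr_s^2, \qquad |\diff\zeta|^2_{\omega^{\m{srf}}_s} \approx (1+\mr_s^{-2})^{-1},
\end{equation*}
up to universal constants. Here $\mr_s^2 = 2\sqrt{\upsilon+\eta+s^2} \approx (\upsilon+\eta+s^2)^{1/2}$. For $f = \eta^{1/2}+\upsilon^{1/4}$ this yields $|\diff \eta^{1/2}| \leq 1$ and
\begin{equation*}
|\diff \upsilon^{1/4}| \approx \upsilon^{-3/4}\,\upsilon^{1/2}\,\mr_s \approx \upsilon^{-1/4}(\upsilon+\eta+s^2)^{1/4}.
\end{equation*}
This is bounded only when $\eta \lesssim \upsilon$, so the Lipschitz estimate for $f$ does not hold uniformly.

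\textbf{Upper bound.} I will avoid a global Lipschitz bound and construct explicit paths instead. Using the $\Un(2)$ action (cohomogeneity two) together with the explicit $\mZ^{\m{reg}}$ description, any point can be joined to the curve $\mC_+$ in two legs:
\begin{enumerate}[(a)]
\item first move in the fibre directions at fixed $\zeta$, within the Eguchi--Hanson fibre, decreasing $\upsilon$ to $0$;
\item then move horizontally along $\{\upsilon=0\}$, decreasing $|\zeta|$.
\end{enumerate}
On leg (a) the fibre metric is $\hat\omega_s$, which is ALE with radius $\mr_s \approx (\upsilon+\eta+s^2)^{1/4}$ restricted to fixed $\eta$. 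The fibre distance from the point to the core is therefore $\lesssim \upsilon^{1/4}+ (\text{size of core}) \lesssim \upsilon^{1/4}+\eta^{1/4}+s^{1/2}$, and $\eta^{1/4}\leq 1+\eta^{1/2}$.

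On leg (b) the horizontal direction has $|\partial_\zeta|^2 \approx 1+\mr_s^{-2} \leq C$ on $\{\upsilon+\eta\ge1\}$. Near the origin $\mr_s^{-2} \lesssim s^{-1}$ is large, but the relevant region has size $\lesssim s$ in $|\zeta|$, contributing length $\lesssim s^{1/2}$. Finally, the curve $\mC_+$ has area $\sim s$ and diameter $\lesssim s^{1/2}$. Summing the three contributions gives the upper bound.

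\textbf{Lower bound.} I will use the two functions separately as distance-like functions. First, $\eta^{1/2}=|\zeta|$ is $1$-Lipschitz, since $|\diff\zeta|\le1$. Second, I will show that $\rho=\mr_s$ itself is uniformly Lipschitz:
\begin{equation*}
\diff(\mr_s^2) = \frac{2}{\mr_s^2}\bigl(\diff\upsilon + 2\Re(\overline\zeta\,\diff\zeta)\bigr),
\end{equation*}
which gives $|\diff \mr_s^2| \lesssim \mr_s^{-2}\bigl(\upsilon^{1/2}\mr_s + \eta^{1/2}\bigr) \lesssim \mr_s$, hence $|\diff\mr_s|\lesssim 1$. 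Since $\mr_s(o_+) \approx s^{1/2}$ and $\mr_s \gtrsim \upsilon^{1/4}$, we get
\begin{equation*}
\mathrm{dist} \ \gtrsim\ \max\bigl(\eta^{1/2},\ \upsilon^{1/4}\bigr) - C s^{1/2} - C,
\end{equation*}
where the constant $C$ from $|\zeta(o_+)|=0$ is harmless. An additive constant can be absorbed into $\kappa$ on $\{\upsilon+\eta\ge1\}$ because $f \ge c>0$ there; to keep $D$ uniform in $s$, this absorption should be done before adding $Ds^{1/2}$.

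\textbf{Main obstacle.} The main difficulty is uniformity in $s$ near the core, where $\mr_s^{-2}\sim s^{-1}$ blows up. I would handle it by comparing with the $s=0$ cone-like model after rescaling by $s$ in the region $\{\upsilon+\eta\lesssim s^2\}$, so that all lengths there are $O(s^{1/2})$.
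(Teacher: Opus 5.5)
This is essentially the paper's argument, with two bookkeeping steps that fail as written but are easy to repair. As in the paper, you get completeness from $\omega^{\m{srf}}_s \geq \omega^{\m{red}}_s$. Your lower bound uses that $|\zeta|$ and $\mr_s$ are Lipschitz, together with $\mr_s \geq \sqrt{2}\,\upsilon^{1/4}$ and $\mr_s(o_+) = \sqrt{2s}$. Your upper bound uses the same three-leg path: radial in the Eguchi--Hanson fibre, horizontal along the cores $\{\upsilon = 0\}$, then inside $\mC_+$. The only real difference is how you show $\mr_s$ is Lipschitz. The paper gets $|\diff \mr_s|_{\omega^{\m{srf}}_s} \leq 1$ for free from the Riemannian quotient $\P_s \to \mZ_s$ and the triangle inequality in $\H^2$; you compute it from the frame of Corollary \ref{cor: Writing the forms conveniently}.

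\textbf{Gradient estimate.} The last inequality $\mr_s^{-2}(\upsilon^{1/2}\mr_s + \eta^{1/2}) \lesssim \mr_s$ fails where $\mr_s \ll 1$, i.e.\ near $\mC_+$ for small $s$. The fix is to use your own bound $|\diff\zeta| \approx \mr_s(1+\mr_s^2)^{-1/2}$ instead of $|\diff\zeta| \leq 1$. Together with $\upsilon^{1/2}, \eta^{1/2} \leq \mr_s^2/2$, this gives $|\diff \mr_s| \lesssim 1$ globally, so no additive constant enters the lower bound. That matters, because your stated reason for absorbing such a constant is wrong for lower bounds. An estimate $\mathrm{dist} \geq a f - C$ cannot be absorbed into $\kappa$ merely because $f = \eta^{1/2}+\upsilon^{1/4} \geq c$ on $\{\upsilon+\eta\geq 1\}$; that argument only works for the upper bound.

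\textbf{Leg (b).} Your two cases, $|\zeta| \geq 1$ and $|\zeta| \lesssim s$, miss the range $s \lesssim |\zeta| \lesssim 1$. There $\mr_s^{-2} \approx (2|\zeta|)^{-1}$, so the speed is about $|\zeta|^{-1/2}$ and the length contribution is $O(1)$, not $O(s^{1/2})$. The paper's estimate
\begin{equation*}
\int_0^{|\zeta|} \Bigl(1 + \tfrac{1}{2\sqrt{\tau^2+s^2}}\Bigr)^{1/2}\diff\tau \leq |\zeta| + \sqrt{2}\,|\zeta|^{1/2}
\end{equation*}
handles the whole leg uniformly in $s$. The resulting $\eta^{1/4}$ term is then absorbable on $\{\upsilon+\eta\geq 1\}$, since there $\eta^{1/4} \leq 1 + \eta^{1/2}$ and $f \geq c$.

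With these fixes, the rescaling argument in your ``Main obstacle'' paragraph is unnecessary: the explicit integrals are already uniform in $s$.
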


\begin{proof}
    Let us identify $\mZ_+$ with $\mZ_s$ and denote by $o_s$ the corresponding base point in $\mC_s$. We can lift $o_s$ to a base-point $\tilde{o}_s$ of the $\Un(1)$-bundle $\P_s$ over $\mZ_s$. Completeness is clear since $\omega^{\m{srf}}_s \geq \omega_s^\m{red}$ and $\omega^\m{red}_s$ is complete. In addition, we obtain the inequality 
    \begin{equation*}
        \mathrm{dist}_{\omega^{\m{srf}}_s}(o_s,\cdot) \geq \dist_{\omega_s^\m{red}}(o_s, \cdot) .
    \end{equation*}
    On the other hand, if $x \in \mZ_s$ is another point and $\tilde{x} \in \P_s \subset \H^2$ is a lift of $x$, that is, $x = [\tilde{x}]$, then it is clear that\footnote{This follows from the general fact that, in Riemannian quotients, horizontal lifts of geodesics remain geodesics, cf. \cite{oneil1967submersions}.}
    \begin{equation*}
        \dist_{\omega_s^\m{red}}(o_s,x) \geq \inf_{e^{\iq \theta} \in \Un(1)} \dist_{\P_s}(\tilde{o}_s,\tilde{x} \cdot e^{\iq \theta}) \geq \inf_{e^{\iq \theta} \in \Un(1)} \dist_{\H^2}(\tilde{o}_s,\tilde{x} \cdot e^{\iq \theta}) .
    \end{equation*}
    On the other hand, for any $e^{\iq \theta} \in \Un(1)$, 
    \begin{equation*}
        \dist_{\H^2}(\tilde{o}_s,\tilde{x} \cdot e^{\iq \theta}) \geq |\tilde{x} \cdot e^{\iq \theta}| - |\tilde{o}_s| = \mr_s(x) - \mr_s(o_s) .
    \end{equation*}
    Thus we obtain the inequality
    \begin{equation*}
        \dist_{\omega^{\m{srf}}_s}(o_s,\cdot) \geq \mr_s - \mr_s(o_s) = \sqrt{2}((\upsilon+\eta+s^2)^{1/4} - \sqrt{s}) \geq \sqrt{2} (\upsilon^{1/4} - s^{1/2}) .
    \end{equation*}
    On the other hand, we also have $\omega^{\m{srf}}_s \geq \frac{\iq}{2} \diff \zeta \wedge \diff \overline{\zeta}$, so that 
    \begin{equation*}
        \dist_{\omega^{\m{srf}}_s}(o_s,\cdot) \geq |\zeta| - |\zeta(o_s)| = \eta^{1/2} .
    \end{equation*}
    Thus we obtain the first inequality, $\dist_{\omega^{\m{srf}}_s}(o_s,\cdot) \geq \frac{1}{2}(\sqrt{2} \upsilon^{1/4} + \eta^{1/2} - \sqrt{2s})$.

    To obtain the other inequality, we take a specific path from $x$ to $o_s$ in $\mZ_s$. First, we descend from $x$ to $y_1$ along the radial geodesic inside the fibre $\mu^{-1}(\zeta(x)+\iq s)$. This path has length
    \begin{equation*}
        \frac{1}{2\sqrt{2}}\int_0^\upsilon \frac{\diff \tau}{\sqrt{\tau} (\tau+\eta+s^2)^{1/4}} \leq \frac{1}{2\sqrt{2}} \int_0^\upsilon \frac{\diff \tau}{\tau^{3/4}} \leq \sqrt{2} \upsilon^{1/4} .
    \end{equation*}
    Then, we take from $y_1$ a horizontal lift of the path $(1-t)\zeta(x)$ from $\zeta(x)$ to $0$ in $\C$ inside the locus $\{\mr_s^2 = 2\sqrt{\eta+s^2}\}$, e.g. the locus formed by the spheres, with end-point $y_2 \in \mC_s$. The restriction of the metric associated with $\omega^{\m{srf}}_s$ to this locus is a Riemannian fibration over the metric on $\C$ which can be written in radial coordinates as
    \begin{equation*}
        \left( 1+ \frac{1}{2\sqrt{\eta+s^2}} \right) (\diff \eta^2 + \diff \theta^2)
    \end{equation*}
    Therefore, the velocity along the horizontal lift of the path $(1-t)\zeta(x)$ is given by
    \begin{align*}
    \left(1+\frac{1}{2\sqrt{(1-t)^2|\zeta(x)|^2+s^2}}\right)^{1/2} |\zeta(x)| .
    \end{align*}
    Therefore the length of this piece of the path is given by
    \begin{align*}
        |\zeta(x)| \int_0^1 \left(1+\frac{1}{2\sqrt{t^2|\zeta(x)|^2+s^2}}\right)^{1/2} \diff t & =\int_0^{|\zeta(x)|}\left(1+\frac{1}{2\sqrt{\tau^2+s^2}}\right)^{1/2}\diff \tau \\
            & \leq \int_0^{|\zeta(x)|} \left( 1 + \frac{1}{\sqrt{2} (\tau^2 + s^2)^{1/4}}\right) \diff \tau  \\
            & \leq |\zeta(x)| + \frac{1}{\sqrt{2}} \int_0^{|\zeta(x)|} \frac{\diff \tau}{\tau^{1/2}}. \\
            & \leq |\zeta(x)| + \sqrt{2}|\zeta(x)|^{1/2} = \eta^{1/2} + \sqrt{2} \eta^{1/4} .
    \end{align*}
    Finally, we can take a geodesic from $y_2$ to $o_s$ inside the rational curve $\mC_s$, whose length is uniformly bounded by $\m{diam}(\mC_s) = D s^{1/2}$ for some constant $D$ independent of $s$. Thus we obtain
    \begin{equation*}
        \dist_{\omega^{\m{srf}}_s}(o_s,\cdot) \leq \eta^{1/2} + \sqrt{2}(\upsilon^{1/4} + \eta^{1/4}) + D s^{1/2} .
    \end{equation*}
    Now in the region $\{\upsilon + \eta \geq 1\}$, there exists a constant $\kappa > 1$ such that
    \begin{equation*}
        \eta^{1/2} + \sqrt{2}(\upsilon^{1/4} +\eta^{1/4}) \leq \sqrt{2} + (1+\sqrt{2}) \eta^{1/2} + \sqrt{2} \upsilon^{1/4} \leq \kappa (\eta^{1/2} + \upsilon^{1/4})
    \end{equation*}
    which finishes the proof.
\end{proof}

\begin{rem}
    The distance estimate and its proof of course extend to the case $s = 0$ by considering the distance to the vertex $o \in \mZ$, i.e. $o = 0$ when we regard $\mZ \subset \mathfrak{gl}(2,\C)$. 
\end{rem}

Let us introduce the function
\begin{equation}
    \rho \coloneqq (\upsilon+\eta^2)^{\frac{1}{4}} .
\end{equation}
on $\mZ_+$. Then in the region $\{\upsilon + \eta \geq 1\} \subset \mZ_+$ we have $\rho \geq \frac{3}{4} > 0$, and $\rho$ is uniformly comparable to the distance function $\dist_{\omega^{\m{srf}}_s}(o_+,\cdot)$, where the constant of comparison is by the previous proposition independent of $s$. Henceforth, it will also be convenient to denote the function $\mr_0$ merely by $\mr$ to lighten notations; since from now on we will always be working on $\mZ_+$ and not think of it as embedded into $\M$, this should not create any confusions. In particular:
\begin{equation}
    \mr^4 \coloneqq \mr^4_0 = 4(\upsilon+\eta) = 4 R^2.
\end{equation}
In the remainder of this section, it will be important to make asymptotic estimates on quantities related to $\omega^{\m{srf}}_s$ in an $s$-independent way, at least in the limit where $s \to 0$. For this purpose we shall introduce the following notation:

\begin{Def}
    Let $F,H \colon [0,+\infty)^3 \to \R$ be two functions such that $H(s,\upsilon,\eta) \geq 0$ in the region $\{\upsilon+\eta \geq 1\}$. 
    We will use the notation
    \begin{equation*}
        F(s,\upsilon,\eta) = \ohat(H(s,\upsilon,\eta))
    \end{equation*}
    (or simply $F = \ohat(H)$) if, for any $s_0 > 0$, there exists a constant $C > 0$ such that
    \begin{equation*}
        \forall s \in [0,s_0], \forall \upsilon,\eta \geq 0, ~~ \upsilon + \eta \geq 1 \implies |F(s,\upsilon,\eta)| \leq C H(s,\upsilon,\eta) .
    \end{equation*}
    Moreover if $U \subseteq [0,\infty)^2$, we will say that $|F| = \ohat(H)$ in the region $U$ if the above condition is satisfied with the additional restriction that $(\upsilon,\eta) \in U$.
\end{Def}

\begin{rem}
    We will naturally use the notation $|F| = \ohat(H)$ when $F(s,\cdot)$, $H(s,\cdot)$ are $1$-parameter families of functions on $\mZ_+$ which are invariant under the action of $\Un(2)$, since they can be written as functions of $\upsilon$ and $\eta$ only.
\end{rem}

\begin{rem}
    We will reserve the notation $F = \mo(H)$ for the cases when $F,H$ are functions which depend on $\upsilon,\eta$ only but not on $s$.
\end{rem}

\begin{lem}     \label{lem: uniform comparisons}
    The following uniform comparisons hold:
    \begin{equation*}
        \mr_s = \ohat(\mr), ~~ \mr = \ohat(\mr_s), ~~ \rho = \mo(\mr^2), ~~ \rho^2 = \mo((1+\eta)^{1/2}\mr^2), ~~ \mr = \mo(\rho).
    \end{equation*}
    Moreover,
    \begin{equation*}
        |\sigma_s|_{\omega_s^{\m{srf}}} = \ohat(1), ~~ |\diff \zeta|_{\omega_s^{\m{srf}}} = \ohat(1), ~~ |\diff \upsilon|_{\omega_s^{\m{srf}}} = \ohat(\sqrt{\upsilon}\mr_s), ~~ |\partial \overline{\partial}\upsilon|_{\omega_s^{\m{srf}}} = \ohat(\mr_s^2),
    \end{equation*}
    and
    \begin{equation*}
        |\omega_s^{\m{srf}} \wedge \partial \overline{\partial} \upsilon \wedge (\zeta \partial \upsilon \wedge \diff \overline{\zeta} + \overline{\zeta} \diff \zeta \wedge \overline{\partial} \upsilon)|_{\omega_s^{\m{srf}}} = \ohat(\eta \mr^2) .
    \end{equation*}
\end{lem}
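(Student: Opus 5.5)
The plan is to treat the lemma as a collection of elementary estimates. Everything is reduced to explicit functions of $(s,\upsilon,\eta)$ on the region $\{\upsilon+\eta\geq 1\}$, using $\mr_s^4 = 4(\upsilon+\eta+s^2)$, $\mr^4 = 4(\upsilon+\eta)$, $\rho^4 = \upsilon+\eta^2$, and the frame description of $\omega^{\m{srf}}_s$ from Corollary \ref{cor: Writing the forms conveniently}.

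\textbf{Comparisons between $\mr_s$, $\mr$ and $\rho$.} For $s\in[0,s_0]$ and $\upsilon+\eta\geq 1$ we have
\begin{equation*}
    \upsilon+\eta \leq \upsilon+\eta+s^2 \leq (1+s_0^2)(\upsilon+\eta),
\end{equation*}
which gives $\mr_s=\ohat(\mr)$ and $\mr=\ohat(\mr_s)$. Next, $\rho^4=\upsilon+\eta^2\leq(\upsilon+\eta)^2$ because $\upsilon+\eta\geq1$ implies $\upsilon\leq \upsilon(\upsilon+\eta)$; hence $\rho=\mo(\mr^2)$. For the fourth comparison, $\rho^4\leq(1+\eta)(\upsilon+\eta)$, so $\rho^2\leq (1+\eta)^{1/2}(\upsilon+\eta)^{1/2}$, which is $\mo((1+\eta)^{1/2}\mr^2)$. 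Finally, $\upsilon+\eta\leq \upsilon+1+\eta^2$, and $\rho\geq 3/4$ on the region, so $\mr^4 = \mo(\rho^4)$.

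\textbf{Norms of the coframe.} Corollary \ref{cor: Writing the forms conveniently}(i) exhibits $\omega^{\m{srf}}_s$ as diagonal in the coframe $\partial\upsilon,\sigma_s,\diff\zeta$, with coefficients $(\upsilon\mr_s^2)^{-1}$, $1$ and $1+\mr_s^{-2}$. Reading off the dual metric gives
\begin{equation*}
    |\sigma_s| \sim 1, \qquad |\diff\zeta|^2 \sim (1+\mr_s^{-2})^{-1}\leq 1, \qquad |\partial\upsilon| \sim \sqrt{\upsilon}\,\mr_s,
\end{equation*}
up to fixed normalisation constants. Since $\diff\upsilon=\partial\upsilon+\overline\partial\upsilon$, the bound $|\diff\upsilon|_{\omega_s^{\m{srf}}}=\ohat(\sqrt{\upsilon}\mr_s)$ follows. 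These identities hold on the dense set $\{\upsilon>0\}$, and the bounds extend by continuity.

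\textbf{The Hessian term.} For $|\partial\overline\partial\upsilon|$, I take the expression in Corollary \ref{cor: Writing the forms conveniently}(iii) and bound each coefficient multiplied by the norms of the corresponding wedge products.

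The $\partial\upsilon\wedge\overline\partial\upsilon$ term has coefficient of size $(\upsilon\mr_s^2)^{-1}\mr_s^2$ against a form of norm $\upsilon\mr_s^2$, giving $\mo(\mr_s^2)$. The $\sigma_s\wedge\overline\sigma_s$ term is $\mo(\mr_s^2)$, and the $\diff\zeta\wedge\diff\overline\zeta$ term is $\upsilon\mr_s^{-4}=\mo(1)$.

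The mixed term $\mr_s^{-4}\zeta\,\partial\upsilon\wedge\diff\overline\zeta$ has norm at most $\mr_s^{-4}\eta^{1/2}\upsilon^{1/2}\mr_s\leq \mo(1)$, because $\eta^{1/2}\upsilon^{1/2}\lesssim\mr_s^4$.

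The $s$-terms give $s\sqrt{\upsilon}\mr_s^{-3}$ and $s\upsilon^{-1/2}\mr_s^{-3}\eta^{1/2}\upsilon^{1/2}\mr_s$, both bounded by $\ohat(1)$. The factor $\upsilon^{-1/2}$ cancels, which is the one point where care is needed so that nothing degenerates at $\upsilon\to0$.

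\textbf{The last wedge estimate.} The form $\zeta\,\partial\upsilon\wedge\diff\overline\zeta+\overline\zeta\,\diff\zeta\wedge\overline\partial\upsilon$ has norm $\ohat(\eta^{1/2}\sqrt{\upsilon}\mr_s)$. Multiplying naively by $|\omega|\cdot|\partial\overline\partial\upsilon| = \ohat(\mr_s^2)$ gives $\eta^{1/2}\upsilon^{1/2}\mr^3$, which is too weak. Instead I wedge explicitly, using the structure already seen in \eqref{eq: Square of Kahler form}. Wedging with $\omega\wedge\partial\overline\partial\upsilon$ kills every component containing a $\partial\upsilon$, $\overline\partial\upsilon$, $\diff\zeta$ or $\diff\overline\zeta$ already present. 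The surviving pieces are of the form
\begin{equation*}
    \sigma_s\wedge\overline\sigma_s\wedge\partial\upsilon\wedge\diff\overline\zeta\wedge(\ldots),
\end{equation*}
with coefficients taken from the $\sigma_s\wedge\overline\sigma_s$ and $\zeta$-mixed terms of $\partial\overline\partial\upsilon$. The main contribution is then $\mr_s^{-4}|\zeta|^2\,\partial\upsilon\wedge\overline\partial\upsilon\wedge\diff\zeta\wedge\diff\overline\zeta\wedge\sigma_s\wedge\overline\sigma_s$ times bounded factors. By \eqref{eq: Volume form in invariant frame}, this six-form has norm comparable to $\upsilon\mr_s^2$.

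I expect this bookkeeping — tracking exactly which components survive and checking that they all lie within $\ohat(\eta\mr^2)$ — to be the main obstacle. I would organise it by listing the finitely many monomials and bounding each one using $\upsilon\leq\mr^4$ and $\eta\leq\mr^4$.
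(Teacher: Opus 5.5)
Your proposal is correct and follows the paper's proof essentially step for step. It uses elementary inequalities for the comparisons, reads the norms off the diagonal expression of $\omega_s^{\m{srf}}$ in Corollary~\ref{cor: Writing the forms conveniently}(i), bounds (iii) term by term, and gets the last estimate from an explicit wedge computation.

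The bookkeeping you flag as the main obstacle is immediate. Since $\omega_s^{\m{srf}}$ is diagonal in the coframe, only one monomial survives: the $\sigma_s\wedge\overline{\sigma}_s$ term of $\omega_s^{\m{srf}}$ (not of $\partial\overline{\partial}\upsilon$) paired with the $\zeta$-mixed term of $\partial\overline{\partial}\upsilon$. This gives exactly $C\eta\upsilon\mr_s^{-2}\vol_+$, as in the paper. There is also a minor slip in your Hessian bound: the mixed term is $\ohat(\mr_s)$ rather than $\ohat(1)$, which is still $\ohat(\mr_s^2)$.
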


\begin{proof}
    The comparisons $\mr_s = \ohat(\mr)$ and $\mr = \ohat(\mr_s)$ are clear since they concern the region $\{\upsilon + \eta \geq 1\}$. The comparisons $\rho = \mo(\mr^2)$ and $\mr = \mo(\rho)$ are also obvious from the definitions. Finally $(1+\eta)^{1/2} \mr^2 \gtrsim \mr^2 + \eta^{1/2} \mr^2 \gtrsim \upsilon^{1/2} + \eta \gtrsim \rho^2$, so that $\rho^2 = \mo((1+\eta)^{1/2} \mr^2)$.

    For the inequality on the norms, the comparisons $|\sigma_s|_{\omega_s^{\m{srf}}} = \ohat(1)$, $|\diff \zeta|_{\omega_s^{\m{srf}}} = \ohat(1)$, and $|\diff \upsilon|_{\omega_s^{\m{srf}}} = \ohat(\sqrt{\upsilon}\mr_s)$ are immediate from the expression of $\omega_s^{\m{srf}}$ given in Corollary \ref{cor: Writing the forms conveniently}. Using these comparisons and point (iii) in the same corollary, and observing that $\upsilon + \eta = \ohat(\mr_s^4)$, $\zeta = \mo(\eta^{1/2})$, it is not difficult to see that $|\partial \overline{\partial} \upsilon|_{\omega_s^{\m{srf}}} = \ohat(\mr_s^2)$. Finally, an explicit computation shows that there exists a constant $C$ independent of $s$ such that
    \begin{equation*}
        \omega_s^{\m{srf}} \wedge \partial \overline{\partial} \upsilon \wedge (\zeta \partial \upsilon \wedge \diff \overline{\zeta} + \overline{\zeta} \diff \zeta \wedge \overline{\partial} \upsilon) = \frac{C\eta\upsilon}{\mr_s^2} \vol_+ = \ohat(\eta \mr^2) \vol_+
    \end{equation*}
    and since $(\omega^{\m{srf}}_s)^3 = (1+\mr_s^{-2})\vol_+$ this yields the desired result.
\end{proof}

    \subsection{The Monge--Amp\`ere equation for symmetric potentials}      \label{subsec: The MA equation}

In order to solve the Monge--Amp\`ere equation on $\mZ_+$ (or find approximate solutions thereof), we may seek potentials invariant under the action of $\Un(2)$. Using the $\Un(2)$-invariant functions $\upsilon,\eta$ which are smooth on $\mZ_+$, we consider families of potentials of the form $\phi_s = F(s,\upsilon,\eta)$. Then it is straightforward to compute:
\begin{align}
    \overline{\partial}\phi_s & = F_\upsilon \overline{\partial} \upsilon + \zeta F_\eta \diff \overline{\zeta} , \\
    \partial \overline{\partial}\phi_s & = F_\upsilon \partial \overline{\partial} \upsilon + F_{\upsilon\upsilon} \partial \upsilon \wedge \overline{\partial} \upsilon + (F_\eta + \eta F_{\eta\eta}) \diff \zeta \wedge \diff \overline{\zeta} + F_{\upsilon\eta} (\zeta \partial \upsilon \wedge \diff \overline{\zeta} + \overline{\zeta} \diff \zeta \wedge \overline{\partial} \upsilon ) .
\end{align}
For our purpose, it will be important to construct a family of approximate solutions of the Monge--Amp\`ere equation $(\omega_s^{\m{srf}}+\iq \partial \overline{\partial}\phi_s)^3 = \vol_+$ which not only decay faster than quadratically, but do so in an $s$-uniform way (at least when $s$ remains bounded, and in particular in the limit $s \to 0$). In order to achieve this and simplify the computations, it will be useful to restrict the class of potentials to families of functions $F(s,\upsilon,\eta)$ whose derivatives have reasonable decay properties, uniformly in $s$. For this purpose, we shall adopt the following terminology:

\begin{Def}     \label{def: admissible potentials}
    A $1$-parameter family of symmetric potentials $\{\phi_s = F(s,\upsilon,\eta)\}_{s \geq 0}$ on $\mZ_+$ will be called \emph{admissible} if the derivatives of the function $F$ satisfy
    \begin{align*}
        |F_\upsilon| &= \ohat(\mr^{-4}), ~~ |F_{\upsilon\upsilon}| = \ohat(\mr^{-8}), ~~ |F_\eta| = \ohat(\rho^{-2}), \\
        & ~~~~ |F_{\upsilon\eta}| = \ohat(\mr^{-4}\rho^{-2}), ~~ |F_{\eta\eta}| = \ohat(\rho^{-4}) .
    \end{align*}
\end{Def}

\begin{lem}     \label{lem: bound on ddbar for admissible potentials}
    Let $\{\phi_s\}_{s \geq 0}$ be an admissible family of symmetric potentials. Then 
    \begin{equation*}
        |\partial\overline{\partial}\phi_s|_{\omega^{\m{srf}}_s} = \ohat(\mr^{-2}) = \ohat(\rho^{-1}) .
    \end{equation*}
\end{lem}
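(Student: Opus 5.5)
The plan is to expand $\partial\overline{\partial}\phi_s$ with the formula just derived and bound each of its four terms in $\omega^{\m{srf}}_s$-norm. The inputs are Lemma \ref{lem: uniform comparisons} and the admissibility bounds of Definition \ref{def: admissible potentials}. Since $\mr^2 = 2(\upsilon+\eta)^{1/2}$ and $\mr = \mo(\rho)$, the second equality $\ohat(\mr^{-2}) = \ohat(\rho^{-1})$ would follow from $\rho = \mo(\mr^2)$, but that inequality points the wrong way. We actually need $\mr^{-2} \lesssim \rho^{-1}$, i.e. $\rho \lesssim \mr^2$, which is exactly $\rho = \mo(\mr^2)$ from Lemma \ref{lem: uniform comparisons}. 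So the second equality is immediate, and the real task is to show $|\partial\overline{\partial}\phi_s|_{\omega^{\m{srf}}_s} = \ohat(\mr^{-2})$.

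The terms are handled one at a time, working on the dense set $\{\upsilon>0\}$ and extending by continuity.

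\textbf{First term.} Using $|\partial\overline{\partial}\upsilon| = \ohat(\mr_s^2)$ and $\mr_s = \ohat(\mr)$, we get $|F_\upsilon \partial\overline{\partial}\upsilon| = \ohat(\mr^{-4}\mr^2) = \ohat(\mr^{-2})$.

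\textbf{Second term.} Using $|\diff\upsilon| = \ohat(\sqrt{\upsilon}\,\mr_s)$, we get $|F_{\upsilon\upsilon}\,\partial\upsilon\wedge\overline{\partial}\upsilon| = \ohat(\mr^{-8}\upsilon\,\mr^2)$. Since $\upsilon \leq \mr^4/4$, this is $\ohat(\mr^{-2})$.

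\textbf{Third term.} Using $|\diff\zeta| = \ohat(1)$, we get $|(F_\eta + \eta F_{\eta\eta})\,\diff\zeta\wedge\diff\overline{\zeta}| = \ohat(\rho^{-2} + \eta\rho^{-4})$. Since $\eta \leq \rho^2$, this is $\ohat(\rho^{-2})$. Then $\rho^{-2} \lesssim \mr^{-2}$ because $\mr = \mo(\rho)$.

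\textbf{Mixed term.} This is $F_{\upsilon\eta}(\zeta\,\partial\upsilon\wedge\diff\overline{\zeta} + \dots)$, and it is bounded by $\ohat(\mr^{-4}\rho^{-2}\,\eta^{1/2}\,\upsilon^{1/2}\,\mr)$. Here $\upsilon^{1/2} \lesssim \mr^2$, $\eta^{1/2} \leq \rho$ and $\mr \lesssim \rho$, so the term is $\ohat(\mr^{-1}\rho^{-1}\rho) = \ohat(\mr^{-1})$.

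The mixed term is the main obstacle: this crude bound only gives $\mr^{-1}$, which is too weak. I would refine it. The key observation is that $|\partial\upsilon|_{\omega^{\m{srf}}_s} \lesssim \sqrt{\upsilon}\,\mr_s$, while $\upsilon^{1/2} \lesssim \rho^2$ and $\eta^{1/2} \lesssim \rho$. This gives the bound $\mr^{-4}\rho^{-2}\cdot\rho\cdot\rho^2\cdot\mr = \mr^{-3}\rho$, and we need $\mr^{-3}\rho \lesssim \mr^{-2}$, i.e. $\rho \lesssim \mr$. That fails when $\eta$ dominates.

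The fallback is to split into the two regions $\{\eta \leq \upsilon^{1/2}\}$ and $\{\eta \geq \upsilon^{1/2}\}$.
\begin{itemize}
\item In $\{\eta \leq \upsilon^{1/2}\}$ we have $\rho \sim \upsilon^{1/4} \sim \mr$, and the bound above gives $\mr^{-2}$.
\item In $\{\eta \geq \upsilon^{1/2}\}$ we have $\rho^2 \sim \eta$, $\mr^4 \sim \upsilon+\eta$ and $\upsilon \leq \eta^2$. The norm is then at most $\mr^{-4}\eta^{-1}\,\eta^{1/2}\,\upsilon^{1/2}\,\mr$. Using $\upsilon^{1/2} \leq \mr^2$, this is $\mr^{-1}\eta^{-1/2}$, and this is $\lesssim \mr^{-2}$ as soon as $\mr \lesssim \eta^{1/2}$.
\end{itemize}
The remaining sliver, where $\eta^{1/2} \ll \mr$ yet $\eta \geq \upsilon^{1/2}$, forces $\mr^4 \sim \upsilon$ and hence $\eta \sim \upsilon^{1/2}$. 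This is again the first region up to constants, so the argument closes.

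All the constants come from Lemma \ref{lem: uniform comparisons} and from admissibility, and both are uniform for $s \in [0,s_0]$. This yields the $\ohat$ statement.
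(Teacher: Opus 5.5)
Your proof is correct, and for the first three terms it is exactly the paper's term-by-term estimate. The mixed term is where you diverge, and only because of an arithmetic slip. The crude bound gives
\[
\mr^{-4}\rho^{-2}\cdot\eta^{1/2}\cdot\upsilon^{1/2}\cdot\mr \;\lesssim\; \mr^{-4}\rho^{-2}\cdot\rho\cdot\mr^{2}\cdot\mr \;=\; \mr^{-1}\rho^{-1},
\]
not $\mr^{-1}$. The factor $\eta^{1/2}\le\rho$ cancels only one of the two powers of $\rho^{-1}$. Since $\mr=\mo(\rho)$, we have $\mr^{-1}\rho^{-1}=\ohat(\mr^{-2})$ at once, and this one-line estimate is precisely the paper's argument.

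Your "refined" bound $\mr^{-3}\rho$ is in fact weaker, because it replaces $\upsilon^{1/2}\lesssim\mr^2$ by the coarser $\upsilon^{1/2}\le\rho^2$. The region splitting you then use to recover from it is valid, just unnecessary. Moreover, the "remaining sliver" you treat does not exist up to constants: on $\{\eta\ge\upsilon^{1/2}\}$ you already have $\mr\lesssim\rho\sim\eta^{1/2}$ by Lemma \ref{lem: uniform comparisons}.

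Your opening remark that $\rho=\mo(\mr^2)$ "points the wrong way" contradicts itself, though you end on the correct justification.
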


\begin{proof}
    Let us estimate each term separately. Using Lemma \ref{lem: uniform comparisons} we have $|F_\upsilon \partial \overline{\partial}\upsilon| = \ohat(\mr^{-2}) = \ohat(\rho^{-1})$ and $|F_{\upsilon\upsilon} \partial \upsilon \wedge \overline{\partial}\upsilon| = \ohat(\mr_s^{-8} |\partial \upsilon \wedge \overline{\partial}\upsilon|) = \ohat(\upsilon \mr_s^{-6}) = \ohat(\mr^{-2}) = \ohat(\rho^{-1})$ since $\upsilon = \ohat(\mr_s^4)$. In addition, $|F_\eta+\eta F_{\eta\eta}| = \ohat(\rho^{-2})$ since $\eta = \ohat(\rho^2)$, and $|\diff \zeta \wedge \diff \overline{\zeta}| = \ohat(1)$, and therefore $|(F_\eta+\eta F_{\eta\eta}) \diff \zeta \wedge \diff \overline{\zeta}| = \ohat(\rho^{-2}) = \ohat(\mr^{-2}) = \ohat(\rho^{-1})$. Finally,
    \begin{equation*}
        |F_{\upsilon\eta} (\zeta \partial \upsilon \wedge \diff \overline{\zeta} + \overline{\zeta} \diff \zeta \wedge \overline{\partial} \upsilon )| = \ohat(|F_{\upsilon\eta}| \cdot |\zeta| \cdot \sqrt{\upsilon}\mr_s) = \ohat(\mr^{-1}_+ \rho^{-1}) = \ohat(\mr^{-2}) = \ohat(\rho^{-1})
    \end{equation*}
    since $|\zeta| = \mo(\sqrt{\eta}) = \mo(\rho)$, $\sqrt{\upsilon} = \mo(\mr^2)$ and $\rho^{-1} = \mo(\mr^{-1})$. The lemma follows.
\end{proof}

\begin{prop}        \label{prop: MA equation}
    Let $\{\phi_s = F(s,\upsilon,\eta)\}_{s \geq 0}$ be an admissible family of symmetric potentials on $\mZ_+$. Then
    \begin{equation*}
        (\omega^{\m{srf}}_s + \tfrac{\iq}{2} \partial \overline{\partial}\phi_s)^3 = (1 + \mr_s^{-2} + \mA_s(F) + \mB_s(F) + \ohat(\rho^{-3})) \cdot \vol_+,
    \end{equation*}
    where
    \begin{align*}
        \mA_s(F) & = \eta F_{\eta\eta} + F_\eta + (\mr_s^2 + 1)\upsilon F_{\upsilon\upsilon} + \left( \mr^2_s + 1 + \frac{2\upsilon}{\mr_s^2} \right) F_\upsilon , \\
        \mB_s(F) & = ( 2\upsilon + \eta) F_\upsilon^2 + \frac{\upsilon \mr^4_s}{2} F_\upsilon F_{\upsilon\upsilon} .
    \end{align*}
\end{prop}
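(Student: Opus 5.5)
The plan is to expand the cube directly. Write $\alpha_s = \tfrac{\iq}{2}\partial\overline{\partial}\phi_s$, so that
\begin{equation*}
(\omega^{\m{srf}}_s+\alpha_s)^3 = (\omega^{\m{srf}}_s)^3 + 3(\omega^{\m{srf}}_s)^2\wedge\alpha_s + 3\,\omega^{\m{srf}}_s\wedge\alpha_s^2 + \alpha_s^3 .
\end{equation*}
Then treat the four pieces separately, expressing everything as multiples of $\vol_+$ via \eqref{eq: Volume form in invariant frame}.

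\textbf{Zeroth and cubic terms.} By Proposition \ref{prop: SKRF ansatz} and the relation $\vol_+ = (1+\mr_s^{-2})^{-1}(\omega^{\m{srf}}_s)^3$, the zeroth-order term is exactly $(1+\mr_s^{-2})\vol_+$. For the cubic term, Lemma \ref{lem: bound on ddbar for admissible potentials} gives $|\alpha_s|_{\omega^{\m{srf}}_s} = \ohat(\rho^{-1})$. Hence $\alpha_s^3 = \ohat(\rho^{-3})\,(\omega^{\m{srf}}_s)^3 = \ohat(\rho^{-3})\vol_+$, since $1+\mr_s^{-2} = \ohat(1)$ on $\{\upsilon+\eta\geq1\}$.

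\textbf{Linear term.} Insert the formula for $\partial\overline{\partial}\phi_s$ and wedge each of the four pieces with $(\omega^{\m{srf}}_s)^2$, using \eqref{eq: Square of Kahler form} and \eqref{eq: Volume form in invariant frame}.
\begin{itemize}
\item The $F_\upsilon$ piece is handled by \eqref{eq: Product of omega squared and iddbar upsilon}. After the factor $3\cdot\tfrac12$ it gives exactly $(\mr_s^2+1+\tfrac{2\upsilon}{\mr_s^2})F_\upsilon$.
\item The $F_{\upsilon\upsilon}$ piece only sees the $\diff\zeta\wedge\diff\overline\zeta\wedge\sigma_s\wedge\overline\sigma_s$ coefficient $(1+\mr_s^{-2})$ of $(\omega^{\m{srf}}_s)^2$. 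Comparing with the prefactor $\tfrac{1}{\upsilon\mr_s^2}$ in $\vol_+$, this produces $(\mr_s^2+1)\upsilon F_{\upsilon\upsilon}$.
\item The $(F_\eta+\eta F_{\eta\eta})$ piece only sees the $\tfrac{1}{\upsilon\mr_s^2}\partial\upsilon\wedge\overline\partial\upsilon\wedge\sigma_s\wedge\overline\sigma_s$ term, which gives exactly $F_\eta+\eta F_{\eta\eta}$.
\item The mixed $F_{\upsilon\eta}$ piece wedges to zero against $(\omega^{\m{srf}}_s)^2$ for type reasons: no $\sigma_s$-pairing is available.
\end{itemize}
Altogether this yields $\mA_s(F)$ with no error. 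The normalisation constants must be tracked carefully here.

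\textbf{Quadratic term.} This is where the bookkeeping is heaviest and where I expect the main obstacle: every cross term must be controlled uniformly in $s$ by $\rho^{-3}$. Expand $\alpha_s^2$ into products of the four pieces.
\begin{itemize}
\item The $F_\upsilon^2(\iq\partial\overline\partial\upsilon)^2$ term is computed by \eqref{eq: Wedge prod of omega and ddbar upsilon squared}. It gives $(2\upsilon+\eta)F_\upsilon^2$ plus $\tfrac{\eta-\upsilon}{\mr_s^2}F_\upsilon^2$. The latter is $\ohat(\mr^{-6}\rho^{2}) = \ohat(\rho^{-3})$, using $\rho = \mo(\mr^2)$, $\mr = \mo(\rho)$ and $\eta\lesssim\rho^2$.
\item The cross term $2F_\upsilon F_{\upsilon\upsilon}\,\omega^{\m{srf}}_s\wedge\iq\partial\overline\partial\upsilon\wedge\iq\partial\upsilon\wedge\overline\partial\upsilon$ should be computed explicitly from Corollary \ref{cor: Writing the forms conveniently}(iii). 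Only the $\sigma_s\wedge\overline\sigma_s$ and $\diff\zeta\wedge\diff\overline\zeta$ parts survive. Its leading part is $\tfrac{\upsilon\mr_s^4}{2}F_\upsilon F_{\upsilon\upsilon}$, with remainder $\ohat(\upsilon F_\upsilon F_{\upsilon\upsilon}) = \ohat(\mr^{-8}) = \ohat(\rho^{-3})$.
\item All remaining products are estimated crudely by norms, using Lemma \ref{lem: uniform comparisons} and the admissibility bounds. For instance, $F_\upsilon F_\eta\,\omega^{\m{srf}}_s\wedge\partial\overline\partial\upsilon\wedge\diff\zeta\wedge\diff\overline\zeta = \ohat(\mr^{-4}\rho^{-2}\mr^2) = \ohat(\rho^{-3})$ since $\rho\lesssim\mr^2$.
\item The mixed term $F_\upsilon F_{\upsilon\eta}$ uses the last estimate of Lemma \ref{lem: uniform comparisons}: it is $\ohat(\eta\mr^2\mr^{-8}\rho^{-2}) = \ohat(\rho^{-3})$.
\item Terms involving $F_{\upsilon\upsilon}$ paired with $\eta$-derivatives, or two $\eta$-derivatives, are similar: $\ohat(\upsilon\mr_s^{-6}\rho^{-2})$, $\ohat(\rho^{-4})$, and $\ohat(\eta\upsilon\mr^{-8}\rho^{-4})$, each of which is $\ohat(\rho^{-3})$.
\end{itemize}
The $s$-uniformity in all of these follows because every comparison used is an $\ohat$ statement on $\{\upsilon+\eta\geq1\}$.
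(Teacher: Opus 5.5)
Your proposal is correct and is essentially the paper's own argument. Like the paper, you use the binomial expansion, bound the cubic term via Lemma~\ref{lem: bound on ddbar for admissible potentials}, and evaluate the linear term exactly to get $\mA_s(F)$. You then evaluate the $F_\upsilon^2$ and $F_\upsilon F_{\upsilon\upsilon}$ quadratic terms to leading order to get $\mB_s(F)$, and bound all remaining products crudely by norms, including the last estimate of Lemma~\ref{lem: uniform comparisons}.

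A few intermediate orders are misstated, though every term is still $\ohat(\rho^{-3})$:
\begin{itemize}
\item The remainder $\frac{\eta-\upsilon}{\mr_s^2}F_\upsilon^2$ is $\ohat(\mr^{-6})$, because $|\eta-\upsilon|\leq\upsilon+\eta\lesssim\mr_s^4$. The bound $\ohat(\mr^{-6}\rho^2)$ you wrote is not $\ohat(\rho^{-3})$: at $\upsilon=0$ it is of order $\rho^{-1}$.
\item The subleading part of the $F_\upsilon F_{\upsilon\upsilon}$ term is $\ohat(\upsilon\mr_s^2F_\upsilon F_{\upsilon\upsilon})=\ohat(\mr^{-6})$, not $\ohat(\mr^{-8})$.
\item The $\eta F_{\upsilon\upsilon}F_{\eta\eta}$ and $\eta F_{\upsilon\eta}^2$ terms are $\ohat(\eta\upsilon\mr^{-6}\rho^{-4})$, since the $4$-form $\partial\upsilon\wedge\overline{\partial}\upsilon\wedge\diff\zeta\wedge\diff\overline{\zeta}$ contributes a factor $\upsilon\mr_s^2$. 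This is still $\ohat(\rho^{-3})$ because $\upsilon\lesssim\mr^4$ and $\eta\mr^{-2}\lesssim\eta^{1/2}\leq\rho$.
\end{itemize}
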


\begin{proof}
    Using binomial expansion, we have
    \begin{align*}
        (\omega^{\m{srf}}_s + \tfrac{\iq}{2} \partial \overline{\partial}\phi_s)^3 & = (\omega^{\m{srf}}_s)^3 + \tfrac{3}{2} (\omega^{\m{srf}}_s)^2 \wedge \iq \partial \overline{\partial} \phi_s +  \tfrac{3}{4} \omega^{\m{srf}}_s \wedge (\iq \partial \overline{\partial}\phi_s)^2 + \tfrac{1}{8}(\iq\partial \overline{\partial}\phi_s)^3 .
    \end{align*}
    By the previous lemma $|\partial\overline{\partial}\phi_s|_{\omega_s^{\m{srf}}} = \ohat(\rho^{-1})$, and since $(\omega_s^{\m{srf}})^3 / \vol_+ = 1+\mr_s^{-2} = \ohat(1)$ we deduce that $(\iq\partial\overline{\partial}\phi_s)^3/\vol_+ = \ohat(\rho^{-3})$ so the last term already decays fast enough. 
    
    Using \eqref{eq: Product of omega squared and iddbar upsilon} and the fact that $(\omega^{\m{srf}}_s)^2 \wedge \partial \upsilon \wedge \diff \overline{\zeta} = (\omega^{\m{srf}}_s)^2 \wedge \overline{\partial} \upsilon \wedge \diff \zeta = 0$, we see that
    \begin{align*}
        \frac{3}{2}(\omega^{\m{srf}}_s)^2 \wedge \iq \partial \overline{\partial} \phi_s & =  \frac{3\iq}{2} F_\upsilon (\omega^{\m{srf}}_s)^2 \wedge \partial \overline{\partial} \upsilon + \frac{3\iq}{2} F_{\upsilon\upsilon} (\omega^{\m{srf}}_s)^2 \wedge \partial \upsilon \wedge \overline{\partial}\upsilon \\ & ~~~~ + \frac{3\iq}{2}(F_\eta + \eta F_{\eta\eta}) (\omega^{\m{srf}}_s)^2 \wedge \diff \zeta \wedge \diff \overline{\zeta} \\
            & = \left( \mr_s^2 + 1 + \frac{2\upsilon}{\mr^2_s} \right) F_\upsilon \vol_+ + \frac{3\iq}{2} F_{\upsilon\upsilon} (\omega^{\m{srf}}_s)^2 \wedge \partial \upsilon \wedge \overline{\partial}\upsilon \\
            & ~~~~ + \frac{3\iq}{2}(F_\eta + \eta F_{\eta\eta}) (\omega^{\m{srf}}_s)^2 \wedge \diff \zeta \wedge \diff \overline{\zeta} .
    \end{align*}
    On the other hand, \eqref{eq: Volume form in invariant frame} and \eqref{eq: Square of Kahler form} imply
    \begin{equation*}
        \frac{3\iq}{2} (\omega^{\m{srf}}_s)^2 \wedge \partial \upsilon \wedge \overline{\partial} \upsilon = \upsilon \mr^2_s(1 + \mr_s^{-2}) \vol_+ = (\mr_s^2 + 1) \upsilon \vol_+ 
    \end{equation*}
    and
    \begin{equation*}
        \frac{3\iq}{2} (\omega^{\m{srf}}_s)^2 \wedge \diff \zeta \wedge \diff \overline{\zeta} = \vol_+
    \end{equation*}
    whence we deduce the expression for $\mA_s(F) = \tfrac{3\iq}{2} (\omega^{\m{srf}}_s)^2 \wedge \partial \overline{\partial}\phi_s / \vol_+$.
    
    For $\mB_s(F)$, we have
    \begin{align*}
    	(\iq \partial \overline{\partial} \phi_s)^2 = ~ & F_\upsilon^2 (\iq \partial \overline{\partial} \upsilon)^2 - 2F_\upsilon F_{\upsilon\upsilon} \partial \overline{\partial} \upsilon \wedge  \partial \upsilon \wedge \overline{\partial} \upsilon - 2 F_{\upsilon}(F_\eta + \eta F_{\eta\eta}) \partial \overline{\partial} \upsilon \wedge \diff \zeta \wedge \diff \overline{\zeta} \\
		& - 2 F_\upsilon F_{\upsilon\eta} \partial \overline{\partial} \upsilon \wedge (\zeta \partial \upsilon \wedge \diff \overline{\zeta} + \overline{\zeta} \diff \zeta \wedge \overline{\partial} \upsilon) - F_{\upsilon\eta}^2 (\zeta \partial \upsilon \wedge \diff \overline{\zeta} + \overline{\zeta} \diff \zeta \wedge \overline{\partial} \upsilon)^2 \\
		& - 2 F_{\upsilon\upsilon}(F_\eta + \eta F_{\eta\eta}) \partial \upsilon \wedge \overline{\partial} \upsilon \wedge \diff \zeta \wedge \diff \overline{\zeta} \\
		= ~ & F_\upsilon^2 (\iq \partial \overline{\partial} \upsilon)^2 - 2 F_\upsilon F_{\upsilon\upsilon} \partial \overline{\partial} \upsilon \wedge  \partial \upsilon \wedge \overline{\partial} \upsilon - 2 F_{\upsilon}(F_\eta + \eta F_{\eta\eta}) \partial \overline{\partial} \upsilon \wedge \diff \zeta \wedge \diff \overline{\zeta} \\
		& - 2 F_\upsilon F_{\upsilon\eta} \partial \overline{\partial} \upsilon \wedge (\zeta \partial \upsilon \wedge \diff \overline{\zeta} + \overline{\zeta} \diff \zeta \wedge \overline{\partial} \upsilon) \\
		& - 2 (\eta(F_{\upsilon\upsilon} F_{\eta\eta} - F_{\upsilon\eta}^2) + F_\eta F_{\upsilon\upsilon}) \partial \upsilon \wedge \overline{\partial} \upsilon \wedge \diff \zeta \wedge \diff \overline{\zeta} .
    \end{align*}
    Since $\phi$ is an admissible potential, we have bounds
    \begin{align*}
        \eta(F_{\upsilon\upsilon} F_{\eta\eta} - F_{\upsilon\eta}^2) = \ohat(\eta \mr^{-8} \rho^{-4}) , ~~F_\eta F_{\upsilon\upsilon} = \ohat(\mr^{-8}\rho^{-2})
    \end{align*}
    and since $|\partial \upsilon \wedge \overline{\partial} \upsilon \wedge \diff \zeta \wedge \diff \overline{\zeta}|_{\omega_s^{\m{srf}}} = \ohat(\upsilon \mr_s^2) = \ohat(\mr^6)$ (cf. Lemma \ref{lem: uniform comparisons}), it follows that the last term of the expression of $(\iq \partial \overline{\partial}\phi_s)^2$ is bounded by $\ohat(\eta \mr^{-2} \rho^{-4}) + \ohat(\mr^{-2}\rho^{-2})$. But since $\mr^{-2} = \ohat(\rho^{-1})$ and $\eta \mr^{-2} = \ohat(\eta^{1/2}) = \ohat(\rho)$ (by the aforementioned lemma), this means that we have a bound in $\ohat(\rho^{-3})$. Similarly, 
    \begin{equation*}
        F_\upsilon(F_\eta+ \eta F_{\eta\eta}) = \ohat(\mr^{-4}\rho^{-2}), ~~ |\partial \overline{\partial} \upsilon \wedge \diff \zeta \wedge \diff \overline{\zeta}|_{\omega_s^{\m{srf}}} = \ohat(\mr^2)
    \end{equation*}
    and hence the third term is bounded by $\ohat(\mr^{-2}\rho^{-2}) = \ohat(\rho^{-3})$. Finally,
    \begin{align*}
        F_\upsilon F_{\upsilon\eta} = \ohat(\mr^{-8} \rho^{-2}), ~~
        |\omega_s^{\m{srf}} \wedge \partial \overline{\partial} \upsilon \wedge (\zeta \partial \upsilon \wedge \diff \overline{\zeta} + \overline{\zeta} \diff \zeta \wedge \overline{\partial} \upsilon)|_{\omega_s^{\m{srf}}} = \ohat(\eta \mr^2)
    \end{align*}
    and thus the fourth term is bounded by $\ohat(\eta \mr^{-6} \rho^{-2}) = \ohat(\mr^{-2} \rho^{-2}) =  \ohat(\rho^{-3})$. In the end,
    \begin{equation*}
        \omega_s^{\m{srf}} \wedge (\iq \partial \overline{\partial} \phi_s)^2 =  F_\upsilon^2 \omega_s^{\m{srf}}\wedge (\iq \partial \overline{\partial} \upsilon)^2 - 2 F_\upsilon F_{\upsilon\upsilon} \omega_s^{\m{srf}} \wedge \partial \overline{\partial} \upsilon \wedge  \partial \upsilon \wedge \overline{\partial} \upsilon + \ohat(\rho^{-3}) .
    \end{equation*}
    Now using \eqref{eq: Wedge prod of omega and ddbar upsilon squared}, we see that
    \begin{align*}
        \tfrac{3}{4} F_\upsilon^2 \omega^{\m{srf}}_s \wedge (\iq \partial \overline{\partial} \upsilon)^2/\vol_+ = \left( 2\upsilon+\eta + \frac{\eta-\upsilon}{\mr_s^2} \right) F_\upsilon^2 & = (2\upsilon+\eta) F_\upsilon^2 + \ohat(\mr^{-6}) \\
            & = (2\upsilon+\eta)F_\upsilon^2 + \ohat(\rho^{-3}).
    \end{align*}
    On the other hand,
    \begin{equation*}
        \omega^{\m{srf}}_s \wedge \partial \upsilon \wedge \overline{\partial} \upsilon = \frac{\iq}{2} ( \partial \upsilon \wedge \overline{\partial} \upsilon \wedge \sigma_s \wedge \overline{\sigma}_s + (1+\mr_s^{-2} ) \partial \upsilon \wedge \overline{\partial} \upsilon \wedge \diff \zeta \wedge \diff \overline{\zeta})
    \end{equation*}
    and hence
    \begin{align*}
        \omega^{\m{srf}}_s \wedge \partial \overline{\partial} \upsilon \wedge \partial \upsilon \wedge \overline{\partial} \upsilon & = \frac{\iq}{4} \left( \left(\mr_s^2 - \frac{4s^2}{\mr_s^2} \right) \left(1 + \mr_s^{-2}\right) - \frac{4\upsilon}{\mr_s^4} \right) \partial \upsilon \wedge \overline{\partial}\upsilon \wedge \sigma_s \wedge \overline{\sigma}_s \wedge \diff \zeta \wedge \diff \overline{\zeta} \\
            & = \left(- \frac{\upsilon \mr_s^4}{3} + \ohat(\mr_s^6)\right) \vol_+
    \end{align*}
    and as $F_\upsilon F_{\upsilon\upsilon} = O(\mr^{-12})$ we obtain
    \begin{equation*}
        - \frac{3}{2} F_\upsilon F_{\upsilon\upsilon} \omega_s \wedge \partial \overline{\partial} \upsilon \wedge \partial \upsilon \wedge \overline{\partial} \upsilon = \frac{\upsilon \mr_s^4}{2} F_\upsilon F_{\upsilon\upsilon} \vol_+ + \ohat(\mr^{-6}) = \frac{\upsilon \mr_s^4}{2} F_\upsilon F_{\upsilon\upsilon} \vol_+ + \ohat(\rho^{-3})
    \end{equation*}
    which finishes the proof.
\end{proof}

    \subsection{Improvement of the ansatz in region I}      \label{subsec: Improvement I}

Thanks to Proposition \ref{prop: MA equation}, we have reduced the problem of finding approximate solutions of the Monge--Amp\`ere equation $(\omega_s^{\m{srf}}+ \frac{\iq}{2} \partial \overline{\partial}\phi_s)^3 = \vol_+$ to the study of the PDE
\begin{equation}    \label{eq: Simplified MA equation}
    \mA_s(F) + \mB_s(F) = - \frac{1}{\mr_s^2}
\end{equation}
where $\{\phi_s = F(s,\upsilon,\eta)\}_{s \geq 0}$ is an admissible potential, and we aim to solve the above equation up to an error term which has faster than quadratic decay, uniformly in $s$ as long as $s$ remains bounded above, say in $\ohat(\rho^{-2-\epsilon})$ for some $\epsilon > 0$. In order to find such a family of asymptotic solutions, we shall separate the domain $\{\upsilon + \eta \geq 1\}$ into two different regions and analyse them separately as a first step, before explaining how to interpolate between them. This is inspired by the construction of approximate solutions in \cite{szekelyhidi2019degenerations}, and the reason why this separation is needed in our case should become clear soon. 

We define the following regions:
\begin{itemize}
    \item Region I: $\{ \upsilon \geq \frac{1}{2} \eta^2, \upsilon + \eta \geq 1 \}$.
    \item Region II: $ \{ \upsilon \leq 2 \eta^2, \upsilon + \eta \geq 1 \}$.
\end{itemize}
Since the semi Ricci-flat ansatz $\omega^{\m{srf}}_s$ already decays quadratically along the fibres of $\zeta$, we can expect that in region I, the non-linear terms $\mB_s(F)$ will be negligible, whilst in region II both linear and non-linear terms will play a role. We first begin with region I, where the analysis is substantially easier, before taking on region II in the next section.

In region I, the following uniform comparisons hold:

\begin{lem}     \label{lem: Uniform comparisons in region I}
    In region I, the functions $\upsilon^{1/4}$, $\mr$ and $\rho$ are uniformly equivalent, which we denote by $\upsilon^{1/4} \asymp \mr \asymp \rho$. In particular, $\mr_s^{-1} = \ohat(\rho^{-1})$.
\end{lem}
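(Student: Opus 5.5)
The proof only needs the definitions: $\mr^4 = 4(\upsilon+\eta)$, $\rho^4 = \upsilon + \eta^2$, $\mr_s^4 = 4(\upsilon+\eta+s^2)$. Region I is $\{\upsilon \geq \tfrac12 \eta^2,\ \upsilon+\eta\geq 1\}$. I would show that each of $\mr^4$ and $\rho^4$ is bounded above and below by constant multiples of $\upsilon$ on Region I. The statement $\mr_s^{-1} = \ohat(\rho^{-1})$ then follows from $\mr \leq \mr_s$.

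\textbf{Step 1: $\rho \asymp \upsilon^{1/4}$.} Since $\eta^2 \leq 2\upsilon$ in Region I, we get $\upsilon \leq \rho^4 = \upsilon + \eta^2 \leq 3\upsilon$.

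\textbf{Step 2: $\mr \asymp \upsilon^{1/4}$.} The lower bound $\mr^4 \geq 4\upsilon$ is trivial. For the upper bound we must control $\eta$ by $\upsilon$, and this uses the constraint $\upsilon+\eta \geq 1$. Split into two cases.
\begin{itemize}
\item If $\eta \geq 1$, then $\eta \leq \eta^2 \leq 2\upsilon$.
\item If $\eta < 1$, then $\eta < 1 \leq \upsilon + \eta$ forces $\upsilon > 1 - \eta$. This alone does not give $\upsilon$ bounded below uniformly as $\eta \to 1^-$. So I would instead argue via $\upsilon+\eta$ directly: on the part of Region I where $\eta < 1$, we have $\upsilon \geq 1-\eta$ and also $\upsilon \geq \tfrac12\eta^2$. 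Hence
\[
\upsilon \geq \max\bigl(1-\eta, \tfrac12 \eta^2\bigr) \geq c > 0
\]
for some absolute constant $c$, since the two functions cannot both be small for the same $\eta\in[0,1]$. Then $\eta < 1 \leq c^{-1}\upsilon$.
\end{itemize}
In both cases $\eta \leq C\upsilon$ with $C = \max(2, c^{-1})$. Therefore $4\upsilon \leq \mr^4 \leq 4(1+C)\upsilon$.

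\textbf{Step 3: conclusion.} Steps 1 and 2 give $\upsilon^{1/4} \asymp \mr \asymp \rho$ with absolute constants. For the final claim, $\mr_s^4 = \mr^4 + 4s^2 \geq \mr^4$, so $\mr_s^{-1} \leq \mr^{-1} \leq C'\rho^{-1}$. This bound holds uniformly for all $s \geq 0$, which is stronger than what $\ohat$ requires.

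The only nontrivial point is the case $\eta<1$ in Step 2. There one must use the normalisation $\upsilon+\eta\geq 1$ to get a positive lower bound on $\upsilon$ from the two competing constraints.
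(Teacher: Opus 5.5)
Your proof is correct and follows the same route as the paper: you bound $\rho^4$ and $\mr^4$ above and below by multiples of $\upsilon$ using $\eta^2 \leq 2\upsilon$, then use $\mr \leq \mr_s$. The only difference is cosmetic. The paper writes $\eta \lesssim \upsilon^{1/2} \lesssim \upsilon$, which silently uses that $\upsilon$ is bounded below on region I; this follows from $\upsilon + \sqrt{2}\upsilon^{1/2} \geq 1$ and gives $\upsilon \geq 2-\sqrt{3}$. Your case split makes exactly this lower bound explicit, with the same constant $c = 2-\sqrt{3}$.
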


\begin{proof}
    $\upsilon^{1/4} \leq \rho = (\upsilon+\eta^2)^{1/4} \lesssim \upsilon^{/4}$. Thus $\rho$ is uniformly equivalent to $\upsilon^{1/4}$ in region I. Furthermore, $\upsilon \leq \mr^4 = 4(\upsilon+\eta) \lesssim \upsilon+\upsilon^{1/2} \lesssim \upsilon$ and thus $\mr$ is also uniformly equivalent to $\upsilon^{1/4}$. In particular $\mr_s^{-1} = \ohat(\mr^{-1}) = \ohat(\rho^{-1})$.
\end{proof}

Let us now consider the potential 
\begin{equation}
    F^{(0)}(s,\upsilon,\eta) \coloneqq - \frac{1}{2} \log(\upsilon+\eta+s^2).
\end{equation}
It is easy to compute
\begin{align*}
    F^{(0)}_\upsilon & = F^{(0)}_\eta = - \frac{1}{2(\upsilon+\eta+s^2)} = - \frac{2}{\mr_s^4}, \\
    F^{(0)}_{\upsilon\upsilon} & = F^{(0)}_{\eta\eta} = F^{(0)}_{\upsilon\eta} = \frac{1}{2(\upsilon+\eta+s^2)^2} = \frac{8}{\mr_s^8} \cdot
\end{align*}
It is immediate to see from these expressions that $\mB_s(F) = \ohat(\mr_s^{-4})$, and moreover
\begin{equation*}
    \mA_s(F) = - \frac{2}{\mr_s^2} + \frac{4\upsilon}{\mr_s^6} + \ohat(\mr_s^{-4}) = - \frac{1}{\mr_s^2} - \frac{4 \eta}{\mr_s^6} +\ohat(\mr_s^{-4}) .
\end{equation*}
But in region I, $\eta = \ohat(\upsilon^{1/2}) = \ohat(\mr_s^2)$ and $\mr \asymp \rho$ by the previous lemma, so that
\begin{equation*}
    \mA_s(F) + \mB_s(F) = -\frac{1}{\mr_s^2} + \ohat(\mr_s^{-4}) = -\frac{1}{\mr_s^2} + \ohat(\rho^{-4}) .
\end{equation*}
This allows us to prove:

\begin{lem}     \label{lem: Potential in region I}
    The family of potentials $F^{(0)}(s,\upsilon,\eta)$ is admissible in region I, and moreover in this region it satisfies
    \begin{equation*}
        \mA_s(F^{(0)}) + \mB_s(F^{(0)}) + \frac{1}{\mr_s^2} = \ohat(\rho^{-4}) .
    \end{equation*}
\end{lem}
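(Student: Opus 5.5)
The plan is to check the two claims of the lemma directly from the explicit formulas for the derivatives of $F^{(0)}$ computed just above, using the uniform comparisons in region I from Lemma \ref{lem: Uniform comparisons in region I}.

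\textbf{Admissibility.} First I will establish that in the region $\{\upsilon+\eta\geq 1\}$ we have $\mr_s^4 = 4(\upsilon+\eta+s^2)\geq 4(\upsilon+\eta) = \mr^4$ for every $s\geq 0$. Hence $\mr_s^{-4}\leq \mr^{-4}$ pointwise, and the constants do not even depend on $s_0$. This gives at once
\begin{equation*}
    |F^{(0)}_\upsilon| = 2\mr_s^{-4} = \ohat(\mr^{-4}), \qquad |F^{(0)}_{\upsilon\upsilon}| = 8\mr_s^{-8} = \ohat(\mr^{-8}).
\end{equation*}
For the $\eta$-derivatives, I will use that in region I we have $\mr\asymp\rho$. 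Consequently $|F^{(0)}_\eta| = \ohat(\mr^{-4}) = \ohat(\rho^{-4})$, which is $\ohat(\rho^{-2})$ since $\rho\geq 3/4$ there. In the same way, $|F^{(0)}_{\upsilon\eta}| = \ohat(\mr^{-8}) = \ohat(\mr^{-4}\rho^{-4})\subset\ohat(\mr^{-4}\rho^{-2})$, and $|F^{(0)}_{\eta\eta}| = \ohat(\mr^{-8}) = \ohat(\rho^{-8})\subset \ohat(\rho^{-4})$. So all five conditions of Definition \ref{def: admissible potentials} hold in region I.

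\textbf{The error estimate.} Next I will substitute the derivatives into $\mA_s$ and $\mB_s$ of Proposition \ref{prop: MA equation}, tracking each term.

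In $\mB_s$, the first term is $(2\upsilon+\eta)F_\upsilon^2 = 4(2\upsilon+\eta)\mr_s^{-8} = \ohat(\mr_s^{-4})$, because $2\upsilon+\eta\lesssim \mr_s^4$. The second term is $\tfrac{\upsilon\mr_s^4}{2}F_\upsilon F_{\upsilon\upsilon} = -8\upsilon\mr_s^{-8} = \ohat(\mr_s^{-4})$.

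In $\mA_s$:
\begin{itemize}
    \item $\eta F_{\eta\eta}+F_\eta = \ohat(\mr_s^{-4})$.
    \item $(\mr_s^2+1)\upsilon F_{\upsilon\upsilon} = 8\upsilon\mr_s^{-6} + \ohat(\mr_s^{-4})$.
    \item $(\mr_s^2+1+2\upsilon\mr_s^{-2})F_\upsilon = -2\mr_s^{-2} - 4\upsilon\mr_s^{-6} + \ohat(\mr_s^{-4})$.
\end{itemize}
Summing these gives $\mA_s = -2\mr_s^{-2} + 4\upsilon\mr_s^{-6} + \ohat(\mr_s^{-4})$.

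I then rewrite the middle term using $4\upsilon = \mr_s^4 - 4\eta - 4s^2$. This turns the leading part into $-\mr_s^{-2} - 4\eta\mr_s^{-6} - 4s^2\mr_s^{-6}$. The last piece is $\ohat(\mr_s^{-4})$ for $s\leq s_0$, so it is absorbed into the error.

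\textbf{The main obstacle.} The one term that needs care is $-4\eta\mr_s^{-6}$, since it is not a priori smaller than $\mr_s^{-4}$. Here region I is essential: the condition $\upsilon\geq \tfrac12\eta^2$ gives $\eta\lesssim\upsilon^{1/2}\lesssim \mr_s^2$, so $\eta\mr_s^{-6} = \ohat(\mr_s^{-4})$.

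Finally, I convert $\ohat(\mr_s^{-4})$ into $\ohat(\rho^{-4})$ using $\mr_s^{-1} = \ohat(\rho^{-1})$ from Lemma \ref{lem: Uniform comparisons in region I}. This yields $\mA_s(F^{(0)})+\mB_s(F^{(0)})+\mr_s^{-2} = \ohat(\rho^{-4})$.

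Everything else is routine bookkeeping with the bounds $\upsilon,\eta,s^2 \lesssim \mr_s^4$.
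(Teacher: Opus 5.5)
Your proposal is correct and follows essentially the same route as the paper. Both arguments substitute $F^{(0)}_\upsilon=-2\mr_s^{-4}$ and $F^{(0)}_{\upsilon\upsilon}=8\mr_s^{-8}$ into $\mA_s,\mB_s$, reduce to $-\mr_s^{-2}-4\eta\mr_s^{-6}+\ohat(\mr_s^{-4})$, remove the $\eta$-term using $\eta\lesssim\upsilon^{1/2}\lesssim\mr_s^2$ in region I, and pass to $\ohat(\rho^{-4})$ via $\mr_s^{-1}=\ohat(\rho^{-1})$. Your explicit handling of the $-4s^2\mr_s^{-6}$ term using $s\le s_0$, which the paper absorbs without comment, is a welcome extra detail.
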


\begin{proof}
    We already proved that $F^{(0)}$ satisfies the desired equation, and the fact that it is admissible in region in I is immediate since $\mr_s^{-1} = \ohat(\rho^{-1})$ therein.
\end{proof}

	\subsection{Improvement of the ansatz in region II}    \label{subsec: Improvement II}

In this section, we consider region II: $\{\upsilon \leq 2\eta^2, \upsilon+\eta \geq 1\}$. In this region, the relevant uniform comparisons are as follows:

\begin{lem}     \label{lem: Uniform comparisons in region II}
    In region II, $\eta^{1/2} \asymp \rho \asymp \mr^2$. In particular, 
    \begin{equation*}
        (\eta+s^2)^{-\frac{1}{2}} = \ohat(\rho^{-1}), ~~ \text{and} ~~ \mr_s^{-2} = \ohat(\rho^{-1}).
    \end{equation*}
\end{lem}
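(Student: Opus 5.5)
The plan is to work directly from the definitions $\rho^4 = \upsilon + \eta^2$ and $\mr^4 = 4(\upsilon+\eta)$, together with the two defining inequalities of region II, $\upsilon \le 2\eta^2$ and $\upsilon + \eta \ge 1$. The two consequences will then follow from $\mr_s \ge \mr$, which holds because $\mr_s^4 = 4(\upsilon+\eta+s^2)$. All constants will be absolute, so every bound is automatically an $\ohat$-bound.

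\textbf{Step 1: $\eta$ is bounded below.} Combining $\upsilon + \eta \ge 1$ with $\upsilon \le 2\eta^2$ gives $2\eta^2 + \eta \ge 1$, hence $\eta \ge \tfrac12$. In particular $\eta \le 2\eta^2$ on region II. This lets me trade lower powers of $\eta$ for higher ones at the cost of a constant.

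\textbf{Step 2: $\eta^{1/2} \asymp \rho$.} From the definition, $\eta^2 \le \rho^4 = \upsilon+\eta^2 \le 3\eta^2$. Taking fourth roots gives $\eta^{1/2} \le \rho \le 3^{1/4}\eta^{1/2}$.

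\textbf{Step 3: compare $\mr^2$ with $\rho$.} For the lower bound, $\mr^4 \ge 4\eta$ gives $\mr^2 \ge 2\eta^{1/2} \ge 2\cdot 3^{-1/4}\rho$. This is the direction used in the rest of the section.

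For the upper bound, Step 1 gives $\mr^4 = 4(\upsilon+\eta) \le 4(2\eta^2 + 2\eta^2) = 16\eta^2$, so $\mr^2 \le 4\eta \lesssim \rho^2$.

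This is the step I expect to be the main obstacle. The two-sided equivalence $\mr^2 \asymp \rho$ holds exactly where $\upsilon \lesssim \eta$, and in particular at $\upsilon=0$, where $\mr^2 = 2\eta^{1/2}$. Near the boundary $\upsilon \approx 2\eta^2$ with respect to region I, however, one has $\mr^2 \approx \eta \asymp \rho^2$. So the clean upper bound available on all of region II is $\mr^2 = \mo(\rho^2)$. I would therefore prove the comparison in the form $\rho \lesssim \mr^2 \lesssim \rho^2$, and check that later arguments only invoke $\rho = \mo(\mr^2)$. That bound is already in Lemma \ref{lem: uniform comparisons}, and the upper estimate is only needed with the weaker power.

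\textbf{Step 4: the two consequences.} For the first, Step 2 gives $(\eta+s^2)^{-1/2} \le \eta^{-1/2} \le 3^{1/4}\rho^{-1}$ for all $s\ge0$. For the second, $\mr_s \ge \mr$ and Step 3 give $\mr_s^{-2} \le \mr^{-2} \le \tfrac{3^{1/4}}{2}\rho^{-1}$. Both bounds use only the lower comparisons, with constants independent of $s$, so they are $\ohat(\rho^{-1})$ as claimed.
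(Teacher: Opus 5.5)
Your proposal is correct and is essentially the paper's argument: the sandwich $\eta^2\le\rho^4\le3\eta^2$ gives $\eta^{1/2}\asymp\rho$, and $\mr_s^{-2}=\ohat(\rho^{-1})$ follows from the bound $\rho=\mo(\mr^2)$, which the paper cites from Lemma~\ref{lem: uniform comparisons} rather than rederiving it from $\mr^4\ge4\eta$ as you do. Your caveat is also right. The two-sided $\rho\asymp\mr^2$ fails near $\upsilon\approx2\eta^2$, where $\mr^2\asymp\rho^2$; indeed on region III the paper itself uses $\mr\asymp\rho$. The paper's proof likewise only establishes, and later only uses, $\mr^{-2}=\ohat(\rho^{-1})$ in region II.
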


\begin{proof}
    In region II, $\eta^{1/2} \leq \rho = (\upsilon+\eta^2)^{1/4} \lesssim \eta^{1/2}$ so that $\rho$ is uniformly equivalent to $\eta^{1/2}$ in region II. On the other hand, $\mr_s^{-2} = \ohat(\rho^{-1})$ on the whole region $\{\upsilon + \eta \geq 1\}$ as we already noted in Lemma \ref{lem: uniform comparisons}.
\end{proof}

Using the previous lemma and the definition of admissible families of potentials, in is not difficult to see that in region II, the leading-order term in \eqref{eq: Simplified MA equation} reads
\begin{equation*}
    \upsilon \mr_s^2 F_{\upsilon\upsilon} + \left( \mr_s^2 + \frac{2\upsilon}{\mr_s^2} \right) F_\upsilon + \frac{1}{\mr_s^2} = 0,
\end{equation*}
which after multiplying by $\frac{1}{2}\mr_s^2 = \sqrt{\upsilon+\eta+s^2}$ can be written as
\begin{equation*}
    2\upsilon(\upsilon+\eta+s^2) F_{\upsilon\upsilon} + (3\upsilon + 2 \eta + 2 s^2) F_\upsilon + \frac{1}{2} = 0 .
\end{equation*}
If we introduce the function\footnote{Remark that $\eta$ does not vanish in region II, and therefore the function $\mt_s$ is smooth and well-defined in this region even when $s = 0$.}
\begin{equation}
    \mt_s \coloneqq \frac{\upsilon}{\eta + s^2}
\end{equation}
this is equivalent to
\begin{equation*}
    2 \mt_s(\mt_s+1)(\eta+s^2)^2 F_{\upsilon\upsilon} + (3\mt_s+2) (\eta + s^2) F_\upsilon + \frac{1}{2} = 0
\end{equation*}
and thus $F$ is a solution of this equation if and only if $F = \tilde{F} \circ \mt_s$ where the function $H(t) = \tilde{F}^\prime(t)$ satisfies the first-order ODE
\begin{equation}        \label{eq: ODE for H}
    2t(t+1) H^\prime(t) + (3t + 2) H(t) + \frac{1}{2} = 0 . 
\end{equation}

\begin{lem}
    The ODE \eqref{eq: ODE for H} has a unique smooth (even real-analytic) solution $H_1$ defined on $[0,+\infty)$,
    \begin{equation*}
        H_1(t) = - \left(1 - \frac{1}{\sqrt{1+t}} \right) \frac{1}{2t} = - \frac{t+1 - \sqrt{t+1}}{2 t (t+1)}\cdot
    \end{equation*}
\end{lem}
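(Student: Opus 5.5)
The ODE \eqref{eq: ODE for H} is linear of first order, singular exactly at $t=0$ and $t=-1$. Since $t=-1$ lies outside $[0,+\infty)$, the only delicate point is $t=0$. The plan is to solve the equation explicitly on $(0,\infty)$ by an integrating factor and then check which solutions extend smoothly across $0$.

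\textbf{Homogeneous solutions.} On $(0,\infty)$ the homogeneous equation reads $H'/H = -(3t+2)/(2t(t+1))$. The partial fraction decomposition is
\begin{equation*}
    \frac{3t+2}{2t(t+1)} = \frac{1}{t} + \frac{1}{2(t+1)} ,
\end{equation*}
so the homogeneous solutions are $H_{\m{hom}}(t) = c\, t^{-1}(1+t)^{-1/2}$. In particular they are singular at $t=0$ for every $c \neq 0$.

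\textbf{A particular solution.} Multiplying by the integrating factor $t(1+t)^{1/2}$, the equation becomes
\begin{equation*}
    \frac{\diff}{\diff t}\big(t(1+t)^{1/2}H\big) = -\frac{1}{4}(1+t)^{-1/2} .
\end{equation*}
Integrating gives
\begin{equation*}
    t(1+t)^{1/2}H(t) = -\frac{1}{2}(1+t)^{1/2} + C .
\end{equation*}
Equivalently, $H(t) = -\tfrac{1}{2t} + \tfrac{C}{t\sqrt{1+t}}$, and these are all solutions on $(0,\infty)$.

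\textbf{Choosing the constant.} Near $0$, $H(t)\approx (C-\tfrac12)/t + O(1)$. Hence $H$ remains bounded as $t\to 0^+$ if and only if $C = \tfrac12$, which gives exactly
\begin{equation*}
    H_1(t) = -\frac{1}{2t}\Big(1-\frac{1}{\sqrt{1+t}}\Big) .
\end{equation*}
Uniqueness follows because any two smooth solutions on $[0,\infty)$ differ by a homogeneous solution, and every nonzero homogeneous solution blows up like $c/t$. The second expression for $H_1$ is obtained by multiplying through by $\sqrt{1+t}/\sqrt{1+t}$.

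\textbf{Regularity at $t=0$.} This is the only step requiring care. Write $1-(1+t)^{-1/2} = t\,g(t)$, where
\begin{equation*}
    g(t) = \sum_{n\ge 1} (-1)^{n+1}\binom{-1/2}{n} t^{n-1}
\end{equation*}
is the difference quotient of the analytic function $(1+t)^{-1/2}$ at $0$. It is real-analytic on $(-1,\infty)$, since it equals $\int_0^1 \tfrac12(1+\tau t)^{-3/2}\,\diff\tau$. Therefore $H_1 = -\tfrac12 g$ is real-analytic on $(-1,\infty)\supset[0,\infty)$, with $H_1(0) = -\tfrac14$.

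Finally, $H_1$ satisfies the ODE at $t=0$ as well: there the equation reduces to $2H(0) + \tfrac12 = 0$, which holds by continuity since $H_1(0)=-\tfrac14$.
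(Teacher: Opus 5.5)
Your proof is correct and is essentially the paper's argument: the integrating factor $t(1+t)^{1/2}$ is exactly the paper's ansatz $H = a(t)/(t\sqrt{1+t})$, which leads to the same family $H(t) = -\tfrac{1}{2t} + \tfrac{C}{t\sqrt{1+t}}$ and the same choice $C=\tfrac12$. Your explicit checks of real-analyticity at $t=0$ and of the equation at $t=0$ supply what the paper only calls clear. One small slip: the series coefficients of $g$ should be $-\binom{-1/2}{n}$, not $(-1)^{n+1}\binom{-1/2}{n}$ (your version gives $g(0)=-\tfrac12$ instead of $\tfrac12$). This is harmless, since your integral formula $g(t)=\int_0^1 \tfrac12(1+\tau t)^{-3/2}\,\diff\tau$ is correct and already gives everything you use, including $H_1(0)=-\tfrac14$.
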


\begin{proof}
    On the interval $(0,+\infty)$, we can seek a solution of the form $H(t) = \frac{a(t)}{t\sqrt{1+t}}$. Since
    \begin{equation*}
        \frac{\diff}{\diff t} \left(\frac{a(t)}{t\sqrt{1+t}}\right) = \frac{a^\prime(t)}{t\sqrt{1+t}} - \frac{3t+2}{2t(t+1)} \cdot \frac{a(t)}{t\sqrt{t+1}}
    \end{equation*}
    the ODE becomes
    \begin{equation*}
        2a^\prime(t) \sqrt{1+t}  + \frac{1}{2} = 0 .
    \end{equation*}
    The solutions of this equation are of the form $a(t) = c - \tfrac{1}{2} \sqrt{1+t}$ for some constant $c$, and thus the solutions of the ODE \eqref{eq: ODE for H} on the interval $(0,+\infty)$ are of the form
    \begin{equation*}
        H(t) = - \left( \frac{1}{2} - \frac{c}{\sqrt{1+t}} \right) \frac{1}{t} \cdot
    \end{equation*}
    It is clear that the unique solution that extends smoothly, and even real-analytically, to the whole interval $[0,+\infty)$ corresponds to $c = \tfrac{1}{2}$.
\end{proof}

We can easily check that the function
\begin{equation}
    \widetilde{H}_1(t) \coloneqq - \frac{1}{2} \log \left(t \frac{\sqrt{1+t}+1}{\sqrt{1+t}-1} \right)
\end{equation}
is a smooth primitive of $H_1$ on the interval $[0,+\infty)$. Let us introduce the potential
\begin{equation}
    F^{(1)}(s,\upsilon,\eta) \coloneqq \widetilde{H}_1(\mt_s) - \frac{1}{2}\log(\eta + s^2) .
\end{equation}
In more explicit terms, one can check that
\begin{equation*}
    F^{(1)}(s,\upsilon,\eta) = - \frac{1}{2} \log \left( \upsilon \frac{\sqrt{\upsilon+\eta+s^2} + \sqrt{\eta+s^2}}{\sqrt{\upsilon+\eta+s^2}-\sqrt{\eta+s^2}} \right) \cdot
\end{equation*}
By construction,
\begin{equation*}
    \upsilon \mr_s^2 F^{(1)}_{\upsilon\upsilon} + \left( \mr_s^2 + \frac{2\upsilon}{\mr_s^2} \right) F^{(1)}_\upsilon = - \frac{1}{\mr_s^2} \cdot
\end{equation*}
Now we need to prove that $F^{(1)}(\upsilon,\eta)$ is an admissible potential. To simplify the computations, remark that we can write 
\begin{equation*}
    H_1(t) = \frac{a(t)}{t+1}, ~~ H^\prime_1(t) = \frac{b(t)}{(t+1)^2}
\end{equation*}
where $a$ and $b$ are \emph{bounded} real-analytic functions on $[0,+\infty)$, explicitly given by
\begin{equation*}
    a(t) = - \frac{t+1 - \sqrt{t+1}}{2t} ~~ \text{and} ~~ b(t) = \frac{2(t+1)^2-(3t+2)\sqrt{t+1}}{2t^2} \cdot
\end{equation*}
Also, the derivatives of the function $\mt_s$ are obviously given by
\begin{equation}
    \partial_\upsilon \mt_s = \frac{1}{\eta+s^2}, ~~ \partial_\eta \mt_s = - \frac{\upsilon}{(\eta+s^2)^2} = - \frac{\mt_s}{\eta+s^2} \cdot
\end{equation}
Using these observations, we compute:
\begin{align*}
    F^{(1)}_\upsilon & = \frac{1}{\eta+s^2} H_1(\mt_s) = \frac{a(\mt_s)}{(\eta+s^2)(1+\mt_s)} = \frac{4a(\mt_s)}{\mr_s^4} , \\
    F^{(1)}_{\upsilon\upsilon} & = \frac{1}{(\eta+s^2)^2} H_1^\prime(\mt_s) = \frac{b(\mt_s)}{(\eta+s^2)^2(1+\mt_s)^2} = \frac{16 b(\mt_s)}{\mr_s^8} \cdot
\end{align*}
Thus $|F^{(1)}_\upsilon| = \ohat(\mr_s^{-4})$ and $|F^{(1)}_{\upsilon\upsilon}| = \ohat(\mr_s^{-8})$. For the derivatives involving $\eta$, we have
\begin{align*}
    F^{(1)}_\eta & = - \frac{\mt_s H_1(\mt_s)}{(\eta+s^2)} - \frac{1}{2(\eta+s^2)} = - \frac{1}{2\sqrt{\mt_s +1} (\eta+s^2)} = - \frac{1}{\mr_s^2\sqrt{\eta+s^2}} , \\
    F^{(1)}_{\upsilon\eta} & = \frac{2}{\mr_s^6\sqrt{\eta+s^2}} \\
    F^{(1)}_{\eta\eta} & = \frac{1}{2\mr_s^2 \sqrt{\eta+s^2}^3} + \frac{2}{\mr_s^6 \sqrt{\eta+s^2}} \cdot
\end{align*}
Using Lemma \ref{lem: Uniform comparisons in region II}, these expressions immediately imply the bounds, which hold in region II, $|F^{(1)}_\eta| = \ohat(\rho^{-2})$, $|F^{(1)}_{\upsilon\eta}| = \ohat(\mr^{-4}\rho^{-2})$ and $F^{(1)}_{\eta\eta} = \ohat(\rho^{-4})$, which means that $F^{(1)}$ is admissible in region II. 

\begin{rem}
    At the intersection of regions I and II, e.g. $\{\frac{1}{2}\eta^2 \leq \upsilon \leq 2\eta^2\}$, we have $F^{(1)} \sim - \frac{1}{2}\log(\upsilon) \sim - \frac{1}{2}\log(\upsilon+\eta+s^2) = F^{(0)}$. This already hints that later on, we will be able to interpolate between the potential $F^{(0)}$ in region I and the potential that we will construct in region II, which will be of the form $F^{(1)} + \text{lower order terms}$.
\end{rem}

Noting that
\begin{align*}
    F^{(1)}_\eta + \eta F^{(1)}_{\eta\eta} & = - \frac{1}{2 \mr_s^2\sqrt{\eta+s^2}} + \frac{2\sqrt{\eta+s^2}}{\mr_s^6} - \frac{s^2}{2\mr_s^2 \sqrt{\eta+s^2}^3} - \frac{2s^2}{\mr_s^6\sqrt{\eta+s^2}} \\
        & = -\frac{1}{2\mr_s^2\sqrt{\eta+s^2}} + \frac{1}{\mr_s^4 \sqrt{1+\mt_s}} + \ohat((\eta+s^2)^{-3/2}\mr_s^{-2})
\end{align*}
and the identities
\begin{equation*}
    \frac{\upsilon}{\mr_s^4} = \frac{\mt_s}{4(1+\mt_s)}, ~~  \frac{\eta}{\mr_s^4} = \frac{1}{4(1+\mt_s)} - \frac{s^2}{\mr_s^4}
\end{equation*}
a direct computation using the above formulas shows that there exists a \emph{bounded}, real-analytic function $\alpha \colon [0,+\infty) \to \R$ such that
\begin{equation}        \label{eq: Approx for F1}
    \mA_s(F^{(1)}) + \mB_s(F^{(1)}) + \frac{1}{\mr_s^2} = - \frac{1}{2\mr_s^2\sqrt{\eta+s^2}} +\frac{\alpha(\mt_s)}{\mr_s^4} + \ohat((\eta+s^2)^{-3/2}\mr_s^{-2}) .
\end{equation}
By Lemma \ref{lem: Uniform comparisons in region II}, $(\eta+s^2)^{-1/2} = \ohat(\rho^{-1})$ and $\mr_s^{-2} = \ohat(\rho^{-1})$ so that the error term $\ohat((\eta+s^2)^{-3/2}\mr_s^{-2}) = \ohat(\rho^{-4})$ already decays fast enough, but the other terms are only bounded by $\ohat(\rho^{-2})$, so they need to be cancelled. We first deal with the term $- \frac{1}{2\mr_s^2\sqrt{\eta+s^2}}$.

\begin{lem}
    Let $F^{(2)}(s,\upsilon,\eta) \coloneqq -\frac{1}{2\sqrt{\eta+s^2}}F^{(1)}(s,\upsilon,\eta)$. Then $F^{(2)}$ is an admissible family of potentials in region II, and moreover in this region we have, for any $\epsilon \in (0,1)$,
    \begin{equation*}
        \mA_s(F^{(1)}+F^{(2)}) + \mB_s(F^{(1)}+F^{(2)}) + \frac{1}{\mr_s^2} = \frac{\alpha(\mt_s)}{\mr_s^4} + \ohat(\rho^{-3+\epsilon})
    \end{equation*}
    where $\alpha \colon [0,+\infty) \to \R$ is the same bounded, real-analytic function as in \eqref{eq: Approx for F1}.
\end{lem}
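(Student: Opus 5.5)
The plan is to exploit the fact that the prefactor $g(s,\eta) \coloneqq -\tfrac{1}{2}(\eta+s^2)^{-1/2}$ depends on $\eta$ only. The leading-order operator in region II,
\begin{equation*}
    \mathcal{L}_s(F) \coloneqq \upsilon \mr_s^2 F_{\upsilon\upsilon} + \left( \mr_s^2 + \frac{2\upsilon}{\mr_s^2}\right) F_\upsilon ,
\end{equation*}
involves only $\upsilon$-derivatives. It therefore commutes with multiplication by $g$, and since $\mathcal{L}_s(F^{(1)}) = -\mr_s^{-2}$ by construction, we get
\begin{equation*}
    \mathcal{L}_s(F^{(2)}) = g\, \mathcal{L}_s(F^{(1)}) = \frac{1}{2\mr_s^2\sqrt{\eta+s^2}} .
\end{equation*}
This is exactly the term to be cancelled in \eqref{eq: Approx for F1}. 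Everything else should be an error bounded by $\ohat(\rho^{-3+\epsilon})$. The loss of $\epsilon$ comes from the logarithmic growth of $F^{(1)}$ itself, which appears once derivatives fall on $g$ rather than on $F^{(1)}$. I would use the bound $|F^{(1)}| = \ohat(\log \rho) = \ohat(\rho^{\epsilon})$ in region II. This follows from the explicit formula for $F^{(1)}$, since $\upsilon \lesssim \rho^4$ and the ratio inside the logarithm is bounded above by $\ohat(\rho^{C})$ and below, up to the factor $\upsilon$, in region II.

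The first step is to check admissibility of $F^{(2)}$ in region II. The $\upsilon$-derivatives are $F^{(2)}_\upsilon = g F^{(1)}_\upsilon$ and $F^{(2)}_{\upsilon\upsilon} = g F^{(1)}_{\upsilon\upsilon}$. Since $|g| = \ohat(\rho^{-1})$ by Lemma \ref{lem: Uniform comparisons in region II}, these satisfy the required bounds with an extra factor $\rho^{-1}$ to spare. For the mixed and $\eta$-derivatives, the Leibniz rule gives terms $g' F^{(1)}$, $g F^{(1)}_\eta$, $g'' F^{(1)}$, $g' F^{(1)}_\eta$, $g F^{(1)}_{\eta\eta}$, $g' F^{(1)}_\upsilon$, and so on. Using $|g'| = \ohat(\rho^{-3})$, $|g''| = \ohat(\rho^{-5})$ and the bounds on $F^{(1)}$ already established, each is bounded by the admissible rate times $\rho^{-1+\epsilon}$.

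The second step is to expand $\mA_s + \mB_s$. By linearity,
\begin{equation*}
    \mA_s(F^{(1)}+F^{(2)}) = \mA_s(F^{(1)}) + \mathcal{L}_s(F^{(2)}) + \big[\eta F^{(2)}_{\eta\eta} + F^{(2)}_\eta + \upsilon F^{(2)}_{\upsilon\upsilon} + F^{(2)}_\upsilon\big] .
\end{equation*}
In the bracket, the last two terms are $g\,\ohat(\upsilon\mr^{-8} + \mr^{-4}) = \ohat(\rho^{-3})$, using $\mr^4 \asymp \rho^2$ and $\upsilon = \ohat(\rho^4)$. The $\eta$-terms are controlled by the admissibility bounds sharpened by $\rho^{-1+\epsilon}$, together with $\eta \asymp \rho^2$, and give $\ohat(\rho^{-3+\epsilon})$. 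For the quadratic part, $\mB_s(F^{(1)}+F^{(2)}) - \mB_s(F^{(1)})$ consists of cross terms $2(2\upsilon+\eta)F^{(1)}_\upsilon F^{(2)}_\upsilon$ and $\tfrac{\upsilon\mr_s^4}{2}(F^{(1)}_\upsilon F^{(2)}_{\upsilon\upsilon} + F^{(2)}_\upsilon F^{(1)}_{\upsilon\upsilon})$, plus $\mB_s(F^{(2)})$. Each of these is $g$ (or $g^2$) times a quantity of the size of $\mB_s(F^{(1)})$. The latter is $\ohat(\rho^4\mr^{-8}) = \ohat(1)$, hence the whole contribution is $\ohat(\rho^{-1})$ at first sight.

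This last estimate is the main obstacle: $\ohat(\rho^{-1})$ is not good enough, so a naive bound fails. The way around it is to note that $\mB_s(F^{(1)})$ was already absorbed into the exact computation leading to \eqref{eq: Approx for F1}. So I would redo this part honestly: write $F^{(1)}_\upsilon = 4a(\mt_s)\mr_s^{-4}$ and $F^{(1)}_{\upsilon\upsilon} = 16 b(\mt_s)\mr_s^{-8}$, and use $(2\upsilon+\eta)\mr_s^{-4}$, $\upsilon\mr_s^{-4} = \ohat(1)$. This shows the cross terms equal $g\cdot\beta(\mt_s)\mr_s^{-4}$ with $\beta$ bounded. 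Since $g\,\mr_s^{-4} = \ohat(\rho^{-3})$ in region II (where $\mr_s^4 \asymp \rho^2$), the cross terms are in fact $\ohat(\rho^{-3})$, and $\mB_s(F^{(2)})$ is smaller still. Combining with \eqref{eq: Approx for F1}, the term $-\frac{1}{2\mr_s^2\sqrt{\eta+s^2}}$ cancels. The $\ohat((\eta+s^2)^{-3/2}\mr_s^{-2}) = \ohat(\rho^{-4})$ error is harmless, leaving $\alpha(\mt_s)\mr_s^{-4} + \ohat(\rho^{-3+\epsilon})$ as claimed.
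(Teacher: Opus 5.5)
Your argument is correct and is essentially the paper's proof. It follows the same three steps: admissibility via the Leibniz rule and $|F^{(1)}| = \ohat(\log(1+\rho))$; commuting the $\upsilon$-part of $\mA_s$ past the $\eta$-dependent prefactor, so that it produces exactly $\frac{1}{2\mr_s^2\sqrt{\eta+s^2}}$; and bounding the $\mB_s$ cross terms via $(2\upsilon+\eta)|F^{(1)}_\upsilon| = \ohat(1)$ and $\upsilon\mr_s^4|F^{(1)}_{\upsilon\upsilon}| = \ohat(1)$.

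Two points in your write-up need correcting. Your ``main obstacle'' is not real: it comes from bounding $\upsilon$ by $\rho^4$ instead of by $\tfrac14\mr_s^4$, and the latter gives $\mB_s(F^{(1)}) = \ohat(\mr^{-4})$ immediately. Also, in region II you should use $\mr^{-4} = \ohat(\rho^{-2})$ (together with $\upsilon \le \tfrac14 \mr_s^4$), not $\mr^4 \asymp \rho^2$. The latter fails when $\upsilon \sim \eta^2$, and with it your stated justification for the $\upsilon F^{(2)}_{\upsilon\upsilon}$ term would only give $\ohat(\rho^{-1})$.
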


\begin{proof}
    Let us first prove admissibility. The $\upsilon$-derivatives cause no problem, since
    \begin{align*}
        F^{(2)}_\upsilon & = -\frac{1}{2\sqrt{\eta+s^2}} F^{(1)}_\upsilon = \ohat((\eta+s^2)^{-1/2} \mr_s^{-4}) = \ohat(\mr^{-4} \rho^{-1}) = \ohat(\rho^{-3}), \\ 
        F^{(2)}_{\upsilon\upsilon} & = -\frac{1}{2\sqrt{\eta+s^2}}F^{(1)}_{\upsilon\upsilon} = \ohat((\eta+s^{2})^{-1/2}\mr_s^{-8}) = \ohat(\mr^{-8}\rho^{-1}) = \ohat(\rho^{-5}) .
    \end{align*}
    For the derivatives involving $\eta$, we calculate and use Lemma \ref{lem: Uniform comparisons in region II}:
    \begin{align*}
        F^{(2)}_\eta & = \frac{F^{(1)}}{4\sqrt{\eta+s^2}^3} - \frac{F^{(1)}_\eta}{2\sqrt{\eta+s^2}} = \ohat((1+|F^{(1)}|)\rho^{-3}), \\
        F^{(2)}_{\upsilon\eta} & =  \frac{F^{(1)}_\upsilon}{4\sqrt{\eta+s^2}^3} - \frac{F^{(1)}_{\upsilon\eta}}{2\sqrt{\eta+s^2}} = \ohat(\rho^{-3} \mr^{-4}) , \\
        F^{(2)}_{\eta\eta} & = -\frac{3F^{(1)}}{8 \sqrt{\eta+s^2}^5} + \frac{F^{(1)}_\eta}{2\sqrt{\eta+s^2}^3} - \frac{F^{(1)}_{\eta\eta}}{2\sqrt{\eta+s^2}} = \ohat((1+|F^{(1)}|)\rho^{-5}) .
    \end{align*}
    In addition, we see from the expression of $F^{(1)}$ that in there exist constants $C$ and $C^\prime$ which are \emph{independent of $s$} and such that
    \begin{equation*}
        |F^{(1)}| \leq C(1+\log(1+\mt_s) + |\log(s^2 + \eta)|) \leq C^\prime(1 + |\log(\mr_s)| + |\log(s^2 + \eta)|)
    \end{equation*}
    whence we deduce that $|F^{(1)}| = \ohat(\log(1+\rho))$. Thus in region II we deduce the estimates
    \begin{align*}
        F^{(2)}_\eta & = \ohat(\log(1+\rho) \rho^{-3}) = \ohat(\rho^{-3+\epsilon}), \\
        F^{(2)}_{\eta\eta} & = \ohat(\log(1+\rho)\rho^{-5}) = \ohat(\rho^{-5+\epsilon})
    \end{align*}
    for any $\epsilon \in (0,1)$. This in particular proves admissibility in region II.

    Let us now show that $F^{(1)}+F^{(2)}$ satisfies the desired equation. By construction,
    \begin{equation*}
        \upsilon \mr_s^2 F^{(2)}_\upsilon + \left(\mr_s^2 + \frac{2\upsilon}{\mr_s^2}\right) F^{(2)}_{\upsilon\upsilon} = -\frac{1}{2\sqrt{\eta+s^2}} \left( \upsilon \mr_s^2 F^{(1)}_\upsilon + \left(\mr_s^2 + \frac{2\upsilon}{\mr_s^2}\right) F^{(1)}_{\upsilon\upsilon} \right) = \frac{1}{2\mr_s^2\sqrt{\eta+s^2}} \cdot
    \end{equation*}
    Moreover, we see from the above estimates that for any $\epsilon \in (0,1)$,
    \begin{equation*}
        F^{(2)}_\eta + \eta F^{(2)}_{\eta\eta} + F^{(2)}_\upsilon + \upsilon F^{(2)}_{\upsilon\upsilon} = \ohat(\rho^{-3+\epsilon}) .
    \end{equation*}
    From the equation satisfied by $F^{(1)}$, we therefore deduce that for any $\epsilon \in (0,1)$, we have in region II
    \begin{equation*}
        \mA_s(F^{(1)}) + \mA_s(F^{(2)}) + \mB_s(F^{(1)}) = \frac{\alpha(\mt_s)}{\mr_s^4} + \ohat(\rho^{-3+\epsilon}) .
    \end{equation*}
    It remains to prove that all the terms in the expression of $\mB_s(F^{(1)}+F^{(2)})-\mB_s(F^{(1)})$ can be neglected. Treating each term separately and observing that $(2\upsilon+\eta)|F^{(1)}_\upsilon| = \ohat(1)$ and $\upsilon \mr_s^4 |F^{(1)}_{\upsilon\upsilon}| = \ohat(1)$, we see that
    \begin{align*}
        (2\upsilon+\eta)F^{(1)}_\upsilon F^{(2)}_\upsilon & = \ohat(\mr_s^{-4}(\eta+s^2)^{-1/2}) = \ohat(\rho^{-3}), \\
        \upsilon \mr_s^4 F^{(1)}_\upsilon F^{(2)}_{\upsilon\upsilon} & = \ohat(\mr_s^{-4}(\eta+s^2)^{-1/2}) = \ohat(\rho^{-3}), \\
        \upsilon \mr_s^4 F^{(2)}_\upsilon F^{(1)}_{\upsilon\upsilon} & = \ohat(\mr_s^{-4}(\eta+s^2)^{-1/2}) = \ohat(\rho^{-3}), \\
        (2\upsilon+\eta)(F^{(2)}_\upsilon)^2 & = \ohat(\mr_s^{-4}(\eta+s^2)^{-1}) = \ohat(\rho^{-4}), \\
        \upsilon \mr_s^4 F^{(2)}_\upsilon F^{(2)}_{\upsilon\upsilon} & = \ohat(\mr_s^{-4}(\eta+s^2)^{-1}) = \ohat(\rho^{-4})
    \end{align*}
    which finishes the proof of the lemma.
\end{proof}

\begin{rem}
    The above lemma is the \emph{raison d'être} of the divide between regions I and II. Since the leading-order term of the Monge--Amp\`ere equation behaves like an ODE in the variable $\mt_s = \frac{\upsilon}{s^2+\eta}$, the derivatives of the solutions will be weighted by positive (fractional) powers of $(\eta+s^2)^{-1}$. Thus there is no hope of treating the problem as a perturbation of an ODE (let alone in a uniform way when $s \to 0$) unless dividing by $(\eta+s^2)$ improves the decay of solutions, i.e. we must stay away from the asymptotic regions corresponding to infinite distances along the fibres of $\zeta$. In region II however the ODE method works, and since $(s^2+\eta)^{-1/2} \leq \eta^{-1/2} \asymp \rho^{-1}$ we can easily build approximate solutions of the Monge--Amp\`ere equation whose decay at infinity is uniformly controlled as $s \to 0$. 
\end{rem}

In order to cancel the term $\frac{\alpha(\mt_s)}{\mr_s^4}$, we follow the same steps as before. That is, we seek a solution of the equation
\begin{equation*}
    \upsilon \mr_s^2 F_{\upsilon\upsilon} + \left( \mr_s^2 + \frac{2\upsilon}{\mr_s^2} \right) F_\upsilon = - \frac{\alpha(\mt_s)}{\mr_s^4},
\end{equation*}
which, after multiplying by $\frac{\mr_s^2}{2}$ on both side and factoring all the $\eta+s^2$ terms, reads
\begin{equation*}
    2 \mt_s(\mt_s+1) (\eta+s^2)^2 F_{\upsilon\upsilon} + (3\mt_s+2) (\eta+s^2) F_\upsilon = - \frac{1}{4\sqrt{\eta+s^2}} \frac{\alpha(\mt_s)}{\sqrt{1+\mt_s}} \cdot
\end{equation*}
Thus we seek a solution of the form $F = - \frac{1}{4\sqrt{\eta+s^2}} \tilde{F}(\mt_s)$ where $H = \tilde{F}^\prime$ satisfies the first-order ODE
\begin{equation*}
    2t(t+1)H^\prime(t) + (3t+2) H(t) = \frac{\alpha(t)}{\sqrt{1+t}} \cdot
\end{equation*}
On the interval $(0,+\infty)$, we seek a solution $H(t) = \frac{a(t)}{t\sqrt{1+t}}$, where $a(t)$ must satisfy
\begin{equation*}
    2 a^\prime(t) \sqrt{1+t} = \frac{\alpha(t)}{\sqrt{1+t}}
\end{equation*}
Hence the solutions are of the form
\begin{equation*}
    H(t) = \frac{1}{t \sqrt{1+t}} \left( c + \frac{1}{2} \int_0^t \frac{\alpha(\tau)}{1+\tau} \diff \tau \right)
\end{equation*}
and the unique solution which extends smoothly, and in fact real-analytically, to the whole interval $[0,+\infty)$ is
\begin{equation*}
    H_3(t) \coloneqq \frac{1}{2t\sqrt{1+t}} \int_0^t \frac{\alpha(\tau)}{1+\tau} \diff x .
\end{equation*}
Since $\alpha(t)$ is uniformly bounded,
\begin{equation}        \label{eq: Bounds on H3}
    \sqrt{1+t}^3 |H_3(t)| + \sqrt{1+t}^5 |H^\prime_3(t)| \lesssim (1+ \log(1+t)), ~~~~ \forall t \in [0,+\infty) .
\end{equation}
In particular $H_3$ is integrable on $[0,+\infty)$, and we can consider the primitive
\begin{equation*}
    \widetilde{H}_3(t) \coloneqq - \int_t^{+\infty} H_3(\tau) \diff \tau .
\end{equation*}
This primitive satisfies the estimate
\begin{equation}       \label{eq: Bounds on primitive of H3}
    \sqrt{1+t} |\widetilde{H}_3(t)| \lesssim (1+\log(1+t)), ~~~~ \forall t \in [0,+\infty).
\end{equation}

Let us now define the function
\begin{equation*}
    F^{(3)}(s,\upsilon,\eta) \coloneqq -\frac{1}{4\sqrt{\eta+s^2}} \widetilde{H}_3(\mt_s).
\end{equation*}
By construction, it satisfies
\begin{equation*}
    \upsilon \mr_s^2 F^{(3)}_{\upsilon\upsilon} + \left( \mr_s^2 + \frac{2\upsilon}{\mr_s^2} \right) F^{(3)}_\upsilon = - \frac{\alpha(\mt_s)}{\mr_s^4} \cdot
\end{equation*}
We want to prove that this defines an admissible potential in the region II, and derive appropriate bounds on the derivative of $F^{(3)}$. As usual, the $\upsilon$-derivatives are easy to bound; taking into account that by definition $(\eta+s^2)(1+\mt_s) = s^2 + \upsilon + \eta = 4\mr_s^4$ and $\log(1+\mt_s) = \log(\upsilon+\eta+s^2) - \log(\eta+s^2) = \ohat(\log(1+\rho))$, and using \eqref{eq: Bounds on H3} and Lemma \ref{lem: Uniform comparisons in region II}, we obtain for any $\epsilon \in (0,1)$:
\begin{align*}
    F^{(3)}_\upsilon & = - \frac{H_3(\mt_s)}{4\sqrt{\eta+s^2}^3} = \ohat(\mr_s^{-6}\log(1+\rho)) = \ohat(\mr^{-4} \rho^{-1 +\epsilon}), \\
    F^{(3)}_{\upsilon\upsilon} & = - \frac{H^\prime_3(\mt_s)}{4\sqrt{\eta+s^2}^5} = \ohat(\mr_s^{-10}\log(1+\rho)) = \ohat(\mr^{-8} \rho^{-1 + \epsilon}) .
\end{align*}
To estimate the derivatives involving $\eta$, we also use \eqref{eq: Bounds on primitive of H3}, which yields
\begin{align*}
    F^{(3)}_\eta & =  \frac{\widetilde{H}_3(\mt)}{8\sqrt{\eta+s^2}^3} - \frac{\mt_sH_3(\mt_s)}{4\sqrt{\eta+s^2}^3} = \ohat(\mr_s^{-2} (\eta+s^2)^{-1} \log(1+\rho)) = \ohat(\rho^{-3+\epsilon}) \\
    F^{(3)}_{\upsilon\eta} & = \frac{H_3(\mt_s)}{8\sqrt{\eta+s^2}^5} - \frac{H_3(\mt_s)}{4\sqrt{\eta+s^2}^5} - \frac{\mt_s H^\prime_3(\mt_s)}{4\sqrt{\eta+s^2}^5} \\
        & = \ohat(\mr_s^{-6}(\eta+s^2)^{-1} \log(1+\rho)) = \ohat(\mr^{-4} \rho^{-3+\epsilon}) \\
    F^{(3)}_{\eta\eta} & = \frac{3\widetilde{H}_3(\mt_s)}{16\sqrt{\eta+s^2}^5} + \frac{\mt_sH_3(\mt_s)}{2\sqrt{\eta+s^2}^5} + \frac{\mt_s^2 H_3^\prime(\mt_s)}{4\sqrt{\eta+s^2}^5} \\
    & = \ohat((\eta+s^2)^{-2} \mr_s^{-2}\log(1+\rho)) = \ohat(\rho^{-5+\epsilon}) .
\end{align*}
for any $\epsilon \in (0,1)$. In particular, we obtain similar bounds as for the derivatives of $F^{(2)}$, so that we can deduce in the same way that $\mB_s(F^{(1)} + F^{(2)} + F^{(3)}) - \mB_s(F^{(1)} + F^{(2)}) = \ohat(\rho^{-3+\epsilon})$. This proves that $F^{(3)}$ is an admissible potential in region II. Moreover, essentially the same proof as for the previous lemma yields the main result of this part: 

\begin{lem}     \label{lem: Potential in region II}
    The potential $F^{(\mathrm{II})} = F^{(1)}+F^{(2)}+F^{(3)}$ is admissible in region II, and moreover in this region is satisfies, for any $\epsilon \in (0,1)$,
    \begin{equation*}
        \mA_s(F^{(\mathrm{II})}) + \mB_s(F^{(\mathrm{II})}) + \frac{1}{\mr_s^2} = \ohat(\rho^{-3+\epsilon}) .
    \end{equation*}
\end{lem}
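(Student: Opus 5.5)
The argument splits $F^{(\mathrm{II})}$ into its three pieces and uses the estimates already established for each of them in region II. Admissibility comes first. $F^{(1)}$ was shown admissible in region II by the explicit formulas for its derivatives. $F^{(2)}$ is admissible by the preceding lemma. For $F^{(3)}$, the bounds just derived from \eqref{eq: Bounds on H3} and \eqref{eq: Bounds on primitive of H3} give
\begin{equation*}
F^{(3)}_\upsilon = \ohat(\mr^{-4}\rho^{-1+\epsilon}), \quad F^{(3)}_{\upsilon\upsilon} = \ohat(\mr^{-8}\rho^{-1+\epsilon}), \quad F^{(3)}_\eta = \ohat(\rho^{-3+\epsilon}),
\end{equation*}
\begin{equation*}
F^{(3)}_{\upsilon\eta} = \ohat(\mr^{-4}\rho^{-3+\epsilon}), \quad F^{(3)}_{\eta\eta} = \ohat(\rho^{-5+\epsilon}).
\end{equation*}
Fixing for instance $\epsilon = 1/2$, each of these is dominated by the corresponding bound in Definition \ref{def: admissible potentials}, since $\rho \geq 3/4$ there. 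The admissibility conditions are bounds on the first and second derivatives, so they are stable under finite sums. Hence $F^{(\mathrm{II})}$ is admissible in region II.

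For the equation, I would use that $\mA_s$ is linear and write
\begin{equation*}
\mA_s(F^{(\mathrm{II})}) + \mB_s(F^{(\mathrm{II})}) = \big[\mA_s(F^{(1)}+F^{(2)}) + \mB_s(F^{(1)}+F^{(2)})\big] + \mA_s(F^{(3)}) + \big[\mB_s(F^{(\mathrm{II})}) - \mB_s(F^{(1)}+F^{(2)})\big].
\end{equation*}
The first bracket equals $-\mr_s^{-2} + \alpha(\mt_s)\mr_s^{-4} + \ohat(\rho^{-3+\epsilon})$ by the previous lemma. For $\mA_s(F^{(3)})$, I would isolate the principal part $\upsilon \mr_s^2 F^{(3)}_{\upsilon\upsilon} + (\mr_s^2 + 2\upsilon\mr_s^{-2})F^{(3)}_\upsilon$, which equals $-\alpha(\mt_s)\mr_s^{-4}$ by construction and so cancels the $\alpha$ term. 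The remaining part is $\eta F^{(3)}_{\eta\eta} + F^{(3)}_\eta + \upsilon F^{(3)}_{\upsilon\upsilon} + F^{(3)}_\upsilon$. Using $\eta \asymp \rho^2$ and $\upsilon \lesssim \eta^2 \asymp \rho^4$ from Lemma \ref{lem: Uniform comparisons in region II}, together with $\mr \asymp \rho^{1/2}$ there, the bounds above give:
\begin{itemize}
\item $\eta F^{(3)}_{\eta\eta} = \ohat(\rho^{-3+\epsilon})$;
\item $F^{(3)}_\eta = \ohat(\rho^{-3+\epsilon})$;
\item $\upsilon F^{(3)}_{\upsilon\upsilon} = \ohat(\rho^4\mr^{-8}\rho^{-1+\epsilon}) = \ohat(\rho^{-1+\epsilon})$.
\end{itemize}
The last of these is not good enough, so I would instead use the sharper form $F^{(3)}_{\upsilon\upsilon} = \ohat(\mr_s^{-10}\log(1+\rho))$. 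This gives $\upsilon\mr_s^{-10} = \ohat(\mr_s^{-6})$, and in region II $\mr_s^{-6} = \ohat(\rho^{-3})$. Similarly, $F^{(3)}_\upsilon = \ohat(\mr_s^{-6}\log(1+\rho)) = \ohat(\rho^{-3+\epsilon})$.

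The last bracket, the nonlinear cross terms, was already noted to be $\ohat(\rho^{-3+\epsilon})$ in the discussion preceding the statement. I would verify this exactly as in the proof of the previous lemma. The relevant inputs are $(2\upsilon+\eta)|F^{(1)}_\upsilon+F^{(2)}_\upsilon| = \ohat(1)$ and $\upsilon\mr_s^4|F^{(1)}_{\upsilon\upsilon}+F^{(2)}_{\upsilon\upsilon}| = \ohat(1)$, together with the fact that $F^{(3)}_\upsilon$ and $F^{(3)}_{\upsilon\upsilon}$ carry an extra factor $\ohat((\eta+s^2)^{-1/2}\log(1+\rho))$ compared with $F^{(1)}$. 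Each cross term is then $\ohat(\mr_s^{-4}\rho^{-1+\epsilon}) = \ohat(\rho^{-3+\epsilon})$, and the quadratic terms in $F^{(3)}$ are even smaller.

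The main obstacle is keeping track of which of the coarse admissibility bounds and which of the sharper $\mr_s$-weighted bounds to use. The $\upsilon$-derivative terms in the lower-order part of $\mA_s$ only decay fast enough when one keeps the powers of $\mr_s$ and converts them using $\mr^2 \asymp \rho$ in region II. One must also check that all constants depend only on $s_0$, which holds because $a$, $b$, $\alpha$ and the bounds on $H_3$ and $\widetilde{H}_3$ are independent of $s$.
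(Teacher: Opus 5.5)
Your proposal is correct and follows essentially the same route as the paper. Admissibility comes from the derivative bounds on $F^{(3)}$. Linearity of $\mA_s$ lets the ODE defining $F^{(3)}$ cancel the term $\alpha(\mt_s)\mr_s^{-4}$ left over from the previous lemma, and the remaining linear terms $\eta F^{(3)}_{\eta\eta}+F^{(3)}_\eta+\upsilon F^{(3)}_{\upsilon\upsilon}+F^{(3)}_\upsilon$ and the $\mB_s$ cross terms are bounded by $\ohat(\rho^{-3+\epsilon})$ exactly as for $F^{(2)}$.

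One small caveat concerns the two-sided comparison $\mr\asymp\rho^{1/2}$ in region II, which also appears in the paper's lemma. It is false where $\upsilon\asymp\eta^2$, since there $\mr\asymp\rho$. However, you only ever use the valid direction $\mr_s^{-2}=\ohat(\rho^{-1})$. Moreover, with $\upsilon\le\mr^4/4$ the coarse bound already gives $\upsilon F^{(3)}_{\upsilon\upsilon}=\ohat(\mr^{-4}\rho^{-1+\epsilon})=\ohat(\rho^{-3+\epsilon})$, so the sharper estimate you switch to is not needed there.
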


    \subsection{Interpolation between regions I \& II}      \label{subsec: Interpolation III}

The goal of this section is to explain how to interpolate between the approximate solutions constructed in regions I and II. First remark that
\begin{align*}
    F^{(0)}(s,\upsilon,\eta) - F^{(1)}(s,\upsilon,\eta) & = - \frac{1}{2} \log \left(\upsilon \frac{\sqrt{\upsilon+\eta+s^2} + \sqrt{\eta+s^2}}{\sqrt{\upsilon+\eta+s^2}-\sqrt{\eta+s^2}} \right) + \frac{1}{2} \log(\upsilon+\eta+s^2) \\
        & = G(\mt_s)
\end{align*}
where
\begin{equation}
    G(t) \coloneqq - \frac{1}{2} \log \left( \frac{t}{1+t} \frac{\sqrt{1+t}+1}{\sqrt{1+t}-1} \right) . 
\end{equation}
We record the following properties of the function $G$:

\begin{lem}
    There is a constant $C > 0$ such that for all $t \in [0,+\infty)$, we have
    \begin{equation*}
        \sqrt{1+t}|G(t)| + \sqrt{1+t}^3 |G^\prime(t)| + \sqrt{1+t}^5 |G^{\prime\prime}(t)| \leq C.
    \end{equation*}
\end{lem}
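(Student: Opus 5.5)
The plan is to simplify $G$ by writing everything in terms of $u = \sqrt{1+t} \in [1,\infty)$. Since $t = u^2-1 = (u-1)(u+1)$, we get
\begin{equation*}
    \frac{t}{1+t}\cdot\frac{\sqrt{1+t}+1}{\sqrt{1+t}-1} = \frac{(u-1)(u+1)}{u^2}\cdot\frac{u+1}{u-1} = \frac{(u+1)^2}{u^2} = \left(1+\frac{1}{u}\right)^2 .
\end{equation*}
Hence $G(t) = -\log\bigl(1 + (1+t)^{-1/2}\bigr)$. This closed form shows at once that $G$ is smooth, in fact real-analytic, on $[0,+\infty)$, including at $t=0$, where the apparent singularity cancels. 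It also gives $|G(t)| \leq (1+t)^{-1/2}$ by the elementary inequality $0 \leq \log(1+x) \leq x$ for $x \geq 0$. This is the bound on $\sqrt{1+t}\,|G(t)|$.

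For the derivatives, set $x = (1+t)^{-1/2} \in (0,1]$, so that $G = -\log(1+x)$ and $x'(t) = -\tfrac12 (1+t)^{-3/2} = -\tfrac12 x^3$. Differentiating once gives
\begin{equation*}
    G'(t) = \frac{x^3}{2(1+x)} .
\end{equation*}
Because $1+x \geq 1$, this yields $|G'(t)| \leq \tfrac12 (1+t)^{-3/2}$. Differentiating again, using $\frac{\diff}{\diff x}\bigl(x^3/(1+x)\bigr) = \frac{3x^2+2x^3}{(1+x)^2}$, gives
\begin{equation*}
    G''(t) = -\frac{x^3}{2}\cdot\frac{3x^2+2x^3}{2(1+x)^2} .
\end{equation*}
This is bounded in absolute value by $\tfrac54 x^5 = \tfrac54(1+t)^{-5/2}$, since $x \leq 1$. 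Adding the three estimates gives the lemma with, for instance, $C = 1 + \tfrac12 + \tfrac54$.

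The only step needing care is spotting the algebraic simplification of the argument of the logarithm. Once that is done, everything reduces to one-variable estimates for $x \in (0,1]$ with no case distinction. If the simplification were missed, I would instead expand separately near $t=0$, to show boundedness, and as $t\to\infty$, using $\log\frac{\sqrt{1+t}+1}{\sqrt{1+t}-1} \sim 2(1+t)^{-1/2}$. That route would be more cumbersome, so I would use the closed form.
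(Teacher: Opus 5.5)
Your proof is correct, and it takes a different route from the paper's. The paper never simplifies the argument of the logarithm. Instead it computes $G'(t) = H_1(t) + \frac{1}{2(t+1)} = \frac{\sqrt{1+t}-1}{2t(1+t)}$ and reads off $G' = \mo(t^{-3/2})$ and $G'' = \mo(t^{-5/2})$ as $t \to \infty$. It handles $G$ itself through the expansion $\frac{t}{1+t}\frac{\sqrt{1+t}+1}{\sqrt{1+t}-1} = 1 + \frac{2}{\sqrt{1+t}} + \mo(t^{-1})$. Boundedness on compact $t$-intervals is left implicit, in particular near $t=0$ where the defining formula is formally $0/0$. It rests on the smoothness of $G = \widetilde{H}_1 + \frac{1}{2}\log(1+t)$ on $[0,+\infty)$.

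Your identity $G(t) = -\log\bigl(1+(1+t)^{-1/2}\bigr)$ shows that the paper's $\mo(t^{-1})$ remainder is exactly $\frac{1}{1+t}$, since $(1+u^{-1})^2 = 1 + 2u^{-1} + u^{-2}$. This removes the split between small and large $t$ entirely. Smoothness at $t=0$ becomes manifest, and the asymptotic statements become global inequalities with an explicit constant. Your $G'(t) = \frac{x^3}{2(1+x)}$ agrees with the paper's formula after rationalising, both being $\frac{1}{2(1+t)(1+\sqrt{1+t})}$.

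The paper's route needs no algebraic insight and would apply equally to a less tidy $G$. It is essentially the fallback you sketch at the end of your proposal.
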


\begin{proof}
    For the derivatives, we have
    \begin{equation*}
        G^\prime(t) = H_1(t) + \frac{1}{2(t+1)} = \frac{\sqrt{1+t}-1}{2t(1+t)}
    \end{equation*}
    so that $|G^\prime(t)| = O(t^{-3/2})$ at infinity. Similarly, it is obvious that $|G^{\prime\prime}(t)| = O(t^{-5/2})$ at infinity. For $G(t)$ itself, the statement follows from the fact that
    \begin{equation*}
        \frac{t}{1+t} \frac{\sqrt{1+t}+1}{\sqrt{1+t}-1} = 1 + \frac{2}{\sqrt{1+t}} + \mo(t^{-1})
    \end{equation*}
    as $t$ goes to $+\infty$.
\end{proof}

Consider the interpolation region, that is,
\begin{itemize}
    \item Region III: $\{ \frac{1}{2} \eta^2 \leq \upsilon \leq 2 \eta^2, \upsilon + \eta \geq 1 \}$.
\end{itemize}
Since region III is the intersection of regions I and II, we can combine Lemma \ref{lem: Uniform comparisons in region I} and Lemma \ref{lem: Uniform comparisons in region II} to obtain the comparisons
\begin{equation*}
    \rho \asymp \upsilon^{1/4} \asymp \mr \asymp \eta^{1/2} .
\end{equation*}
Using this observation and the properties of the map $G$, we obtain

\begin{lem}     \label{lem: Estimates on the difference of potentials}
    In region III, the following estimates hold for any $\epsilon \in (0,1)$:
    \begin{align*}
        & |F^{(\mathrm{II})}_\upsilon - F^{(0)}_\upsilon| = \ohat(\mr^{-4} \rho^{-1+\epsilon}), ~~ |F^{(\mathrm{II})}_{\upsilon\upsilon}-F^{(0)}_{\upsilon\upsilon}| = \ohat(\mr^{-8} \rho^{-1+\epsilon}), \\
        |F^{(\mathrm{II})}_\eta-F^{(0)}_\eta| & = \ohat(\rho^{-3+\epsilon}), ~~ |F^{(\mathrm{II})}_{\upsilon\eta}-F^{(0)}_{\upsilon\eta}| = \ohat(\mr^{-4}\rho^{-3+\epsilon}), ~~|F^{(\mathrm{II})}_{\eta\eta}-F^{(0)}_{\eta\eta}| = \ohat(\rho^{-5+\epsilon}) .
    \end{align*}
\end{lem}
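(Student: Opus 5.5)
The plan is to write the difference as
\[
F^{(\mathrm{II})} - F^{(0)} = -G(\mt_s) + F^{(2)} + F^{(3)},
\]
using the identity $F^{(0)} - F^{(1)} = G(\mt_s)$, and to estimate each of the three pieces separately in region III. For $F^{(2)}$ and $F^{(3)}$ no new work is needed. In the proofs of admissibility in region II we already obtained
\begin{align*}
F^{(2)}_\upsilon,\ F^{(3)}_\upsilon &= \ohat(\mr^{-4}\rho^{-1+\epsilon}), & F^{(2)}_{\upsilon\upsilon},\ F^{(3)}_{\upsilon\upsilon} &= \ohat(\mr^{-8}\rho^{-1+\epsilon}), & F^{(2)}_\eta,\ F^{(3)}_\eta &= \ohat(\rho^{-3+\epsilon}),\\
F^{(2)}_{\upsilon\eta},\ F^{(3)}_{\upsilon\eta} &= \ohat(\mr^{-4}\rho^{-3+\epsilon}), & F^{(2)}_{\eta\eta},\ F^{(3)}_{\eta\eta} &= \ohat(\rho^{-5+\epsilon}). & &
\end{align*}
For $F^{(2)}$, the $\upsilon$-bounds $\ohat(\mr^{-4}\rho^{-1})$ and $\ohat(\mr^{-8}\rho^{-1})$ are even better than needed. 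Since region III is contained in region II, these bounds hold there unchanged. So the only real content is the term $G(\mt_s)$.

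For $G(\mt_s)$, I will differentiate by the chain rule, using $\partial_\upsilon \mt_s = (\eta+s^2)^{-1}$ and $\partial_\eta \mt_s = -\mt_s(\eta+s^2)^{-1}$. This gives
\begin{align*}
\partial_\upsilon G(\mt_s) &= \frac{G'(\mt_s)}{\eta+s^2}, &
\partial_\upsilon^2 G(\mt_s) &= \frac{G''(\mt_s)}{(\eta+s^2)^2},\\
\partial_\eta G(\mt_s) &= -\frac{\mt_s G'(\mt_s)}{\eta+s^2}, &
\partial_\upsilon\partial_\eta G(\mt_s) &= -\frac{G'(\mt_s)+\mt_s G''(\mt_s)}{(\eta+s^2)^2},\\
\partial_\eta^2 G(\mt_s) &= \frac{2\mt_s G'(\mt_s) + \mt_s^2 G''(\mt_s)}{(\eta+s^2)^2}. & &
\end{align*}
The key observation is that in region III the variable $\mt_s$ stays bounded. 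Indeed $\upsilon \le 2\eta^2$ gives $\mt_s \le 2\eta^2/\eta = 2\eta \lesssim \rho^2$. More precisely, since $\upsilon \asymp \eta^2$ there, $\mt_s \asymp \eta$ whenever $s$ is bounded and $\eta$ is large. So the powers of $\mt_s$ do \emph{not} stay bounded, and I must use the decay of $G$ from the previous lemma: $|G'(t)| \lesssim (1+t)^{-3/2}$ and $|G''(t)| \lesssim (1+t)^{-5/2}$.

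Combining this decay with $(\eta+s^2)(1+\mt_s) = \upsilon+\eta+s^2 = \mr_s^4/4$, each derivative turns into a product of powers of $(1+\mt_s)^{-1/2}$ and $\mr_s^{-4}$. For instance,
\[
|\partial_\upsilon G(\mt_s)| \lesssim \frac{(1+\mt_s)^{-1/2}}{(\eta+s^2)(1+\mt_s)} \lesssim \frac{(1+\mt_s)^{-1/2}}{\mr_s^4}.
\]
Similarly:
\begin{itemize}
\item $\partial_\upsilon^2 G(\mt_s) = \ohat(\mr_s^{-8}(1+\mt_s)^{-1/2})$;
\item $\partial_\eta G(\mt_s) = \ohat\big(\mt_s(1+\mt_s)^{-3/2}(\eta+s^2)^{-1}\big) = \ohat\big((1+\mt_s)^{-1/2}(\eta+s^2)^{-1}\big)$;
\item $\partial_\upsilon\partial_\eta G(\mt_s) = \ohat\big(\mr_s^{-4}(1+\mt_s)^{-1/2}(\eta+s^2)^{-1}\big)$;
\item $\partial_\eta^2 G(\mt_s) = \ohat\big((1+\mt_s)^{-1/2}(\eta+s^2)^{-2}\big)$.
\end{itemize}

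To conclude, I will use the region III comparisons. Since $\mt_s \asymp \eta$ for large $\rho$, we have $(1+\mt_s)^{-1/2} = \ohat(\eta^{-1/2}) = \ohat(\rho^{-1})$. Together with $\eta+s^2 \gtrsim \eta \asymp \rho^2$ and $\mr_s \asymp \mr \asymp \rho$, this yields exactly the claimed bounds, without the $\epsilon$-loss for the $G$ part:
\begin{itemize}
\item the $\upsilon$-derivative is $\ohat(\mr^{-4}\rho^{-1})$;
\item the $\upsilon\upsilon$-derivative is $\ohat(\mr^{-8}\rho^{-1})$;
\item the $\eta$-derivative is $\ohat(\rho^{-3})$;
\item the $\upsilon\eta$-derivative is $\ohat(\mr^{-4}\rho^{-3})$;
\item the $\eta\eta$-derivative is $\ohat(\rho^{-5})$.
\end{itemize}

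The step needing most care is the lower bound $\mt_s \gtrsim \eta$, uniformly in $s \in [0,s_0]$. Region III gives $\upsilon \ge \tfrac12\eta^2$, so $\mt_s \ge \eta^2/(2(\eta+s_0^2))$. This is $\gtrsim \eta$ once $\eta \ge s_0^2$. On the remaining compact part, where $\eta \le s_0^2$, all the quantities involved are bounded above and below, so the estimates hold trivially with a constant depending on $s_0$.
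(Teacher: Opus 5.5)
Your proposal is correct and is essentially the paper's proof. You split off $F^{(2)}+F^{(3)}$, whose bounds were already established in region II, and you estimate the derivatives of $G(\mt_s)$ using four ingredients: the chain rule, the decay $|G'|\lesssim(1+t)^{-3/2}$ and $|G''|\lesssim(1+t)^{-5/2}$, the identity $(\eta+s^2)(1+\mt_s)=\mr_s^4/4$, and the region III bound $(1+\mt_s)^{-1/2}=\ohat(\rho^{-1})$.

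The only differences are cosmetic. The paper obtains that last bound directly from $(1+\mt_s)^{-1/2}=2\sqrt{\eta+s^2}\,\mr_s^{-2}$ together with $\eta^{1/2}\asymp\mr$, which avoids your case split. Also, your opening claim that $\mt_s$ stays bounded in region III is false, as you observe yourself in the next sentence, so it should be deleted.
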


\begin{proof}
    Since $F^{(\mathrm{II})}-F^{(0)} = F^{(1)}-F^{(0)} + F^{(2)} + F^{(3)}$ and we have already proved such estimates in region II for the functions $F^{(2)}$ and $F^{(3)}$, the only thing left to do is to check the estimates on the derivatives of $F^{(1)}-F^{(0)} = G(\mt_s)$. Noting that the above uniform comparisons imply
    \begin{equation*}
        \frac{1}{\sqrt{1+\mt_s}} = \frac{\sqrt{\eta+s^2}}{2\mr_s^2} = \ohat(\mr_s^{-1}) = \ohat(\rho^{-1})
    \end{equation*}
    in region III, we calculate:
    \begin{align*}
        |F^{(1)}_\upsilon - F^{(0)}_\upsilon| & = \frac{|G^\prime(\mt_s)|}{\eta+s^2} = \ohat(\mr_s^{-4}(1+\mt_s)^{-1/2}) = \ohat(\mr^{-4} \rho^{-1}) , \\
        |F^{(1)}_{\upsilon\upsilon} - F^{(0)}_{\upsilon\upsilon}| & = \frac{|G^{\prime\prime}(\mt_s)|}{(\eta+s^2)^2} = \ohat(\mr_s^{-8}(1+\mt_s)^{-1/2}) = \ohat(\mr^{-8}\rho^{-1}).
    \end{align*}
    Similarly  for the derivatives involving $\eta$, since $\mr^2 \asymp \eta  \asymp \rho^2$ in region III, we have
    \begin{align*}
        |F^{(1)}_\eta - F^{(0)}_\eta| & = \frac{\mt_s|G^\prime(\mt_s)|}{\eta+s^2} = \ohat(\mr_s^{-2}(\eta+s^2)^{-1/2}) = \ohat(\rho^{-3}) \\
        |F^{(1)}_{\upsilon\eta} - F^{(0)}_{\upsilon\eta}| & \leq \frac{|G^\prime(\mt_s)|}{(\eta+s^2)^2} + \frac{\mt_s|G^{\prime\prime}(\mt_s)|}{(\eta+s^2)} = \ohat(\mr_s^{-6}(\eta+s^2)^{-1/2}) = \ohat(\mr^{-4}\rho^{-3}) \\
        |F^{(1)}_{\eta\eta} - F^{(0)}_{\eta\eta}| & \leq \frac{\mt_s|G^\prime(\mt_s)|}{(\eta+s^2)^2} + \frac{\mt_s^2 |G^{\prime\prime}(\mt_s)|}{(\eta+s^2)^2} = \ohat(\mr_s^{-2}(\eta+s^2)^{-3/2}) = \ohat(\rho^{-5})
    \end{align*}
    as claimed.
\end{proof}

Let us now pick a standard cutoff function $\chi \colon [0,+\infty) \to \R$ such that $\chi(t) = 1$ when $t \leq \frac{1}{2}$ and $\chi(t) = 0$ if $t \geq 2$. We introduce the family of symmetric potentials $\{\varphi_s = F^\varphi(s,\upsilon,\eta)\}_{s \geq 0}$ where:
\begin{equation}
    F^\varphi(s,\upsilon,\eta) = (1-\chi(\tfrac{\upsilon}{\eta^2})) F^{(0)}(\upsilon,\eta) + \chi(\tfrac{\upsilon}{\eta^2}) F^{(\mathrm{II})}(\upsilon,\eta) .
\end{equation}
In particular, it coincides with $F^{(0)}$ in the region $\{ \upsilon \geq 2\eta^2\}$ and with $F^{(\mathrm{II})}$ in the region $\{ \upsilon \leq \frac{1}{2}\eta^2 \}$. 

\begin{prop}        \label{prop: Improved solutions at infinity}
    The family of symmetric potentials $\{\varphi_s\}_{s \geq 0}$ is admissible on $\mZ_+$, and
    \begin{equation*}
        (\omega^{\m{srf}}_s + \tfrac{\iq}{2} \partial \overline{\partial} \varphi_s)^3 = (1+ \psi_s) \vol_+
    \end{equation*}
    where for any $\epsilon \in (0,1)$ the functions $\{\psi_s\}_{s \geq 0}$ satisfy
    \begin{equation*}
        |\psi_s| = \ohat(\rho^{-3+\epsilon}) .
    \end{equation*}
\end{prop}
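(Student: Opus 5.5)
We split $\{\upsilon+\eta\geq 1\}$ into three pieces according to the value of $\tau \coloneqq \upsilon/\eta^2$. On $\{\tau \geq 2\}$ we have $\varphi_s = F^{(0)}$. On $\{\tau\leq \frac12\}$ we have $\varphi_s = F^{(\mathrm{II})}$. On region III, $\{\frac12\leq\tau\leq 2\}$, the cutoff is active. In the first two regions everything has already been proved. Admissibility of $F^{(0)}$ in region I and the bound $\mA_s+\mB_s+\mr_s^{-2}=\ohat(\rho^{-4})$ come from Lemma \ref{lem: Potential in region I}. Admissibility of $F^{(\mathrm{II})}$ and the bound $\ohat(\rho^{-3+\epsilon})$ in region II come from Lemma \ref{lem: Potential in region II}. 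Once admissibility holds globally, Proposition \ref{prop: MA equation} applies: it expresses $(\omega^{\m{srf}}_s+\frac{\iq}{2}\partial\overline\partial\varphi_s)^3/\vol_+$ as $1+\mr_s^{-2}+\mA_s+\mB_s+\ohat(\rho^{-3})$. We then set $\psi_s$ equal to this quantity minus $1$, so that $\psi_s = \mA_s(F^\varphi)+\mB_s(F^\varphi)+\mr_s^{-2}+\ohat(\rho^{-3})$. It therefore suffices to prove admissibility and the bound $\mA_s(F^\varphi)+\mB_s(F^\varphi)+\mr_s^{-2}=\ohat(\rho^{-3+\epsilon})$ in region III.

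In region III we write $F^\varphi = F^{(0)} + \chi(\tau)\,D$ with $D\coloneqq F^{(\mathrm{II})}-F^{(0)}$. The derivatives of $D$ are controlled by Lemma \ref{lem: Estimates on the difference of potentials}, which gives bounds better than the admissibility thresholds by a factor $\rho^{-1+\epsilon}$. We also need $D$ itself: here $D=G(\mt_s)+F^{(2)}+F^{(3)}$. The function $G$ satisfies $|G(t)|\lesssim(1+t)^{-1/2}$, and in region III $(1+\mt_s)^{-1/2}=\ohat(\rho^{-1})$. The functions $F^{(2)},F^{(3)}$ are $\ohat(\rho^{-1+\epsilon})$ by their definitions together with $|F^{(1)}|=\ohat(\log(1+\rho))$ and \eqref{eq: Bounds on primitive of H3}. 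Hence $D=\ohat(\rho^{-1+\epsilon})$.

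Next we estimate the derivatives of the cutoff. We have $\partial_\upsilon\tau=\eta^{-2}$ and $\partial_\eta\tau=-2\tau/\eta$. In region III, $\eta^{-2}\asymp\mr^{-4}$ and $\eta^{-1}\asymp\rho^{-2}$, so
\begin{equation*}
    \partial_\upsilon\chi=\ohat(\mr^{-4}), \quad \partial_\eta\chi=\ohat(\rho^{-2}), \quad \partial^2_\upsilon\chi=\ohat(\mr^{-8}), \quad \partial_\upsilon\partial_\eta\chi=\ohat(\mr^{-4}\rho^{-2}), \quad \partial^2_\eta\chi=\ohat(\rho^{-4}).
\end{equation*}
These are exactly the admissible weights. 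Leibniz's rule then shows that every derivative of $\chi D$ satisfies the admissible bound improved by a factor $\rho^{-1+\epsilon}$. Admissibility of $F^\varphi$ follows, since $F^{(0)}$ is admissible in region I.

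For the equation, the operator $\mA_s$ is linear. Writing the equation in the form
\begin{equation*}
    \mA_s(F^\varphi)+\mr_s^{-2} = (1-\chi)\big(\mA_s(F^{(0)})+\mr_s^{-2}\big)+\chi\big(\mA_s(F^{(\mathrm{II})})+\mr_s^{-2}\big) + (\text{terms containing derivatives of }\chi\text{ times }D\text{ or }\partial D),
\end{equation*}
the weights in $\mA_s$ (namely $\eta$, $(\mr_s^2+1)\upsilon$, and $\mr_s^2+1+2\upsilon\mr_s^{-2}$) combine with the admissible weights to give $\ohat(1)$ times $\rho^{-2}$. This is the scaling of Lemma \ref{lem: bound on ddbar for admissible potentials}. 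The commutator terms are therefore $\ohat(\rho^{-2}\cdot\rho^{-1+\epsilon})=\ohat(\rho^{-3+\epsilon})$. For the quadratic part we set $\mB_s(F^\varphi)=\mB_s(F^{(0)}+\chi D)$. The difference from $(1-\chi)\mB_s(F^{(0)})+\chi\mB_s(F^{(\mathrm{II})})$ consists of bilinear terms in which at least one factor carries the extra gain $\rho^{-1+\epsilon}$. Each such term is bounded exactly as in the proof of the lemma on $F^{(2)}$, giving $\ohat(\rho^{-3+\epsilon})$.

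The main obstacle is bookkeeping the mixed terms. The key check is that $\eta F_{\eta\eta}$ applied to $\chi D$ produces $\eta\,\partial_\eta^2\chi\cdot D = \ohat(\rho^2\rho^{-4}\rho^{-1+\epsilon})$, which is acceptable. Similarly, $\upsilon\mr_s^2\,\partial_\upsilon\chi\,\partial_\upsilon D = \ohat(\mr^6\mr^{-4}\mr^{-4}\rho^{-1+\epsilon})$ is also acceptable. Both checks rely on $\rho\asymp\mr\asymp\eta^{1/2}$ in region III.
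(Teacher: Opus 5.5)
Your proposal is correct and follows the paper's proof. In region III you write $F^\varphi=F^{(0)}+\chi D$, show $D=\ohat(\rho^{-1+\epsilon})$ through $G(\mt_s)+F^{(2)}+F^{(3)}$, combine Lemma \ref{lem: Estimates on the difference of potentials} with the cutoff bounds via Leibniz to get admissibility with a gain of $\rho^{-1+\epsilon}$, and conclude with Proposition \ref{prop: MA equation}. The only difference is bookkeeping: the paper uses that region III lies inside region I and treats $\chi D$ as a perturbation of $F^{(0)}$ alone, bounding $\mA_s(\chi D)$ and $\mB_s(F^{(0)}+\chi D)-\mB_s(F^{(0)})$, so it never needs Lemma \ref{lem: Potential in region II} there, whereas your convex-combination splitting invokes both region lemmas. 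Your cutoff bounds, such as $\partial_\eta\chi=\ohat(\rho^{-2})$, are the correct ones; the paper's displayed $\mo(\rho^{-6})$ drops a factor $\upsilon\asymp\eta^2$. These correct bounds suffice precisely because $D$ carries the gain.
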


\begin{proof}
    The only thing left to do is check the statement in region III. In this region, we can write $F^\varphi = F^{(0)} + \tilde{F}$ where $\tilde{F} = \chi(\frac{\upsilon}{\eta^2})(F^{(\mathrm{II})} - F^{(0)})$. Let us first remark that in region III, we have for any $\epsilon \in (0,1)$
    \begin{equation*}
        |F^{(\mathrm{II})} - F^{(0)}|  = \ohat(\rho^{-1+\epsilon})
    \end{equation*}
    Indeed, $F^{(II)} - F^{(0)} = G(\mt_s) + F^{(2)} + F^{(3)}$ and we have in region III
    \begin{align*}
        |G(\mt_s)| & = \ohat((1+\mt_s)^{-1/2}) = \ohat(\rho^{-1}), \\
        |F^{(2)}| & = \ohat((\eta+s^2)^{-1/2}\log(1+\rho)) = \ohat(\log(1+\rho)\rho^{-1}) , \\ 
        |F^{(3)}| & = \ohat(\mr_s^{-2}\log(1+\rho)) = \ohat(\log(1+\rho)\rho^{-2}) .
    \end{align*}
    On the other hand, using that $\eta^2 \asymp \upsilon \asymp \mr^4$ and $\rho \asymp \eta^{1/2}$ in region III, we obtain the following estimates (independent of $s$) on the derivatives of the cutoff function $\tilde{\chi} \coloneqq \chi(\frac{\upsilon}{\eta^2})$:
    \begin{align*}
        \tilde{\chi}_\upsilon & = \eta^{-2} \chi^\prime(\tfrac{\upsilon}{\eta^2}) = \mo(\mr^{-4}), ~~ \tilde{\chi}_{\upsilon\upsilon} = \eta^{-4} \chi^{\prime\prime}(\tfrac{\upsilon}{\eta^2}) = \mo(\mr^{-8}), ~~ \tilde{\chi}_\eta = - 2 \eta^{-3} \chi^{\prime}(\tfrac{\upsilon}{\eta^2}) = \mo(\rho^{-6}), \\
        & \tilde{\chi}_{\upsilon\eta} = - 2\eta^{-5}\chi^{\prime\prime}(\tfrac{\upsilon}{\eta^2})= \mo(\mr^{-4} \rho^{-6}), ~~ \tilde{\chi}_{\eta\eta} = 6\eta^{-4} \chi^{\prime}(\tfrac{\upsilon}{\eta^2}) + 4 \eta^{-6} \chi^{\prime\prime}(\tfrac{\upsilon}{\eta^2}) = \mo(\rho^{-8}),
    \end{align*}
    which immediately imply that $\tilde{F}$ is an admissible potential. More precisely, one can easily check using the above estimates that for any $\epsilon \in (0,1)$ we have
    \begin{align*}
        |\tilde{F}_\upsilon| & = \ohat(\mr^{-4} \rho^{-1+\epsilon}), ~~ |\tilde{F}_{\upsilon\upsilon}| = \ohat(\mr^{-8} \rho^{-1+\epsilon}), ~~ |\tilde{F}_\eta| = \ohat(\rho^{-3+\epsilon}), \\
        & ~~ |\tilde{F}_{\upsilon\eta}| = \ohat(\mr^{-4}\rho^{-3+\epsilon}), ~~ |\tilde{F}_{\eta\eta}| = \ohat(\rho_s^{-5+\epsilon}).
    \end{align*}
    These estimates readily imply that $\mA_s(\tilde{F}) = \ohat(\rho^{-3+\epsilon})$, and thus in region III,
    \begin{equation*}
        \mA_s(F^\varphi) + \mB_s(F^\varphi) + \frac{1}{\mr_s^2} + \mB_s(F^{(0)}+\tilde{F})-\mB_s(F^{(0)}) = \ohat(\rho^{-3+\epsilon}).
    \end{equation*}
    Moreover, we can use all the above estimates to argue as in the previous sections that $\mB_s(F^{(0)}+\tilde{F})-\mB_s(F^{(0)}) = \ohat(\rho^{-3+\epsilon})$. Thus the rest of the proposition follows from Proposition \ref{prop: MA equation}, Lemma \ref{lem: Potential in region I} and Lemma \ref{lem: Potential in region II}.
\end{proof}

    \subsection{Higher-order estimates}     \label{subsec: Higher-order estimates}

\begin{lem}     \label{lem: higher-order estimates}
    The family of potentials $\{\varphi_s = F^\varphi(s,\upsilon,\eta)\}_{s \geq 0}$ satisfies the estimates
    \begin{equation*}
        |\partial^k_\upsilon \partial^\ell_\eta F^\varphi | = \ohat(\mr^{-4k} \rho^{-2\ell}), ~~~~ \forall k,\ell \in \N_0, ~ k+\ell \geq 1 .
    \end{equation*}
\end{lem}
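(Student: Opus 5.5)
We treat the three regions separately, as in Section 3, with region III handled through the cutoff; note that the claimed bound is to hold on all of $\{\upsilon+\eta\geq 1\}$. The basic device is a symbol-type calculus. Call $f(s,\upsilon,\eta)$ of \emph{type} $(a,b)$ if $|\partial^k_\upsilon\partial^\ell_\eta f| = \ohat(\mr^{-a-4k}\rho^{-b-2\ell})$ for all $k,\ell$ with $k+\ell\geq 1$, and also for $k=\ell=0$ when $a,b$ are finite. Types multiply under products. The lemma then says that every first derivative of $F^\varphi$ lies in the corresponding class; for instance $F^\varphi_\upsilon$ should have type $(4,0)$ and $F^\varphi_\eta$ type $(0,2)$.

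\textbf{Region I.} Here $F^{(0)}=-\frac12\log(\upsilon+\eta+s^2)$. Its derivatives are $c_{k,\ell}(\upsilon+\eta+s^2)^{-(k+\ell)}$. Since $\mr_s\asymp\mr\asymp\rho$ in region I by Lemma \ref{lem: Uniform comparisons in region I}, these are $\ohat(\mr^{-4(k+\ell)})$. This suffices because $\mr^{-4}\lesssim\rho^{-2}$ in region I.

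\textbf{Region II.} Every piece has the form $(\eta+s^2)^{-j/2}\Phi(\mt_s)$, or $\log(\eta+s^2)$ times such a term. Two derivative rules drive the induction. First, $\partial_\upsilon$ acting on $\Phi(\mt_s)$ produces $(\eta+s^2)^{-1}\Phi'(\mt_s)$. Second, $\partial_\eta$ produces $-(\eta+s^2)^{-1}\mt_s\Phi'(\mt_s)$ together with terms $(\eta+s^2)^{-1}$ times the previous expression.

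The key inductive claim concerns $\widetilde H_1$, $G$ and $\widetilde H_3$. We show, by explicit form or from the ODE, that $|\Phi^{(k)}(t)|\lesssim(1+t)^{-k}(1+\log(1+t))$, possibly with an extra gain. For $\widetilde H_1$ this holds because $H_1(t)=a(t)/(1+t)$ with $a$ real-analytic and bounded, and $a$ has an expansion in powers of $(1+t)^{-1/2}$ at infinity, so $a^{(k)}=\mo((1+t)^{-k})$. The same reasoning applies to $H_3$ by differentiating the ODE $2t(t+1)H'+(3t+2)H=\alpha/\sqrt{1+t}$ repeatedly. This requires $|\alpha^{(k)}|\lesssim(1+t)^{-k}$, which follows from the explicit algebraic form of $\alpha$ as a function of $\sqrt{1+t}$.

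From this claim, each $\partial_\upsilon$ costs $(\eta+s^2)^{-1}(1+\mt_s)^{-1}\asymp\mr_s^{-4}$. Each $\partial_\eta$ costs $(\eta+s^2)^{-1}\lesssim\rho^{-2}$ by Lemma \ref{lem: Uniform comparisons in region II}, since $\mt_s(1+\mt_s)^{-1}\leq1$. The prefactors $(\eta+s^2)^{-j/2}$ and the logarithms are absorbed using $\log(1+\rho)\rho^{-1}=\ohat(1)$. Finally, $\mr_s^{-1}=\ohat(\mr^{-1})$ on $\{\upsilon+\eta\geq1\}$.

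\textbf{Region III.} We write $F^\varphi=F^{(0)}+\tilde\chi\,(G(\mt_s)+F^{(2)}+F^{(3)})$ and apply the Leibniz rule. Here $\rho\asymp\mr\asymp\eta^{1/2}\asymp\upsilon^{1/4}$. The cutoff $\tilde\chi=\chi(\upsilon/\eta^2)$ is homogeneous of degree $0$ under the weighting $\upsilon\sim\mr^4$, $\eta\sim\rho^2$, so $\partial^k_\upsilon\partial^\ell_\eta\tilde\chi=\mo(\mr^{-4k}\rho^{-2\ell})$ by induction. Each bracketed term is $\ohat(1)$ with derivatives of the right type, by the region II analysis, so the products satisfy the bound.

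\textbf{Main obstacle.} The hard step is the uniform control of all higher derivatives of $\alpha$, and hence of $H_3$ and $\widetilde H_3$, in the form $(1+t)^{-k}$. Without this, the $\partial_\upsilon$ derivatives only cost $(\eta+s^2)^{-1}$ rather than $\mr_s^{-4}$, and that is insufficient where $\mt_s$ is large. The first approach would be to show that $\alpha(t)=\beta((1+t)^{-1/2})$ with $\beta$ real-analytic near $[0,1]$, reading this off the explicit computation behind \eqref{eq: Approx for F1}. The decay of $H_3^{(k)}$ then follows by induction from the ODE, using that its coefficients are polynomials of the right homogeneity.
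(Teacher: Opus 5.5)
Your proposal is correct and follows the same route as the paper: $F^{(0)}$ in region I via its explicit derivatives, $F^{(\mathrm{II})}$ in region II by redoing the admissibility computations using that each $t$-derivative of $H_1$, $H_3$ (and $G$) gains a factor $(1+t)^{-1}$, and region III by the cutoff and the Leibniz rule. The step you flag as the main obstacle is one the paper simply asserts, and your plan for it works: $a$, $b$ and hence $\alpha$ are rational functions of $(1+t)^{-1/2}$ that are regular on $[0,1]$, so $\alpha^{(k)}=\mo((1+t)^{-k})$.
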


\begin{proof}
    We restrict ourselves to proving the estimates for $F^{(0)}$ in region I and $F^{(\m{II})}$ in region II, since there is no difficulty in deducing the result by cutoff in the interpolation region as in the previous section. In region I this is trivial since $\partial^k_\upsilon \partial^\ell_\eta F^{(0)}  = C_{k+\ell} \mr_s^{-4(k+\ell)}$ for some constants $C_{k+\ell}$ and $\rho = \ohat(\mr_s^2)$. In region II we claim that for all $k,\ell \geq 0$ such that $k+\ell \geq 1$ we have
    \begin{equation*}
        |\partial^k_\upsilon\partial^\ell_\eta F^{(1)}| = \ohat(\mr^{-4k}\rho^{-2\ell}), ~~ |\partial^k_\upsilon \partial^\ell_\eta (F^{(2)}+F^{(3)})| = \ohat(\mr^{-4k} \rho^{-2\ell-1+\epsilon})
    \end{equation*}
    for any $\epsilon \in (0,1)$. The proof is exactly the same as for the proof of the admissibility of these potentials,  because the functions $(1+t)^{k+1} \tfrac{\diff^k}{\diff t^k} H_1(t)$ and $(1+t)^{k+\frac{3}{2}} \tfrac{\diff^k}{\diff t^k} H_3(t)$ are smooth and remain bounded on the interval $[0,+\infty)$, for all $k \geq 0$.
\end{proof}

\begin{lem}     \label{lem: higher-order estimates epsilon}
    Let us write the functions $\psi_s$ of Proposition \ref{prop: Improved solutions at infinity} as $\psi_s = F^\psi(s,\upsilon,\eta)$. Then for all $k,\ell \geq 0$ and $\epsilon \in (0,1)$ we have
    \begin{equation*}
        |\partial^k_\upsilon \partial^\ell_\eta F^\psi| = \ohat(\mr^{-4k} \rho^{-2\ell-3+\epsilon}) .
    \end{equation*}
\end{lem}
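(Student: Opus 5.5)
We need to bound derivatives of $F^\psi$, where $1+\psi_s$ is the ratio $(\omega^{\m{srf}}_s+\tfrac{\iq}{2}\partial\overline{\partial}\varphi_s)^3/\vol_+$. The approach is to write this ratio exactly, rather than up to $\ohat(\rho^{-3})$, as an explicit polynomial expression in the derivatives $F^\varphi_\upsilon, F^\varphi_{\upsilon\upsilon},\dots$. The coefficients of this polynomial are rational functions of $s,\upsilon,\eta,\mr_s$. Such an expression is available from the computation in the proof of Proposition \ref{prop: MA equation}: all wedge products of $\omega^{\m{srf}}_s$, $\iq\partial\overline{\partial}\upsilon$, $\partial\upsilon\wedge\overline{\partial}\upsilon$, $\diff\zeta\wedge\diff\overline{\zeta}$ and $\zeta\partial\upsilon\wedge\diff\overline{\zeta}+\overline{\zeta}\diff\zeta\wedge\overline{\partial}\upsilon$ are, by Corollary \ref{cor: Writing the forms conveniently} and \eqref{eq: Volume form in invariant frame}, explicit $\Un(2)$-invariant multiples of $\vol_+$. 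Thus
\[
F^\psi = \mr_s^{-2} + \mA_s(F^\varphi)+\mB_s(F^\varphi) + \mathcal{R}_s(F^\varphi),
\]
where $\mathcal{R}_s$ collects the remaining quadratic and cubic terms. Each term in $\mathcal{R}_s$ is a product of a coefficient $c(s,\upsilon,\eta)$ with derivatives of $F^\varphi$ of order at most $2$.

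First I will record symbol-type bounds for the coefficients. Each coefficient is a polynomial in $\upsilon,\eta,s$, $\mr_s^{\pm 2}$ and $\upsilon^{-1}$; the factor $\upsilon^{-1}$ always cancels, since the volume form already contains $\upsilon^{-1}$. Each application of $\partial_\upsilon$ gains a factor $\ohat(\mr^{-4})$, because $\partial_\upsilon \mr_s^2=2\mr_s^{-2}$ and $\upsilon\le\mr^4$. Each application of $\partial_\eta$ gains a factor $\ohat(\rho^{-2})$ relative to the natural weight of the coefficient; this works region by region, using $\eta\lesssim\rho^2$ and the comparisons of Lemmas \ref{lem: Uniform comparisons in region I} and \ref{lem: Uniform comparisons in region II}. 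Combining this with Lemma \ref{lem: higher-order estimates}, which gives $\partial^k_\upsilon\partial^\ell_\eta F^\varphi=\ohat(\mr^{-4k}\rho^{-2\ell})$, the Leibniz rule shows that every term which was estimated by $\ohat(\rho^{-3})$ in Proposition \ref{prop: MA equation} keeps its bound after applying $\partial^k_\upsilon\partial^\ell_\eta$, with the extra factor $\mr^{-4k}\rho^{-2\ell}$.

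The main obstacle is the term $\mr_s^{-2}+\mA_s(F^\varphi)+\mB_s(F^\varphi)$. It is only of size $\ohat(\rho^{-3+\epsilon})$ because of cancellations, so differentiating its individual summands loses those cancellations. I will handle this region by region, differentiating the identities that produced the cancellation rather than bounding terms separately.

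In region I, $F^{(0)}$ is an explicit function of $\upsilon+\eta+s^2$, so the expression is an explicit rational function whose derivatives can be checked directly. In region II, the expression is, by construction of $F^{(1)},F^{(2)},F^{(3)}$, equal to $\mr_s^{-2}$ plus the leftover from the ODE reductions; this leftover is a sum of terms of the form $(\eta+s^2)^{-a}\mr_s^{-2b}\beta(\mt_s)$ with $\beta$ smooth and symbol-like on $[0,\infty)$, possibly times logarithmic factors. The function $\alpha$ in \eqref{eq: Approx for F1} is real-analytic and obeys symbol bounds $(1+t)^j\alpha^{(j)}=\mo(1)$, since it is a rational expression in $\sqrt{1+t}$. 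Using $\partial_\upsilon\mt_s=(\eta+s^2)^{-1}$ and $\partial_\eta\mt_s=-\mt_s(\eta+s^2)^{-1}$, each $\partial_\upsilon$ then gains $(\eta+s^2)^{-1}(1+\mt_s)^{-1}\asymp\mr_s^{-4}$ and each $\partial_\eta$ gains $(\eta+s^2)^{-1}\asymp\rho^{-2}$.

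In region III, I will write $F^\varphi=F^{(0)}+\tilde F$ and apply the same Leibniz argument to the difference estimates of Lemma \ref{lem: Estimates on the difference of potentials}. These extend to all orders by the symbol bounds on $G$ and on the cutoff. Since the $\ohat$ bounds depend only on the invariant variables, the estimate holds uniformly in $s\in[0,s_0]$, and this proves Lemma \ref{lem: higher-order estimates epsilon}.
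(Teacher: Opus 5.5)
Your proposal is correct and follows essentially the paper's route. You expand $F^\psi$ as coefficient monomials in $\upsilon,\eta,\mr_s$ times derivatives of $F^\varphi$ of order at most $2$, observe that each $\partial_\upsilon$ (resp.\ $\partial_\eta$) acting on a coefficient gains $\mr^{-4}$ (resp.\ $\rho^{-2}$), and combine this with Lemma \ref{lem: higher-order estimates} while retracing the proof of the $C^0$ bound. Your region-by-region treatment of the cancellation in $\mr_s^{-2}+\mA_s(F^\varphi)+\mB_s(F^\varphi)$, differentiating the exact ODE identities and using symbol bounds in $\mt_s$, is exactly what the paper's short remark about retracing \S\ref{subsec: Improvement I}--\S\ref{subsec: Interpolation III} tacitly relies on, so it is a useful clarification rather than a different argument.
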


\begin{proof}
    The function $F^\psi$ can be decomposed as a sum of products of the derivatives of $F^\varphi$ of order $1$ and $2$, multiplied by coefficients depending \emph{polynomially}\footnote{In fact we have used fractional powers of $\upsilon$ and $\eta$ in the proofs, but it is not difficult to see that if we explicitly expanded the Monge--Amp\`ere equation only integral powers of $\upsilon$ and $\eta$ would occur.} on $\upsilon$, $\eta$, and integral powers of $\mr_s$ (of any sign). Now, in the proof of the estimate $|F^\psi| = \hat{\mo}(\rho^{-3+\epsilon})$ (that is, in Lemma \ref{lem: bound on ddbar for admissible potentials}, Proposition \ref{prop: MA equation}, and when neglecting the terms of $\mB_s(F^{(1)}+F^{(2)}) - \mB_s(F^{(1)})$ and so on in \S\ref{subsec: Improvement I}--\ref{subsec: Improvement II}), the only bounds we have used on the monomials in $\upsilon, \eta$ and powers of $\mr_s$ for the coefficients are $\upsilon = \ohat(\mr_s^4)$, $\eta = \mo(\rho^2)$, $\rho^2 = \ohat(\mr_s^4)$. But for such monomials we can also derive estimates on the derivatives, since $\partial_\upsilon^k \partial_\eta^\ell \mr_s = \ohat(\mr^{1-4k-4\ell}) = \ohat(\mr^{1-4k} \rho^{-2\ell})$, and of course $\partial_\upsilon \upsilon = \mo(1)$, $\partial_\eta \eta = \mo(1)$, and $\partial^k_\upsilon \partial^\ell_\eta \upsilon = 0$ if $\ell \geq 1$ or $k \geq 2$ and $\partial^k_\upsilon \partial^\ell_\eta \eta = 0$ if $k \geq 1$ or $\ell \geq 2$. Hence any monomial in $\upsilon, \eta$ and powers of $\mr_s$ have uniform higher-order estimates such that each $\upsilon$-derivative gives an additional $\mr^{-4}$ factor in the decay and any $\eta$-derivative an additional factor of $\rho^{-2}$. Combined with the estimates on the derivatives of $F^\varphi$ from the previous lemma, the result follows.
\end{proof}

We shall also need to express the higher-order estimates intrinsically using covariant derivatives. For this we need the following result:

\begin{prop}     \label{prop: covariant derivatives at infinity}
    For any $k \geq 0$,
    \begin{align*}
        |\nabla_{\omega_s^{\m{srf}}}^k & \diff \upsilon| = \ohat(\mr^{3-k}), ~~~ |\nabla_{\omega_s^{\m{srf}}}^k \diff \zeta| = \ohat(\mr^{-k}), ~~~ |\nabla_{\omega_s^{\m{srf}}}^k \diff \eta| = \ohat(\mr^{2-k}), \\
        &|\nabla_{\omega_s^{\m{srf}}}^k \diff \mr_s| = \ohat(\mr^{-k}), ~~~ |\nabla_{\omega_s^{\m{srf}}}^k \diff \rho| = \ohat(\mr^{-k}) .
    \end{align*}
    Moreover, 
    \begin{equation*}
        |\nabla^k_{\omega_0^{\m{srf}}}(\omega_s^{\m{srf}} - \omega_0^{\m{srf}})| = \ohat(\mr^{-2-k})
    \end{equation*}
    for all $k \geq 0$.
\end{prop}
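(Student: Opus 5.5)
We will work in the $s$-dependent co-frame $\{\partial\upsilon,\overline{\partial}\upsilon,\sigma_s,\overline{\sigma}_s,\diff\zeta,\diff\overline{\zeta}\}$ on $\{\upsilon>0\}$ and first rescale it to a frame that is uniformly orthonormal for $\omega_s^{\m{srf}}$. By Corollary \ref{cor: Writing the forms conveniently}(i), the rescaled co-frame is
\begin{equation*}
    e^1 = (\upsilon\mr_s^2)^{-1/2}\partial\upsilon, \qquad e^2 = \sigma_s, \qquad e^3 = (1+\mr_s^{-2})^{1/2}\diff\zeta .
\end{equation*}
All forms in the statement are $\Un(2)$-invariant or equivariant, so their covariant derivatives are determined by two data. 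The first is the connection $1$-forms of $\omega_s^{\m{srf}}$ in this frame. The second is the frame derivatives of the coefficient functions, which are functions of $(s,\upsilon,\eta)$.

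\textbf{Step 1: connection forms.} Compute $\diff e^a$ using Corollary \ref{cor: Writing the forms conveniently}(ii)--(iii) and the formula for $\diff\sigma_s$. The latter comes from the horizontal lift $\omega_2^{\m{st}}+\iq\omega_3^{\m{st}}+\iq\mathfrak{a}\wedge\diff\zeta$ of $\hat\Omega_s$ together with the expression of $\sigma^0$. Expand the resulting structure coefficients in the orthonormal frame and show each is $\ohat(\mr^{-1})$. The basic inputs are $|\diff\upsilon|=\ohat(\sqrt\upsilon\,\mr_s)$, $|\partial\overline\partial\upsilon|=\ohat(\mr_s^2)$ and the bounds of Lemma \ref{lem: uniform comparisons}. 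Koszul's formula then gives connection forms that are $\ohat(\mr^{-1})$. Differentiating again, each frame derivative of a coefficient, which is a function of $\upsilon,\eta,\mr_s$, gains a factor $\ohat(\mr^{-1})$. This holds because $e_1$ acting on $\upsilon$ is $\ohat(\sqrt\upsilon\,\mr_s)$ and $\partial_\upsilon$ of a monomial costs $\mr^{-4}$ (as in Lemma \ref{lem: higher-order estimates epsilon}), while $e_3$ acting on $\eta$ is $\ohat(\eta^{1/2})$ and $\partial_\eta$ costs $\rho^{-2}\lesssim \eta^{-1/2}\mr^{-1}$ roughly. By induction on $k$, we get $|\nabla^k(\text{connection forms})|=\ohat(\mr^{-1-k})$.

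\textbf{Step 2: the listed forms.} Each form in the statement has the shape $\diff f$ with $f$ invariant. For instance $\diff\upsilon=\sqrt\upsilon\,\mr_s(e^1+\overline{e^1})$, $\diff\zeta=(1+\mr_s^{-2})^{-1/2}e^3$, $\diff\eta=\overline\zeta\,\diff\zeta+\zeta\,\diff\overline\zeta$, and $\diff\mr_s,\diff\rho$ are given by the chain rule. The claimed weights follow from Step 1 combined with coefficient estimates: $\sqrt\upsilon\,\mr_s=\ohat(\mr^3)$, $|\zeta|=\ohat(\mr^2)$ in the relevant regions, and $\diff\mr_s=\ohat(1)$. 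For example $\diff\rho=\tfrac14\rho^{-3}(\diff\upsilon+2\eta\,\diff\eta)$ has size $\rho^{-3}(\mr^3+\eta\mr^2)=\ohat(1)$, since $\eta\lesssim\rho^2$ and $\mr\lesssim\rho$. Each covariant derivative either hits a coefficient, gaining $\mr^{-1}$ as above, or hits the frame, gaining $\mr^{-1}$ by Step 1. The equivalence $\rho\asymp\mr$ fails in region II, where $\rho\asymp\mr^2$, so each estimate must be stated with the weight $\mr^{-k}$, which is what is claimed.

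\textbf{Step 3: the difference $\omega_s^{\m{srf}}-\omega_0^{\m{srf}}$.} Write this difference via Proposition \ref{prop: Properties of reduction}(iii) as $s\iq\Theta_s+\tfrac{\iq}{2}\partial\overline\partial(\mr_s^2-\mr^2)$, plus the change of the $\diff\zeta$ coefficient. The zeroth-order bound is essentially Lemma \ref{lem: asymptotic equivalence of skrf metrics}, but improved to $\mr^{-2}$ using the explicit coefficients of $\iq\Theta_s$ in Corollary \ref{cor: Writing the forms conveniently}(ii). Each of those coefficients carries a factor $s\,\mr_s^{-4}$ or $\sqrt{\upsilon}\mr_s^{-5}$ against unit frame elements; the latter, multiplied by $s$ and compared with the normalisations, is $\ohat(\mr^{-2})$. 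Similarly $\mr_s^2-\mr^2=\ohat(s^2\mr^{-2})$ has $\partial\overline\partial$ of size $\ohat(\mr^{-4})$ by Step 2. The higher derivatives then follow from Steps 1--2, after noting that $\nabla_{\omega_0^{\m{srf}}}-\nabla_{\omega_s^{\m{srf}}}$ is controlled by $\nabla(\omega_s-\omega_0)$ inductively, and that the metrics are uniformly equivalent.

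\textbf{Main obstacle.} The hardest step is Step 1. The frame is $s$-dependent through $\sigma_s$, and it degenerates where $\upsilon\to0$, i.e. along the spheres. The first thing to try is the symmetry-based computation above on $\{\upsilon>0\}$. This must be complemented near $\upsilon=0$ by a separate argument: rescale a fibrewise Eguchi--Hanson chart of size $\sim\mr$, on which the metric is uniformly bounded in $C^k$ after scaling by $\mr^{-1}$. In that chart the Eguchi--Hanson parameter $\sqrt{\eta+s^2}\lesssim\mr^2$ stays bounded in scaled units, and all estimates hold in scaled units.
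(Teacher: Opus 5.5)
Your Step 1 bound is false on a large region, not only near $\{\upsilon=0\}$. We have $\diff(\partial\upsilon)=-\partial\overline{\partial}\upsilon$, and by Corollary~\ref{cor: Writing the forms conveniently}(iii) the $\sigma_s\wedge\overline{\sigma}_s$-component of $\partial\overline{\partial}\upsilon$ is $\tfrac12(\mr_s^2-4s^2\mr_s^{-2})$. Hence $\diff e^1$ contains
\begin{equation*}
    -\frac{\mr_s^2-4s^2\mr_s^{-2}}{2\sqrt{\upsilon}\,\mr_s}\, e^2\wedge\overline{e^2} = -\big(1-4s^2\mr_s^{-4}\big)\sqrt{1+\mt_s^{-1}}\;\mr_s^{-1}\, e^2\wedge\overline{e^2} .
\end{equation*}
No other part of $\diff e^1$ has an $e^2\wedge\overline{e^2}$-component, so the connection matrix in your frame has an entry of exactly this size. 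That entry is $\ohat(\mr^{-1})$ only on $\{\mt_s\geq c\}$, where $\upsilon\asymp\mr^4$. As $\mt_s=\upsilon/(\eta+s^2)\to 0$ it blows up relative to $\mr^{-1}$, for instance along $\upsilon=\eta^{1/2}\to\infty$, which is far from $\{\upsilon=0\}$ in absolute terms.

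The same problem affects your rule that $\partial_\upsilon$ ``costs $\mr^{-4}$''. For the normalising factor $\upsilon^{-1/2}$ the cost is $\upsilon^{-1}$, and Lemma~\ref{lem: higher-order estimates epsilon} only concerns nonnegative powers of $\upsilon$. Geometrically, $e_1$ is a polar frame normal to the exceptional sphere of each Eguchi--Hanson fibre. Its connection is of the order of the inverse distance to that sphere, $\asymp\mr_s/\sqrt{\upsilon}$, and on $\{\mt_s\lesssim 1\}$ the sphere has radius $\asymp(\eta+s^2)^{1/4}\asymp\mr$. So the induction in Step 1, and with it Steps 2--3 beyond order zero, only covers $\{\mt_s\geq c\}$, not a neighbourhood of $\{\upsilon=0\}$'s complement.

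Your last paragraph contains the right remedy, but the rescaled chart has to cover all of $\{\mt_s\leq C\}$ with overlap with $\{\mt_s\geq c\}$. It must also be six-dimensional rather than fibrewise: you need to control how the fibres vary horizontally, which the paper does by reparametrising the base via $\zeta'=\lambda\zeta$. Once such charts exist, the frame computation is superfluous. The same rescaling gives uniformly bounded geometry everywhere, because where the core is small compared with $\lambda$ the rescaled fibre is close to flat $\C^2/\Z_2$ away from its vertex.

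That is exactly the paper's proof. It maps $U_\lambda$ to $U_1$, turning $\hat\omega_0+\frac{\iq}{2}\diff\zeta\wedge\diff\overline{\zeta}$ into $\lambda^2(\hat\omega'_\lambda+\frac{\iq}{2}\diff\zeta'\wedge\diff\overline{\zeta}')$ with bounded geometry. It then reads off the weights $\mr^{3-k}$ and $\mr^{-k}$ from $\upsilon\mapsto\lambda^4\upsilon$ and $\zeta\mapsto\lambda\zeta'$, with no connection forms computed. The paper does this only at $s=0$ and treats $s>0$ by perturbation. It writes $\omega^{\m{srf}}_s-\omega^{\m{srf}}_0=s\widetilde{\omega}^+_{\m{FS}}+\iq\partial\overline{\partial}\phi'_s$, with $\phi'_s$ an explicit function of $\mr$, and uses the scale invariance of $\widetilde{\omega}^+_{\m{FS}}$; this sidesteps the $s$-dependence of $\sigma_s$ entirely.

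Your Step 3 decomposition $s\,\iq\Theta_s+\frac{\iq}{2}\partial\overline{\partial}(\mr_s^2-\mr^2)$ is the same difference; there is no separate change of the $\diff\zeta$ coefficient. It is fine at order zero, although $\Theta_s$ also has coefficients of size $\zeta\mr_s^{-4}$ in your unit frame, not only the two types you list.

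Finally, your sample estimate $\rho^{-3}\eta\,\mr^2=\ohat(1)$ in Step 2 is wrong: it grows like $\upsilon^{1/4}$ along $\upsilon=\eta^2$. You need $|\diff\eta|\lesssim\eta^{1/2}$, and in general you must track explicit powers of $\zeta$. The cruder bound $|\nabla^k\diff\eta|=\ohat(\mr^{2-k})$ does not yield $|\nabla\diff\rho|=\ohat(\mr^{-1})$.
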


\begin{proof}
    For the covariant derivatives of the functions $\upsilon,\zeta,\eta,\mr,\rho$, we only need to estimate the covariant derivatives of $\upsilon$ and $\zeta$, since one can check that the other bounds easily follow. We begin with the covariant derivatives with respect to $\omega_0^{\m{srf}}$. In fact we can instead work with the metric $\tilde{\omega}_0 \coloneqq \hat{\omega}_0 + \frac{\iq}{2} \diff \zeta \wedge \diff \overline{\zeta}$, since if we can prove these estimates for this metric then the term $\mr_s^{-2}\diff \zeta \wedge \diff \overline{\zeta}$ will give a negligible error at all order of derivatives. 
    
    With this in mind, let $\lambda \geq 1$ and let us consider the region 
    \begin{equation*}
        U_\lambda \coloneqq  \{\upsilon \leq \lambda^4, \eta \leq \lambda ^4\} \setminus \{\upsilon \leq \tfrac{1}{2}\lambda^4, \eta \leq \tfrac{1}{2}\lambda ^4\} \subset \mZ_0 \cong \mZ^{\m{reg}} .
    \end{equation*}
    Since the action of $\Un(1)$ on $\C^4$ commutes with the action of $\R_+$ by scaling, and the moment map $\mu$ is quadratic, we can identify this region with the standard region $U_1 = \{\upsilon \leq 1, \eta \leq 1\} \setminus \{\upsilon \leq \tfrac{1}{2}, \eta \leq \tfrac{1}{2}\} \subset \mZ_0$ by a rescaling of $\lambda^{-1}$, which has the effect of changing $\tilde{\omega}_0$ to $\lambda^{2}(\hat{\omega}_0 + \lambda^{2}\frac{\iq}{2}\diff \zeta \wedge \diff \overline{\zeta})$ (note that the horizontal connection of the fibration $\zeta$ is scale-invariant). On the other hand, using radial parallel transport we can trivialise the fibration to see the standard region $U_1$ as an open subset of $\C \times T^* \mathbb{S}^2$, and under this identification we can change the base coordinate from $\zeta$ to $\zeta^\prime = \lambda \zeta$. Since $\lambda \geq 1$ this has the effect of flattening the horizontal distribution, and transforms the metric to $\lambda^2 (\hat{\omega}^\prime_\lambda + \frac{\iq}{2} \diff \zeta^\prime \wedge \diff\overline{\zeta}^\prime)$ where the horizontal part $\hat{\omega}^\prime_\lambda$ varies very slowly in the horizontal directions when $\lambda \gg 1$. Thus it is easy to see that in the region $\{\upsilon \leq 1, |\zeta^\prime|^2 \leq \lambda^2\} \setminus \{\upsilon \leq \frac{1}{2}, |\zeta^\prime|^2 \leq \frac{1}{2}\lambda^2\}$ could be covered with an atlas of charts such that the coordinates of $\hat{\omega}^\prime_\lambda + \frac{\iq}{2} \diff \zeta^\prime \wedge \diff\overline{\zeta}^\prime$ and all their derivatives are uniformly bounded with respect to $\lambda \geq 1$. Taking into account that the rescaling and base reparametrisation swap $\upsilon$ for $\lambda^4\upsilon$ and $\zeta$ for $\lambda \zeta^\prime$, we deduce that in the region $U_\lambda$,
    \begin{equation*}
        |\nabla_{\tilde{\omega}_0}^k \diff \upsilon| = \mo(\lambda^{3-k}), ~~~ |\nabla_{\tilde{\omega}_0}^k \diff \zeta| = \mo(\lambda^{-k}), ~~~ \text{when} ~\lambda \gg 1 .
    \end{equation*}
    Since $\lambda$ is uniformly comparable to $\mr$ in $U_\lambda$ this yields the desired bounds for $s = 0$.

    From this we can deduce the estimates 
    \begin{equation*}
        |\nabla^k_{\omega_0^{\m{srf}}}(\omega_s^{\m{srf}} - \omega_0^{\m{srf}})| = \ohat(\mr^{-2-k})
    \end{equation*}
    for all $k \geq 0$, which generalises Lemma \ref{lem: asymptotic equivalence of skrf metrics} to estimates on the derivatives. Indeed, from Proposition \ref{prop: Explicit form of srf ansatz} and the identity $R = \frac{1}{2} \mr^2$ we obtain
    \begin{equation*}
        \omega_s^{\m{srf}} - \omega_0^{\m{srf}} = s \tilde{\omega}^+_{\m{FS}} + \iq \partial \overline{\partial}  \phi^\prime_s
    \end{equation*}
    where
    \begin{equation*}
        \phi^\prime_s \coloneqq \sqrt{\tfrac{1}{4} \mr^4 + s^2} - \tfrac{1}{2}\mr^2 - s \log \left(s + \sqrt{\tfrac{1}{4} \mr^4 + s^2} \right) .
    \end{equation*}
    Moreover, the above estimates yield
    \begin{equation*}
        |\nabla_{\omega^{\m{srf}}_0}^k\iq\partial \overline{\partial}  \phi^\prime_s| = \ohat(\mr^{-2-k})
    \end{equation*}
    for all $k \geq 0$. On the other hand, $\tilde{\omega}_{\m{FS}}^+ = p_+^*\omega_{\m{FS}}$ is invariant under scaling, since the projection $p_+ \colon \mZ_+ \to \mathbb{P}^1$ is invariant under scaling. Thus the same scaling argument as above shows that 
    \begin{equation*}
        |\nabla^k_{\omega^{\m{srf}}_0} \tilde{\omega}^+_{\m{FS}}| = \mo(\mr^{-2-k}) 
    \end{equation*}
    for all $k \geq 0$, which yields the desired bounds on the covariant derivatives of $\omega^{\m{srf}}_s - \omega^{\m{srf}}_0$. This in turn implies the bounds on the covariant derivatives of $\upsilon,\zeta,\eta,\mr_s,\rho$ with respect to $\omega^{\m{srf}}_s$ for all $s > 0$.
\end{proof}

\begin{cor}     \label{cor: Intrinsic higher-order estimates}
    For any $k \geq 1$ and $\epsilon \in (0,1)$, 
    \begin{equation*}
        |\nabla_{\omega_s^{\m{srf}}}^k \phi_s| = \ohat(\mr^{-k}), ~~~ \text{and} ~~~ |\nabla_{\omega_s^{\m{srf}}}^k \psi_s| = \ohat(\mr^{-k}\rho^{-3+\epsilon}) .
    \end{equation*}
\end{cor}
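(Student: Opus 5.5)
The plan is to pass from the coordinate estimates of Lemmas \ref{lem: higher-order estimates} and \ref{lem: higher-order estimates epsilon} to intrinsic ones with the chain rule. Here $\phi_s$ means the potential $\varphi_s = F^\varphi(s,\upsilon,\eta)$. We regard $\varphi_s$ and $\psi_s$ as compositions $F(s,\upsilon,\eta)$ of a two-variable function with the invariant functions $\upsilon$ and $\eta$. Expanding $\nabla^k_{\omega_s^{\m{srf}}}$ of such a composition by the Faà di Bruno formula gives a finite sum of terms of the form
\begin{equation*}
    (\partial^a_\upsilon \partial^b_\eta F) \cdot \nabla^{j_1}\diff \upsilon \otimes \cdots \otimes \nabla^{j_a}\diff\upsilon \otimes \nabla^{i_1}\diff\eta \otimes \cdots \otimes \nabla^{i_b} \diff \eta ,
\end{equation*}
with $a+b \geq 1$ and $\sum_p (j_p+1) + \sum_q (i_q+1) = k$. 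Proposition \ref{prop: covariant derivatives at infinity} gives $|\nabla^{j}\diff\upsilon| = \ohat(\mr^{3-j})$ and $|\nabla^{i}\diff\eta| = \ohat(\mr^{2-i})$. Lemma \ref{lem: higher-order estimates} gives $|\partial^a_\upsilon\partial^b_\eta F^\varphi| = \ohat(\mr^{-4a}\rho^{-2b})$. So each term is bounded by
\begin{equation*}
    \ohat\big(\mr^{-4a}\rho^{-2b}\, \mr^{\sum_p(3-j_p)}\, \mr^{\sum_q (2-i_q)}\big) = \ohat\big(\mr^{-k}\, \mr^{2b}\rho^{-2b}\big),
\end{equation*}
using $\sum_p(3-j_p) + \sum_q(2-i_q) = 4a + 3b - k$. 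Since $\mr = \mo(\rho)$ by Lemma \ref{lem: uniform comparisons}, we have $\mr^{2b}\rho^{-2b} = \ohat(1)$, and the first estimate $|\nabla^k \phi_s| = \ohat(\mr^{-k})$ follows.

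For $\psi_s$ we repeat the same bookkeeping, now using Lemma \ref{lem: higher-order estimates epsilon} in place of Lemma \ref{lem: higher-order estimates}. This adds an extra factor $\rho^{-3+\epsilon}$ to every term, and gives $|\nabla^k\psi_s| = \ohat(\mr^{-k}\rho^{-3+\epsilon})$ for $k \geq 1$. The case $k=0$ is Proposition \ref{prop: Improved solutions at infinity}.

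The main point to check is the exponent bookkeeping for the $\eta$-derivatives. Each $\diff\eta$ factor costs only $\mr^{2}$, while each $\eta$-derivative of $F$ gains $\rho^{-2}$. So the argument relies on $\rho \gtrsim \mr$ and not on the cruder $\rho = \mo(\mr^2)$. A second point is the use of the Levi--Civita connection of $\omega_s^{\m{srf}}$ rather than that of $\omega_0^{\m{srf}}$. This is handled by the last part of Proposition \ref{prop: covariant derivatives at infinity}: the difference of the Christoffel symbols is $\ohat(\mr^{-3-k})$ at order $k$, which is absorbed into the same bounds. Note also that the estimates live on the region $\{\upsilon+\eta\geq 1\}$ with $s$ bounded, which is exactly where the $\ohat$ notation applies.
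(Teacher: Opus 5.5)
Your strategy (a Fa\`a di Bruno expansion fed by the coordinate bounds of Lemmas \ref{lem: higher-order estimates} and \ref{lem: higher-order estimates epsilon}) is the same as the paper's. However, the exponent count contains an arithmetic slip, and the slip hides a genuine failure. With your own identity $\sum_p(3-j_p)+\sum_q(2-i_q)=4a+3b-k$, the product $\mr^{-4a}\rho^{-2b}\mr^{4a+3b-k}$ equals $\mr^{-k}\,\mr^{3b}\rho^{-2b}$, not $\mr^{-k}\,\mr^{2b}\rho^{-2b}$. The ratio $\mr^{3/2}\rho^{-1}$ is unbounded, because $\rho\asymp\mr$ in region I (Lemma \ref{lem: Uniform comparisons in region I}). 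Already for $k=1$ your inputs only give $|F^\varphi_\eta\,\diff\eta| = \ohat(\rho^{-2}\mr^{2})$. In region I this is $\ohat(1)$, not $\ohat(\mr^{-1})$. The culprit is the bound $|\nabla^i\diff\eta| = \ohat(\mr^{2-i})$ from Proposition \ref{prop: covariant derivatives at infinity}. It ignores that $\diff\eta = \zeta\,\diff\overline{\zeta}+\overline{\zeta}\,\diff\zeta$ carries a factor $|\zeta| = \eta^{1/2}$. In region I this factor is $\lesssim\mr$, which is much smaller than $\mr^2$.

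The missing input, which is exactly what the paper uses, is $|\diff\eta| = \ohat(\eta^{1/2})$. More generally, apply Leibniz to $\eta=\zeta\overline{\zeta}$ together with $|\nabla^i\diff\zeta| = \ohat(\mr^{-i})$. This gives $|\nabla^i\diff\eta| = \ohat(\eta^{1/2}\mr^{-i}+\mr^{1-i}) = \ohat(\rho\,\mr^{-i})$, using $\eta^{1/2}\leq\rho$ and $\mr = \mo(\rho)$. With this bound, each term of your expansion is controlled by $\mr^{-4a}\rho^{-2b}\,\mr^{3a-\sum_p j_p}\,\rho^{b}\,\mr^{-\sum_q i_q} = \mr^{-k}(\mr/\rho)^{b} = \ohat(\mr^{-k})$. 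The same holds with the extra factor $\rho^{-3+\epsilon}$ for $\psi_s$. In other words, an $\eta$-derivative of $F$ paired with its $\diff\eta$ factor costs $\rho^{-2}|\diff\eta| = \ohat(\rho^{-1}) = \ohat(\mr^{-1})$. This is precisely where $\mr\lesssim\rho$ enters, as you anticipated. Your remark about changing connections is unnecessary, since Proposition \ref{prop: covariant derivatives at infinity} is already stated for $\nabla_{\omega_s^{\m{srf}}}$, but it does no harm.
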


\begin{proof}
    For $k = 1$, we have $\diff \phi_s = F^\phi_\upsilon \diff \upsilon + F^\phi_\zeta \diff \eta$, and $F^\phi_\upsilon \diff \upsilon = \ohat(\mr^{-4} \mr\sqrt{\upsilon}) = \ohat(\mr^{-1})$, $F^\phi_\eta \diff \eta = \ohat(\rho^{-2} \sqrt{\eta}) = \ohat(\rho^{-1}) = \ohat(\mr^{-1})$. For the higher derivatives, any covariant derivative of $\diff \upsilon$, $\diff \eta$ gives an additional $\mr^{-1}$ factor in the decay, and any additional $\upsilon$ or $\eta$-derivative of $F^\phi$ gives an additional $\mr^{-4}|\diff \upsilon| = \ohat(\mr^{-1})$ or $\rho^{-2} |\diff \eta| = \ohat(\rho^{-1}) = \ohat(\mr^{-1})$-factor in the decay. The result follows for the covariant derivatives of $\diff \phi_s$, and the case of $\diff \psi_s$ is similar. 
\end{proof}

\begin{rem}
    Along the fibres of $\zeta$, $\mr \asymp \rho$ and therefore the $k$-th derivative of the potential $\phi_s$ decays like the $k$-th power of the inverse of the distance, which is to be expected since the fibres are asymptotically conical. However, the decay is much slower in the horizontal directions, because we only have the estimate $\rho^{1/2} \lesssim \mr$ globally. This reflects the fact that the metrics $\omega^{\m{srf}}_s$ are not asymptotically conical, but rather have a singular tangent cone at infinity, $\C \times (\C^2/\Z_2)$. 
\end{rem}

We note the following result which will be important later:

\begin{lem}     \label{lem: Derivative of distance function}
    $|\diff \rho|_{\omega_s^{\m{srf}}} + \rho |\partial \overline{\partial}\rho|_{\omega^{\m{srf}}_s} = \ohat(1)$.
\end{lem}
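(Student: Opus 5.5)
Since $\rho = (\upsilon+\eta^2)^{1/4}$ is $\Un(2)$-invariant, I will treat it as a symmetric potential $\rho = F(\upsilon,\eta)$ and apply the formulas for $\overline{\partial}\phi_s$ and $\partial\overline{\partial}\phi_s$ from \S\ref{subsec: The MA equation}. The derivatives are
\begin{equation*}
    F_\upsilon = \tfrac14 \rho^{-3}, \quad F_\eta = \tfrac12 \eta\rho^{-3}, \quad F_{\upsilon\upsilon} = -\tfrac{3}{16}\rho^{-7}, \quad F_{\upsilon\eta} = -\tfrac38 \eta\rho^{-7}, \quad F_{\eta\eta} = \tfrac12\rho^{-3} - \tfrac34 \eta^2\rho^{-7}.
\end{equation*}
Because these depend only on $(\upsilon,\eta)$, every bound below is automatically uniform in $s$, and only the norms of the frame forms change with $s$. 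Those norms are controlled by Lemma \ref{lem: uniform comparisons}: $|\diff\upsilon| = \ohat(\sqrt{\upsilon}\,\mr)$, $|\diff\zeta| = \ohat(1)$ and $|\partial\overline{\partial}\upsilon| = \ohat(\mr^2)$. The relations I will use throughout are $\upsilon \le \rho^4$, $\eta \le \rho^2$ and $\mr = \mo(\rho)$, all of which follow from the definitions.

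For the gradient I use $\diff\rho = F_\upsilon \diff\upsilon + F_\eta \diff\eta$ together with $|\diff\eta| \le 2|\zeta|\,|\diff\zeta| = \ohat(\eta^{1/2})$. The first term is $\ohat(\rho^{-3}\sqrt{\upsilon}\,\mr) = \ohat(\rho^{-3}\rho^2\rho) = \ohat(1)$. The second is $\ohat(\eta^{3/2}\rho^{-3}) = \ohat(1)$.

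For the Hessian I bound $\rho$ times each of the four terms of $\partial\overline{\partial}\rho$ separately.
\begin{itemize}
    \item $\rho |F_\upsilon \partial\overline{\partial}\upsilon| = \ohat(\rho^{-2}\mr^2) = \ohat(1)$.
    \item $\rho |F_{\upsilon\upsilon}\, \partial\upsilon\wedge\overline{\partial}\upsilon| = \ohat(\rho^{-6}\upsilon\mr^2) = \ohat(1)$, since $\upsilon\mr^2 = \mo(\rho^6)$.
    \item $\rho |(F_\eta + \eta F_{\eta\eta})\, \diff\zeta\wedge\diff\overline{\zeta}| = \ohat(\rho(\eta\rho^{-3} + \eta^3\rho^{-7})) = \ohat(\rho^{-2}\eta) = \ohat(1)$.
    \item For the mixed term, $|\zeta\,\partial\upsilon\wedge\diff\overline{\zeta}| = \ohat(\eta^{1/2}\upsilon^{1/2}\mr)$, so $\rho |F_{\upsilon\eta}\,\zeta\,\partial\upsilon\wedge\diff\overline{\zeta}| = \ohat(\rho^{-6}\eta^{3/2}\upsilon^{1/2}\mr) = \ohat(\rho^{-6}\rho^{3}\rho^{2}\rho) = \ohat(1)$.
\end{itemize}

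I do not expect a serious obstacle, since only the first-order comparisons of Lemma \ref{lem: uniform comparisons} enter and no Christoffel symbols are involved. The points needing care are two. First, the frame $\partial\upsilon, \sigma_s, \diff\zeta$ is only defined on $\{\upsilon>0\}$; but the norms of $\diff\upsilon$, $\diff\zeta$ and $\partial\overline{\partial}\upsilon$ extend continuously, and the set $\{\upsilon=0\}$ has measure zero, so this causes no difficulty. Second, the statement only makes sense in the region $\{\upsilon+\eta\ge1\}$ where the $\ohat$ notation is defined. There $\rho \gtrsim 1$, so the term $\rho^{-2}\mr^2$, which is a priori only bounded by $\mr^2$, is indeed bounded because $\mr = \mo(\rho)$.
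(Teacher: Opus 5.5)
Your proof is correct and follows the paper's argument. Both expand $\partial\overline{\partial}\rho$ by the chain rule in $(\upsilon,\eta)$ and bound each term using the norms of $\diff\upsilon$, $\diff\zeta$ and $\partial\overline{\partial}\upsilon$ from Lemma \ref{lem: uniform comparisons}. The only differences are organisational: the paper groups the second-order terms into $-3\rho^{-1}\partial\rho\wedge\overline{\partial}\rho$ and cites $|\diff\rho| = \ohat(1)$ from Proposition \ref{prop: covariant derivatives at infinity}, whereas you bound all four terms of the symmetric-potential formula and the gradient directly.
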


\begin{proof}
    We already know that $|\diff \rho|_{\omega_s^{\m{srf}}} = \ohat(1)$, and for $\partial\overline{\partial}\rho$ we explicitly compute:
    \begin{align*}
        \overline{\partial} \rho = \frac{1}{4} \rho^{-3} (\overline{\partial} \upsilon + 2 \eta \zeta \diff \overline{\zeta}), ~~~ \partial\overline{\partial}\rho = - \rho^{-1}( 3 \partial \rho \wedge \overline{\partial}\rho + \frac{1}{4}\rho^{-2} \partial \overline{\partial}\upsilon + \eta \rho^{-2} \diff \zeta \wedge \overline{\diff} \zeta) .
    \end{align*}
    Since $|\diff \rho|_{\omega_s^{\m{srf}}} = \ohat(1)$, $|\partial \overline{\partial}\upsilon|_{\omega_s^{\m{srf}}} = \ohat(\mr^{2}) = \ohat(\rho^2)$ and $\eta = \ohat(\rho^2)$ this yields the result.
\end{proof}

Finally, we remark that the same scaling argument as in the proof of Proposition \ref{prop: covariant derivatives at infinity} yields the following curvature estimates:

\begin{lem}     \label{lem: curvature of the skrf ansatz}
    For any $k \geq 0$, $|\nabla_{\omega^{\m{srf}}_s}^k \m{Rm}_{\omega^{\m{srf}}_s}| = \ohat(\mr^{-2-k})$.
\end{lem}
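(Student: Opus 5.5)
We would follow the scaling argument in the proof of Proposition \ref{prop: covariant derivatives at infinity}, first for $s=0$ and then perturb to $s>0$. Since $\omega^{\m{srf}}_0$ differs from $\tilde{\omega}_0 = \hat{\omega}_0 + \frac{\iq}{2}\diff\zeta\wedge\diff\overline{\zeta}$ only by $\mr^{-2}\frac{\iq}{2}\diff\zeta\wedge\diff\overline{\zeta}$, the first step is to show that $|\nabla^k_{\tilde\omega_0}\m{Rm}_{\tilde\omega_0}| = \mo(\mr^{-2-k})$. We then treat the difference $\omega^{\m{srf}}_0 - \tilde\omega_0$ as a perturbation. Its covariant derivatives are $\mo(\mr^{-2-k})$ by the bounds on $\nabla^k\diff\zeta$ and $\nabla^k\diff\mr$ in that proposition, so it changes the curvature only by terms of the same order.

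For the model estimate on $\tilde\omega_0$, we work on the annular region $U_\lambda$ and rescale by $\lambda^{-1}$ to the standard region $U_1$. As in the earlier proof, we trivialise the fibration by radial parallel transport and reparametrise the base by $\zeta' = \lambda\zeta$. The metric then becomes $\lambda^2 g'_\lambda$, where $g'_\lambda = \hat\omega'_\lambda + \frac{\iq}{2}\diff\zeta'\wedge\diff\overline{\zeta}'$ has coefficients that are uniformly bounded in $C^k$ in a suitable atlas, uniformly in $\lambda\ge1$.

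Two points need to be checked here. First, the charts must also be uniformly nondegenerate, that is, have bounded inverse metric. We would get this from the fact that on $U_1$ the Eguchi--Hanson fibres over $\{|\zeta'|\le\lambda\}$ correspond to fibres $M_\lambda$ with $|\zeta|\le1$. This is a compact family of fibres away from the origin of $\M$, or else cones where $\upsilon\ge\frac12$. Second, the curvature of $g'_\lambda$ must be bounded in $C^k$ uniformly in $\lambda$. This follows from the $C^{k+2}$ bounds on its coefficients.

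Scaling back gives $|\nabla^k\m{Rm}| = \mo(\lambda^{-2-k})$ on $U_\lambda$, because curvature of $\lambda^2 g$ scales by $\lambda^{-2}$ and each derivative contributes a further $\lambda^{-1}$. We then use that $\lambda\asymp\mr$ on $U_\lambda$.

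To pass to $s\in[0,s_0]$, we would use the bound $|\nabla^k_{\omega_0^{\m{srf}}}(\omega_s^{\m{srf}}-\omega_0^{\m{srf}})| = \ohat(\mr^{-2-k})$ from Proposition \ref{prop: covariant derivatives at infinity}. In the region $\{\upsilon+\eta\ge1\}$ the difference is also small relative to the metric, because it is $\ohat(\mr^{-1})$ by Lemma \ref{lem: asymptotic equivalence of skrf metrics}. The curvature of $g_s = g_0 + h$ is $\m{Rm}_{g_0}$ plus terms of the schematic form $g_s^{-1}*\nabla^2 h + g_s^{-1}*g_s^{-1}*\nabla h*\nabla h + \m{Rm}_{g_0}*h$. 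Each of these is $\ohat(\mr^{-2})$, and iterating gives the $k$-th derivative bounds. Converting $\nabla_{\omega_0}$ into $\nabla_{\omega_s}$ uses the difference tensor $g_s^{-1}*\nabla_0 h = \ohat(\mr^{-3})$, so the estimates transfer.

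The main obstacle is the uniform-geometry claim in the rescaled picture. One must check that the fibres are rescaled by $\lambda$ while the horizontal direction is effectively stretched, so that $\hat\omega'_\lambda$ varies slowly in $\zeta'$ and its $\zeta'$-derivatives gain factors of $\lambda^{-1}$. We would verify this by writing the connection, and hence $\hat\omega$, in terms of the scale-invariant data of the hyperkähler quotient.
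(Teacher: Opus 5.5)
Your proposal is correct and follows essentially the same route as the paper, which simply invokes the scaling argument from the proof of Proposition \ref{prop: covariant derivatives at infinity}: reduce to $\tilde\omega_0$, rescale the annuli $U_\lambda$ to $U_1$ and stretch the base via $\zeta' = \lambda\zeta$ to get uniformly bounded geometry, then pass from $s=0$ to $s\in[0,s_0]$ using the bounds on $\nabla^k(\omega^{\m{srf}}_s-\omega^{\m{srf}}_0)$. One minor gap: the difference $\omega^{\m{srf}}_s-\omega^{\m{srf}}_0$ (which is $O(\mr^{-2})$, not just $O(\mr^{-1})$) is small relative to the metric only once $\mr$ is large, so the bounded part of $\{\upsilon+\eta\geq 1\}$ needs a separate (immediate) argument by compactness and smooth dependence on $s\in[0,s_0]$ away from $\mC_+$.
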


%%%%% End of Section 3 %%%%%
%%%%%%%%%%%%%%%%%%%%%%%%%%%%

%% file: section4_newfamilies.tex
%%%%%%%%%%%%%%%%%%%%%%%%%%%%%%%%%%
%%%%% Beginning of Section 4 %%%%%

    \section{New families of complete Calabi--Yau metrics}      \label{sec: New families of CY metrics}    

In the previous section, we improved the semi Ricci-flat ansatz $\omega_s^{\m{srf}}$ in the region where $R$ is large by constructing a family of approximate solutions of the Monge--Amp\`ere equation with fast decay as $R \to \infty$, uniformly in $s$. This will allow us to construct the new family of $\Un(2)$-symmetric Calabi--Yau metrics in \S\ref{subsec: The new smooth CY metrics} building on the works of \cite{hein2010gravitational,li2019new}, thereby proving an effective version of Theorem \ref{thm: A}. However, even with the asymptotic correction at infinity, the improved semi Ricci-flat ansatz behaves poorly near the exceptional curve for small values of $s$, which impedes our understanding of the singular limit when the Kähler class degenerates. For this reason, we shall introduce a further modification of our ansatz when $s \ll 1$ in \S\ref{subsec: The cdlo metrics} and \S\ref{subsec: Interpolation asymptotic regimes}, and then proceed to study of geometry of this modified ansatz in \S\ref{subsec: Geometrical properties} as a preparatory work for Section \ref{sec: The singular limit}.

    \subsection{The new smooth Calabi--Yau metrics}       \label{subsec: The new smooth CY metrics}

Let us fix a cutoff function $\chi \colon \R \to [0,1]$ such that $\chi(t) = 0$ if $t \leq \frac{1}{2}$ and $\chi(t) = 1$ if $t \geq 1$. Then for any fixed $s > 0$, it is not difficult to see that there exists $R_s > 0$ large enough such that 
\begin{equation}
    \omega^\prime_s \coloneqq \omega^{\m{srf}}_s + \tfrac{\iq}{2} \partial\overline{\partial} (\chi(\tfrac{R}{R_s}) \varphi_s)
\end{equation}
defines a Kähler metric on $\mZ_+$, where $\varphi_s$ is the asymptotic solution of the Monge--Amp\`ere equation constructed in Proposition \ref{prop: Improved solutions at infinity} (see for instance the proof of Proposition \ref{prop: interpolation metric} below). Remark that by our choice of cutoff, $\omega_s^\prime$ is invariant under the action of $\Un(2)$. Moreover, it follows from Corollary \ref{cor: Intrinsic higher-order estimates} that 
\begin{equation*}
    (\omega^\prime_s )^3 = e^{f^\prime_s} \vol_+
\end{equation*}
where the function $f^\prime_s \in C^\infty(\mZ_+)$ satisfies 
\begin{equation*}
    |\nabla_{\omega^\prime_s}^k f^\prime_s |_{\omega^\prime_s} = \mo(\mr^{-k} \rho^{-2 -\delta})
\end{equation*}
for any $\delta \in (0,1)$. Using similar arguments as in \cite{li2019new}, we obtain the first main theorem of this article:

\begin{thm}     \label{thm: the new complete metrics}
    For any $s > 0$, there exists a unique potential $u^\prime_s \in C^\infty(\mZ_+)$ such that, for all $\delta \in (0,1)$,
    \begin{equation*}
        | \nabla_{\omega^\prime_s}^k u^\prime_s |_{\omega^\prime_s} = \mo(\rho^{-\delta}\mr^{-k}), ~~~~ \text{as} ~ \rho \to \infty,
    \end{equation*}
    and $\omega_{\m{CY},s} \coloneqq \omega^\prime_s + \iq \partial\overline{\partial} u^\prime_s$ is a complete Calabi--Yau metric on $\mZ_+$ satisfying $\omega_{\m{CY},s}^3 = \vol_+$. Moreover, the following properties hold:
    \begin{enumerate}[(i)]
        \item The potential $u_s$ and the metric $\omega_{\m{CY},s}$ are $\Un(2)$-invariant.
        \item The tangent cone at infinity of $\omega_{\m{CY},s}$ is $\C \times (\C^2 / \Z_2)$.
        \item The family $\{\omega_{\m{CY},s}\}_{s \in (0,\infty)}$ is continuous in the $C^\infty_{\m{loc}}$-topology.
    \end{enumerate}
\end{thm}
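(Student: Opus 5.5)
We follow Hein's approach to the complex Monge--Amp\`ere equation on complete noncompact manifolds \cite{hein2010gravitational}, in the form used by Li \cite{li2019new}. The plan is to solve $(\omega^\prime_s+\iq\partial\overline{\partial}u)^3 = e^{-f^\prime_s}(\omega^\prime_s)^3$ for fixed $s>0$.

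\textbf{Step 1: the background geometry.} We first verify that $(\mZ_+,\omega^\prime_s)$ satisfies Hein's hypotheses. Completeness follows from Proposition \ref{prop: distance estimate}. Bounded curvature follows from Lemma \ref{lem: curvature of the skrf ansatz} together with Corollary \ref{cor: Intrinsic higher-order estimates}. The property $\mathsf{SOB}(6)$ requires maximal volume growth $\Vol(B(o_+,r))\asymp r^6$ and a relatively connected annuli condition. Volume growth comes from the comparison $\rho\asymp \dist$ and the explicit volume form \eqref{eq: Volume form in invariant frame}: integrating in the variables $(\upsilon,\zeta)$ over the Eguchi--Hanson fibres gives volume $\asymp\int_{\eta^{1/2}+\upsilon^{1/4}\le r}\diff\upsilon\,\diff^2\zeta\asymp r^6$. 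Connectedness of annuli is clear, since $\{\rho\ge 1\}$ fibres over $\C$ with connected fibres. With these hypotheses, Hein's theorem gives a bounded solution $u^\prime_s$ with $\omega^\prime_s+\iq\partial\overline{\partial}u^\prime_s>0$ and all derivatives bounded. We will also use that $f^\prime_s=\mo(\rho^{-2-\delta})$ is integrable against $\rho^{-4}\vol$ to the needed extent: it is $\mo(\rho^{-2-\delta})$ with $\rho^{-2-\delta}\in L^1$ of annuli weighted appropriately.

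\textbf{Step 2: decay.} To obtain $u^\prime_s=\mo(\rho^{-\delta})$, we follow Hein's and Li's argument. Write the equation as $\Delta_{\omega^\prime_s}u = \tilde f$ with a positive-definite averaged operator, where $\tilde f=\mo(\rho^{-2-\delta})$ is built from $f^\prime_s$. We then compare with the Green's function $G\asymp\rho^{-4}$ of a metric with $\mathsf{SOB}(6)$ (Li--Yau bounds), noting that $\int G(x,y)\rho(y)^{-2-\delta}\diff y=\mo(\rho^{-\delta})$. Alternatively, we run a weighted Moser iteration on $u$ and a barrier argument using $\rho^{-\delta}$. For the barrier, we use Lemma \ref{lem: Derivative of distance function}, which gives $\Delta\rho^{-\delta}\approx-\delta(4-\delta)\rho^{-2-\delta}$ up to errors $\mo(\rho^{-2-\delta})$ with small constants. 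The higher-order decay $\mo(\rho^{-\delta}\mr^{-k})$ then follows from local Schauder estimates on balls of radius $\asymp\mr$. These balls have bounded geometry after rescaling by $\mr^{-1}$, since curvature is $\mo(\mr^{-2})$ and the injectivity radius is $\gtrsim\mr$ by the scaling picture in Proposition \ref{prop: covariant derivatives at infinity}.

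\textbf{Step 3: uniqueness and the listed properties.} Uniqueness follows from the maximum principle: the difference of two decaying solutions satisfies a linear elliptic equation and tends to $0$ at infinity. Property (i) follows from uniqueness together with the $\Un(2)$-invariance of $\omega^\prime_s$ and $\vol_+$, the latter by Lemma \ref{lem: Equivariance relations for rhos}, since $|\det A|=1$. For (ii), $\iq\partial\overline{\partial}u^\prime_s$ and $\omega^\prime_s-\omega^{\m{srf}}_s$ decay, so the tangent cone is that of $\omega^{\m{srf}}_s$. By Lemma \ref{lem: asymptotic equivalence of skrf metrics} this equals that of $\omega^{\m{srf}}_0=\iq\partial\overline{\partial}(R+\tfrac12|w_{12}-w_{21}|^2)$. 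Rescaling shows the latter converges to the flat product of the $\zeta$-plane and $\C^2/\Z_2$, because the fibre metrics are Eguchi--Hanson metrics with parameter $\to 0$ under blow-down. For (iii), we use uniqueness and compactness. The estimates are locally uniform in $s$, since $R_s$ can be chosen locally constant and the constants depend continuously on $s$. Any sequence $s_j\to s$ therefore has a subsequence along which the potentials converge in $C^\infty_{\m{loc}}$ to a decaying solution, which must be $u^\prime_s$.

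The main obstacle is Step 2: the decay estimate for the slowly decaying source $\rho^{-2-\delta}$ on a space whose tangent cone is singular. It requires a careful Green's function or barrier argument, because the $\mr$-scale and the $\rho$-scale differ horizontally.
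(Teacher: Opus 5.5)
Your overall architecture matches the paper's, which is itself modelled on Li's \S5. It has the same steps: Hein's $\mathsf{SOB}(6)$ existence theorem, decay of the potential, rescaled interior estimates on balls of size $\asymp\mr$, uniqueness by the maximum principle, and (i)--(iii) from uniqueness, asymptotic equivalence with $\omega^{\m{srf}}_0$, and locally uniform estimates.

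\textbf{The gap is in Step 2.} Uniqueness, and hence (i) and (iii), rests on the decay, but the one mechanism you spell out (the barrier) fails, the Green's function route is mis-justified, and Moser iteration is mentioned only in passing, tied to the barrier.

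\emph{The barrier claim is false.} Lemma \ref{lem: Derivative of distance function} only bounds $|\partial\overline{\partial}\rho|$ by $\rho^{-1}$ and gives no sign. Moreover, $\rho=(\upsilon+\eta^2)^{1/4}$ is merely comparable to the distance; it is not a cone radius. Write $\rho^{-\delta}=F(\upsilon,\eta)$ and use the frame of Corollary \ref{cor: Writing the forms conveniently}. Then $F_\eta+\eta F_{\eta\eta}=\delta\eta(\upsilon+\eta^2)^{-2-\delta/4}(\tfrac{\delta}{4}\eta^2-\upsilon)$. On $\{\upsilon\le\eta\}$, a neighbourhood of the Eguchi--Hanson cores over the base, this is $\approx\tfrac{\delta^2}{4}\rho^{-2-\delta}$. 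The $\upsilon$-derivative terms there are only $O(\mr^2\rho^{-4-\delta})=O(\rho^{-3-\delta})$, since $\mr^2\asymp\rho$ in that region. Hence $\Delta_{\omega'_s}\rho^{-\delta}=\tfrac{\delta^2}{2}\rho^{-2-\delta}(1+o(1))>0$. In other words, $\rho^{-\delta}$ behaves like $|\zeta|^{-\delta}$, which is subharmonic on the plane, so it is not a supersolution and the comparison argument breaks down. The tangent-cone radius $\sim(\eta+c\,\upsilon^{1/2})^{1/2}$ has the right shape, but you would then have to verify the inequality through the Eguchi--Hanson cores, which you do not do.

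\emph{The Green's function route is not justified as stated.} Li--Yau bounds require $\m{Ric}\geq 0$, which $\omega'_s$ does not have. The relevant operator is also not $\Delta_{\omega'_s}$ but $v\mapsto \iq\partial\overline{\partial}v\wedge T/(\omega'_s)^3$, with $T=(\omega'_s)^2+\omega'_s\wedge\omega_{\m{CY},s}+\omega_{\m{CY},s}^2$. You would need heat-kernel bounds for such divergence-form operators derived from the Sobolev inequality. You would also need an argument that $u'_s$ equals its Green potential, with no additive constant.

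\textbf{What closes Step 2 in the paper} is the weighted Moser iteration of Appendix \ref{app: quantitative moser iteration}, run on the $\varepsilon$-regularised solutions $u_\varepsilon$. Testing against $\chi(\rho/R)\rho^{p\nu}$ produces the term $\int|u_\varepsilon|^p\,\diff\diff^c(\rho^{p\nu})\wedge T_{u_\varepsilon}$. This term is controlled using only the size bound $|\diff\rho|+\rho|\diff\diff^c\rho|\lesssim 1$ and a $C^2$-bound on $u_\varepsilon$ off a compact set; no sign is needed. It is absorbed by first iterating with a small weight $\nu_*$ on top of the unweighted bounds, then bootstrapping to any $\nu_0<\min\{1,\mu-2\}$. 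Here $\mu$ may be taken close to $3$, because $|f'_s|=O(\rho^{-2-\delta})$ for every $\delta<1$. This is the ingredient you need to supply.

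\textbf{Minor points.} $\mathsf{SOB}(6)$ also requires large-scale non-collapsing and a Ricci lower bound, which you do not address. Annular connectedness concerns metric annuli; the paper gets it from the connected link of $\C\times(\C^2/\Z_2)$, not from connectedness of the fibres of $\zeta$.
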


\begin{proof}
    The proof of the existence and uniqueness of the potential $u^\prime_s$ and properties (i) and (ii) in the above theorem is entirely similar to Section 5 in \cite{li2019new}. We shall therefore only sketch the main threads of the argument.
    
    The point is that the metrics $\omega_s^\prime$ satisfy the $\mathsf{SOB}(6)$ property (see Definition \ref{def: SOBbeta Spaces} below), and therefore the existence of a solution $u^\prime_s$ of the Monge--Amp\`ere equation follows from \cite[Prop. 4.1]{hein2010gravitational}. The decay $|u^\prime_s| = \mo(\rho^{-\delta})$ of the potential is a classical consequence of weighted Moser iteration (cf. Appendix \ref{app: quantitative moser iteration} for more details), and the decay of the derivatives can be obtained in a similar fashion to \cite[Th. 8.6.11]{joyce2000compact}, using the same scaling argument as in the proof of Proposition \ref{prop: covariant derivatives at infinity}. The uniqueness of the solution $u^\prime_s$ amongst potentials satisfying the desired decay properties is an immediate consequence of the maximum principle, and this implies that $u^\prime_s$ is invariant under the action of $\Un(2)$. Finally, the fact that the tangent cone at infinity is $\C \times (\C^2 / \Z_2)$ is similar to \cite[Prop. 5.6]{li2019new}, see also Lemma \ref{lem: tangent cone at infinity for ansatz} below. 

    The proof of point (iii) in the above theorem does not pose any more difficulty. Indeed, if we restrict the values of $s$ to a compact interval $[s_1,s_2] \subset (0,\infty)$, we can make a fixed choice of $R_s = R_{s_1,s_2}$ for any $s \in [s_1,s_2]$ in the cutoff procedure, so as to construct a family of Kähler metrics $\{\tilde{\omega}^\prime_s\}_{s \in [s_1,s_2]}$ which is continuous in the $C^\infty_{\m{loc}}$-topology. Since $\tilde{\omega}^\prime_s - \omega^\prime_s = \iq \partial\overline{\partial} v_s$ for some compactly supported function $v_s$, this does not change the output Calabi--Yau metric $\omega_{\m{CY},s} \coloneqq \tilde{\omega}_s^\prime + \iq \partial\overline{\partial} \tilde{u}^\prime_s$ by the uniqueness part of the theorem. Now since the geometry of $\{\tilde{\omega}_s^\prime\}_{s \in [s_1,s_2]}$ is uniformly under control, we obtain uniform weighted estimates $|(1+\mr)^k (1+\rho)^\delta \nabla^k \tilde{u}_s^\prime| \leq C_k$ for all $s \in [s_1,s_2]$, which together with the uniqueness implies that $\{\omega_{\m{CY},s}\}_{s \in [s_1,s_2]}$ is continuous in the $C^\infty_{\m{loc}}$-topology.
\end{proof}

\begin{rem} \label{rem: negative branch}
    Using the $\Un(2)$-equivariant biholomorphism $\jmath \colon \mZ_- \to \mZ_+$ described after Proposition \ref{prop: Explicit form of srf ansatz}, we can also define the family of Calabi--Yau metrics $\{\omega_{\m{CY},s}\}_{s < 0}$ on $\mZ_-$ by setting $\omega_{\m{CY},s} \coloneqq \jmath^* \omega_{\m{CY},-s}$ for any $s < 0$. Of course this family of Calabi--Yau metrics satisfies the analogous properties of (i)--(iii) in the above theorem.
\end{rem}

In the remainder of this article, we shall be concerned with the behaviour of the family of Calabi--Yau metrics $\{\omega_{\m{CY},s}\}_{s \in (0,\infty)}$ when $s \to 0$. In this limit, the above argument breaks down, because the metrics $\omega_s^{\m{srf}}$ become singular as the Kähler class degenerates. What is more, when $s \to 0$ the semi Ricci-flat metrics are very far from Calabi--Yau in the region where $R$ is small, i.e. near the exceptional curve, as pointed out in Section \ref{sec: sRfK metrics}. Therefore it is necessary to introduce a further modification of our ansatz near $\mC_+$ in order to study the degenerate limit. Since the parameter $s$ determines the size of the exceptional curve which shrinks as $s\to 0$, one naturally expects this degeneration to be modelled on the family of Calabi--Yau metrics constructed by Candelas and de la Ossa in \cite{candelas1990comments}. We shall briefly describe these metrics in the next part, before explaining how to interpolate between the two asymptotic regimes.

    \subsection{The Candelas--de la Ossa metrics}   \label{subsec: The cdlo metrics}

The starting point is the cohomogeneity-one structure of $\mZ$. After mapping $\Un(1) \times \SU(2)$ to $\Un(2)$ via the homomorphism $(e^{\iq \theta},A) \mapsto e^{\iq \theta} A$, the action of $\Un(2)$ considered in Section \ref{sec: sRfK metrics} induces an action of $\Un(1) \times \SU(2)$ on $\mZ \subset \mathfrak{gl}(2,\C)$. This action in turn extends to an action of $\Un(1) \times \SU(2) \times \SU(2)$ via
\begin{equation*}
    (e^{\iq\theta},A,B) \cdot W=e^{\iq\theta}AWB^{-1}. 
\end{equation*}
This is a cohomogeneity-one action, and on $\mZ^{\m{reg}} = \mZ \setminus \{0\}$ the orbits are diffeomorphic to
\begin{equation*}
    (\SU(2) \times \SU(2) )/\Un(1) \cong \mathbb{S}^2\times \mathbb{S}^3,
\end{equation*}
and the orbit space is parametrised by the invariant function $R^2=|W|^2=u+\eta$. This cohomogeneity-one action lifts to the small resolution $\mZ_+$ with the rational curve $\mC_+$ corresponding to the unique singular orbit. On $\mZ$ or $\mZ_+$, any function invariant under the action of $\Un(1) \times \SU(2) \times \SU(2)$ must be a function of $R^2$ alone. Since the real volume forms $\vol, \vol_+$ defined in \S\ref{subsec: Relation to the flop} are also invariant under this action, the Monge--Amp\`ere equation for invariant Kähler potentials reduces to an ODE in the variable $R^2$. This is in particular the case of the conical Stenzel metric on $\mZ$ \cite{calabi1979hyperkahler,candelas1990comments,stenzel1993ricci}, which is of the form $\iq \partial \overline{\partial} (F(R^2))$. Since $R^2 = \upsilon + \eta = \frac{1}{4}\mr^4$ we can expand
\begin{align*}
    \iq \partial \overline{\partial}(F(R^2)) & = F^\prime \tfrac{\iq}{2} \mr^2  \partial \overline{\partial}(\mr^2) + \iq F^{\prime\prime}  (\partial \upsilon + \overline{\zeta}\diff \zeta) \wedge (\overline{\partial} \upsilon + \zeta \diff \overline{\zeta}) \\
        & = 2 R F^\prime \omega^\m{red}_0 + \iq F^{\prime\prime} (\partial \upsilon + \overline{\zeta}\diff \zeta) \wedge (\overline{\partial} \upsilon + \zeta \diff \overline{\zeta}) .
\end{align*}
Noticing that $((\partial \upsilon + \overline{\zeta} \diff \zeta) \wedge (\overline{\partial} \upsilon \wedge \zeta \diff \overline{\zeta}))^2 = 0$, we obtain
\begin{align*}
    (\iq \partial \overline{\partial} F)^3 & = 8R^3 \cdot(F^\prime)^2 (\omega_0^\m{red})^3 + 4\iq  R^2\cdot (F^\prime)^2 F^{\prime\prime} (\omega_0^\m{red})^2 \wedge (\partial \upsilon \wedge \overline{\partial} \upsilon + \eta \diff \zeta \wedge \diff \overline{\zeta}) \\
        & = \left( 4 R^2 \cdot(F^\prime)^3 + \tfrac{8}{3} R^4 \cdot(F^\prime)^2 F^{\prime\prime} \right) \vol .
\end{align*}
The Stenzel metric corresponds to the solution of the ODE given by
\begin{equation*}
    \mathcal{K}_0 = \frac{3\gamma}{2}R^{4/3} = \gamma \widetilde{\mathcal{K}}_0(R^2) ,
\end{equation*}
where $\widetilde{\mathcal{K}}_0(t) = \frac{3}{2}t^{2/3}$ and the constant $\gamma > 0$ is given by $\gamma^3 = \frac{28}{9}$. The specific value of $\gamma$ is unimportant, and merely reflects our choice of scaling for the volume form. With this choice of normalisation let us define the conical Stenzel metric as
\begin{equation}
    \omega_{\m{Stz}} \coloneqq \iq \partial\overline{\partial} \mathcal{K}_0 .
\end{equation}

In \cite{candelas1990comments}, Candelas and de la Ossa show that $\widetilde{\mathcal{K}}_0$ is part of a $1$-parameter family of functions $\widetilde{\mathcal{K}}_a \colon [0,+\infty) \to \R$, defined for $a \geq 0$, satisfying the ODE
\begin{equation*}
    (t\widetilde{\mathcal{K}}_a^\prime)^3 + 6 a^2 (t\widetilde{\mathcal{K}}_a^\prime)^2 = t^2 
\end{equation*}
and such that for any $a > 0$, the $(1,1)$-forms $\widetilde{\omega}_a = 4 a^2 \widetilde{\omega}^+_{\m{FS}} + \iq \partial \overline{\partial}(\widetilde{\mathcal{K}}_a(R^2))$ defined on the small resolution $\mZ_+$ are Ricci-flat Kähler forms. More precisely, $\tilde{\omega}_a$ satisfies $(\widetilde{\omega}_a)^3 = \widetilde{\vol}_+$ where the volume form $\widetilde{\vol}_+$ on $\mZ_+$ must be given by $\widetilde{\vol}_+ = \gamma^{-3}\vol_+$. In order to make a choice of scaling for the Candelas--de la Ossa metrics consistent with the other conventions used in the present article, we shall define for any $s \geq 0$ a constant $a_s \geq 0$ and a function $\mathcal{K}_s(R)$ through the identities
\begin{equation}
    s = 4 \gamma a^2_s, ~~~ \mathcal{K}_s(R) \coloneqq \gamma \widetilde{\mathcal{K}}_{a_s}(R^2) ,
\end{equation}
and set
\begin{equation}
    \omega_{\m{CdlO},s} \coloneqq s \widetilde{\omega}^+_{\m{FS}} + \iq \partial \overline{\partial} \mathcal{K}_s.
\end{equation} 
Therefore, for $s = 0$ we have $\omega_{\m{CdlO},0} = \omega_{\m{Stz}}$, and for any $s > 0$ we have
\begin{equation*}
    \omega_{\m{CdlO},s}^3 = \vol_+ .
\end{equation*}
Thus $\{\omega_{\m{CdlO},s}\}_{s > 0}$ is a family of Calabi--Yau metrics on $\mZ_+$, and moreover $(\mZ_+,\omega_{\m{CdlO},s},o_+)$ converges to $(\mZ,\omega_{\m{Stz}},o)$ in the pointed Gromov--Hausdorff sense, and also locally smoothly away from $\mC_+$ (under the identification $\mZ_+ \setminus \mC_+ \cong \mZ^{\m{reg}}$). In fact, we even have smooth convergence at the level of potentials: the family $\{\mathcal{K}_s\}_{s \geq 0}$ is continuous in the $C^{\infty}_{\m{loc}}(\mZ_+ \setminus \mC_+)$ sense; in particular, for any $0 < R_1 < R_2$, $\left. \mathcal{K}_s \right|_{[R_1,R_2]} \to \left. \mathcal{K}_0 \right|_{[R_1,R_2]}$ in the $C^\infty$-sense.

For later use, we shall define the $\Un(1) \times \SU(2) \times \SU(2)$-invariant functions
\begin{equation}
    \Lambda_s(R)\coloneqq \gamma R^2\widetilde{\mathcal{K}}_{a_s}'(R^2) ~~ \text{and} ~~\varrho_s^2 \coloneqq 3 \left(\Lambda_s(R) +s\right)\geq \tfrac{3}{2}s ,
\end{equation}
for any $s \geq 0$. Notice that $\Lambda_s$ is a positive solution of $\Lambda_s^3+\tfrac{3}{2}s\Lambda^2_s=\gamma^3R^4$. Moreover, if we define
\begin{equation}
    \varrho^2 \coloneqq \varrho^2_0 = 3\gamma R^{4/3}
\end{equation}
then we see that $\omega_{\m{Stz}} = \iq\partial\overline{\partial}(\frac{\varrho^2}{2})$, so that $\varrho$ is the radius function of the conical metric $\omega_{\m{Stz}}$. For $s > 0$, $\varrho_s$ has the interpretation of a regularised radius function for $\omega_{\m{CdlO},s}$; in particular there are a constants $\kappa \geq 1$, $D > 0$ such that
\begin{equation*}
    \kappa^{-1} \varrho_s - D s^{1/2} \leq \dist_{\omega_{\m{CdlO},s}}(o_+,\cdot) \leq \kappa \varrho_s + D s^{1/2}
\end{equation*}
for any $s > 0$.

    \subsection{Interpolation between the two asymptotic regimes}       \label{subsec: Interpolation asymptotic regimes}

We now explain how to interpolate between the Candelas--de-la-Ossa metrics (near $\mC_+$) and our asymptotic ansatz of solution constructed in Section \ref{sec: Asymptotic solutions of the MA equation} (at infinity), at least when the parameter $s$ is small enough. More precisely, we prove:

\begin{prop}    \label{prop: interpolation metric}
    There exist constants $s_0,c_0,R_1,R_2 > 0$ and a $1$-parameter family of $\Un(2)$-symmetric potentials $\{\phi_s = F(s,R,\eta)\}_{s \in [0,s_0]}$ satisfying the following:
    \begin{enumerate}[(i)]
        \item $\phi_0$ is smooth on $\mZ^{\m{reg}}$ and $\omega_0 \coloneqq \iq \partial \overline{\partial} \phi_0$ is Kähler.
        \item For any $s \in (0,s_0]$, $\phi_s$ is smooth on $\mZ_+$ and $\omega_s \coloneqq s \widetilde{\omega}^{+}_{\m{FS}} + \iq \partial \overline{\partial}\phi_s$ is Kähler.
        \item For any $s \in [0,s_0]$, $\left. \phi_s \right|_{\{R \leq R_1\}} = \mathcal{K}_s(R) + c_0$; in particular $\omega_s$ coincides with the Candelas--de la Ossa metric $\omega_{\m{CdlO},s}$ on the domain $\{R \leq R_1\}$.
        \item For any $s \in [0,s_0]$, $\left. \omega_s \right|_{\{R \geq R_2 \}} = \left. \omega^{\m{srf}}_s + \tfrac{\iq}{2} \partial \overline{\partial} \varphi_s \right|_{\{R \geq R_2 \}}$, where $\varphi_s$ is the admissible potential of Proposition \ref{prop: Improved solutions at infinity}.
        \item $\{\phi_s\}_{s \in [0,s_0]}$ is continuous in the $C^\infty_{\mathrm{loc}}$ sense away from the locus $\{R = 0\}$.
    \end{enumerate}
\end{prop}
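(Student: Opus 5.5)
The construction writes both regimes with the same $s\widetilde{\omega}^+_{\m{FS}}$ term and glues the potentials in $R$. By Proposition \ref{prop: Explicit form of srf ansatz}, on $\{R>0\}$ we have $\omega^{\m{srf}}_s+\tfrac{\iq}{2}\partial\overline{\partial}\varphi_s = s\widetilde{\omega}^+_{\m{FS}} + \iq\partial\overline{\partial}\Phi^\infty_s$, where
\begin{equation*}
    \Phi^\infty_s \coloneqq \sqrt{R^2+s^2} - s\log(s+\sqrt{R^2+s^2}) + \tfrac{1}{2}|w_{12}-w_{21}|^2 + \tfrac{1}{2}\varphi_s .
\end{equation*}
Since $|w_{12}-w_{21}|^2=\eta$, this is a $\Un(2)$-symmetric function $F^\infty(s,R,\eta)$. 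Likewise $\omega_{\m{CdlO},s}=s\widetilde{\omega}^+_{\m{FS}}+\iq\partial\overline{\partial}\mathcal{K}_s$. For a cutoff $\chi$ with $\chi=1$ on $[0,1]$ and $\chi=0$ on $[2,\infty)$, and a large scale $\Lambda$ fixed below, I would set
\begin{equation*}
    \phi_s \coloneqq \chi(R/\Lambda)\,(\mathcal{K}_s(R)+c_0) + (1-\chi(R/\Lambda))\,\Phi^\infty_s .
\end{equation*}
Then (iii) and (iv) hold with $R_1=\Lambda$ and $R_2=2\Lambda$, provided $\varphi_s$ is defined on $\{R\ge \Lambda\}$. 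This is fine because $\varphi_s$ is built on $\{\upsilon+\eta\ge 1\}=\{R^2\ge1\}$. Near $\mC_+$ the function $\phi_s$ equals $\mathcal{K}_s+c_0$ and is smooth for $s>0$. Continuity in (v) follows from $C^\infty_{\m{loc}}$-continuity of $\mathcal{K}_s$ and of $\Phi^\infty_s$ (explicit in $s$, $R$, $\eta$) away from $R=0$. The real content is positivity in (i)/(ii) on the gluing annulus $\{\Lambda\le R\le 2\Lambda\}$.

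Positivity in the annulus would go by comparing both potentials with a common model, the Stenzel cone there. Write $\iq\partial\overline{\partial}\phi_s = \iq\partial\overline{\partial}\mathcal{K}_s + \iq\partial\overline{\partial}\big((1-\chi)(\Phi^\infty_s-\mathcal{K}_s-c_0)\big)$. The first term is $\omega_{\m{CdlO},s}-s\widetilde{\omega}^+_{\m{FS}}$, so adding $s\widetilde{\omega}^+_{\m{FS}}$ gives the CdlO metric. It then suffices that the correction has $\omega_{\m{CdlO},s}$-norm $<1/2$ in the annulus, uniformly for $s\in[0,s_0]$. This needs $C^2$ bounds (with respect to the CdlO metric) on $\Phi^\infty_s-\mathcal{K}_s-c_0$, together with the cutoff-derivative bounds $|\diff R|$ and $|\partial\overline{\partial}R|$ measured in that metric. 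The catch is that $\Phi^\infty_0\approx R+\eta/2$ and $\mathcal{K}_0=\tfrac32\gamma R^{4/3}$ are not close, since srf and Stenzel are genuinely different metrics. So the roles have to be arranged differently.

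The refined plan works at a fixed radius. At $s=0$, both $\omega^\infty_0 = \iq\partial\overline{\partial}\Phi^\infty_0$ and $\omega_{\m{Stz}}$ are Kähler on the fixed compact annulus $A=\{\Lambda\le R\le2\Lambda\}$, with $\Lambda$ fixed once and for all and large enough that $\varphi_0$ is defined there and $\omega^\infty_0>0$. The interpolation would use an auxiliary large multiple: replace $\mathcal{K}_s$ on the inner side by $\mathcal{K}_s + C\,g(R)$, with $g$ convex increasing and supported in $\{R\ge \Lambda/2\}$. Since $\iq\partial\overline{\partial}g(R)\ge0$, this keeps the CdlO metric near $\mC_+$ and only makes $\{R\le R_1\}$ smaller. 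Better still is a regularised-maximum gluing: take $\phi_0=\widetilde{\max}(\mathcal{K}_0+c_0,\Phi^\infty_0)$, choosing $c_0$ so that $\mathcal{K}_0+c_0>\Phi^\infty_0$ near $R=\Lambda$ and $<$ near $R=2\Lambda$. This is possible because $\Phi^\infty_0-\mathcal{K}_0$ grows in $R$: at fixed $R$ it ranges over $\eta\in[0,R^2]$, so one may need to widen the annulus to $[\Lambda,\Lambda']$ with $\Lambda'$ large so that $\min_{R=\Lambda'}(\Phi^\infty_0-\mathcal{K}_0) > \max_{R=\Lambda}(\Phi^\infty_0-\mathcal{K}_0)+1$. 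The regularised maximum of strictly psh functions is strictly psh, giving (i), and this holds without any closeness of the two metrics. For small $s$ the two inputs converge in $C^\infty$ on the compact annulus, so the strict inequalities and strict positivity persist for $s\le s_0$, giving (ii) and (v). The term $s\widetilde{\omega}^+_{\m{FS}}$ is common to both sides and semipositive, so it only helps.

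The main obstacle is exactly this ordering condition on $\Phi^\infty_0-\mathcal{K}_0$: $\mathcal{K}_0$ must dominate on the inner sphere and $\Phi^\infty_0$ on the outer one, despite the $\eta$-dependence. I would verify it by bounding $\Phi^\infty_0-\mathcal{K}_0\ge R-\tfrac32\gamma R^{4/3}-C$ from below... but this tends to $-\infty$, so the inequality points the wrong way. I would then swap roles: place $\Phi^\infty$ inside, or add to $\Phi^\infty_0$ a term $\epsilon R^{4/3}$-type psh function, or replace $\mathcal{K}$ by $\lambda\mathcal{K}_s$ with $\lambda$ small near the outer edge. Since the CdlO region only needs $\omega_s=\omega_{\m{CdlO},s}$ on $\{R\le R_1\}$ for some small $R_1$, I would take $R_1$ small, where $\mathcal{K}_0\sim R^{4/3}\gg R$ in relative growth of the Hessian. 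Checking this comparison carefully, uniformly in $\eta$, is where I expect the work to lie.
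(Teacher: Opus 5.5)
There is a genuine gap. The one step with real content is never established: positivity of $\omega_s$ on the transition region, uniformly in $\eta$ and in $s$. The scheme you settle on, a regularised maximum of $\mathcal{K}_0+c_0$ and $\Phi^\infty_0$ on a large annulus, cannot work, for the reason you half-notice. On any sphere $\{R=r\}$ with $r$ large, $\Phi^\infty_0-\mathcal{K}_0$ is roughly $r-\tfrac{3\gamma}{2}r^{4/3}<0$ where $\eta=0$ and roughly $\tfrac12 r^2>0$ where $\eta=r^2$. So no constant $c_0$ orders the two potentials on either boundary sphere.

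The alternatives you then list are not carried out, and as stated none of them works. Putting $\Phi^\infty$ inside, adding an $R^{4/3}$-type term to $\Phi^\infty_0$ globally, or using a constant $\lambda\neq1$ in front of $\mathcal{K}_s$ each contradicts (iii) or (iv). A position-dependent $\lambda$ is just another cutoff, with the same uncontrolled Hessian error. Your closing heuristic is also backwards: for $R<1$ one has $R^{4/3}\ll R$, and this is exactly what makes a small-radius gluing possible.

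The missing idea is to interpolate near $R=0$, against the common strictly plurisubharmonic function $R+\tfrac{\eta}{2}$, exploiting $0\le\eta\le R^2$. This is the potential of $\omega^{\m{srf}}_0$ without $\varphi_0$, which is in any case only defined for $R\geq1$.

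The paper composes $R+\tfrac{\eta}{2}$ with a convex increasing $F$ equal to $c_0+\tfrac{3\gamma}{2}t^{4/3}$ near $0$ and to $t$ for $t$ large.
\begin{itemize}
\item $F(R+\tfrac{\eta}{2})$ is strictly plurisubharmonic on $\mZ^{\m{reg}}$ and coincides with the srf potential at large $R$.
\item Near $R=0$ it differs from $c_0+\mathcal{K}_0$ by $\mo(R^{7/3})$, with matching derivative bounds. Cutting this difference off at scale $\epsilon$ therefore costs only $\mo(\epsilon^{2/3})$ in the Stenzel norm.
\item The $\varphi_0$-correction is inserted by a separate cutoff $(1-\chi(R/C))\varphi_0$. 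Its cost is $\mo(C^{-1/2})$, using $|\partial\overline{\partial}\varphi_0|=\mo(R^{-1})$, $|\partial\varphi_0|=\mo(R^{-1/2})$, $|\varphi_0|=\mo(\log R)$ and $|\diff R|_{\omega^{\m{srf}}_0}=\mo(R^{1/2})$.
\item The case $s\in(0,s_0]$ follows by gluing in $\mathcal{K}_s-\mathcal{K}_0$ and $\Phi^\infty_s-\Phi^\infty_0$ on fixed compact annuli.
\end{itemize}

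Your regularised-maximum idea would also work if moved to this regime.
\begin{itemize}
\item Let $h(r)=r-\tfrac{3\gamma}{2}r^{4/3}$, which is increasing on $(0,(2\gamma)^{-3})$.
\item Pick $R_1<R_2<(2\gamma)^{-3}$ with $h(R_2)>h(R_1)+\tfrac12R_1^2$, and $c_0$ strictly between these two values.
\item Then $\mathcal{K}_0+c_0$ dominates $R+\tfrac{\eta}{2}$ near $\{R=R_1\}$ and is dominated by it near $\{R=R_2\}$, uniformly in $\eta$.
\item The regularised maximum stays Kähler after adding $s\widetilde{\omega}^+_{\m{FS}}$, because its partial derivatives are non-negative and sum to one.
\item The strict orderings persist for small $s$, with $R+\tfrac{\eta}{2}$ replaced by $\sqrt{R^2+s^2}-s\log(s+\sqrt{R^2+s^2})+\tfrac{\eta}{2}$.
\end{itemize}
You would still need the separate large-radius cutoff for $\varphi_s$, and the uniform admissibility bound to keep $\omega_s$ Kähler on $\{R\ge R_2\}$. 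As written, none of this is in your proposal.
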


\begin{proof}
    The crucial part is to understand how to construct $\phi_0$, then we will be able to construct $\phi_s$ using cutoffs for $s > 0$ sufficiently small. For $s = 0$, the normalised Stenzel metric is given by $\omega_{\m{CdlO},0} = \iq \partial \overline{\partial} \mathcal{K}_0 =  \iq \partial \overline{\partial}(\frac{3\gamma}{2} R^{4/3})$, while $\omega_0 = \iq \partial \overline{\partial}(R + \frac{\eta}{2})$. In particular, the function $R + \frac{\eta}{2}$ is plurisubharmonic on $\mZ^{\mathrm{reg}}$, and therefore it is a classical fact that for any non-decreasing function $F \colon (0,+\infty) \to \R$ such that $F^{\prime\prime} > 0$, $F(R+\frac{\eta}{2})$ will remain plurisubharmonic. 

    The idea is to first pick a convex function $F \colon [0,+\infty) \to \R$ such that $F^\prime, F^{\prime\prime} > 0$ on $(0,+\infty)$ and such that there exist $T_2 > T_1 >0$ and a constant $c_0 > 0$ such that $F(t) = c_0 + \frac{3\gamma}{2} t^{4/3}$ if $t \leq T_1$ and $F(t) = t$ if $t \geq T_2$. To construct such a function, we can for instance pick a cutoff function $\chi \colon [0,+\infty) \to \R$ such that $\chi(t) = 1$ near $0$ and $\chi(t) = 0$ when $t$ is large enough, and such that $\int_0^\infty \frac{2\gamma}{3} \chi(t) t^{-2/3} \diff t = 1$; such a cutoff function exists since $t^{-2/3}$ is integrable on $[0,1]$. Thus the function 
    \begin{equation*}
        F_0(t) = \int_0^t \int_0^x \frac{2\gamma}{3} \chi(y) y^{-2/3} \diff y \diff x
    \end{equation*}
    satisfies $F_0^\prime, F_0^{\prime\prime} > 0$, $F_0(t) = \frac{3\gamma}{2}t^{4/3}$ for $t$ small enough, and $F^\prime(t) = 1$ for $t$ large enough. Given the convexity of the function $F_0$, it follows that there exists a constant $c_0 > 0$ such that $F(t) = t-c_0$ for $t$ large enough\footnote{In fact it is not difficult to see that we could make the constant $c_0 > 0$ as small as we want by choosing an appropriate cutoff $\chi$, but we cannot achieve $c_0 = 0$ since we want $F$ to be convex.}. Hence the function $F(t) = c_0 + F_0(t)$ satisfies the desired properties. In particular, the function $F(R + \frac{\eta}{2})$ is plurisubharmonic. 

    We want to study the expansion of the Kähler form $\iq \partial \overline{\partial}(F(R + \frac{\eta}{2}))$ in the two asymptotic regimes $R \to 0$ and $R \to \infty$. First, we begin with the limit $R \to 0$. Since $\eta \leq R^2$, there exists $R_1 > 0$ small enough such that $F(R+\frac{\eta}{2}) = c_0 + \frac{3\gamma}{2}(R + \frac{\eta}{2})^{4/3}$ in the region $\{ R \leq R_1 \}$. Thus, we may write
    \begin{equation*}
        F(R,\eta) = c_0 + \frac{3\gamma}{2} R^{4/3} + \tilde{F}(R,\eta)
    \end{equation*}
    where in the limit $R \to 0$ we have $\tilde{F}(R,\eta) = \frac{3\gamma}{2}R^{4/3}(\frac{2\eta}{3R} + \frac{\eta^2}{18R^2} + \cdots)$. In particular,
    \begin{align*}
        |\tilde{F}| = \mo(R^{7/3}), ~~ |\tilde{F}_R| = \mo(R^{4/3}), ~~ |\tilde{F}_{RR}| = \mo(R^{1/3}), ~~ |\tilde{F}_\eta| = \mo(R^{1/3}) \\
        |\tilde{F}_{R\eta}| = \mo(R^{-2/3}) , ~~ |\tilde{F}_{\eta\eta}| = \mo(R^{-2/3}) . ~~~~~~~~~~~~~~~~~~~~~~~~~~
    \end{align*}
    In order to interpolate with the potential of the Stenzel metric, we take a cutoff function $\chi$ as before and seek a potential of the form $\phi_\epsilon = c_0 + \frac{3\gamma}{2} R^{4/3} + (1-\chi(\frac{R}{\epsilon})) \tilde{F}(R,\eta)$. In order to show that this potential is plurisubharmonic for $\epsilon > 0$ small enough, we calculate
    \begin{align*}
        \iq \partial \overline{\partial} \phi_\epsilon & = \iq \partial \overline{\partial}\mathcal{K}_0 + (1-\chi(\tfrac{R}{\epsilon})) \iq \partial \overline{\partial} \tilde{F} - \epsilon^{-1} \chi^\prime(\tfrac{R}{\epsilon}) \iq (\partial R \wedge \overline{\partial} F + \partial F \wedge \overline{\partial} R) \\
            & ~~~~ - \epsilon^{-2} \chi^{\prime\prime}(\tfrac{R}{\epsilon}) \tilde{F}  \iq \partial R \wedge \overline{\partial} R .
    \end{align*}
    Since the distance to the vertex of the cone $o \in \mZ$ for the Stenzel metric is $\varrho_0 \asymp R^{2/3}$ and the fibration $\zeta$ satisfies $\zeta(\lambda \cdot W) = \lambda \zeta(W)$ for any $W \in \mZ$ and $\lambda > 0$, we deduce that with respect to this metric,
    \begin{equation*}
        |\diff R|, ~ |\partial R|, ~ |\overline{\partial} R|, ~ |\diff \zeta| , ~ |\diff \overline{\zeta}| = \mo(R^{1/3}), ~~ |\partial \overline{\partial} R| = \mo(R^{-1/3})
    \end{equation*}
    as $R \to 0$. This notably implies that
    \begin{align*}
        |\partial \tilde{F}| & = |\tilde{F}_R \partial R + \overline{\zeta} \tilde{F}_\eta \diff \zeta| = \mo(R^{4/3}), ~~ |\overline{\partial}\tilde{F}| = \mo(R^{4/3}), \\
            |\partial \overline{\partial} \tilde{F}| & = |\tilde{F}_R \partial \overline{\partial} R + \tilde{F}_{RR} \partial R \wedge \overline{\partial}R + \tilde{F}_{R\eta} (\zeta \partial R \wedge \diff \overline{\zeta} + \overline{\zeta} \diff \zeta \wedge \overline{\partial} R + (\tilde{F}_\eta + \eta \tilde{F}_{\eta\eta}) \diff \zeta \wedge \diff \overline{\zeta}| \\
            & = \mo(R)
    \end{align*}
    as $R \to 0$. Combining these estimates together, it follows that $\iq \partial \overline{\partial}\phi_\epsilon = \iq \partial \overline{\partial} \mathcal{K}_0 + \mo(\epsilon^{2/3})$, and thus by choosing $\epsilon_0 > 0$ small enough the function $\phi_{\epsilon_0}$ can be made plurisubharmonic.

    For this fixed choice of $\epsilon_0 > 0$, the Kähler form $\iq \partial \overline{\partial}\phi_{\epsilon_0}$ coincides with the semi Ricci-flat Kähler metric $\omega_0$ for $R = \tfrac{\mr^2}{2}$ large enough. On the other hand, since the potential $\varphi_0$ is adapted, we have already proved that $|\partial \overline{\partial}\varphi_0|_{\omega_0} = \mo(\mr^{-2}) = \mo(R^{-1})$. Similarly, it is easy to see from the definition of adapted potentials that $|\partial \varphi_0| \asymp |\overline{\partial}\varphi_0| = \mo(\mr^{-1}) = \mo(R^{-1/2})$, and from the expression of $\varphi_0$ we have $|\varphi_0| = \mo(\log(R))$, as $R \to \infty$. Thus we can seek a potential of the form $\phi_{\epsilon_0,C} = \phi_{\epsilon_0} + (1-\chi(\frac{R}{C})) \varphi_0$, for some $C > 0$ large enough. As before,
    \begin{align*}
        \iq \partial \overline{\partial} \phi_{\epsilon_0,C} & = \iq \partial \overline{\partial}\phi_{\epsilon_0} + (1-\chi(\tfrac{R}{C})) \iq \partial \overline{\partial} \varphi_0 - C^{-1} \chi^\prime(\tfrac{R}{C}) \iq (\partial R \wedge \overline{\partial} \varphi_0 + \partial F \wedge \overline{\partial} \varphi_0) \\
        & ~~~~ - C^{-2} \chi^{\prime\prime}(\tfrac{R}{C}) \varphi_0  \iq \partial R \wedge \overline{\partial} R .
    \end{align*}
    Note that as $R \to \infty$, $|\diff (R^2)|_{\omega^{\m{srf}}_0} \leq |\diff \upsilon|_{\omega^{\m{srf}}_0} + |\diff \eta|_{\omega^{\m{srf}}_0} = \mo(\mr^3) = \mo(R^{3/2})$ and thus $|\diff R |_{\omega^{\m{srf}}_0} \asymp |\partial R|_{\omega^{\m{srf}}_0} \asymp |\overline{\partial}R|_{\omega^{\m{srf}}_0} = \mo(R^{1/2})$ as $R \to \infty$. In particular, we deduce that $\iq \partial \overline{\partial} \phi_{\epsilon_0,C} = \omega_0 + \mo(C^{-1/2})$ for $C$ sufficiently large, and therefore we can choose $C_0 > 0$ large enough so that $\phi_0 = \phi_{\epsilon_0,C_0}$ is plurisubharmonic.

    At this stage, we obtain $\phi_0$ satisfying the desired property for some radii $\tilde{R}_1=2R_1,\tilde{R}_2 = \tfrac{R_2}{2}$. To construct $\phi_s$, we take a cutoffs between $R_1$ and $2R_1$, and $\tfrac{R_2}{2}$ and $R_2$, to glue in the potentials $\mathcal{K}_s-\mathcal{K}_0$ in the region $\{R_1 \leq R \leq 2R_1\}$ and $\sqrt{R^2+s^2}-R - s \log(s + \sqrt{R^2+s^2}) + \tfrac{1}{2} (\varphi_s-\varphi_0)$ in the region $\{\tfrac{R_2}{2} \leq R \leq R_2\}$. Since the difference of the potentials is continuous in the $C^\infty_{\mathrm{loc}}$ sense, there exists $s_0 > 0$ such that for any $s \in [0,s_0]$, the resulting family of functions $\phi_s$ is plurisubharmonic and satisfies the desired properties.
\end{proof}

We now list a few properties which immediately follow from the construction of $\omega_s$.

\begin{lem}     \label{lem: Distance for main ansatz}
    For any $s \in (0,s_0]$, $(\omega_s,\mZ_+)$ is complete, and $(\mZ^{\m{reg}},\omega_0)$ is an incomplete Kähler manifold whose completion is homeomorphic to $\mZ$. Moreover, if we define the function
    \begin{equation*}
        \rho_s \coloneqq \chi(\tfrac{R}{R_1}) \varrho_s + (1-\chi(\tfrac{R}{R_1})) \rho
    \end{equation*}
    where $\chi \colon \R \to [0,1]$ is a smooth cutoff function such that $\chi(t) = 1$ if $t \leq 1$ and $\chi(t) = 0$ if $t \geq 2$, then there are constants $\kappa > 1$, $D > 0$ such that
    \begin{itemize}
        \item for all $s \in (0,s_0]$, $\kappa^{-1} \rho_s - D s^{1/2} \leq \dist_{\omega_s}(o_+,\cdot) \leq \kappa \rho_s + D s^{1/2}$, and
        \item $\kappa^{-1} \rho_0 \leq \dist_{\omega_0}(o,\cdot) \leq \kappa \rho_0$.
    \end{itemize}
\end{lem}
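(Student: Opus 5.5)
The plan is to split $\mZ_+$ into three regions according to Proposition \ref{prop: interpolation metric}: the inner region $\{R \leq R_1\}$ where $\omega_s = \omega_{\m{CdlO},s}$, the outer region $\{R \geq R_2\}$ where $\omega_s = \omega^{\m{srf}}_s + \tfrac{\iq}{2}\partial\overline{\partial}\varphi_s$, and the compact transition shell $\{R_1 \leq R \leq R_2\}$. In each region we compare $\omega_s$ with a model metric whose distance function is already controlled, and then patch the estimates with the triangle inequality, using that the shell has uniformly bounded diameter.

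\textbf{Inner region.} Here $\omega_s = \omega_{\m{CdlO},s}$, so the stated estimate $\kappa^{-1}\varrho_s - Ds^{1/2} \leq \dist_{\omega_{\m{CdlO},s}}(o_+,\cdot) \leq \kappa\varrho_s + Ds^{1/2}$ applies. It transfers to $\dist_{\omega_s}$ as follows. Any path from $o_+$ to a point with $R \leq R_1$ either stays in $\{R \leq R_1\}$, or it exits and re-enters. In the second case its $\omega_s$-length is at least the $\omega_s$-width of the annulus $\{R_1/2 \leq R\leq R_1\}$, which is bounded below uniformly in $s$ because on this annulus $\omega_s$ converges smoothly to the Stenzel metric. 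Note also that $\rho_s = \varrho_s$ on $\{R\leq R_1\}$, and that $\varrho_s$ is bounded on this set uniformly in $s$. Together these give the two-sided bound there, after enlarging $\kappa$ and $D$.

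\textbf{Shell and outer region.} On the shell, $\omega_s$ is a continuous family of smooth Kähler metrics (by (v)), uniformly equivalent to $\omega_0$ for $s \in [0,s_0]$. Hence the shell has uniformly bounded $\omega_s$-diameter and $\rho_s$ is uniformly bounded above and below there, so the estimate holds on the shell trivially. On $\{R \geq R_2\}$ we use that $\varphi_s$ is admissible: Lemma \ref{lem: bound on ddbar for admissible potentials} gives $|\partial\overline{\partial}\varphi_s|_{\omega^{\m{srf}}_s} = \ohat(\mr^{-2})$. Enlarging $R_2$ if necessary, we may therefore assume $\tfrac12\omega^{\m{srf}}_s \leq \omega_s \leq 2\omega^{\m{srf}}_s$ there, uniformly in $s$. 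For a point $x$ with $R(x)\ge R_2$:
\begin{itemize}
\item For the upper bound, we re-run the path construction of Proposition \ref{prop: distance estimate}: descend radially in the fibre, then move horizontally along the spheres down to $R \sim R_2$ while remaining in the outer region, then cross the shell and the inner region. This yields $\dist_{\omega_s}(o_+,x) \leq 2\kappa(\eta^{1/2}+\upsilon^{1/4}) + C$, and $C \lesssim \rho$ since $\rho$ is bounded below there.
\item For the lower bound, we observe that $\rho$ has bounded gradient for $\omega^{\m{srf}}_s$ (Lemma \ref{lem: Derivative of distance function}). Hence $\rho_s$ has $|\diff\rho_s|_{\omega_s} \leq C$ globally and uniformly in $s$: the inner contribution is bounded because $\varrho_s$ is a regularised radius function, and the cutoff region is compact. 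Integrating along a minimising path gives $\rho_s(x) - \rho_s(o_+) \leq C\dist_{\omega_s}(o_+,x)$, with $\rho_s(o_+) = \varrho_s(o_+) \lesssim s^{1/2}$.
\end{itemize}
This gradient argument in fact gives the lower bound globally, so the case analysis is only needed for the upper bound.

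\textbf{Completeness and $s=0$.} For $s>0$, completeness follows because $\rho_s$ is proper and $\dist_{\omega_s}(o_+,\cdot) \gtrsim \rho_s - Ds^{1/2}$, so closed bounded sets are compact. For $s=0$, the same arguments work with $D=0$, since the Stenzel cone has $\dist = \varrho$ exactly. The metric is incomplete because the vertex is at finite distance, $\varrho\to0$ along radial paths. The completion adds exactly one point: every Cauchy sequence leaving all compact subsets of $\mZ^{\m{reg}}$ must have $\varrho \to 0$, since $\rho_0\to\infty$ forces distances to blow up. By the conical structure near $o$, such sequences all converge to a single point, so the completion is homeomorphic to the one-point compactification of the end near $R = 0$, i.e. to $\mZ$.

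The main obstacle I expect is uniformity in $s$ in the inner-region transfer. Paths might exploit the outer geometry, and the $Ds^{1/2}$ term coming from $\mathrm{diam}(\mC_+)$ must be tracked. I would handle both through the gradient bound on $\rho_s$ and the uniform convergence of $\omega_{\m{CdlO},s}$ on annuli.
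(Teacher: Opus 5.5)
Your proof is correct. It is essentially the argument the paper leaves implicit (the lemma is stated there as an immediate consequence of the construction, with no written proof): use the Candelas--de la Ossa estimate on $\{R \leq R_1\}$, transfer Proposition~\ref{prop: distance estimate} through the uniform equivalence $\omega_s \asymp \omega_s^{\m{srf}}$ on $\{R \geq R_2\}$, and use compactness of the shell in between.

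The one ingredient you add is a different lower bound. You use the uniform global Lipschitz bound $|\diff \rho_s|_{\omega_s} \leq C$, which comes from Lemma~\ref{lem: Derivative of distance function} outside and from the scaling argument of Lemma~\ref{lem: radius derivative estimates} near $\mC_+$, together with $\rho_s(o_+) = \sqrt{3s}$. This is cleaner than re-running the quotient and $|\zeta|$-Lipschitz argument of Proposition~\ref{prop: distance estimate}, because it does not care how paths wander between regions. It also gives completeness and the $s=0$ lower bound at once.

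Two small points should be tightened.

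First, your exit/re-enter dichotomy on $\{R \leq R_1\}$ only yields the lower bound, which the gradient argument already covers. The upper bound there does not follow from the CdlO estimate alone. The reason is that $\dist_{\omega_{\m{CdlO},s}}$ is a priori smaller than the distance through paths confined to $\{R \leq R_1\}$, and only the confined distance controls $\dist_{\omega_s}$. Instead, take a geodesic normal to the cohomogeneity-one orbits from $x$ down to $\mC_+$. Along it $R$ decreases, and its length is $\dist_{\omega_{\m{CdlO},s}}(x,\mC_+) \leq \kappa \varrho_s(x) + D s^{1/2}$. Follow it by a path in $\mC_+$ of length $\lesssim s^{1/2}$. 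For $s=0$ this is just the radial ray of the cone.

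Second, in the outer upper bound, the radial descent in the fibre leaves $\{R \geq R_2\}$ when $\eta(x) < R_2^2$. In that case stop the descent on $\{R = R_2\}$ and cross the shell from there; this only changes the additive constant.
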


\begin{lem}     \label{lem: tangent cone at infinity for ansatz}
    For any $s \in [0,s_0]$, the tangent cone at infinity of $\omega_s$ is $\C \times (\C^2/\Z_2)$.
\end{lem}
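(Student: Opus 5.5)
The approach is to show that, for each fixed $s \in [0,s_0]$, the rescalings $(\mZ_+, \lambda^{-2}\omega_s, o_+)$ converge in the pointed Gromov--Hausdorff sense to $\C \times (\C^2/\Z_2)$ as $\lambda \to \infty$. By Proposition \ref{prop: interpolation metric}(iv), $\omega_s$ coincides with $\omega^{\m{srf}}_s + \frac{\iq}{2}\partial\overline{\partial}\varphi_s$ on $\{R \geq R_2\}$. The region $\{R \leq R_2\}$ has diameter bounded independently of $\lambda$, by Lemma \ref{lem: Distance for main ansatz}, so after rescaling it collapses to a point. It therefore suffices to study the asymptotic region.

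\textbf{Step 1 (remove the correction terms).} The admissible correction satisfies $|\partial\overline{\partial}\varphi_s|_{\omega^{\m{srf}}_s} = \ohat(\mr^{-2})$ by Lemma \ref{lem: bound on ddbar for admissible potentials}. Lemma \ref{lem: asymptotic equivalence of skrf metrics} gives $|\omega^{\m{srf}}_s - \omega^{\m{srf}}_0|_{\omega^{\m{srf}}_0} = \mo(R^{-1})$. Hence on $\{R \geq R_2\}$ the metric $\omega_s$ is bi-Lipschitz to $\omega^{\m{srf}}_0$ with constant $1+\mo(R^{-1})$. On the annuli $\{\rho \sim \lambda\}$ with $\lambda\to\infty$, this constant tends to $1$ except near the locus $\{\mr \ll \rho\}$. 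There we can use the cruder bound $\mo(\mr^{-2})$ together with $\mr \gtrsim \rho^{1/2}$. The distortion of distances is controlled by integrating the length error along near-minimising paths, using the explicit paths from Proposition \ref{prop: distance estimate}. One concludes that the pointed Gromov--Hausdorff limits of $\lambda^{-2}\omega_s$ and of $\lambda^{-2}\omega^{\m{srf}}_0$ coincide. Alternatively, one can simply show directly that both converge to the same explicit model.

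\textbf{Step 2 (scaling of the semi Ricci-flat ansatz).} Write $\omega^{\m{srf}}_0 = \hat\omega_0 + (1+\mr^{-2})\frac{\iq}{2}\diff\zeta\wedge\diff\overline{\zeta}$. Use the $\R_+$-dilation $\delta_t$ of $\C^4$ commuting with $\Un(1)$, under which $\hat\omega_0 \mapsto t^2\hat\omega_0$ fibrewise and $\zeta \mapsto t^2\zeta$. Compose with the base rescaling $\zeta' = \lambda^{-1}\zeta$, exactly as in the proof of Proposition \ref{prop: covariant derivatives at infinity}. With $t = \lambda^{1/2}$, the fibres become $\lambda^{-1}$-scaled Eguchi--Hanson spaces of parameter $|\zeta|$, i.e.\ the fibre over $\zeta'$ is Eguchi--Hanson with parameter $|\zeta'|/\lambda \to 0$. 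These converge to the flat cone $\C^2/\Z_2$, uniformly on compact subsets away from the vertex, and globally in the Gromov--Hausdorff sense, since Eguchi--Hanson with parameter $\epsilon$ is $\mo(\epsilon^{1/2})$-close to $\C^2/\Z_2$. The horizontal term becomes $(1+\mo(\lambda^{-1}))\frac{\iq}{2}\diff\zeta'\wedge\diff\overline{\zeta}'$. Moreover, the Ehresmann connection flattens, since the holonomy of the scale-invariant connection over a disc of radius $\mo(1)$ in $\zeta'$ corresponds to a disc of radius $\mo(\lambda^{-1})$ in the original normalised base. Therefore the Riemannian submersion converges to the product $\C \times (\C^2/\Z_2)$.

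\textbf{Main obstacle.} The hardest step will be making the Gromov--Hausdorff convergence rigorous near $\zeta' = 0$ and near the fibre vertices, where the smooth convergence fails. I would first construct explicit $\varepsilon_\lambda$-approximations. Using the trivialisation by radial parallel transport, each point is sent to $(\zeta', \text{fibre point collapsed via the EH}\to\C^2/\Z_2 \text{ map})$. Distortion is then estimated with the two-sided bounds of Proposition \ref{prop: distance estimate}: after rescaling, horizontal distance is $|\zeta'|+o(1)$ and fibre distance is $\sqrt{2}\,\upsilon^{1/4}\lambda^{-1}+o(1)$. The extra $\eta^{1/4}$ and $s^{1/2}$ terms vanish after division by $\lambda$. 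The Pythagorean structure of product distance follows from the asymptotic orthogonality of the splitting $\widetilde{\mathcal{H}}\oplus\mathcal{E}$ together with the vanishing curvature of the connection (Corollary \ref{cor: Writing the forms conveniently}(ii) gives $|\Theta_s| \to 0$ after rescaling). This mirrors \cite[Prop.~5.6]{li2019new}, to which I would defer the routine details.
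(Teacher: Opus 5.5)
Your proposal takes the same approach as the paper, which notes only that at infinity $\omega_s$ is asymptotically a Riemannian fibration over the flat $\zeta$-plane whose Eguchi--Hanson fibres have exceptional spheres of size $(\eta+s^2)^{1/4}\ll\rho_s$, and defers the Gromov--Hausdorff details to \cite[Prop.~5.6]{li2019new}; your Steps 1--2 make this reduction explicit. Two small corrections: $\Theta_s$ in Corollary~\ref{cor: Writing the forms conveniently}(ii) is the curvature of the $\Un(1)$-connection $\mathfrak{a}_s$, not of the Ehresmann connection of $\zeta$, but the flattening you need already follows from your base-rescaling argument in Step 2; and the caveat about $\{\mr\ll\rho\}$ in Step 1 is unnecessary, since $R^{-1}=2\mr^{-2}\lesssim\rho^{-1}$ throughout the asymptotic region.
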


\begin{proof}
    This is similar to \cite[Prop. 5.6]{li2019new}: $\omega_s$ is asymptotically equivalent to a fibration over the Euclidean plane $\C$, and the exceptional sphere in the Eguchi--Hanson fibres has radius of order $(\eta+s^2)^{1/4} \ll \rho_s$, thus the tangent cone at infinity is indeed $\C \times (\C^2/\Z_2)$. 
\end{proof}

\begin{lem}     \label{lem: functions fs}
    The $1$-parameter family of $\Un(2)$-invariant functions $\{f_s \colon \mZ_+ \to \R\}_{s \in [0,s_0]}$ defined through the identity
    \begin{equation*}
        \omega_s^3 = e^{-f_s} \vol_+, 
    \end{equation*}
    for any $s \in [0,s_0]$, satisfies the following:
    \begin{enumerate}[(i)]
        \item $\left. f_s \right|_{R \leq R_1} \equiv 0$ for any $s \in [0,s_0]$.
        \item For any $\mu \in (2,3)$, there exists a constant $A_\mu > 0$ such that 
        \begin{equation*}
            \sup_{s \in [0,s_0]} \sup \{\rho^\mu|1-e^{f_s}|\} \leq A_\mu,
        \end{equation*} 
        where $\rho = (\upsilon + \eta^2)^{\frac{1}{4}}$ as in \S\ref{subsec: The distance function}.
        \item For any $\mu \in (2,3)$ and $k \geq 1$, there exists a constant $C_{k,\mu} > 0$ such that for any $s \in [0,s_0]$, 
        \begin{equation*}
            |\nabla_{\omega_s}^{k-1} \diff f_s|_{\omega_s} \leq C_{k,\mu} \mr^{-k}\rho^{-\mu} .
        \end{equation*}
        In particular, there exists a constant $C_\mu > 0$ such that for any $s \in [0,s_0]$,
        \begin{equation*}
            |\partial\overline{\partial}f_s|_{\omega_s} \leq C_\mu \mr^{-2} \rho_s^{-\mu} .
        \end{equation*}
    \end{enumerate}
\end{lem}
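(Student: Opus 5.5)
The proof will split $\mZ_+$ into three regions according to Proposition \ref{prop: interpolation metric}: the Candelas--de la Ossa region $\{R \leq R_1\}$, the asymptotic region $\{R \geq R_2\}$, and the compact transition region $\{R_1 \leq R \leq R_2\}$. Point (i) is immediate. By Proposition \ref{prop: interpolation metric}(iii), $\omega_s = \omega_{\m{CdlO},s}$ on $\{R \leq R_1\}$, and we have normalised so that $\omega_{\m{CdlO},s}^3 = \vol_+$. Hence $f_s \equiv 0$ there, uniformly for $s \in [0,s_0]$.

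In the asymptotic region, Proposition \ref{prop: interpolation metric}(iv) gives $\omega_s = \omega_s^{\m{srf}} + \tfrac{\iq}{2}\partial\overline{\partial}\varphi_s$. Proposition \ref{prop: Improved solutions at infinity} then yields $e^{-f_s} = 1 + \psi_s$ with $|\psi_s| = \ohat(\rho^{-3+\epsilon})$. We choose $\epsilon = 3-\mu \in (0,1)$ and note that $\rho$ is bounded below on $\{R \geq R_2\}$. Since $\ohat$ gives a constant uniform over $s \in [0,s_0]$, for $R_2$ large (or after shrinking constants) we get $|\psi_s| \leq \tfrac12$. This gives $|1-e^{f_s}| = |\psi_s|/(1+\psi_s) \leq 2|\psi_s|$, which proves (ii) there.

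For (iii) in this region, write $f_s = -\log(1+\psi_s)$. The chain rule expresses $\nabla^{k-1}\diff f_s$ as a sum of products of covariant derivatives of $\psi_s$ with bounded coefficients. Corollary \ref{cor: Intrinsic higher-order estimates} gives $|\nabla^j_{\omega_s^{\m{srf}}}\psi_s| = \ohat(\mr^{-j}\rho^{-\mu})$, and any product of at least two factors only improves the decay. The remaining task is to pass from $\nabla_{\omega_s^{\m{srf}}}$ to $\nabla_{\omega_s}$. For this we use that $\omega_s - \omega_s^{\m{srf}} = \tfrac{\iq}{2}\partial\overline{\partial}\varphi_s$ satisfies $|\nabla^j_{\omega^{\m{srf}}_s}(\omega_s-\omega_s^{\m{srf}})| = \ohat(\mr^{-2-j})$, which follows from Corollary \ref{cor: Intrinsic higher-order estimates}. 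The metrics are therefore uniformly equivalent, and the difference of Christoffel symbols is $\ohat(\mr^{-3-j})$ at order $j$. An induction on $k$ then shows that each extra derivative still costs a factor $\mr^{-1}$.

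On the compact transition region $K = \{R_1 \leq R \leq R_2\}$, which is independent of $s$, Proposition \ref{prop: interpolation metric}(v) shows that $\phi_s$ varies continuously in $C^\infty_{\m{loc}}$ away from $\{R=0\}$. The same holds for $s\widetilde{\omega}^+_{\m{FS}}$ and for the volume form. Hence $f_s$ and all its $\omega_s$-covariant derivatives are bounded on $K$, uniformly in $s \in [0,s_0]$, by compactness of $K \times [0,s_0]$. Since $\mr$ and $\rho$ are bounded above and below on $K$, the weights are harmless. The bound on $\partial\overline{\partial} f_s$ is the case $k=2$; in the region $R \leq R_1$ it is trivial, and elsewhere $\rho_s \asymp \rho$ by the definition in Lemma \ref{lem: Distance for main ansatz}.

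The main obstacle is the uniform transfer of the higher-order estimates between connections in the asymptotic region. The available bounds are stated with respect to $\omega_s^{\m{srf}}$, with the anisotropic weights $\mr$ and $\rho$, and we need the Christoffel difference to cost exactly one power of $\mr$ per derivative, uniformly in $s$. I would establish this first by the scaling argument of Proposition \ref{prop: covariant derivatives at infinity}, applied to $\omega_s$ in place of $\omega_s^{\m{srf}}$.
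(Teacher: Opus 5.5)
Your proposal is correct and follows the paper's route. The paper's proof says only that (i) holds by construction, (ii) follows from Proposition~\ref{prop: Improved solutions at infinity} and (iii) from Corollary~\ref{cor: Intrinsic higher-order estimates}; you have spelled out the continuity argument on the transition region and the passage from $\nabla_{\omega_s^{\m{srf}}}$ to $\nabla_{\omega_s}$ via the $\ohat(\mr^{-3-j})$ Christoffel difference. The one loose step is ``for $R_2$ large'': rather than enlarging $R_2$, note that $\{R\geq R_2,\ \rho\leq\rho_0\}$ is compact in $\mZ_+\setminus\mC_+$, so the same continuity-in-$s$ argument you use on $\{R_1\leq R\leq R_2\}$ bounds $(1+\psi_s)^{-1}$ uniformly there.
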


\begin{proof}
    Point (i) is obvious by construction, point (ii) follows from Proposition \ref{prop: Improved solutions at infinity} and point (iii) from Corollary \ref{cor: Intrinsic higher-order estimates}.
\end{proof}

    \subsection{Geometrical properties of the modified ansatz}      \label{subsec: Geometrical properties}

In this section we discuss the geometry of the family of Kähler metrics $\{\omega_s\}_{s \in (0,s_0]}$ on $\mZ_+$. We begin with estimates on the curvatures and the covariant derivatives of useful functions.

\begin{lem}     \label{lem: curvature estimates for SOB}
    For any $k \geq 0$, there exists a constant $C_k > 0$ such that for any $s \in (0,s_0]$,
    \begin{align*}
            |\nabla_{\omega_s}^k\m{Rm}_{\omega_s}|_{\omega_s} \leq \left\{\begin{array}{ll}
                 C_k \rho_s^{-2-k} & \text{in } \{ R \leq R_1\}, \\
                 C_k \mr_s^{-2-k} & \text{in } \{ R \geq R_1\}.
            \end{array}\right.
        \end{align*}
\end{lem}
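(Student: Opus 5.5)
The plan is to split $\mZ_+$ into three regions according to Proposition \ref{prop: interpolation metric}: the inner region $\{R \leq R_1\}$, the outer region $\{R \geq R_2\}$, and the compact transition annulus $\{R_1 \leq R \leq R_2\}$. Each region is treated with the tools already available there.

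\emph{Inner region.} On $\{R\leq R_1\}$ we have $\omega_s = \omega_{\m{CdlO},s}$ by Proposition \ref{prop: interpolation metric}(iii), so we need uniform curvature bounds for the Candelas--de la Ossa family. The key is scaling. The cone $\mZ$ carries the dilations $W\mapsto \lambda W$, which lift to $\mZ_+$. One checks from the ODE $\Lambda_s^3 + \tfrac32 s\Lambda_s^2 = \gamma^3 R^4$, with $s=4\gamma a_s^2$, that pulling back by the dilation with $\lambda^{2/3}\asymp s^{-1/2}$ gives
\begin{equation*}
    \delta_\lambda^*\omega_{\m{CdlO},s} = \lambda^{4/3}\,\omega_{\m{CdlO},s\lambda^{-4/3}} .
\end{equation*}
So every $\omega_{\m{CdlO},s}$ is homothetic to $\omega_{\m{CdlO},1}$ with scale factor $s$, and $\varrho_s$ scales like a distance. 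For the fixed metric $\omega_{\m{CdlO},1}$, which is smooth and asymptotic to the Stenzel cone, we have $|\nabla^k \m{Rm}|\leq C_k \varrho_1^{-2-k}$ globally. Here we use $\varrho_1\geq (3/2)^{1/2}>0$ near $\mC_+$ and the conical decay at infinity, which follows from the $C^\infty_{\m{loc}}$ convergence of potentials recorded after the definition of $\omega_{\m{CdlO},s}$, or simply from AC theory. Rescaling gives $|\nabla^k\m{Rm}_{\omega_s}|\leq C_k\varrho_s^{-2-k}$, and on $\{R\leq R_1\}$ the cutoff definition gives $\rho_s=\varrho_s$.

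\emph{Outer region.} On $\{R\geq R_2\}$ we have $\omega_s = \omega_s^{\m{srf}} + \tfrac{\iq}{2}\partial\overline{\partial}\varphi_s$. Lemma \ref{lem: curvature of the skrf ansatz} gives $|\nabla^k\m{Rm}_{\omega_s^{\m{srf}}}| = \ohat(\mr^{-2-k})$, and Corollary \ref{cor: Intrinsic higher-order estimates} gives $|\nabla^k_{\omega_s^{\m{srf}}}\partial\overline{\partial}\varphi_s| = \ohat(\mr^{-2-k})$. Because $\mr^{-2}$ is small for $R\geq R_2$, the metrics $\omega_s$ and $\omega_s^{\m{srf}}$ are uniformly equivalent. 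The difference of Levi-Civita connections is $A = \omega_s^{-1}*\nabla^{\m{srf}}(\partial\overline{\partial}\varphi_s)$, which satisfies $|\nabla^j A| = \ohat(\mr^{-3-j})$. The standard expansion
\begin{equation*}
    \m{Rm}_{\omega_s} = \m{Rm}_{\omega_s^{\m{srf}}} + \nabla A + A * A ,
\end{equation*}
iterated for covariant derivatives, then yields $|\nabla^k_{\omega_s}\m{Rm}_{\omega_s}| \leq C_k \mr^{-2-k}$. Finally, $\mr_s \asymp \mr$ uniformly for $s\leq s_0$ by Lemma \ref{lem: uniform comparisons}.

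\emph{Transition annulus.} The region $\{R_1\leq R\leq R_2\}$ is compact and stays away from $\mC_+$ and $\{R=0\}$. By Proposition \ref{prop: interpolation metric}(v), together with the smooth dependence of $\widetilde{\omega}^+_{\m{FS}}$, the family $\omega_s$ is continuous in $C^\infty$ there for $s\in[0,s_0]$, including $s=0$. Hence $\sup_{s}\sup_{\text{annulus}}|\nabla^k\m{Rm}_{\omega_s}|<\infty$. Since $\mr_s$ is bounded above and below there, this bound can be written as $C_k\mr_s^{-2-k}$.

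The main obstacle is the inner region, and specifically the homothety identity relating $\omega_{\m{CdlO},s}$ to $\omega_{\m{CdlO},1}$. The Fubini--Study term $s\widetilde{\omega}^+_{\m{FS}}$ is dilation invariant, so it scales correctly. The real check is the rescaling of $\widetilde{\mathcal{K}}_{a}(R^2)$, which I would verify directly from the ODE $(t\widetilde{\mathcal{K}}'_a)^3 + 6a^2(t\widetilde{\mathcal{K}}'_a)^2 = t^2$. Any additive constant in the potential is irrelevant. A secondary point is making sure the constants in the outer region are independent of $s$ as $s\to 0$; this is exactly what the $\ohat$ notation in Section \ref{sec: Asymptotic solutions of the MA equation} provides.
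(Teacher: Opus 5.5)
Your proposal is correct and follows essentially the same route as the paper. Both use the same three-region split: the Candelas--de la Ossa metrics on $\{R\le R_1\}$ are handled by homothety to the asymptotically conical metric $\omega_{\m{CdlO},1}$, the region $\{R\ge R_2\}$ by the admissibility estimates combined with Lemma \ref{lem: curvature of the skrf ansatz}, and the compact annulus by $C^\infty_{\m{loc}}$-continuity in $s$ together with two-sided bounds on $\mr_s$.

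One cosmetic slip: your identity $\delta_\lambda^*\omega_{\m{CdlO},s} = \lambda^{4/3}\omega_{\m{CdlO},s\lambda^{-4/3}}$ holds for every $\lambda>0$, and to reach $\omega_{\m{CdlO},1}$ you should take $\lambda = s^{3/4}$ there. Your choice $\lambda^{2/3}=s^{-1/2}$ is the right one only when the identity is applied to $\omega_{\m{CdlO},1}$ instead.
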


\begin{proof}
    In the region $\{R_1 \leq R \leq R_2\}$ this is clear since the family of metrics $\{\omega_s\}_{s \in [0,s_0]}$ is continuous in the $C^\infty_{\m{loc}}(\mZ_+ \setminus \mC_+)$ topology and we have uniform two-sided bounds on $r_s$, $\rho_s$. In the region $\{R \geq R_2\}$, this follows from the admissibility of the family of potentials $\{\varphi_s\}_{s \in [0,s_0]}$ together with Proposition \ref{prop: covariant derivatives at infinity} and Lemma \ref{lem: curvature of the skrf ansatz}. In the region $\{R \leq R_1\}$, $\omega_s$ coincides with the Candelas-de la Ossa metric $\omega_{\m{CdlO},s}$ and $\rho_s$ with $\varrho_s$. Since $\omega_{\m{CdlO},1}$ is asymptotically conical, we have estimates $|\nabla_{\omega_{\m{CdlO},1}}^k \m{Rm}_{\omega_{\m{CdlO},1}}| \lesssim \varrho^{-2-k}_1$, and since all the Candelas-de la Ossa metrics are isometric to rescalings of $\omega_{\m{CdlO},1}$ this yields the desired estimates.
\end{proof}

\begin{lem}     \label{lem: radius derivative estimates}
    For every $k\geq 1$ there exists a constant $C^\prime_k > 0$ such that for all $s \in (0,s_0]$,
    \begin{align*}
        |\nabla_{\omega_s}^{k-1} \diff \rho_s|_{\omega_s} &\leq C^\prime_k\rho_s^{1-k} ~~~~~ \text{in } \{ R \leq R_1\}, \\
        |\nabla_{\omega_s}^{k-1} \diff \rho_s|_{\omega_s} &\leq C^\prime_k \mr_s^{1-k} ~~~~~ \text{in } \{R \geq R_1\}, \\
        |\nabla_{\omega_s}^{k-1}\diff r_s|_{\omega_s} &\leq C^\prime_k \mr_s^{1-k} ~~~~~ \text{in } \{R \geq R_1\}.
    \end{align*}
\end{lem}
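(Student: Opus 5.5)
We split $\mZ_+$ into the three regions $\{R \leq R_1\}$, $\{R_1 \leq R \leq R_2\}$ and $\{R \geq R_2\}$ given by Proposition \ref{prop: interpolation metric}, and treat each by the same mechanism as in the proof of Lemma \ref{lem: curvature estimates for SOB}.

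\textbf{Inner region $\{R \leq R_1\}$.} Here $\omega_s = \omega_{\m{CdlO},s}$ and $\rho_s = \varrho_s$, with $\varrho_s^2 = 3(\Lambda_s(R) + s)$. Both are invariant under the cohomogeneity-one action. The scaling $W \mapsto \lambda W$ intertwines $\omega_{\m{CdlO},s}$ with a constant multiple of $\omega_{\m{CdlO},\lambda^{2/3}s}$ and scales $\varrho_s^2$ by the same factor. This follows from the ODE $\Lambda_s^3 + \tfrac32 s \Lambda_s^2 = \gamma^3 R^4$, which is homogeneous under $R \mapsto \lambda R$, $\Lambda \mapsto \lambda^{4/3}\Lambda$, $s \mapsto \lambda^{4/3} s$. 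Consequently it suffices to prove $|\nabla^{k-1}\diff\varrho_1|_{\omega_{\m{CdlO},1}} \lesssim \varrho_1^{1-k}$. This bound holds trivially on compact sets. At infinity it holds because $\omega_{\m{CdlO},1}$ is asymptotically conical with rate $\varrho^{-2}$ (recall $\Lambda_1 = \gamma R^{4/3} + \mo(1)$ from the ODE), and $\varrho_1$ is asymptotic to the cone radius $\varrho$, for which $\nabla^{k-1}\diff\varrho$ on $\omega_{\m{Stz}}$ is homogeneous of degree $1-k$. The estimates are scale invariant, so they transfer to every $s$ with constants independent of $s$.

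\textbf{Middle region $\{R_1 \leq R \leq R_2\}$.} On $\{R_1 \le R \le 2R_1\}$ the cutoff in $\rho_s$ mixes $\varrho_s$ and $\rho$, but all quantities involved are uniformly bounded. The family $\{\omega_s\}_{s\in[0,s_0]}$ is continuous in $C^\infty_{\m{loc}}$ away from $\{R=0\}$ by Proposition \ref{prop: interpolation metric}(v). The functions $\varrho_s$, $\rho$, $\mr_s$ and the cutoff $\chi(R/R_1)$ are smooth in the variables and depend continuously on $s \in [0,s_0]$, including $s=0$, since $R \ge R_1 > 0$. Compactness of the annulus modulo the $\Un(2)$ action then gives uniform bounds on all covariant derivatives. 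Since $\rho_s$, $\mr_s$ are bounded above and below there, this is equivalent to the claimed powers.

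\textbf{Outer region $\{R \geq R_2\}$.} Here $\rho_s = \rho$ and $\omega_s = \omega_s^{\m{srf}} + \tfrac{\iq}{2}\partial\overline\partial\varphi_s$. Proposition \ref{prop: covariant derivatives at infinity} already gives $|\nabla^k_{\omega_s^{\m{srf}}}\diff\rho|,|\nabla^k_{\omega_s^{\m{srf}}}\diff\mr_s| = \ohat(\mr^{-k})$. The remaining task is to replace $\nabla_{\omega_s^{\m{srf}}}$ by $\nabla_{\omega_s}$. Write $\nabla_{\omega_s} = \nabla_{\omega_s^{\m{srf}}} + T_s$, where $T_s$ is built from $g_s^{-1}\nabla_{\omega_s^{\m{srf}}}(\partial\overline\partial\varphi_s)$. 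Corollary \ref{cor: Intrinsic higher-order estimates} yields $|\nabla^j_{\omega_s^{\m{srf}}}\partial\overline\partial\varphi_s| = \ohat(\mr^{-2-j})$, and $\omega_s$ is uniformly equivalent to $\omega_s^{\m{srf}}$ for $R \ge R_2$. An induction on $k$, expanding $\nabla^{k-1}_{\omega_s}$ as $\nabla^{k-1}_{\omega_s^{\m{srf}}}$ plus products of $\nabla^j T_s$ with lower derivatives, then shows that each extra term carries decay at least $\mr^{-1}$ per derivative. This gives $|\nabla^{k-1}_{\omega_s}\diff\rho_s| \lesssim \mr_s^{1-k}$, and likewise for $\mr_s$, using $\mr \asymp \mr_s$ uniformly (Lemma \ref{lem: uniform comparisons}).

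\textbf{Main obstacle.} The hardest step is the inner region: verifying the uniformity in $s$. This requires pinning down exactly how $\omega_{\m{CdlO},s}$ and $\varrho_s$ transform under the scaling $W \mapsto \lambda W$, including the $\widetilde{\omega}^+_{\m{FS}}$ term, which is scale invariant while $s$ must rescale. A second point to confirm is that $\varrho_1$ differs from the conical radius by terms whose derivatives decay fast enough. I would first check directly from the ODE that $\Lambda_1 - \gamma R^{4/3}$ has an expansion in powers of $R^{-2/3}$.
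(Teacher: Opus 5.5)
Your proposal is correct and follows the paper's proof. The paper uses the same three-region split: the transition region is handled by $C^\infty_{\mathrm{loc}}$-continuity and compactness, and the outer region by Proposition \ref{prop: covariant derivatives at infinity} together with the decay of $\partial\overline{\partial}\varphi_s$. The inner region is handled by rescaling every Candelas--de la Ossa metric to $\omega_{\m{CdlO},1}$, using its asymptotically conical structure and the scale invariance of the estimates.

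The only slip is cosmetic. Writing $m_\lambda(W)=\lambda W$, one has $m_\lambda^*\omega_{\m{CdlO},\lambda^{4/3}s}=\lambda^{4/3}\omega_{\m{CdlO},s}$, so the parameter rescales by $\lambda^{4/3}$, not $\lambda^{2/3}$; your own homogeneity check of the ODE already shows this.
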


\begin{proof}
    As before, the estimates are obvious in the transition region $\{ R_1 \leq R \leq R_2 \}$, and in region $\{ R \geq R_2\}$ the estimates easily follow from Proposition \ref{prop: covariant derivatives at infinity} and the admissibility of the family of potentials $\{\varphi_s\}_{s \in [0,s_0]}$. As for the region $\{ R \leq R_1\}$, after reparametrisation of $\mZ_+$ all the Candelas--de la Ossa metrics are isometric to rescalings of $\omega_{\m{CdlO},1}$, and $\varrho_s$ is the natural (rescaled) distance function. On the other hand, $\omega_{\m{CdlO},1}$ is asymptotically conical, and we have estimates $|\nabla^{k-1} \diff \varrho_1| \lesssim \varrho_1^{1-k}$. Since these estimates are invariant under a rescaling of the metric, this implies the desired estimates in $\{R \leq R_1\}$.
\end{proof}

We now recall the definition of $\mathsf{SOB}(\beta)$-spaces, first introduced by Hein \cite[Def. 3.1]{hein2010gravitational} in order to study the complex Monge--Am\`ere equation on noncompact Kähler manifolds whose geometry is well under control. We state it according to Li's version\footnote{Cf. the remark following Definition 5.7 in \cite{li2019new} for the difference between the two definitions. The point is that this does not affect the Sobolev inequalities that come with the $\mathsf{SOB}(\beta)$-package.} \cite[Def. 5.7]{li2019new}.

\begin{Def}     \label{def: SOBbeta Spaces}
    Let $(M,g)$ be a complete Riemannian manifold and $x_0\in M$. Then $(M,g)$ satisfies $\mathsf{SOB}(\beta)$, for some $\beta > 0$, if there exists $\delta \in (0,1)$, $C > 1$ such that:
    \begin{itemize}
        \item \textit{Volume growth:} $\mathrm{Vol}(B(x_0,r)) \leq C r^\beta $ for all $r \geq C$.
        %\item \textit{Local non-collapsing:} there exists $c>0$ such that for every $x \in M$ and $r \leq 1$ we have $cr^n\leq \mathrm{Vol}(B(x,r))$.
        \item \textit{Large-scale non-collapsing:} for all $x \in M$ with $r(x) \coloneqq \dist(x_0,x) \geq C$ we have $\mathrm{Vol}(B(x,(1-\frac{1}{C})r(x))) \geq \frac{1}{C} r(x)^\beta$.
        \item \textit{Annular connectedness:} any two points $x,y \in M$ with $\dist(x_0,x) = \dist(x_0,y) = r \geq C$ are connected within the annulus $A(x_0,(1-\delta)r,(1+\delta)r)$.
        \item \textit{Ricci bound:} for any $x$ such that $r(x) \coloneqq \dist(x_0,x) \geq C$, $\m{Ric} \geq - C r(x)^{-2}$.
    \end{itemize}
\end{Def} 

\begin{prop}      \label{prop: Uniform SOB(6) property}
    The family of Kähler metrics $\{\omega_s\}_{s \in (0,s_0]}$ on $\mZ_+$ satisfies the $\mathsf{SOB}(6)$-property uniformly: there exist $\delta \in (0,1)$, $C > 1$ such that:
    \begin{itemize}
        \item \emph{Uniform volume growth:} for any $s \in (0,s_0]$, $\frac{1}{C} r^6 \leq \mathrm{Vol}_{\omega_s}(B_{\omega_s}(o_+,r))\leq C r^6 $ for all $r \geq C$.
        \item \emph{Uniform large-scale non-collapsing:} for all $s \in (0,s_0]$ and for all $x\in \mathcal{Z}_+$ with $r(x) \coloneqq \dist_{\omega_s}(o_+,x) \geq C$, $\frac{1}{C} r(x)^6\leq \m{Vol}(B_{\omega_s}(x,(1-\frac{1}{C})r(x))$
        \item \emph{Uniform annular connectedness:} for any $s \in (0,s_0]$, any two points $x,y \in \mZ_+$ with $\dist_{\omega_s}(x_0,x) = \dist_{\omega_s}(x_0,y) = r \geq C$ are connected within the annulus $A_{\omega_s}(x_0,(1-\delta)r,(1+\delta)r)$.
        \item \emph{Uniform Ricci bound:} For all $s \in (0,s_0]$ and $x \in \mZ_+$ with $r(x) \coloneqq \dist_{\omega_s}(o_+,x)$,
        \begin{align*}
            |\m{Ric}_{\omega_s}(x)|_{\omega_s} \leq \left\{\begin{array}{ll}
                 0& \text{in } \{ R \leq R_1\}, \\
                 C r(x)^{-3}& \text{in } \{ R \geq R_1 \}.
            \end{array}\right.
        \end{align*}
    \end{itemize}
    Moreover, the following also holds:
    \begin{itemize}
        \item \emph{Uniform local non-collapsing:} for all $s \in (0,s_0]$, $x\in \mathcal{Z}_+$ and $r \leq 1$ we have $\frac{1}{C} r^6 \leq \m{Vol}_{\omega_s}(B_{\omega_s}(x,r))$.
    \end{itemize}
\end{prop}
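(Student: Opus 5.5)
The plan is to split $\mZ_+$ into the three regions $\{R \leq R_1\}$, $\{R_1 \leq R \leq R_2\}$ and $\{R \geq R_2\}$ of Proposition \ref{prop: interpolation metric}. On the first region $\omega_s$ is a rescaled copy of the Candelas--de la Ossa metric. On the last it is a uniformly small perturbation of $\omega^{\m{srf}}_s$. On the middle region the family varies continuously in $C^\infty_{\m{loc}}$, including at $s = 0$, and $\{R_1 \leq R \leq R_2\}$ is a fixed compact subset of $\mZ_+ \setminus \mC_+$. All constants will be tracked through Lemma \ref{lem: Distance for main ansatz}, which makes $\dist_{\omega_s}(o_+,\cdot)$ comparable to $\rho_s$ up to an additive $Ds^{1/2} \leq Ds_0^{1/2}$. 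In particular, for $r(x) \geq C$ with $C$ large, the relevant points lie in $\{R \geq R_2\}$, where $\rho_s = \rho$.

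\emph{Ricci bound.} On $\{R \leq R_1\}$ the form $\omega_s$ is Calabi--Yau, so the Ricci form vanishes there. Elsewhere $\m{Ric}_{\omega_s} = \iq\partial\overline{\partial} f_s$ up to sign, so Lemma \ref{lem: functions fs}(iii) gives $|\m{Ric}_{\omega_s}| \leq C_\mu \mr^{-2}\rho^{-\mu}$. Since $\mr^{-2} \lesssim \rho^{-1}$ by Lemma \ref{lem: uniform comparisons}, choosing $\mu \in (2,3)$ yields $\rho^{-3}$ decay. On the compact middle region we only need a bounded constant. Converting $\rho$ to $r(x)$ via the distance comparison finishes this step.

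\emph{Volume growth and large-scale non-collapsing.} First I would show a uniform two-sided volume comparison at infinity. Lemma \ref{lem: functions fs}(ii) gives $\omega_s^3 \asymp \vol_+$ uniformly, and $\vol_+$ is independent of $s$. Using the invariant frame in \eqref{eq: Volume form in invariant frame}, I would compute volumes of sublevel sets of $\rho$ by integrating over the $\Un(2)$-orbit space in $(\upsilon,\eta)$. An Eguchi--Hanson fibre region $\{\upsilon \lesssim r^4\}$ has volume $\asymp r^4$, and the base disc $\{|\zeta| \lesssim r\}$ has area $\asymp r^2$. This gives $\Vol\{\rho \leq r\} \asymp r^6$, uniformly in $s \in [0,s_0]$, since $\vol_+$ does not depend on $s$ and the comparisons are $\ohat$. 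Combined with the distance comparison, this yields $C^{-1}r^6 \leq \Vol(B(o_+,r)) \leq Cr^6$.

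For non-collapsing at $x$ with $r(x) \geq C$, the ball $B(x,(1-\frac1C)r(x))$ should contain a set of the form $\{\rho \leq c\, r(x)\}$ shifted in the base. I would rather argue directly: the ball contains a product-like region where $|\zeta - \zeta(x)| \leq c r(x)$ and $\upsilon^{1/4} \leq c r(x)$, built from the explicit radial and horizontal paths in the proof of Proposition \ref{prop: distance estimate} together with Lemma \ref{lem: asymptotic equivalence of skrf metrics}. That region has volume $\gtrsim r(x)^6$. Alternatively, Bishop--Gromov style reasoning is available, but only with the decaying Ricci bound.

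\emph{Annular connectedness.} This reduces to connectedness of the level sets of $\rho$ at large values, made uniform in $s$. The level set $\{\rho = r\}$ for $r$ large is connected: it is a $\Un(2)$-invariant hypersurface whose orbit-space image is a connected arc in the $(\upsilon,\eta)$ quadrant, and the $\Un(2)$-orbits are connected. Given $x,y$ at distance $r$, I would move each radially along $\rho$ to the level $\{\rho = \lambda r\}$ with a fixed $\lambda$. Lemma \ref{lem: Derivative of distance function}, giving $|\diff\rho| = \ohat(1)$, together with the distance comparison controls how far this moves the distance, and I expect this to keep us inside $A((1-\delta)r,(1+\delta)r)$. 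I expect this step to be the main obstacle, because the comparison constant $\kappa$ between $\rho_s$ and distance is not close to $1$. It is therefore unclear that a level set of $\rho$ sits inside a thin distance annulus. To handle this I would use the tangent cone at infinity $\C \times (\C^2/\Z_2)$ from Lemma \ref{lem: tangent cone at infinity for ansatz}, whose cross-section is connected. I would make the Gromov--Hausdorff convergence of the rescalings $(\mZ_+, r^{-2}\omega_s, o_+)$ to this cone uniform in $s \in (0,s_0]$; the exceptional sphere has size $(\eta+s^2)^{1/4} \ll r$ uniformly. Annular connectedness of the cone then transfers to large $r$ by a contradiction argument over sequences $(s_i,r_i)$.

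\emph{Local non-collapsing.} This follows from a uniform bound $|\m{Rm}| \leq C$ and a uniform injectivity-radius-type bound on unit balls. Lemma \ref{lem: curvature estimates for SOB} bounds curvature by $\rho_s^{-2}$ near $\mC_+$, which is not uniform. However, on $\{R \leq R_1\}$ the metric is a rescaled Candelas--de la Ossa metric, and that metric is AC with Euclidean-type volume ratios at all scales. Volume ratios are scale invariant, so this gives $\Vol(B(x,r)) \geq c r^6$ there for all $r \leq 1$. Away from that region, bounded curvature on $\mr$-scale balls gives the same conclusion.
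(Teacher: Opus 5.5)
Most of your plan follows the paper's proof. The Ricci bound comes from Lemma~\ref{lem: functions fs} together with $\mr^{-2}\lesssim\rho^{-1}$. Volume growth comes from computing the $\vol_+$-volume of $\{\eta\lesssim r^2,\ \upsilon\lesssim r^4\}$ as a base disc of area $\asymp r^2$ times Eguchi--Hanson fibre regions of volume $\asymp r^4$. Annular connectedness comes from the connected link of the tangent cone $\C\times(\C^2/\Z_2)$; the paper reduces to $\omega_0$ via Lemmas~\ref{lem: Distance for main ansatz} and~\ref{lem: asymptotic equivalence of skrf metrics} rather than proving uniform convergence of rescalings, but this amounts to the same argument. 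Local non-collapsing near $\mC_+$ comes from scale invariance of the Candelas--de la Ossa volume ratios. The step that fails is large-scale non-collapsing.

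Your region $\{|\zeta-\zeta(x)|\le c\,r(x),\ \upsilon^{1/4}\le c\,r(x)\}$ is centred on the cores of the fibres, not at $x$. It is in general not contained in $B_{\omega_s}(x,(1-\frac{1}{C})r(x))$. Whenever $|\zeta(x)|\le c\,r(x)$, it contains $\mC_+=\{\upsilon=\eta=0\}$, hence $o_+$, which is at distance exactly $r(x)$ from $x$. This is exactly the case of points lying far out along an Eguchi--Hanson fibre, and no choice of the paths from Proposition~\ref{prop: distance estimate} repairs it.

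What you need is a volume lower bound for balls centred at arbitrary points of the fibres. The paper gets this from volume monotonicity along the fibres. On $\{R\ge R_2\}$, $\omega_s$ is uniformly equivalent to $\tilde{\omega}_0=\hat{\omega}_0+\frac{\iq}{2}\diff\zeta\wedge\diff\overline{\zeta}$. For this metric $\zeta$ is a Riemannian submersion onto the flat plane, with exactly Ricci-flat Eguchi--Hanson fibres of asymptotic volume ratio $\frac12$. Bishop--Gromov in each fibre therefore gives $\Vol(B(y,t))\ge\frac12\Vol(B_{\R^4}(t))$ for every fibre point $y$ and every $t>0$.

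Now take fibre balls of radius $c\,r(x)$ about the horizontal transports of $x$ over the disc $\{|\zeta-\zeta(x)|\le c\,r(x)\}$. By Fubini this gives a set of volume $\gtrsim r(x)^6$, contained in an $\omega_s$-ball of radius $O(c\,r(x))$ about $x$. For $c$ small and $r(x)$ large, that ball lies inside $B_{\omega_s}(x,(1-\frac{1}{C})r(x))\cap\{R\ge R_2\}$, uniformly in $s$. Bishop--Gromov on the total space is not a substitute, since $\m{Ric}_{\omega_s}$ has no sign there.

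The same fibrewise monotonicity is also the volume input missing from your local non-collapsing argument on $\{R\ge R_2\}$. A curvature bound by itself gives no lower bound on volume.
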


\begin{proof}
    We argue as in \cite[Sec. 5]{li2019new}, adapting Li's distance and volume comparisons to our setting in order to prove the uniformity with respect to $s$.

    \textit{Uniform volume growth:} In $\{R \geq R_2\}$, the distance $\dist_{\omega_s}(o_+,\cdot)$ is equivalent to $\rho = (\upsilon + \eta^2)^\frac{1}{4}$, uniformly in $s$. Moreover, in this region $\omega_s = \omega_s^{\m{srf}} + \tfrac{\iq}{2}\partial \overline{\partial} \varphi_s$ coincides with the corrected semi Ricci-flat metric. Using Lemma \ref{lem: asymptotic equivalence of skrf metrics} and the admissibility of the family of potentials $\{\varphi_s\}_{s \in [0,s_0]}$ it follows that there exists a constant $C > 1$ such that for any $s \in (0,s_0]$,
    \begin{align*}
        C^{-1}\left. \omega_s \right|_{\{R \geq R_2\}} \leq \left. \omega^{\m{srf}}_0 \right|_{\{R \geq R_2\}} \leq C \left.\omega_s \right|_{\{R \geq R_2\}} .
    \end{align*}
    In addition, since $f_s$ is uniformly bounded, the volume forms of $\omega_s$ are uniformly equivalent to $\vol_+$. Thus it suffices to prove the result for $\omega_0$ (seen as a Kähler form on $\mZ^{\m{reg}} \cong \mZ_+ \setminus \mC_+$).

    Now using the equivalence of $\dist_{\omega_0}(o,\cdot)$ and $\rho \asymp \upsilon^{1/4} + \eta^{1/2}$ in $\{ R \geq R_2\}$, there exist $0<c<C$ such that for any sufficiently large $r$,
    \begin{align*}
        \bigl\{\eta^{1/2}+\upsilon ^{1/4}\leq c r\bigr\} \subset B_{\omega_0}(o,r) \subset \bigl\{\eta^{1/2}+\upsilon ^{1/4}\leq C r\bigr\}.
    \end{align*}
    Equivalently,
    \begin{align*}
        \{\eta \leq c^2 r^2,\upsilon \leq c^4 r^4\} \subset B_{\omega_0}(o,r) \subset \{\eta \le C^2 r^2,\upsilon \le C^4 r^4\}.
    \end{align*}
    Thus it suffices to estimate the $\omega^{\m{srf}}_0$-volume of the regions $\{\eta\le r^2,\upsilon \le r^4\} \cap \{ R \geq R_2\}$. In $\{R \geq R_2\}$, as in the proof of Proposition \ref{prop: covariant derivatives at infinity} we know that $\omega_0^{\m{srf}}$ is uniformly equivalent to $\tilde{\omega}_0 \coloneqq \hat{\omega}_0 + \frac{\iq}{2} \diff \zeta \wedge \diff \overline{\zeta}$, which is a Riemannian fibration over the $\zeta$-Euclidean plane, with fibre metric $\hat \omega_0$ being the Eguchi--Hanson metric corresponding to the parameter $\zeta$. By the previous comparison, it is enough to estimate the $\tilde{\omega}_0$-volume of the region
    \begin{equation*}
        A_r \coloneqq \{\eta \leq c^2 r^2,\upsilon \leq c^4 r^4\} .
    \end{equation*}
    Since the base disc $D_r = \{ \eta \leq r^2\}$ has area of order $r^2$, it only remains to show that the fibres have area of order $r^4$ for the vertical metric $\hat{\omega}_0$. 

    To estimate the volume along the fibres, note that for any $a \in \C$ such that $|a|^2 \leq C r^2$, the fibre region $\{\upsilon \leq r^4\}\cap \zeta^{-1}(a)$ has diameter uniformly equivalent to $r$ for $r$ large enough (cf. Proposition \ref{prop: distance estimate})\footnote{The proposition gives a uniform upper bound, but one can use the expression of the length of a vertical radial path to deduce a uniform lower bound as long as $\eta \leq C r^2 \ll r^4$.}. Since the Eguchi--Hanson metrics are complete Ricci-flat metrics with maximal volume growth and tangent cone at infinity $\C^2 / \Z_2$, Bishop--Gromov volume monotonicity implies that there exist constants $c^\prime, C^\prime > 0$ independent of $r$ large enough such that
    \begin{equation*}
        c^\prime r^4 \leq \m{Vol}_{\hat{\omega}_0}(\{\upsilon \leq r^4\}\cap \zeta^{-1}(a)) \leq C^\prime r^4
    \end{equation*}
    for any $a \in \C$ such that $|a|^2 \leq C r^2$. The uniform volume growth follows.

    \emph{Uniform large-scale non-collapsing:} As before, it is enough to prove the result for $\tilde{\omega}_0$, provided that we let $C$ be large enough so that if $r(x) \coloneqq \dist_{\omega_s}(o_+,x) \geq C$ then $B_{\omega_s}(x,(1-\frac{1}{C})r(x)) \subset \{R \geq R_2\}$, which we can do uniformly with respect to $s \in (0,s_0]$ given our distance estimate. Then we argue as above for the lower bound on $B_{\tilde{\omega}_0}(o_+,r)$: since $\tilde{\omega}_0$ is a Riemannian fibration over $\C$ with Eguchi--Hanson fibres, volume monotonicity along the fibres implies the desired bound.

    \emph{Uniform annular connectedness:} Given Lemma \ref{lem: Distance for main ansatz} and Lemma \ref{lem: asymptotic equivalence of skrf metrics}, it is clearly enough to prove the property for $\omega_0$. Now the tangent cone of $\omega_0$ at infinity is $\C \times (\C^2/\Z_2)$ which has connected link, and as in \cite[Prop. 5.8]{li2019new} this implies the connectedness of sufficiently large annuli.

    \emph{Uniform Ricci bound:} since $\omega_s^3 = e^{-f_s} \vol_+$ this immediately follows from Lemma \ref{lem: functions fs}. 

    \textit{Uniform local non-collapsing:} Since the Candelas-de la Ossa metrics $\{\omega_{\m{CdlO},s}\}_{s > 0}$ form a non-collapsed family of Ricci-flat metrics on $\mZ_+$, the result clearly holds in the region $\{ R \leq R_1 \}$. Moreover, the family of Kähler metrics $\{\omega_s\}_{s \in (0,s_0]}$ converges to $\omega_0 = \iq \partial \overline{\partial}\phi_0$ in $C^\infty_{\m{loc}}(\mZ_+ \setminus \mC_+)$ by Proposition \ref{prop: interpolation metric}, and therefore the result also holds on $\{ R_1 \leq R \leq R_2\}$. Finally, in the region $\{ R \geq R_2\}$, $\omega_s$ is uniformly equivalent to $\tilde{\omega}_0$ which has Eguchi--Hanson fibres over a flat base and the local non-collapsing follows as before from volume monotonicity along the fibres.
\end{proof}

From the analytical point of view, one of the advantages of $\mathsf{SOB}(\beta)$-spaces is that they automatically satisfy certain weighted Sobolev inequalities. In \cite[Prop. 3.2]{hein2010gravitational}, Hein proved that if $(M^n,g)$ is an $n$-dimensional $\mathsf{SOB}(\beta)$-space and $r$ the distance to a fixed point of $M$, then for any $\alpha \in [1,\frac{n}{n-2}]$ there exists a constant $A > 0$ such that for any $u \in C^\infty_c(M)$,
\begin{equation*}
    \left( \int_M (1+r)^{\alpha(\beta-2)-\beta} |u|^{2\alpha} \vol_g\right)^{\frac{1}{\alpha}} \leq A \int_M |\diff u|_g^2 \vol_g .
\end{equation*}
Notice that in the particular case where $\beta = n$ and $\alpha = \frac{n}{n-2}$, this becomes an \emph{unweighted} Sobolev bound,
\begin{equation*}
    \left( \int_M |u|^{\frac{2n}{n-2}} \vol_g \right)^{\frac{n-2}{n}} \leq A \int_M |\diff u|_g^2 \vol_g .
\end{equation*}
Notice further that such a bound is invariant under a rescaling of the metric $g$. Using this observation, we shall prove the following uniform Sobolev inequality:

\begin{lem}    \label{lem: uniform sobolev constant A1}
    There exists a constant $A_1 > 0$ such that for any $s \in (0,s_0]$,
    \begin{equation*}
        \left( \int_{\mZ_+} |u|^3 \omega_s^3 \right)^{\frac{2}{3}} \leq A_1 \int_{\mZ_+} |\diff u|^2 \omega_s^3, ~~~~ \forall u \in C^\infty_c(\mZ_+) .
    \end{equation*}
\end{lem}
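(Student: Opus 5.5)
The plan is to patch two Sobolev inequalities, each uniform in $s$ for its own reason. Near $\mC_+$ the metrics are rescalings of a single Calabi--Yau metric, and the unweighted inequality is scale invariant. Away from $\mC_+$ they are uniformly equivalent to a fixed reference metric. The one non-uniform term produced by the patching will be removed by a compactness argument.

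\emph{Inner region.} By Proposition \ref{prop: interpolation metric}(iii), $\omega_s = \omega_{\m{CdlO},s}$ on $\{R\le R_1\}$. After reparametrising $\mZ_+$, $\omega_{\m{CdlO},s}$ is isometric to a rescaling of $\omega_{\m{CdlO},1}$. The latter is a complete Ricci-flat asymptotically conical metric of maximal volume growth, so it is $\mathsf{SOB}(6)$. Hein's inequality \cite[Prop. 3.2]{hein2010gravitational} with $\beta=n=6$, $\alpha=\tfrac32$ then gives an unweighted $L^3$--$\dot H^1$ inequality for it. Since this inequality is scale invariant, it holds for every $\omega_{\m{CdlO},s}$ with one constant $A_{\m{in}}$.

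\emph{Outer region.} On $\{R\ge R_1/2\}$ I will use $\omega_{s_0}$ as the reference metric; it is complete and $\mathsf{SOB}(6)$ by Proposition \ref{prop: Uniform SOB(6) property}, so it carries a Sobolev constant $A_{\m{out}}$. I then claim that $\omega_s$ and $\omega_{s_0}$ are uniformly equivalent on $\{R\ge R_1/2\}$ for $s\in(0,s_0]$:
\begin{itemize}
\item on the compact annulus $\{R_1/2\le R\le R_2\}$ this follows from $C^\infty_{\m{loc}}$ continuity of $\{\phi_s\}$ away from $\{R=0\}$ (Proposition \ref{prop: interpolation metric}(v));
\item on $\{R\ge R_2\}$ it follows from Lemma \ref{lem: asymptotic equivalence of skrf metrics} together with admissibility of $\varphi_s$, exactly as in the proof of Proposition \ref{prop: Uniform SOB(6) property}.
\end{itemize}
Uniform equivalence of the metrics transfers the inequality with a constant $C A_{\m{out}}$, independent of $s$, to functions supported in $\{R\ge R_1/2\}$.

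\emph{Patching.} Take a cutoff $\chi$ in $R$ equal to $1$ on $\{R\le R_1/2\}$ and to $0$ on $\{R\ge R_1\}$. Split $u=\chi u+(1-\chi)u$ and apply the two inequalities to the two pieces. Writing $A=\{R_1/2\le R\le R_1\}$, this yields, for all $s\in(0,s_0]$ and $u\in C^\infty_c(\mZ_+)$,
\begin{equation*}
    \Big(\int_{\mZ_+}|u|^3\omega_s^3\Big)^{2/3}\le A_2\Big(\int_{\mZ_+}|\diff u|^2_{\omega_s}\omega_s^3+\int_{A}|u|^2\omega_s^3\Big).
\end{equation*}
The constant $A_2$ is uniform because $|\diff\chi|_{\omega_s}$ is uniformly bounded on $A$, again by smooth convergence on that compact annulus.

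\emph{Removing the $L^2$ term (main obstacle).} I would argue by contradiction. Suppose there are $s_i\in(0,s_0]$ and $u_i\in C^\infty_c(\mZ_+)$ with $\|u_i\|_{L^3(\omega_{s_i})}=1$ but $\|\diff u_i\|_{L^2(\omega_{s_i})}\to0$. Then the displayed inequality forces $\int_A u_i^2\ge c>0$. Pass to a subsequence with $s_i\to s_\infty\in[0,s_0]$. The metrics converge smoothly on compact subsets of $\mZ_+\setminus\mC_+$ (resp.\ of $\mZ^{\m{reg}}$ if $s_\infty=0$), and $u_i$ is bounded in $L^3_{\m{loc}}$ with $\diff u_i\to0$ in $L^2$. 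Rellich on an exhaustion by compact connected subsets then gives $u_i\to c_\infty$ in $L^2_{\m{loc}}$ for some constant $c_\infty$, and $c_\infty\ne0$ because of the lower bound on $A$. On the other hand, Fatou's lemma gives $\int|c_\infty|^3\omega_{s_\infty}^3\le1$, which is impossible since the limit metric has infinite volume by the uniform volume growth. The delicate points are the $s$-uniform local compactness and the connectedness of the exhaustion, which needs $\mZ^{\m{reg}}$ connected; both follow from the smooth local convergence already established.
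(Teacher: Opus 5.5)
Your proof is correct. Its first three steps match the paper's:
\begin{itemize}
\item near $\mC_+$, scale invariance of the unweighted Sobolev constant of $\omega_{\m{CdlO},1}$;
\item on $\{R \geq \tfrac12 R_1\}$, uniform equivalence of the metrics $\omega_s$;
\item a cutoff patching that leaves an $L^2$ error term on the annulus $\{\tfrac12 R_1\le R\le R_1\}$.
\end{itemize}
The genuine difference is how you absorb that annulus term.

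The paper appeals to the internals of Hein's proof of \cite[Prop. 3.2]{hein2010gravitational}. There the $\mathsf{SOB}(6)$ property gives weak Neumann--Sobolev inequalities on balls and annuli, which control $u$ minus its averages by the gradient. The averages are then handled by Cauchy--Schwarz and a telescoping sum. Uniformity holds because this is only needed on $\{R\ge\tfrac12R_1\}$, where the metrics are uniformly equivalent.

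You instead use a soft contradiction argument. Rellich compactness on a connected exhaustion of $\mZ^{\m{reg}}$ forces a non-zero constant limit, and such a constant cannot have finite $L^3$-norm since the volume is infinite. You handle uniformity in $s$ by also passing to a limit $s_i\to s_\infty\in[0,s_0]$, which the $C^\infty_{\m{loc}}$-continuity in Proposition \ref{prop: interpolation metric}(v) permits.

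What each route buys:
\begin{itemize}
\item Your argument is self-contained and uses only local smooth convergence, connectedness and infinite volume. The paper's version of this step is only sketched.
\item The price is that your $A_1$ is non-explicit, whereas Hein's mechanism would in principle give a constant in terms of the $\mathsf{SOB}(6)$ data. This is harmless here, because the Moser iteration in Appendix \ref{app: quantitative moser iteration} only needs some uniform $A_1$.
\item Your choice of the complete $\mathsf{SOB}(6)$ metric $\omega_{s_0}$ as the outer reference is slightly cleaner than comparing with the incomplete $\omega_0$, to which Hein's inequality does not directly apply.
\end{itemize}
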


\begin{proof}
    As before, we know that there is a constant $C > 1$ such that for any $s \in (0,s_0]$,
    \begin{equation*}
        C^{-1} \left. \omega_0 \right|_{\{R \geq \tfrac{1}{2}R_1\}} \leq \left. \omega_s \right|_{\{R \geq \tfrac{1}{2}R_1\}} \leq C \left. \omega_0 \right|_{\{R \geq \tfrac{1}{2}R_1\}}
    \end{equation*}
    which combined with \cite[Prop. 3.2]{hein2010gravitational} shows that there is a constant $A^\prime_1$ such that for any $s \in (0,s_0]$,
    \begin{equation*}
        \left( \int_{\mZ_+} |u|^3 \omega_s^3 \right)^{\frac{2}{3}} \leq A^\prime_1 \int_{\mZ_+} |\diff u|^2 \omega_s^3, ~~~ \forall u \in C^\infty_c(\mZ_+ \setminus \{R \leq \tfrac{1}{2} R_1\}).
    \end{equation*}
    On the other hand, in the region $\{ R \leq R_1\}$, $\omega_s = \omega_{\m{CdlO},s}$, and using Hein's Sobolev bound again we know that there exists a constant $A^{\prime\prime}_1 > 0$ such that for any $u \in C^\infty_c(\mZ_+)$, 
    \begin{equation*}
        \left( \int_{\mZ_+} |u|^3 \omega_{\m{CdlO},1}^3 \right)^{\frac{2}{3}} \leq A^{\prime\prime}_1 \int_{\mZ_+} |\diff u|^2 \omega_{\m{CdlO},1}^3 .
    \end{equation*}
    But since this Sobolev constant is invariant under a rescaling of the metric and $\omega_{\m{CdlO},s}$ is isometric to a rescaling of $\omega_{\m{CdlO},1}$ (under an appropriate diffeomorphism of $\mZ_+$), it follows that for any $s \in (0,s_0]$, 
    \begin{equation*}
        \left( \int_{\mZ_+} |u|^3 \omega_s^3 \right)^{\frac{2}{3}} \leq A^{\prime\prime}_1 \int_{\mZ_+} |\diff u|^2 \omega_s^3, ~~~ \forall u \in C^\infty_c(\mZ_+ \setminus \{R \geq  R_1\}).
    \end{equation*}
    Taking a standard cutoff function $\chi : [0,+\infty) \to [0,1]$ such that $\chi(t) = 1$ for $t \leq \frac{1}{2}$ and $\chi(t) = 0$ for $t \geq 1$ and writing any $u \in C^\infty_c(\mZ_+)$ as $u = \chi(\tfrac{R}{R_1})u + (1-\chi(\tfrac{R}{R_1}))u$, it is easy to deduce that there exists a constant $\tilde{A}_1 >0$, independent of $u$ and $s$, such that for any $s \in (0,s_0]$ we have
    \begin{equation*}
        \left( \int_{\mZ_+} |u|^3 \omega_s^3 \right)^{\frac{2}{3}} \leq \tilde{A}_1 \left( \int_{\mZ_+} |\diff u|^2 \omega_s^3   + \int_{\tfrac{1}{2}R_1 \leq R \leq R_1} |u|^2 \omega^2_s\right)
    \end{equation*}
    
    This is almost what we need, up to the second term on the right-hand side. Now the proof \cite[Prop. 3.2]{hein2010gravitational} shows that this term can be absorbed after increasing the constant $\tilde{A}_1$. The idea is to use the $\mathsf{SOB}(6)$ property to obtain certain weak Neumann--Sobolev inequalities on balls and annuli to show that the $L^2$-norm of the function $u$ minus its average is controlled by the $L^2$ norm of the gradient on slightly larger balls and annuli, and then to deal with the average part using the Cauchy--Schwarz inequality and a telescopic sum argument. In particular, there is no problem with the uniformity of the constants since we only need this argument in the region $\{R \geq \frac{1}{2}R_1\} \subset \mZ_+$ where the Kähler metrics $\omega_s$ are uniformly equivalent.
\end{proof}

%%%%% End of Section 4 %%%%%
%%%%%%%%%%%%%%%%%%%%%%%%%%%%

%% file: section5_singularlimit.tex
%%%%%%%%%%%%%%%%%%%%%%%%%%%%%%%%%%
%%%%% Beginning of Section 5 %%%%%

    \section{The singular limit}        \label{sec: The singular limit}

In this section, we use the modified ansatz $\omega_s$ constructed in the previous section to study the limit of $\omega_{\m{CY},s}$ as $s \to 0$. Note that for any $s > 0$, the Kähler metric $\omega_s$ only differs from the metric $\omega^\prime_s$ used in \S\ref{subsec: The new smooth CY metrics} to construct $\omega_{\m{CY},s}$ by $\iq\partial\overline{\partial}$ of a compactly supported function, which by the uniqueness part of Theorem \ref{thm: the new complete metrics} implies that there exists a unique $\Un(2)$-invariant function $u_s$ such that $\omega_{\m{CY},s} = \omega_s + \iq \partial\overline{\partial} u_s$ and $|\nabla^k_{\omega_s} u_s| = \mo(\rho^{-\delta} \mr^{-k})$, for any $\delta \in (0,1)$ and $k \geq 0$. We shall exploit the uniform $\mathsf{SOB}(6)$-property satisfied by the metrics $\{\omega_s\}_{s \in (0,s_0]}$ and the fact that $\omega_s$ is already Ricci-flat in a neighbourhood of $\mC_+$ to prove that it admits a singular limit $\omega_{\m{CY},0}$ when $s \to 0$ and study its properties.

This section is organised as follows. In \S\ref{subsec: L infinity bounds}, we gather the results of the previous sections to derive uniform $L^\infty$-bounds on the potentials $u_s$, using the method of viscosity solutions. In order to make the exposition more streamlined, we deferred the relevant results concerning Moser iteration for $\mathsf{SOB}(\beta)$-spaces according to Hein's thesis \cite{hein2010gravitational} to Appendix \ref{app: quantitative moser iteration}. In \S\ref{subsec: Second-order estimates}, we derive uniform $C^2$-estimates at the level of viscosity solutions away from the exceptional curve, using the Chern--Lu inequality and a well-designed (semi-positive) background metric $\omega_{\m{c}}$ on $\mZ_+$. These estimates are used to prove uniform \emph{weighted} $L^\infty$-bounds for the potentials $u_s$ in \S\ref{subsec: Uniform decay of the potential}. We then prove in \S\ref{subsec: Convergence away from curve} that the Calabi--Yau metrics $\omega_{\m{CY},s}$ converge smoothly away from $\mC_+$ to a singular Calabi--Yau metric $\omega_{\m{CY},0}$ on $\mZ^{\m{reg}}$ (Theorem \ref{thm: convergence to omegaCY0}). Finally, in \S\ref{subsec: GH convergence} we prove that $(\mZ_+,\omega_{\m{CY},s})$ converges to the metric completion of $(\mZ^{\m{reg}},\omega_{\m{CY},0})$ in the Gromov--Hausdorff sense (Theorem \ref{thm: metric completion}), and that $\omega_{\m{CY},0}$ has an isolated conical singularity modelled on the Stenzel metric at the ordinary double point (Proposition \ref{prop: tangent cone at o}). Combined together, these results prove Theorem \ref{thm: B}.

    \subsection{Uniform $L^\infty$-bounds}  \label{subsec: L infinity bounds}

In \S\ref{subsec: L infinity bounds}--\S\ref{subsec: Uniform decay of the potential}, we will prove uniform estimates up to order $2$ for the solutions $u_s$ of the Monge--Amp\`ere equation
\begin{equation}        \label{eq: MA equation for SOB}
    (\omega_s + \iq \partial \overline{\partial}u_s)^3 = e^{f_s} \omega_s^3 = \vol_+ .
\end{equation}
Recall that the Kähler metrics $\{\omega_s\}_{s \in (0,s_0]}$ satisfy the $\mathsf{SOB}(6)$-property, and that in the proof of Theorem \ref{thm: the new complete metrics} we obtained the solutions by applying \cite[Prop. 4.1]{hein2010gravitational} in Hein's thesis. Hein's method, which generalises the constructions of Tian--Yau \cite{tian1990completei,tian1991completeii}, is to show that one can solve the viscosity equation
\begin{equation}
    (\omega_s + \iq \partial\overline{\partial}u_{\varepsilon,s})^3 = e^{f_s + \varepsilon u_{\varepsilon,s}} \omega_s^3
\end{equation}
for all $\varepsilon \in (0,1)$, where the solutions $u_{\varepsilon,s}$ and all their derivatives are uniformly bounded\footnote{This holds because $f_s$ and all its derivatives are bounded. In fact Hein's method only requires H\"older bounds for the function $f_s$ and its derivatives of order $1$ and $2$, and yields solutions with weaker H\"older bounds on the derivatives up to order $4$, but we shall not need this more general version.} with respect to $\varepsilon$. Thus a solution $u_s$ of \eqref{eq: MA equation for SOB} can be obtained as a limit of $u_{\varepsilon,s}$ as $\varepsilon \to 0$ using the Arzel\`a--Ascoli theorem. Note that in the general setting of \cite[Prop. 4.1]{hein2010gravitational}, the solution might not be unique, but in our case uniqueness follows from the decay of $u_s$ and all its derivatives at infinity (cf. the discussion below Theorem \ref{thm: the new complete metrics}). 

With these preliminaries in mind, we first claim that the following uniform $L^\infty$-bounds can be extracted from the arguments of \cite[Ch. 3 \& 4]{hein2010gravitational} (see also Appendix \ref{app: quantitative moser iteration}):

\begin{prop}        \label{prop: L infinity bounds for us}
    There exists $Q_1 > 0$ such that for all $\varepsilon \in (0,1)$ and $s \in (0,s_0]$:
    \begin{equation*}
        \|u_{\varepsilon,s}\|_{L^\infty} \leq Q_1 ~~ \text{and thus} ~~ \|u_s\|_{L^\infty} \leq Q_1 .
    \end{equation*}
\end{prop}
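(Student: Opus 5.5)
We plan to follow Hein's scheme from \cite[Ch.~3--4]{hein2010gravitational} and check that each constant depends only on data controlled uniformly in $s$. Those data are the uniform $\mathsf{SOB}(6)$ constants of Proposition \ref{prop: Uniform SOB(6) property}, the uniform Sobolev constant $A_1$ of Lemma \ref{lem: uniform sobolev constant A1}, and the uniform decay of $f_s$ from Lemma \ref{lem: functions fs}. In particular, since $\mu>2$, $|1-e^{f_s}| \le A_\mu \rho^{-\mu}$ with $\rho^{-\mu}$ integrable to a suitable power against $\omega_s^3$, uniformly in $s$ (volume growth being uniformly Euclidean of order $6$).

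\textbf{Step 1: an $L^2$/$L^p$ energy bound.} Write $\omega_{\varepsilon,s}=\omega_s+\iq\partial\overline\partial u_{\varepsilon,s}$ and use the standard identity
\begin{equation*}
(e^{f_s+\varepsilon u}-1)\,\omega_s^3=\iq\partial\overline\partial u\wedge(\omega_{\varepsilon,s}^2+\omega_{\varepsilon,s}\wedge\omega_s+\omega_s^2).
\end{equation*}
Multiply by $-u|u|^{p-2}$ (suitably cut off; the boundary terms vanish because $u_{\varepsilon,s}$ is bounded and, for fixed $\varepsilon$, decays). Integrating by parts gives
\begin{equation*}
\int|\diff |u|^{p/2}|^2_{\omega_s}\omega_s^3\lesssim_p \int |1-e^{f_s}|\,|u|^{p-1}\omega_s^3 .
\end{equation*}
The $\varepsilon u$ term has a favourable sign: $(e^{f_s+\varepsilon u}-e^{f_s})u\ge0$. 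Apply the uniform Sobolev inequality of Lemma \ref{lem: uniform sobolev constant A1} on the left. On the right, use H\"older with $|1-e^{f_s}|\in L^{q}$ uniformly for some $q<3/2$ close to $3/2$, which works since $\rho^{-\mu}$ with $\mu>2$ lies in $L^{3/2+}$ against a $6$-dimensional volume. Together these give a uniform bound on $\|u_{\varepsilon,s}\|_{L^{p_0}}$ for some $p_0$, independent of $\varepsilon$ and $s$. This is the usual first step, whose constants depend only on $A_1$ and $A_\mu$.

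\textbf{Step 2: Moser iteration.} Iterate the same inequality with $p\to \tfrac32 p$, using $A_1$ at each stage and the uniform weighted $L^q$ control on $1-e^{f_s}$. This yields $\|u_{\varepsilon,s}\|_{L^\infty}\le Q_1$, with $Q_1$ depending only on $A_1$, $A_\mu$, and the volume growth constant $C$; this is the quantitative Moser iteration of Appendix \ref{app: quantitative moser iteration}. The bound on $u_s$ then follows by passing to the limit $\varepsilon\to0$ (locally uniform convergence via Arzel\`a--Ascoli) and by the uniqueness of $u_s$ discussed above.

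\textbf{Main obstacle.} The delicate point is justifying the integration by parts and the a priori finiteness of $\int |u_{\varepsilon,s}|^{p}$ at each stage. Hein only gives boundedness of $u_{\varepsilon,s}$, not integrability, so the argument needs cutoff functions $\chi(\rho_s/L)$ with $L\to\infty$. The cutoff error terms are controlled using the $\mathsf{SOB}(6)$ volume growth and $|\diff\rho_s|\lesssim1$ (Lemma \ref{lem: radius derivative estimates}), uniformly in $s$. A second subtlety is that near $\mC_+$ the metrics degenerate as $s\to0$. This is harmless here, since $f_s\equiv0$ on $\{R\le R_1\}$ by Lemma \ref{lem: functions fs}, and the only geometric inputs used (the Sobolev constant and volume growth) are scale-invariant or uniform there.
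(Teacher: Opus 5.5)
\textbf{There is a genuine gap in how you handle a priori integrability, and an exponent error in Step 1.} Your architecture is the same as the paper's: the uniform Sobolev constant $A_1$, H\"older against the decay of $1-e^{f_s}$, absorption, Moser iteration, and dropping the $\varepsilon$-term by sign. But the step you flag as the main obstacle is not resolved by what you propose, and it is where the real content lies. Both the absorption $\|u\|_{L^{3p/2}}^{p}\le C\|u\|_{L^{3p/2}}^{p-1}$ and the vanishing of the cutoff errors need $\int|u_{\varepsilon,s}|^{p}\omega_s^3<\infty$ a priori. Boundedness, $\mathsf{SOB}(6)$ volume growth and $|\diff\rho_s|\lesssim 1$ only bound the cutoff error by $L^{-1}\int_{\{\rho\le 2L\}}|u_{\varepsilon,s}|^{p-1}\omega_s^3\lesssim L^{5}$, which diverges. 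Your assertion that $u_{\varepsilon,s}$ ``decays'' for fixed $\varepsilon$ is exactly what has to be proved.

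This is not a technicality. For $\varepsilon=0$, $u_s+c$ is again a bounded solution, so the equation plus boundedness cannot give a bound by itself. The normalisation must come from the $\varepsilon$-term, which you only use to discard a non-negative quantity. The missing idea, in Lemma \ref{lem: integrability of viscosity solutions}, is to use that term's coercivity, $u_\varepsilon(e^{\varepsilon u_\varepsilon}-1)\ge C_\varepsilon|u_\varepsilon|^2$. Here $C_\varepsilon$ depends on $\|u_\varepsilon\|_{L^\infty}$, which is finite but not uniform. Put this into the master inequality with test functions $\chi(\rho/R)\rho^k$ and let $R\to\infty$; this gives $\int\rho^k|u_\varepsilon|^p\lesssim_{k,\varepsilon}\int\rho^{k-1}|u_\varepsilon|^{p-1}$. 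H\"older plus an ascending induction in $k$, starting from $k$ so negative that $\rho^k$ is integrable, then yields $\int\rho^k|u_\varepsilon|^p<\infty$ for $p>4m$ and $k\le\frac12$. Only this qualitative finiteness is needed. Once you have it, the cutoff terms vanish and your uniform estimates take over.

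\textbf{Exponents in Step 1.} Against a volume growing like $r^6$, $\rho^{-\mu}\in L^q$ if and only if $\mu q>6$, i.e.\ $q>6/\mu>2$ since $\mu\in(2,3)$. So $\rho^{-\mu}$ is not in $L^{3/2+}$. The H\"older exponent dual to $\frac{3p}{2(p-1)}$ is $q=\frac{3p}{p+2}$, so $q\approx\frac32$ corresponds to the $p=2$ energy estimate, which therefore does not close. You must start the argument at large $p$, namely $p>4/(\mu-2)$, so that $q\in(6/\mu,3)$. This is the paper's choice of $p_0$ and $q_*=\mu q_0>2m$ in Lemma \ref{lem: constant Q2}. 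It is also why the paper controls $A_3=\int\tilde\rho^{-q_*}\omega_s^3$ uniformly in $s$, rather than an $L^{3/2}$ norm of $1-e^{f_s}$.
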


\begin{proof}
    The proof of the above $L^\infty$-bounds is a classical consequence of Moser iteration. The uniformity follows from the observation that we have a uniform control over the three key ingredients of Moser iteration, with respect to the parameter $s$. First of all, by Lemma \ref{lem: uniform sobolev constant A1} there is a uniform Sobolev constant $A_1 > 0$ such that for all $s \in (0,s_0]$,
    \begin{equation*}
        \|u\|^2_{L^{\frac{3}{2}}} \leq A_1 \| \diff u \|^2_{L^2}, ~~~~ \forall u \in C^\infty_c(\mZ_+) .
    \end{equation*}
    Second, if we fix a parameter $\mu \in (2,3)$, we deduce from Lemma \ref{lem: functions fs} that
    \begin{equation}
        A_2 \coloneqq \sup_{s \in (0,s_0]} \sup_{\mZ_+} \{ \rho^\mu |1-e^{f_s}|\},
    \end{equation}
    is finite. Finally, let us pick a weight function $\tilde{\rho} \colon \mZ_+ \to [1,+\infty)$ such that $\left. \tilde{\rho} \right|_{\{R \leq R_1\}} \equiv 1$ and $\left.\rho \right|_{\{R \geq 2 R_1\}} = \rho$. Then for any $q > 2m$, there exists a constant $A_3 > 0$ independent of $s \in (0,s_0]$ such that
    \begin{equation*}
        \int_{\mZ_+} \tilde{\rho}^{-q} \omega_s^3 \leq A_3 .
    \end{equation*}
    Indeed, it follows from Lemma \ref{lem: functions fs} that there exists a constant $C > 1$ independent of $s$ such that $C^{-1} \vol_+ \leq \omega_s^3 \leq C \vol_+$, and on the other hand for any $q > 2m$ the function $\tilde{\rho}^{-q}$ is integrable with respect to the measure $\vol_+$ since $\tilde{\rho}$ is comparable to the distance to $o_+ \in \mZ_+$ outside a compact subset and $(\mZ_+,\omega_s)$ satisfies the $\mathsf{SOB}(6)$ property.

    While it should be clear to the reader familiar with Chapters 3 and 4 in Hein's thesis \cite{hein2010gravitational} that the uniform control over $A_1,A_2,A_3 > 0$ is enough to deduce Proposition \ref{prop: L infinity bounds for us} from the arguments therein, we have included for completeness a proof of a more general statement for solutions of the Monge--Amp\`ere equation on Kähler $m$-folds satisfying the $\mathsf{SOB}(2m)$-property in Appendix \ref{app: quantitative moser iteration} (Proposition \ref{prop: Quantitative L infinity bound}). 
\end{proof}

    \subsection{Second-order estimates for viscosity solutions}     \label{subsec: Second-order estimates}

Let us define the Kähler metrics $\omega_{\varepsilon,s} \coloneqq \omega_s + \iq \partial\overline{\partial}u_{\varepsilon,s}$, for $\varepsilon \in (0,1)$ and $s \in (0,s_0]$, where as before we denote by $u_{\varepsilon,s}$ the solution of the viscosity equation $(\omega_s + \iq \partial \overline{\partial} u_{\varepsilon,s})^3 = e^{f_s + u_{\varepsilon,s}}\omega_s^3$. In this section we shall prove:

\begin{prop}        \label{prop: C2 bounds for viscosity solutions}
    For any $R_0 > 0$, there exists a constant $Q_{2,R_0} > 1$ such that for all $\varepsilon \in (0,1)$ and $s \in (0,s_0]$,
    \begin{equation*}
        Q_{2,R_0}^{-1} \left. \omega_s \right|_{\{ R \geq R_0\}} \leq \left. \omega_{\varepsilon,s} \right|_{\{ R \geq R_0\}} \leq Q_{2,R_0} \left. \omega_s \right|_{\{R \geq R_0\}}  .
    \end{equation*}
    In particular this comparison also holds for $\varepsilon = 0$, i.e.
    \begin{equation*}
        Q_{2,R_0}^{-1} \left. \omega_s \right|_{\{ R \geq R_0\}} \leq \left. \omega_{\m{CY},s} \right|_{\{ R \geq R_0\}} \leq Q_{2,R_0} \left. \omega_s \right|_{\{R \geq R_0\}}  .
    \end{equation*}
\end{prop}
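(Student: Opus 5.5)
The comparison will come from a Chern--Lu argument against a semi-positive reference metric $\omega_{\m{c}}$ that is uniformly equivalent to $\omega_s$ on $\{R \geq R_0\}$ and has bisectional curvature bounded above uniformly in $s$, combined with the Monge--Amp\`ere equation to get the reverse inequality. Since $\omega_s$ itself has curvature blowing up like $\varrho_s^{-2}$ near $\mC_+$ as $s \to 0$ (Lemma \ref{lem: curvature estimates for SOB}), I do not apply Chern--Lu directly to $\omega_s$. Instead I take $\omega_{\m{c}}$ to be the pull-back of a fixed metric under a holomorphic map that collapses $\mC_+$. The natural candidate is $\pi_+^*$ of a smooth Kähler metric on $\mZ \subset \mathfrak{gl}(2,\C) \cong \C^4$, for instance $\omega_{\m{c}} = \pi_+^*\big(\iq\partial\overline{\partial}|W|^2\big)$ near the origin, interpolated with $\omega_0$ (or its model $\omega^{\m{srf}}_0$) at infinity. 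This form is semi-positive on $\mZ_+$, independent of $s$, and positive away from $\mC_+$. Because $\{\omega_s\}$ converges in $C^\infty_{\m{loc}}(\mZ_+\setminus\mC_+)$, and by Lemma \ref{lem: asymptotic equivalence of skrf metrics} together with admissibility it is uniformly equivalent to $\omega_0$ for $R \geq R_2$, we get $C^{-1}\omega_s \leq \omega_{\m{c}} \leq C\omega_s$ on $\{R\geq R_0\}$ uniformly in $s$. Moreover $\omega_{\m{c}} \leq C\omega_s$ globally, since near $\mC_+$ the Candelas--de la Ossa metrics dominate the pulled-back flat metric uniformly. At infinity the bisectional curvature of $\omega_{\m{c}}$ is bounded above by Lemma \ref{lem: curvature of the skrf ansatz}.

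Apply the Chern--Lu inequality to the identity map $(\mZ_+,\omega_{\varepsilon,s}) \to (\mZ_+,\omega_{\m{c}})$. Write $\m{Ric}(\omega_{\varepsilon,s}) = -\iq\partial\overline{\partial}(f_s + \varepsilon u_{\varepsilon,s}) + \m{Ric}(\omega_s)$, which is bounded below by $-C\omega_s - \varepsilon\iq\partial\overline{\partial}u_{\varepsilon,s}$ using Lemma \ref{lem: functions fs} and the Ricci bound of Proposition \ref{prop: Uniform SOB(6) property}. Chern--Lu then gives $\Delta_{\omega_{\varepsilon,s}} \log \m{tr}_{\omega_{\varepsilon,s}}\omega_{\m{c}} \geq -C\,\m{tr}_{\omega_{\varepsilon,s}}\omega_s - C' \m{tr}_{\omega_{\varepsilon,s}}\omega_{\m{c}}$, up to terms involving $\varepsilon$ that can be absorbed. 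Using $\m{tr}_{\omega_{\varepsilon,s}}\omega_s = 3 - \Delta_{\omega_{\varepsilon,s}} u_{\varepsilon,s}$, I consider $H = \log\m{tr}_{\omega_{\varepsilon,s}}\omega_{\m{c}} - A u_{\varepsilon,s}$ with $A$ large, so that $\Delta H \geq \m{tr}_{\omega_{\varepsilon,s}}\omega_{\m{c}} - C''$ wherever the trace is comparable to that against $\omega_s$.

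The main obstacle is making the maximum principle work. First, $\mZ_+$ is noncompact; I handle this with the Omori--Yau/Cheng--Yau maximum principle, since $\omega_{\varepsilon,s}$ is complete with bounded geometry for each fixed $\varepsilon, s$, or else use that $u_{\varepsilon,s}$ and its derivatives are bounded, so $H$ is bounded. Second, $\m{Ric}(\omega_s)$ is only controlled relative to $\omega_s$, not $\omega_{\m{c}}$, near $\mC_+$. The key point here is that $f_s \equiv 0$ and $\omega_s$ is Ricci-flat on $\{R\leq R_1\}$, so the bad term vanishes there. I bound $\m{tr}_{\omega_{\varepsilon,s}}\omega_s$ by $C\,\m{tr}_{\omega_{\varepsilon,s}}\omega_{\m{c}}$ only on $\{R \geq R_1\}$. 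The uniform $L^\infty$ bound of Proposition \ref{prop: L infinity bounds for us} then controls $A u_{\varepsilon,s}$, and evaluating at the maximum point gives $\m{tr}_{\omega_{\varepsilon,s}}\omega_{\m{c}} \leq Q$ everywhere, uniformly in $\varepsilon$ and $s$.

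Finally, on $\{R\geq R_0\}$ this gives $\omega_s \leq C\omega_{\m{c}} \leq CQ\,\omega_{\varepsilon,s}$. The volume ratio $\omega_{\varepsilon,s}^3/\omega_s^3 = e^{f_s+\varepsilon u_{\varepsilon,s}}$ is uniformly bounded by Lemma \ref{lem: functions fs} and Proposition \ref{prop: L infinity bounds for us}. Combining the lower eigenvalue bound with the determinant bound yields the upper bound $\omega_{\varepsilon,s} \leq Q_{2,R_0}\omega_s$. Letting $\varepsilon \to 0$ along the convergent subsequence then gives the statement for $\omega_{\m{CY},s}$.
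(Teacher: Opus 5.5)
Your proposal is correct and is essentially the paper's proof: the same reference form $\omega_{\m{c}}$ (the pulled-back flat metric near $\mC_+$, equal to $\omega_0$ at infinity), Chern--Lu for $\omega_{\varepsilon,s}$ against $\omega_{\m{c}}$ with $H = \log \m{tr}_{\omega_{\varepsilon,s}}\omega_{\m{c}} - A u_{\varepsilon,s}$, Yau's maximum principle and the uniform $L^\infty$ bound, then a determinant argument for the reverse inequality. Your $f_s$-terms in fact cancel exactly: since $e^{f_s}\omega_s^3 = \vol_+$, one has $\m{Ric}(\omega_{\varepsilon,s}) = -\varepsilon \iq \partial\overline{\partial} u_{\varepsilon,s}$.

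The one step you assert without argument is the $s$-uniform bound $\omega_{\m{c}} \leq C \omega_s$ on $\{R \leq R_1^\prime\}$. The paper proves this as a separate lemma: it applies Chern--Lu to the Ricci-flat $\omega_s = \omega_{\m{CdlO},s}$ against the flat $\omega_{\m{c}}$ and uses the maximum principle on that compact region. A scaling argument for the Candelas--de la Ossa family would also establish it.
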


To prove the proposition, we must deal with the issue that the bisectional curvature of $\omega_s$ is not uniformly bounded above with respect to the parameter $s \in (0,s_0]$ near the exceptional curve $\mC_+$. To fix this, we will use the following result:

\begin{lem}     \label{lem: comparison form}
    There exist $R^\prime_1,R^\prime_2 > 0$ and a smooth $(1,1)$-form $\omega_{\m{c}}$ on $\mZ_+$ such that:
    \begin{enumerate}[(i)]
        \item $\omega_{\m{c}}$ is Kähler on $\mZ_+ \setminus \mC_+$.
        \item $\omega_c = \frac{1}{2}\iq \partial\overline{\partial}(R^2)$ in the region $\{R \leq R^\prime_1\}$.
        \item $\omega_{\m{c}} = \omega_0$ in the region $\{R \geq R^\prime_2\}$.
    \end{enumerate}
\end{lem}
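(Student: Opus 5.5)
We construct $\omega_{\m{c}}$ by gluing potentials, exactly as in the proof of Proposition \ref{prop: interpolation metric}. Recall from that proof that $\omega_0 = \iq\partial\overline{\partial}\phi_0$ on $\mZ^{\m{reg}}$. The potential $\phi_0$ equals $c_0 + \mathcal{K}_0$ near $R=0$, and it coincides with $R + \frac{\eta}{2} + \frac{1}{2}\varphi_0$ for $R$ large.

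The form $\frac{\iq}{2}\partial\overline{\partial}(R^2)$ is the pullback to $\mZ_+$ of the flat form $\frac{\iq}{2}\partial\overline{\partial}|W|^2$ on $\mathfrak{gl}(2,\C)$ under $\pi_+$. It is therefore smooth and closed on $\mZ_+$, semi-positive everywhere, and Kähler on $\mZ_+ \setminus \mC_+$ because $\pi_+$ is biholomorphic there. It is also $\Un(2)$-invariant.

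\textbf{Choice of potential.} Both $\frac{1}{2}R^2$ and $\phi_0$ are functions of $(R,\eta)$, and both are plurisubharmonic on $\mZ^{\m{reg}}$. We therefore look for a potential of the form $\Phi = \chi(R)\,\tfrac12 R^2 + (1-\chi(R))\,\phi_0$, or more robustly a regularised maximum $\widetilde{\max}(\frac{1}{2}A R^2 - B,\ \phi_0)$. The regularised maximum of two strictly plurisubharmonic functions is strictly plurisubharmonic, and this avoids controlling the cutoff error terms.

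\textbf{Key step: ordering of the two functions.} Near $R=0$ we have $\phi_0 = c_0 + \frac{3\gamma}{2}R^{4/3}$, which dominates $\frac{1}{2}AR^2 - B$ for every $B>0$. For large $R$ we have $\phi_0 \sim R + \frac{\eta}{2} + O(\log R) \le R + \frac{R^2}{2} + O(\log R)$, using $\eta \le R^2$. So if $A > 1$, then $\frac{1}{2}AR^2 - B$ dominates for $R$ large. With that orientation the regularised maximum would equal $\frac{1}{2}AR^2$ at infinity, which is the wrong way round.

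We therefore reverse the roles and use $\Phi = \widetilde{\max}\big(\phi_0,\ \frac{1}{2}\epsilon R^2 + c\big)$ with small $\epsilon$. Near $R = 0$ we need $\frac{1}{2}\epsilon R^2 + c > \phi_0$, which forces $c > c_0$. For $R \ge R_2'$ we need $\phi_0 > \frac{1}{2}\epsilon R^2 + c$. The latter can fail along the region $\eta \approx R^2$ unless $\epsilon < 1$. With $\epsilon < 1$ we get $\phi_0 - \frac{\epsilon}{2}R^2 \ge R - C\log R - c \to \infty$, since $\eta \ge 0$. So choose $\epsilon = \frac12$ and $c = c_0 + 1$.

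On the small ball $\{R\le R_1'\}$ we have $\phi_0 \le c_0 + \frac{3\gamma}{2}R_1'^{4/3} < c - 1$, so there $\Phi = \frac{\epsilon}{2}R^2 + c$. For $R \ge R_2'$, $\Phi = \phi_0$. In the compact transition region, $\Phi$ is a regularised maximum of two functions that are strictly plurisubharmonic there, so $\iq\partial\overline{\partial}\Phi > 0$. Finally, set
\[
\omega_{\m{c}} := \epsilon^{-1}\,\iq\partial\overline{\partial}\Phi .
\]
Rescaling by $\epsilon^{-1}$ makes $\omega_{\m{c}} = \frac{1}{2}\iq\partial\overline{\partial}(R^2)$ near $\mC_+$ (property (ii) of Lemma \ref{lem: comparison form}), but turns property (iii) into $\epsilon^{-1}\omega_0 = 2\omega_0$.

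\textbf{Main obstacle.} The obstacle is reconciling the two normalisations with constant $1$: property (ii) at scale $1$ and property (iii) exactly $\omega_0$. Two options are available.

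The first is the general fact that the maximum of $\phi_0$ and $\frac{1}{2}R^2 + c$ works as soon as $\phi_0 - \frac12 R^2 \to +\infty$ in the exterior region. This is not automatic, because $R + \frac{\eta}{2} - \frac{R^2}{2} \to -\infty$ where $\upsilon \gg \eta$.

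The second, which we would pursue, is to replace $\frac{1}{2}R^2$ by $\frac{1}{2}\psi(R^2)$. Here $\psi$ is convex and increasing with $\psi(t) = t$ for small $t$ and $\psi(t) = \delta t$ for large $t$ (after adding a constant). Then $\frac{1}{2}\psi(R^2)$ is plurisubharmonic on $\mZ_+$, equals $\frac{1}{2}R^2$ near $\mC_+$, and grows only like $\frac{\delta}{2}R^2$. With $\delta < 1$ it is dominated by $\phi_0$ at infinity and dominates $\phi_0$ near $0$ (adding a constant $c > c_0$). The regularised maximum with $\phi_0$ then gives (i)–(iii) exactly, with no rescaling. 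Smoothness on all of $\mZ_+$ follows because near $\mC_+$ the form is the pullback of a smooth form from $\mathfrak{gl}(2,\C)$.
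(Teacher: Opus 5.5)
Your construction has a genuine gap in the comparison at infinity. You claim that a comparison potential growing like $\frac{\delta}{2}R^2$ with $\delta<1$ is eventually dominated by $\phi_0$. Earlier you wrote $\phi_0-\frac{\epsilon}{2}R^2\geq R-C\log R-c$, but that inequality simply drops the $-\frac{\epsilon}{2}R^2$ term.

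For $R$ large, $\phi_0=R+\frac{\eta}{2}+\mo(\log R)$, so $\phi_0$ grows only linearly in $R$ wherever $\eta\ll R^2$. For instance, on the fibre $\zeta^{-1}(0)$ one has $\eta=0$ and $\phi_0=R+\mo(\log R)$. More generally, in region I one has $\eta\lesssim R$, so $\phi_0=\mo(R)$ there. Any function growing like $\frac{\delta}{2}R^2$, for any $\delta>0$, eventually exceeds this. Your regularised maximum therefore coincides with the comparison function on all of region I outside a compact set, and property (iii) fails.

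Your own remark in the ``Main obstacle'' paragraph, that $R+\frac{\eta}{2}-\frac{R^2}{2}\to-\infty$ where $\upsilon\gg\eta$, applies verbatim for every $\delta>0$. The problem is the growth order, not the constant. Separately, no convex $\psi$ can satisfy $\psi(t)=t$ near $0$ and have slope $\delta<1$ at infinity, because convexity forces $\psi'$ to be nondecreasing.

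The fix is to perform the quadratic-to-linear transition in the variable $R$ rather than $R^2$:
\begin{itemize}
  \item Take $G$ convex and increasing with $G(r)=r^2$ for small $r$ and $G(r)=2\delta r+b$ for large $r$, where $\delta\in(0,1)$.
  \item Since $R=|W|$ is strictly plurisubharmonic on $\mZ^{\m{reg}}$ (being the restriction of the strictly psh function $|W|$ on $\mathbb{C}^4\setminus\{0\}$), the function $\frac12G(R)+c$ is strictly psh on $\mZ^{\m{reg}}$. It equals $\frac12R^2+c$ near $\mC_+$, hence is smooth on $\mZ_+$ there.
  \item With $c=c_0+1$, it dominates $\phi_0=c_0+\frac{3\gamma}{2}R^{4/3}$ on a small region $\{R\le R_1'\}$.
  \item Using only $\eta\geq 0$, one now has $\phi_0-\frac12G(R)-c\geq(1-\delta)R-C\log R-C'\to+\infty$ uniformly in $\eta$.
\end{itemize}
Your regularised maximum then gives (i)--(iii) directly, with no rescaling.

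This is the same mechanism as the paper's proof. The paper composes a convex $F$, with $F(t)=\frac{t^2}{2}+c$ near $0$ and $F(t)=t$ near infinity, with the psh function $R+\frac{\eta}{2}$. It then uses two cutoffs: one to add $\varphi_0$ at infinity, and one near $0$ to replace $(R+\frac{\eta}{2})^2$ by $R^2$ using $\eta=\mo(R^2)$. The corrected max-construction buys you the avoidance of those two cutoff error estimates.
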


\begin{proof}
    This is exactly the same convexity argument as in the proof of Proposition \ref{prop: interpolation metric}. Namely, we first pick an increasing, convex function $F \colon [0,+\infty) \to \R$ such that $F(t) = \frac{t^2}{2} + c$ in a neighbourhood of $0$ and $F(t) = t$ in a neighbourhood of $+\infty$, so that $F(R+\frac{\eta}{2})$ is strictly plurisubharmonic on $\mZ_+ \setminus \mC_+$. In particular, it coincides with $\omega^{\m{srf}}_0$ at infinity, and using the potential $\varphi_0$ and an appropriate cutoff we can modify it to coincide with $\omega_0$ in the region where $R$ is large enough. On the other hand, using $\eta = \mo(R^2)$ as $R \to 0$, we can use another cutoff to find a plurisubharmonic function interpolating between $R^2$ and $(R+\frac{\eta}{2})^2$ in the region where $R$ is small, so as to obtain a Kähler form on $\mZ_+ \setminus \mC_+$. Finally, since the function $R^2$ is smooth on $\mZ_+$, $\omega_{\m{c}}$ extends smoothly to a semi-positive $(1,1)$-form.
\end{proof}

\begin{lem}     \label{lem: lower bound for comparison form}
    Without loss of generality, assume that $R^\prime_1 \leq R_1$. Then there exists a constant $c_0 > 0$ such that for any $s \in (0,s_0]$,
    \begin{equation*}
        \omega_{s} \geq c_0 \omega_{\m{c}} ~~~~ \text{in the region}~ \{R \leq R^\prime_1\} .
    \end{equation*}
\end{lem}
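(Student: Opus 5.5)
In the region $\{R \leq R^\prime_1\} \subset \{R \leq R_1\}$ we have $\omega_s = \omega_{\m{CdlO},s} = s\widetilde{\omega}^+_{\m{FS}} + \iq\partial\overline{\partial}\mathcal{K}_s$ and $\omega_{\m{c}} = \tfrac12 \iq\partial\overline{\partial}(R^2)$. Both forms are invariant under the cohomogeneity-one action of $\Un(1)\times\SU(2)\times\SU(2)$. The plan is therefore to reduce the inequality to a pointwise comparison of eigenvalues along the orbits, as functions of $R$ alone.

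First, since $s\widetilde{\omega}^+_{\m{FS}} \geq 0$, it suffices to compare $\iq\partial\overline{\partial}\mathcal{K}_s$ with $\tfrac12\iq\partial\overline{\partial}(R^2)$ away from $\mC_+$. Both lie on $\mC_+$ where $\omega_{\m{c}}$ is only semi-positive, but there the claim follows from the positivity of $s\widetilde{\omega}^+_{\m{FS}}$ on $T\mC_+$ together with continuity. Writing $t = R^2$, we have
\[
\iq\partial\overline{\partial}(G(t)) = G^\prime \iq\partial\overline{\partial}t + G^{\prime\prime}\iq\partial t\wedge\overline{\partial}t .
\]
Applying this with $G = \gamma\widetilde{\mathcal{K}}_{a_s}$ and $G = t/2$ shows that $\iq\partial\overline{\partial}\mathcal{K}_s - c_0\tfrac12\iq\partial\overline{\partial}(R^2)$ equals
\[
(\gamma\widetilde{\mathcal{K}}^\prime_{a_s} - \tfrac{c_0}{2})\,\iq\partial\overline{\partial}t + \gamma\widetilde{\mathcal{K}}^{\prime\prime}_{a_s}\,\iq\partial t\wedge\overline{\partial}t .
\]
The form $\iq\partial\overline{\partial}t$ is positive semi-definite (it is the pullback of the flat metric on $\C^4$), and in the radial direction it is comparable to $t^{-1}\iq\partial t\wedge\overline{\partial}t$. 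So it suffices to show that $\gamma t\widetilde{\mathcal{K}}^\prime_{a_s} = \Lambda_s$ satisfies $\Lambda_s/t \geq c$ for $t \leq (R^\prime_1)^2$, and that $\Lambda_s + t\Lambda_s^\prime$-type radial coefficients are bounded below by $c\,t$ as well, uniformly in $s \in (0,s_0]$.

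These bounds come from the algebraic relation $\Lambda_s^3 + \tfrac32 s\Lambda_s^2 = \gamma^3 R^4$. It gives $\Lambda_s^2(\Lambda_s + \tfrac32 s) = \gamma^3 t^2$, so $\Lambda_s \geq \min\{(\gamma^3t^2)^{1/3}2^{-1/3}, (\gamma^3 t^2/3s)^{1/2}\}$, by considering separately the cases $\Lambda_s \geq \tfrac32 s$ and $\Lambda_s \leq \tfrac32 s$. In the first case $\Lambda_s \gtrsim t^{2/3} \geq t\,(R^\prime_1)^{-2/3}$. In the second, $\Lambda_s \gtrsim t/\sqrt{s} \geq t/\sqrt{s_0}$. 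Hence $\Lambda_s \geq c\,t$ with $c$ independent of $s$. For the radial direction, differentiating the relation gives
\[
t\Lambda_s^\prime = \frac{2\Lambda_s(\Lambda_s + \tfrac32 s)}{3\Lambda_s + 3s} \geq \tfrac23\Lambda_s ,
\]
which is again $\geq c\,t$. Finally, the radial eigenvalue of $\iq\partial\overline{\partial}\mathcal{K}_s$ relative to $\tfrac12\iq\partial\overline{\partial}t$ is a positive combination of $\Lambda_s/t$ and $\Lambda_s^\prime$, which yields the claim with $c_0 = \min$ of the constants.

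The main obstacle is making the orbit-wise eigenvalue bookkeeping rigorous, in particular identifying the radial versus tangential parts of $\iq\partial\overline{\partial}t$ and $\iq\partial t\wedge\overline{\partial}t$. The route I would try first is to diagonalize using the standard cohomogeneity-one description of Candelas--de la Ossa: in that description the eigenvalues are $\Lambda_s/t$ on the $\mathbb{S}^3$-type directions, $(\Lambda_s+s)/t$ on the $\mathbb{S}^2$-directions, and $(t\Lambda_s)^\prime/t$ radially, while the corresponding eigenvalues for $\omega_{\m{c}}$ are $\tfrac12$ and $1$. Once this diagonalization is in hand, the uniform bounds on $\Lambda_s$ above close the argument.
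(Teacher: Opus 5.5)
Your argument is correct, but it takes a different route from the paper. The paper computes nothing explicitly. On $\{R\le R'_1\}$, $\omega_{\m{c}}$ is the restriction of the flat metric of $\mathfrak{gl}(2,\C)$ and $\omega_s$ is Ricci-flat, so the Chern--Lu inequality gives $\Delta_{\omega_s}\log \m{tr}_{\omega_s}(\omega_{\m{c}})\ge 0$. The maximum principle then moves $\sup \m{tr}_{\omega_s}(\omega_{\m{c}})$ to the boundary $\{R=R'_1\}$, where the $\omega_s$ are uniformly equivalent because the family converges in $C^\infty_{\m{loc}}$ away from $\mC_+$.

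You instead use the explicit cohomogeneity-one form of $\omega_{\m{CdlO},s}$ and the algebraic relation $\Lambda_s^3+\tfrac32 s\Lambda_s^2=\gamma^3R^4$. Your route is more elementary, needs no maximum principle, and gives an explicit constant depending only on $\gamma$, $R'_1$ and $s_0$. However, it is tied to the specific Candelas--de la Ossa potential. The paper's soft argument uses only Ricci-flatness and boundary control. It is the same mechanism the paper reuses immediately afterwards for the trace bound on $\omega_{\varepsilon,s}$, where no explicit formula is available.

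The diagonalisation you flag as the main obstacle is unnecessary, and your identification of the radial eigenvalue is slightly off. For $t=|W|^2$, Cauchy--Schwarz on $\C^4$ gives $\iq\partial t\wedge\overline{\partial}t\le t\,\iq\partial\overline{\partial}t$, and this restricts to the complex submanifold $\mZ^{\m{reg}}$. Your own formula gives $t\Lambda_s'<\Lambda_s$, so $G''=\gamma\widetilde{\mathcal{K}}''_{a_s}<0$. Hence $\iq\partial\overline{\partial}\mathcal{K}_s\ge (G'+tG'')\,\iq\partial\overline{\partial}t=\Lambda_s'\,\iq\partial\overline{\partial}t=2\Lambda_s'\,\omega_{\m{c}}$ on $\{0<R\le R'_1\}$.

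So the radial eigenvalue is exactly $\Lambda_s'$, which is the smallest one. It is neither a positive combination involving $\Lambda_s/t$ nor $(t\Lambda_s)'/t$ as in your last paragraph. This does not hurt you, because you proved $\Lambda_s'\ge\tfrac23\Lambda_s/t\ge\tfrac23 c$ directly, and that is all that is needed. Continuity then handles $\mC_+$, as you say.
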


\begin{proof}
    Under the identification $\mZ_+ \setminus \mC_+ \cong \mZ^{\m{reg}} \hookrightarrow \mathfrak{gl}(2,\C) \cong \C^4$, the metric $\omega_{\m{c}}$ coincides with the restriction of the Euclidean metric in the region $\{0 < R \leq R^\prime_1\}$, and therefore it has non-positive bisectional curvature therein. On other hand, $\omega_s$ is Ricci-flat in this region, and therefore the Chern--Lu inequality (cf. \cite[Prop. 7.2]{rubinstein2014smooth} for instance) yields 
    \begin{equation*}
        \Delta_{\omega_s} \log( \m{tr}_{\omega_s} (\omega_{\m{c}}) ) \geq 0, ~~~~ \text{if} ~R \leq R_1,
    \end{equation*}
    where in our convention $\Delta_\omega u = \m{tr}_{\omega}(\iq \partial \overline{\partial} u)$ has positive-definite principal symbol. By the maximum principle,
    \begin{equation*}
        \sup_{\{ R \leq R^{\prime}_1\}} \m{tr}_{\omega_s}(\omega_{\m{c}}) = \sup_{\{R = R^{\prime}_1\}} \m{tr}_{\omega_s}(\omega_{\m{c}})
    \end{equation*}
    for any $s \in (0,s_0]$. On the other hand, the Kähler forms $\omega_s$ are all uniformly equivalent away from $\mC_+$, and therefore $\m{tr}_{\omega_s}(\omega_{\m{c}})$ has a uniform upper bound along $\{R = R^\prime_1\}$, which yields the desired result.
\end{proof}

With this in hand, we shall prove Proposition \ref{prop: C2 bounds for viscosity solutions} by the standard argument which relies on the Chern--Lu inequality and the maximum principle. Note that because all the derivatives of $u_{\varepsilon,s}$ are bounded, the Kähler metrics $\omega_{\varepsilon,s}$ are complete and the Riemann curvature tensor $\m{Rm}_{\omega_{\varepsilon,s}}$ and all its derivatives are bounded (albeit not uniformly so in $\varepsilon,s$). Consequently, we may apply Yau's maximum principle \cite[Th. 1]{yau1975harmonic}, which implies (cf. \cite[Lem. 4.4]{hein2010gravitational}) that for any function $H \in C^{2}_{\m{loc}}(\mZ_+)$ such that $|H| + |\diff H|_{\omega_{\varepsilon,s}} + |\nabla^{\omega_{\varepsilon,s}}\diff H|_{\omega_{\varepsilon,s}} \lesssim 1$, either $H$ attains a maximum, or there is a sequence of points $\{x_j\}_{j \in \N}$ such that 
\begin{equation*}
    \dist_{\omega_{\varepsilon,s}}(o_+,x_j) \to \infty, ~~ H(x_j) \to \sup H, ~~ |\diff H(x_j)|_{\omega_{\varepsilon,s}} \to 0 ~~ \text{and} ~~\limsup \Delta_{\omega_{\varepsilon,s}} H(x_j) \leq 0. 
\end{equation*}
In particular, for any such function there is a point $x \in \mZ_+$ such that $H(x) \geq \sup H - 1$ and $\Delta_{\omega_{\varepsilon,s}} H(x) \leq 1$. Using this observation, we first prove:

\begin{lem}
    There exists a constant $C > 0$ such that 
    \begin{equation*}
        \m{tr}_{\omega_{\varepsilon,s}}(\omega_{\m{c}}) \leq C 
    \end{equation*}
    for all $\varepsilon \in (0,1)$ and $s_0 \in (0,s_0]$.
\end{lem}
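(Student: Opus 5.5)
We plan to apply the Chern--Lu inequality to the map $\m{id} \colon (\mZ_+,\omega_{\varepsilon,s}) \to (\mZ_+,\omega_{\m{c}})$, combined with Yau's maximum principle as recalled above. The Ricci curvature of $\omega_{\varepsilon,s}$ is computed from the viscosity equation: $\m{Ric}(\omega_{\varepsilon,s}) = \m{Ric}(\omega_s) - \iq\partial\overline{\partial}(f_s + \varepsilon u_{\varepsilon,s})$. Here $\m{Ric}(\omega_s) = \iq\partial\overline{\partial} f_s$ because $\vol_+$ is Ricci-flat, so $\m{Ric}(\omega_{\varepsilon,s}) = -\varepsilon \iq\partial\overline{\partial}u_{\varepsilon,s} = -\varepsilon\omega_{\varepsilon,s} + \varepsilon\omega_s \geq -\varepsilon \omega_{\varepsilon,s}$.

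We also need an upper bound $B$ on the holomorphic bisectional curvature of $\omega_{\m{c}}$, uniform in everything. This holds on $\{R \leq R^\prime_1\}$, where $\omega_{\m{c}}$ is the flat ambient metric, so its bisectional curvature there is $\leq 0$. On $\{R \geq R^\prime_2\}$ we have $\omega_{\m{c}} = \omega_0$, and Lemma \ref{lem: curvature of the skrf ansatz} together with the admissibility of $\varphi_0$ gives curvature $\mo(\mr^{-2})$. On the compact middle region the curvature is bounded since $\omega_{\m{c}}$ is a smooth Kähler form there. Chern--Lu then yields
\begin{equation*}
    \Delta_{\omega_{\varepsilon,s}} \log \m{tr}_{\omega_{\varepsilon,s}}\omega_{\m{c}} \geq -\varepsilon - B\, \m{tr}_{\omega_{\varepsilon,s}}\omega_{\m{c}}
\end{equation*}
on $\mZ_+\setminus\mC_+$. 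Where $\omega_{\m{c}}$ degenerates along $\mC_+$ the trace is only smaller, so $\mC_+$ causes no difficulty for upper bounds. We would rather work with $\omega_{\m{c}}+\delta\omega_s$, or localize away from $\mC_+$ using the preceding lemma, to keep everything smooth.

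To absorb the bad term, consider $H = \log \m{tr}_{\omega_{\varepsilon,s}}\omega_{\m{c}} - A u_{\varepsilon,s}$. We have $\Delta_{\omega_{\varepsilon,s}} u_{\varepsilon,s} = 3 - \m{tr}_{\omega_{\varepsilon,s}}\omega_s$. The key point is a uniform comparison $\omega_s \geq c\,\omega_{\m{c}}$ on all of $\mZ_+$. Lemma \ref{lem: lower bound for comparison form} supplies it on $\{R\leq R^\prime_1\}$. Near infinity it follows from the uniform equivalence of $\omega_s$ and $\omega_0$ used in Proposition \ref{prop: Uniform SOB(6) property}. In between, it follows from $C^\infty_{\m{loc}}$ convergence of $\omega_s$ away from $\mC_+$. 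With this comparison,
\begin{equation*}
    \Delta H \geq -\varepsilon - 3A + (Ac - B)\m{tr}_{\omega_{\varepsilon,s}}\omega_{\m{c}}.
\end{equation*}
Choosing $A = (B+1)/c$ gives $\Delta H \geq \m{tr}_{\omega_{\varepsilon,s}}\omega_{\m{c}} - C$.

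Since $|u_{\varepsilon,s}| \leq Q_1$ by Proposition \ref{prop: L infinity bounds for us}, and since for fixed $\varepsilon,s$ all derivatives of $u_{\varepsilon,s}$ are bounded, $H$ satisfies the hypotheses of Yau's maximum principle; this needs $\m{tr}_{\omega_{\varepsilon,s}}\omega_{\m{c}}$ bounded below away from $0$ for the logarithm, which is the subtle point, and we would use $\omega_{\m{c}}+\delta\omega_s$ and let $\delta\to0$. We then pick $x$ with $H(x)\geq \sup H - 1$ and $\Delta H(x) \leq 1$. This gives $\m{tr}(x) \leq C+1$, hence $\sup H \leq \log(C+1) + AQ_1 + 1$ and $\m{tr}_{\omega_{\varepsilon,s}}\omega_{\m{c}} \leq C'$ everywhere.

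The main obstacle we expect is the uniform global lower bound $\omega_s \geq c\,\omega_{\m{c}}$ together with the uniform upper bound on the bisectional curvature of $\omega_{\m{c}}$. The first is exactly what the preceding lemma and the uniform equivalence away from $\mC_+$ provide. The second requires making sure that the cutoff interpolations in Lemma \ref{lem: comparison form} do not destroy curvature control at infinity; this holds because $\omega_{\m{c}}$ equals $\omega_0$ there.
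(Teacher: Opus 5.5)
Your proposal is essentially the paper's proof. The ingredients are the same: Chern--Lu for $\m{id}\colon(\mZ_+,\omega_{\varepsilon,s})\to(\mZ_+,\omega_{\m{c}})$, the auxiliary function $\log\m{tr}_{\omega_{\varepsilon,s}}\omega_{\m{c}}-Au_{\varepsilon,s}$, Yau's maximum principle, $|u_{\varepsilon,s}|\leq Q_1$, and the global comparison $\omega_s\geq c\,\omega_{\m{c}}$. The only difference is cosmetic: you use that comparison inside the differential inequality, whereas the paper first converts $\m{tr}_{\omega_{\varepsilon,s}}\omega_{\m{c}}$ into $\m{tr}_{\omega_{\varepsilon,s}}\omega_s$ on $\{R\geq R_1'\}$ and invokes Lemma \ref{lem: lower bound for comparison form} only at the near-maximum point.

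The regularisation $\omega_{\m{c}}+\delta\omega_s$ is unnecessary, and it would actually destroy the uniform bisectional curvature bound, since it gives $\mC_+$ area of order $\delta s$. Near $\mC_+$, $\omega_{\m{c}}$ is the pull-back under $\pi_+$ of the Euclidean metric on $\mathfrak{gl}(2,\C)$, which is non-degenerate in the normal directions. Hence $\m{tr}_{\omega_{\varepsilon,s}}\omega_{\m{c}}>0$ everywhere, $H$ is smooth and bounded for fixed $\varepsilon,s$, and Chern--Lu applies directly, as in the paper.
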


\begin{proof}
    Remark that
    \begin{equation*}
        (\omega_s + \iq \partial \overline{\partial}u_{\varepsilon,s})^3 = e^{f_s+\varepsilon u_{\varepsilon,s}}\omega_s^3 = e^{\varepsilon u_{\varepsilon,s}} \vol_+
    \end{equation*}
    and therefore
    \begin{equation*}
        \m{Ric}_{\omega_{\varepsilon,s}} = - \iq \varepsilon \partial \overline{\partial} u_\varepsilon = \varepsilon \omega_s - \varepsilon \omega_{\varepsilon,s} \geq - \omega_{\varepsilon,s}
    \end{equation*}
    since $\varepsilon \in (0,1)$. On the other hand, $\omega_{\m{c}}$ has non-positive bisectional curvature in the region $\{ R \leq R^\prime_1\}$. In addition, the bisectional curvature of $\omega_0$ is bounded away from the exceptional curve and hence so is that of $\omega_{\m{c}}$. Using the Chern--Lu inequality (cf. \cite[Prop. 7.2]{rubinstein2014smooth} for instance), it follows that there exists a constant $C > 0$ such that
    \begin{align*}
        \Delta_{\omega_{\varepsilon,s}} \log \m{tr}_{\omega_{\varepsilon,s}} (\omega_{\m{c}}) \geq \left\{\begin{array}{ll}
                 -1 & \text{if } R \leq R^\prime_1, \\
                 -1 - C \m{tr}_{\omega_{\varepsilon,s}}(\omega_c) & \text{if } R \geq R^\prime_1.
        \end{array}\right.
    \end{align*}
    Since the metrics $\omega_0$, $\omega_{\m{c}}$ and $\omega_s$ are all uniformly comparable in the region $\{R \geq R_1\}$, we deduce (after possibly enlarging the constant $C$) that
    \begin{equation*}
        \Delta_{\omega_{\varepsilon,s}} \log \m{tr}_{\omega_{\varepsilon,s}} (\omega_{\m{c}}) \geq -1 - C \m{tr}_{\omega_{\varepsilon,s}}(\omega_s)
    \end{equation*}
    for all $\varepsilon \in (0,1)$ and $s \in (0,s_0]$.

    Let us now introduce the function
    \begin{equation}
        H_{\varepsilon,s} \coloneqq \log \m{tr}_{\omega_{\varepsilon,s}} (\omega_{\m{c}}) -  (C+1) u_{\varepsilon,s} .
    \end{equation}
    Since $\m{tr}_{\omega_{\varepsilon,s}}(\omega_s) = 3 - \Delta_{\varepsilon,s} u_{\varepsilon,s}$, the previous inequality yields
    \begin{equation*}
        \Delta_{\omega_{\varepsilon,s}} H_{\varepsilon,s}  \geq \m{tr}_{\omega_{\varepsilon,s}}(\omega_s) - 3C - 4 .
    \end{equation*}
    Since all the derivatives of $u_{\varepsilon,s}$ are bounded and $\omega_{\m{c}}$ is a smooth $(1,1)$-form on $\mZ_+$ coinciding with $\omega_0$ at infinity, we see using Proposition \ref{prop: covariant derivatives at infinity} that $H_{\varepsilon,s}$ is smooth, bounded and has all derivatives bounded. Using Yau's maximum principle, we find $x_{\varepsilon,s} \in \mZ_+$ such that $\sup H_{\varepsilon,s} \leq H_{\varepsilon,s}(x_{\varepsilon,s}) + 1$ and $\Delta_{\omega_{\varepsilon,s}} H_{\varepsilon,s}(x_{\varepsilon,s}) \leq 1$. Thus $\m{tr}_{\omega_{\varepsilon,s}} (\omega_s)_{x_{\varepsilon,s}} \leq 3C + 5$, and by Lemma \ref{lem: lower bound for comparison form} it follows that 
    \begin{equation*}
        \m{tr}_{\omega_{\varepsilon,s}} (\omega_{\m{c}})_{x_{\varepsilon,s}} \leq c_0^{-1} \m{tr}_{\omega_{\varepsilon,s}} (\omega_s)_{x_{\varepsilon,s}} \leq C_0 \coloneqq c_0^{-1}(3C+5) .
    \end{equation*}
    This inequality together with Proposition \ref{prop: L infinity bounds for us} yields a uniform upper bound on $H_{\varepsilon,s}(x_{\varepsilon,s})$ and hence on $\sup H_{\varepsilon,s}$, which implies the lemma.
\end{proof}

\begin{proof}[Proof of Proposition \ref{prop: C2 bounds for viscosity solutions}]
    By the previous lemma, $\omega_{\m{c}} \leq C \omega_{\varepsilon,s}$ for some constant $C > 0$ independent of $\varepsilon \in (0,1)$ and $s \in (0,s_0]$. To obtain a bound in the other direction, notice that if we pick a volume form $\vol_{\mathbb{S}^2 \times S_3}$ on $\mathbb{S}^2 \times \mathbb{S}^3$ and identify $\mZ_+ \setminus \mC_+ \cong (0,1)_R \times \mathbb{S}^2 \times \mathbb{S}^3$, then the volume form of $\omega_c$ satisfies 
    \begin{equation*}
        \omega_c \asymp R^5 \diff R \wedge \vol_{\mathbb{S}^2 \times \mathbb{S}^3} 
    \end{equation*}
    in the region $\{R \leq R^\prime_1\}$ since $\omega_{\m{c}}$ is the restriction of the Euclidean metric therein. On the other hand, $\omega_{\varepsilon,s}^3 = e^{\varepsilon u_{\varepsilon,s}} \vol_+$ is uniformly comparable to $\vol_+ = \omega_{\m{Stz}}^3$ by Proposition \ref{prop: L infinity bounds for us}, and since $\omega_{\m{Stz}}$ is conical with radius function $\varrho \asymp R^{2/3}$ we see that 
    \begin{equation*}
        \vol_+ \asymp (R^{2/3})^5 \diff (R^{2/3}) \wedge \asymp R^3 \diff R \wedge \vol_{\mathbb{S}^2 \times \mathbb{S}^3} \asymp R^{-2} \omega_{\m{c}}^3
    \end{equation*} 
    in $\{ R \leq R^\prime_1\}$. Thus if we diagonalise $\omega_{\m{c}}$ with respect to $\omega_{\varepsilon,s}$ in this region, each eigenvalue is bounded above by $C$ and below by $C^{-1} R^2$ in $\{ R \leq R^\prime_1\}$, for some $C > 1$ independent of $\varepsilon \in (0,1)$ and $s \in (0,s_0]$. On the other hand, the volume forms $\omega_{\m{c}}^3$, $\omega_s^3$ and $\omega_{\varepsilon,s}^3$ are uniformly comparable in the region $\{R \geq R^\prime_1\}$, and thus we deduce that there exists a constant $C > 1$ such that
    \begin{equation}        \label{eq: comparison with omegac}
        C^{-1} \omega_{\m{c}} \leq \omega_{\varepsilon,s} \leq C \max\{1, R^{-2}\} \omega_{\m{c}}
    \end{equation}
    for all $\varepsilon \in (0,1)$ and $s \in (0,s_0]$.
    
    Since the Kähler forms $\omega_s,\omega_{\m{c}}$ are uniformly equivalent away from the exceptional curve, it follows that for any $R_0 > 0$, there exists a constant $C_{R_0} > 1$ such that for any $s \in (0,s_0]$, 
    \begin{equation*}
        C_{R_0}^{-1} \left. \omega_s \right|_{\{R \geq R_0\}} \leq \left.  \omega_{\m{c}} \right|_{\{ R \geq R_0\}} \leq C_{R_0} \left. \omega_s \right|_{\{R \geq R_0\}} .
    \end{equation*}
    Together with \eqref{eq: comparison with omegac} this completes the proof of the proposition.
\end{proof}

    \subsection{Uniform decay of the potential}     \label{subsec: Uniform decay of the potential}

Thanks to the $C^2$-estimates proved in the previous part, we know that there exists a constant $A_4 > 0$ such that 
\begin{equation*}
    |\partial\overline{\partial} u_{s,\varepsilon}| \leq A_4, ~~~ \text{in} ~ \{ R \geq R_1\},
\end{equation*}
for all $\varepsilon \in (0,1)$ and $s \in (0,s_0]$. Moreover, we picked in \S\ref{subsec: L infinity bounds} a weight function $\tilde{\rho} \geq 1$ equal to $1$ in the region $\{R \leq R_1\}$ and to $\rho$ in the region $\{R \geq 2 R_1\}$, and since $\omega_s$ is uniformly equivalent to $\omega_s^{\m{srf}}$ in the region $\{R \geq R_1\}$ it follows from Lemma \ref{lem: Derivative of distance function} that there exists a constant $A_5 > 0$ such that for all $s \in (0,s_0]$,
\begin{equation*}
    |\diff \tilde{\rho}|_{\omega_s} + \tilde{\rho} |\diff\diff^c\tilde{\rho}|_{\omega_s} \leq A_5, ~~~ \text{on} ~\mZ_+ .
\end{equation*}
Using the uniform constants $A_1,A_2,A_3$ introduced in \S\ref{subsec: L infinity bounds}, the above constants $A_4,A_5$ and weighted Moser iteration we can deduce:

\begin{prop}    \label{prop: Uniform decay of potential}
    For any $\delta \in (0,1)$, there exists a constant $Q_3 > 0$ such that for all $s \in (0,s_0]$, $\|\tilde{\rho}^\delta u_s\|_{L^\infty} \leq Q_3$.
\end{prop}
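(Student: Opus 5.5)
We run a weighted Moser iteration on the viscosity approximations $u_{\varepsilon,s}$ and then let $\varepsilon \to 0$, checking at each step that every constant depends only on $A_1,\dots,A_5$, $Q_1$ and $\delta$. Set $\omega_{\varepsilon,s} = \omega_s + \iq\partial\overline{\partial}u_{\varepsilon,s}$.

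\emph{Step 1: the integral inequality.} Rewrite the equation as
\begin{equation*}
    (e^{f_s+\varepsilon u_{\varepsilon,s}} - 1)\,\omega_s^3 = \iq\partial\overline{\partial}u_{\varepsilon,s}\wedge T,
\end{equation*}
where $T = \omega_s^2 + \omega_s\wedge\omega_{\varepsilon,s} + \omega_{\varepsilon,s}^2 \geq \omega_s^2$. Multiply by $\tilde{\rho}^{a}|u|^{p-2}u$, with $u = u_{\varepsilon,s}$, $p \geq 2$ and $a \geq 0$, and integrate by parts. This is justified because $u_{\varepsilon,s}$ and its derivatives are bounded and $\tilde{\rho}^{-q}$ is integrable; if needed, insert a cutoff and pass to the limit.

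The good term is
\begin{equation*}
    \int \tilde{\rho}^a\,|u|^{p-2}\,|\diff u|^2_{\omega_s}\,\omega_s^3 \;\gtrsim\; p^{-2}\int |\diff(\tilde{\rho}^{a/2}|u|^{p/2})|^2 \;-\; a^2\int \tilde{\rho}^{a-2}|u|^p ,
\end{equation*}
which uses the positivity of $T$. The terms where a derivative hits $\tilde{\rho}^a$ contribute $a\,\tilde{\rho}^{a-1}|\diff\tilde{\rho}|\,|u|^{p-1}|\diff u|\,T$. We treat them as follows.
\begin{itemize}
    \item In $\{R\geq R_1\}$, Proposition \ref{prop: C2 bounds for viscosity solutions} gives $T \leq C\omega_s^2$, and $A_5$ bounds $|\diff\tilde{\rho}|$.
    \item In $\{R\leq R_1\}$, $\tilde{\rho}$ is constant, so these terms vanish. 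This is why $A_4$ is only needed away from $\mC_+$.
    \item Alternatively, integrate by parts once more to move the derivative onto $\diff\diff^c\tilde{\rho}^a$. Then $A_5$ bounds it by $a^2\tilde{\rho}^{a-2}|u|^p$, and $A_4$ bounds $T$ there.
\end{itemize}

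The term $\varepsilon u$ has a favourable sign: $(e^{\varepsilon u}-1)u \geq 0$. Writing $e^{f_s+\varepsilon u}-1 = e^{\varepsilon u}(e^{f_s}-1) + (e^{\varepsilon u}-1)$, the first piece is bounded by $A_2\,\tilde{\rho}^{-\mu}$ with $\mu\in(2,3)$.

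\emph{Step 2: iteration with decreasing weight.} Using the Sobolev inequality of Lemma \ref{lem: uniform sobolev constant A1} with constant $A_1$, we obtain the recursion
\begin{equation*}
    \Big(\int \tilde{\rho}^{3a/2}|u|^{3p/2}\Big)^{2/3} \leq C p^{C}\Big(\int \tilde{\rho}^{a-2}|u|^p + \int \tilde{\rho}^{a-\mu}|u|^{p-1}\Big).
\end{equation*}
The right-hand side carries a weight two powers lower than $a$. Choose exponents $p_k = p_0(3/2)^k$ and weights $a_k$ so that $a_k/p_k \to \delta$. The bound $\|u\|_\infty\leq Q_1$ from Proposition \ref{prop: L infinity bounds for us} lets us replace $|u|^{p-1}$ by $Q_1^{-1}|u|^p$ wherever convenient. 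The integrability constant $A_3$ starts the iteration: take $p_0 > 1$ and $a_0$ such that $\tilde{\rho}^{a_0-2}$, or equivalently $\tilde{\rho}^{a_0-\mu}$, is integrable with $q > 6$.

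\emph{Step 3: conclusion.} The iteration yields $\|\tilde{\rho}^\delta u_{\varepsilon,s}\|_{L^\infty} \leq Q_3$ uniformly in $\varepsilon$ and $s$. Letting $\varepsilon\to0$ via locally uniform convergence (Arzelà--Ascoli) gives the bound for $u_s$. One can also argue by uniqueness, since $u_s$ is the limit.

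\emph{Main obstacle.} The difficulty is the admissible range of $\delta$. Because the integrability exponent satisfies $q>6$ while the decay of the source term is only $\mu<3$, a single Moser pass gives a small $\delta$. To reach every $\delta<1$, we bootstrap: feed the decay obtained so far back in as the $L^\infty$-bound with weight, and repeat, gaining a fixed increment each time until every $\delta<1$ is reached. The Appendix's quantitative statement should adapt directly; the point to verify is that the bootstrap needs $\mu > 2+\delta$, which holds since $\mu$ can be taken close to $3$.
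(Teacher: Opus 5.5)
There is a genuine gap in Step 2. The iteration as you set it up cannot produce any positive weight, and the proposed bootstrap gains nothing.

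If the right-hand side at stage $k+1$ is to be the quantity controlled at stage $k$, you need $a_{k+1}-2=\tfrac32 a_k$ with $p_{k+1}=\tfrac32 p_k$. Then $a_k/p_k$ increases in total by exactly $4/p_0$. But starting from $\|u\|_{L^\infty}\le Q_1$ plus integrability of a power of $\tilde\rho$ (your use of $A_3$) forces $a_0-2<-6$. Hence $\lim a_k/p_k=(a_0+4)/p_0<0$.

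Feeding back a previously proved bound $|u|\le Q\tilde\rho^{-\nu}$ only allows $a_0<\nu p_0-4$, so the limit is again $<\nu$. The reason is that the term $\tilde\rho^{a-2}|u|^p$, produced by differentiating the weight, is scale-critical in real dimension $6$ (it is the Hardy weight). The obstruction is not the mismatch between $q>6$ and $\mu<3$.

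Note also that $\|u\|_{L^\infty}\le Q_1$ gives $|u|^p\le Q_1|u|^{p-1}$, not $|u|^{p-1}\le Q_1^{-1}|u|^p$. The latter would in any case erase the structure the argument needs.

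The missing idea is to \emph{absorb} the source term, which is the only place where a genuine gain enters. That term has one power of $u$ fewer than the left-hand side and carries the weight $\tilde\rho^{-\mu}$ with $\mu>2$. Apply H\"older against $\tilde\rho^{-\mu}\in L^q$, with $q$ conjugate to $\frac{3p}{2(p-1)}$; then $\mu q>6$ once $p>4/(\mu-2)$. This bounds the source term by $CX^{\frac23\cdot\frac{p-1}{p}}$, where $X^{2/3}$ is the left-hand side, so $X\le C'$ with no starting information.

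This is how the paper gets the uniform unweighted bound $\|u_{\varepsilon,s}\|_{L^{p_*}}\le Q_1'$ (Lemma \ref{lem: constant Q2}). Its first positive weight $\nu_*=2/p$ comes from taking $a=2$: the critical term $\int\tilde\rho^{a-2}|u|^p$ is then exactly that unweighted norm, and the source term is absorbed again.

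Absorption also requires the integrals to be finite a priori, and boundedness of $u_{\varepsilon,s}$ does not give this on an infinite-volume manifold. With a cutoff at scale $R$, the error terms are of size $R^{-2}\int_{\{R\le\tilde\rho\le 2R\}}\tilde\rho^{a}|u|^p$, which need not tend to zero. The paper obtains finiteness from the $\varepsilon u_{\varepsilon,s}$ term, via $u(e^{\varepsilon u}-1)\ge C_\varepsilon u^2$ for each fixed $\varepsilon$ (Lemma \ref{lem: integrability of viscosity solutions} and its weighted analogue). You only discard that term.

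With both ingredients in place, your climbing recursion does become effective:
\begin{itemize}
\item Start from an unweighted bound at an exponent $p_1>6/(\mu-2)$, which is exactly the range the unweighted absorption reaches.
\item At each stage, handle the source term by H\"older against the previous stage.
\item Every stage then requires only $6/p_1<\mu-2$, and the weights climb to $6/p_1$.
\end{itemize}
Taking $\mu$ close to $3$ and $p_1$ close to $6/(\mu-2)$ reaches any $\delta<1$ without a bootstrap. This holds provided the a priori integrability is established at that lower exponent.
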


As for Proposition \ref{prop: L infinity bounds for us}, it should be clear that this can be deduced from the arguments of \cite[Ch. 3 \& 4]{hein2010gravitational}, but for the convenience of the reader we have included the proof of a more general statement for $\mathsf{SOB}(2m)$ $m$-folds in Appendix \ref{app: quantitative moser iteration} (Proposition \ref{prop: weighted L infinity bound}).

    \subsection{Convergence away from the exceptional curve}        \label{subsec: Convergence away from curve}

Using Evans--Krylov estimates, which are local in nature, we are now in position to prove:

\begin{prop}    \label{prop: uniform higher-order estimates}
    For any $R_0 > 0$ and $k \geq 1$, there exists a constant $C_{k,R_0}$ such that
    \begin{equation*}
        \|u_s \|_{C^k(\{R \geq R_0\})} \leq C_{k,R_0}
    \end{equation*}
    for any $s \in (0,s_0]$.
\end{prop}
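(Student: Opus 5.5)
The plan is to run the standard local bootstrap for the complex Monge--Amp\`ere equation $(\omega_s+\iq\partial\overline{\partial}u_s)^3 = e^{f_s}\omega_s^3$, keeping every constant independent of $s$. The inputs are the uniform $L^\infty$-bound of Proposition \ref{prop: L infinity bounds for us} and the uniform $C^2$-equivalence of Proposition \ref{prop: C2 bounds for viscosity solutions}. The point to exploit is that on $\{R \geq R_0/2\}$ the background metrics $\omega_s$ have uniformly bounded geometry, so every estimate can be carried out in uniform local holomorphic charts.

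\emph{Uniform charts.} First I would produce charts. For each point $x \in \{R \geq R_0\}$ I want a holomorphic chart on a ball of fixed size (after a rescaling depending only on $x$, not on $s$). In this chart the coefficients of $\omega_s$ should be uniformly equivalent to the Euclidean metric, with all $C^k$ norms bounded independently of $s\in(0,s_0]$; the same should hold for $f_s$.

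On the compact region $\{R_0/2 \leq R \leq 2R_2\}$ these charts come from the $C^\infty_{\m{loc}}$-continuity of $\{\omega_s\}_{s\in[0,s_0]}$ away from $\{R=0\}$ in Proposition \ref{prop: interpolation metric}(v). On $\{R \geq R_2\}$ I would use the curvature bounds of Lemma \ref{lem: curvature estimates for SOB} and the uniform local non-collapsing of Proposition \ref{prop: Uniform SOB(6) property}. Together these give uniform harmonic (or holomorphic) coordinates on balls of radius comparable to $\mr_s$, and hence of radius at least $1$ since $\mr_s\gtrsim 1$ there. Alternatively one can reuse the explicit scaling charts from the proof of Proposition \ref{prop: covariant derivatives at infinity}, combined with Lemma \ref{lem: asymptotic equivalence of skrf metrics} and the admissibility of $\varphi_s$. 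Lemma \ref{lem: functions fs}(iii) then gives uniform $C^k$ bounds on $f_s$ in these charts.

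\emph{Local estimates.} In each chart, $u_s$ is uniformly bounded and $\omega_s+\iq\partial\overline{\partial}u_s$ is uniformly equivalent to the Euclidean metric. So the equation is uniformly elliptic and concave in $\iq\partial\overline{\partial}u_s$, with smooth uniformly controlled data. The Evans--Krylov theorem (in its complex version, e.g.\ Siu's or Trudinger's interior estimate) then gives a uniform $C^{2,\alpha}$ bound on a smaller ball. Differentiating the equation gives a linear uniformly elliptic equation for the derivatives of $u_s$ whose coefficients are now $C^\alpha$-controlled. Schauder estimates followed by induction then give uniform $C^{k,\alpha}$ bounds for all $k$. Covering $\{R\geq R_0\}$ by such charts yields $\|u_s\|_{C^k(\{R\geq R_0\})}\leq C_{k,R_0}$, with the norm taken with respect to $\omega_s$, or equivalently $\omega_0$.

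\emph{Main obstacle.} I expect the hardest part to be the uniformity of the charts near infinity. There the injectivity radius of $\omega_s$ is not bounded below at the scale of the distance: the metric collapses towards $\C\times(\C^2/\Z_2)$ at the Eguchi--Hanson spheres of size $(\eta+s^2)^{1/4}$. Only the scale $\mr$ is controlled.

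I would handle this by working at unit scale rather than at the scale of the distance. The curvature is $\ohat(\mr^{-2})$, which is bounded by a constant on $\{R\geq R_0\}$, and local volume is non-collapsed at scale $\leq1$ by Proposition \ref{prop: Uniform SOB(6) property}. These two facts give a uniform lower bound on the injectivity radius at scale $1$, which is all the local Evans--Krylov and Schauder theory needs.

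If the holomorphic part of this needs extra care, I would fall back on the $\Un(2)$-invariant scaling charts of Proposition \ref{prop: covariant derivatives at infinity}. Those charts are already holomorphic and uniformly controlled, so they settle the issue directly.
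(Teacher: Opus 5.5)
Your proposal is correct and follows the same route as the paper. The paper's proof simply combines the uniform $C^2$-equivalence of Proposition~\ref{prop: C2 bounds for viscosity solutions} with local Evans--Krylov-type higher-order estimates (citing Sherman--Weinkove), and your unit-scale charts, built from the curvature bounds of Lemma~\ref{lem: curvature estimates for SOB} and uniform local non-collapsing, just make explicit the uniform bounded-geometry input that this citation tacitly uses; one minor caveat is that the scaling charts in the proof of Proposition~\ref{prop: covariant derivatives at infinity} use a parallel-transport trivialisation of $\zeta$ and are not literally holomorphic, so your primary bounded-geometry argument is the one to rely on.
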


\begin{proof}
    This follows from the uniform $C^2$-bound of Proposition \ref{prop: C2 bounds for viscosity solutions} and the local estimates of \cite{sherman2013local} for instance, cf. also \cite[Prop. 4.7]{collins2022degeneration}.
\end{proof}

We can in fact use the same arguments as in the proof of \cite[Th. 8.6.11]{joyce2000compact} to show that we even have uniform \emph{weighted} estimates at infinity:

\begin{prop}    \label{prop: uniform weighted estimates}
    For any $\delta \in (0,1)$ and $k \geq 0$, there exists a constant $C_k > 0$ such that for all $s \in (0,s_0]$,
    \begin{equation*}
        \|\nabla_{\omega_s}^k u_s\|_{C^0(\{ R \geq R_1\})} \leq C_{k,\delta} \rho^{-\delta} \mr^{-k} .
    \end{equation*}
\end{prop}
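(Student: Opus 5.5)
The plan is to run the scaling argument of \cite[Th. 8.6.11]{joyce2000compact}, the same one used in the proof of Proposition \ref{prop: covariant derivatives at infinity}, with all constants tracked uniformly in $s$. Fix $\delta \in (0,1)$. We work at a point $x \in \{R \geq R_1\}$ and rescale a ball of radius comparable to $\mr(x)$ to unit size.

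First we set up the rescaled geometry. Let $\lambda = \mr(x)$. By Lemma \ref{lem: curvature estimates for SOB}, the curvature of $\omega_s$ and all its derivatives are bounded by $C_k \mr_s^{-2-k}$ on $\{R \geq R_1\}$, uniformly in $s$. By Lemma \ref{lem: uniform comparisons}, $\mr_s$ is uniformly comparable to $\mr$ there. Together with the uniform local non-collapsing of Proposition \ref{prop: Uniform SOB(6) property}, this gives the following. There is a fixed $c>0$ such that the ball $B_{\omega_s}(x, c\lambda)$, equipped with the rescaled metric $\lambda^{-2}\omega_s$, is covered by holomorphic charts of a fixed size. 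In these charts the metric coefficients are bounded in $C^k$ independently of $x$ and $s$. Here the injectivity radius bound comes from Cheeger--Gromov--Taylor, or alternatively we lift to the tangent space via the exponential map. On this ball $\rho$ varies by at most a bounded factor, since $|\diff\rho|=\ohat(1)$ by Lemma \ref{lem: Derivative of distance function} and $\mr \lesssim \rho$. For $\mr(x)$ bounded, we take $\lambda$ of order one instead; this case is covered directly by Proposition \ref{prop: uniform higher-order estimates}.

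Next we write the rescaled equation. Set $\tilde u = \lambda^{-2} u_s$. On the chart it satisfies
\begin{equation*}
(\lambda^{-2}\omega_s + \iq\partial\overline\partial \tilde u)^3 = e^{f_s}(\lambda^{-2}\omega_s)^3 .
\end{equation*}
We have the following uniform inputs:
\begin{itemize}
\item By Proposition \ref{prop: C2 bounds for viscosity solutions}, $\lambda^{-2}\omega_s+\iq\partial\overline\partial\tilde u$ is uniformly equivalent to $\lambda^{-2}\omega_s$.
\item By Lemma \ref{lem: functions fs}, the rescaled $C^k$ norms of $f_s$ are bounded by $C\rho^{-\mu}$.
\item By Proposition \ref{prop: Uniform decay of potential}, $|\tilde u| \le Q_3\lambda^{-2}\rho(x)^{-\delta}$.
\end{itemize}
Evans--Krylov combined with Schauder bootstrapping then gives uniform $C^{k,\alpha}$ bounds on $\tilde u$. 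To keep the decay, we linearise. Write the equation as $L\tilde u = e^{f_s}-1$, where $L = \int_0^1 \Delta_{\omega_t}\,dt$ is a uniformly elliptic operator with $C^{k,\alpha}$ coefficients, and $\omega_t$ interpolates between the rescaled background and Calabi--Yau metrics. Interior Schauder estimates give
\begin{equation*}
\|\tilde u\|_{C^{k,\alpha}(B_{c/2})} \lesssim \|\tilde u\|_{C^0(B_c)} + \|e^{f_s}-1\|_{C^{k-2,\alpha}(B_c)} \lesssim \lambda^{-2}\rho^{-\delta} + \rho^{-\mu} .
\end{equation*}
Scaling back, $|\nabla^k u_s| = \lambda^{2-k}|\nabla^k\tilde u|$, which yields
\begin{equation*}
|\nabla^k u_s| \lesssim \mr^{-k}\rho^{-\delta} + \mr^{2-k}\rho^{-\mu} \lesssim \mr^{-k}\rho^{-\delta},
\end{equation*}
using $\mr^2 \lesssim \rho$ and $\mu > 2 > 1+\delta$.

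The main obstacle is the uniformity of the local geometry. We need harmonic or holomorphic coordinates of size comparable to $\mr$ with uniform $C^k$ control, in a region where the injectivity radius of $\omega_s$ might degenerate relative to $\mr$; the Eguchi--Hanson cores in region II have size only about $(\eta+s^2)^{1/4}$. If injectivity radius bounds fail, I would pass to the universal cover of the ball, or pull back by the exponential map. The curvature bound $\mr_s^{-2}$ makes the exponential map a local diffeomorphism on a ball of radius $c\mr$, and the estimates can be done upstairs, where everything is uniform in $s$ by the bounds above.
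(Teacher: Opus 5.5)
\textbf{Same approach as the paper, with one step that fails as written.} Your strategy is the paper's. You rescale at scale $\mr$, rewrite the Monge--Amp\`ere equation as a linear elliptic equation with right-hand side $e^{f_s}-1$, and apply interior Schauder estimates fed by three inputs: the uniform $C^2$ bound, the weighted $C^0$ bound, and Lemma~\ref{lem: functions fs}. The paper rescales using the explicit charts from the proof of Proposition~\ref{prop: covariant derivatives at infinity}, where you use abstract bounded-geometry coordinates. Your use of rescaled Evans--Krylov to control the coefficients of the linearised operator is, if anything, more explicit than the paper's sketch.

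\textbf{The failing step.} The last step uses $\mr^2 \lesssim \rho$, which is false. The valid comparisons (Lemma~\ref{lem: uniform comparisons}) are $\rho = \mo(\mr^2)$ and $\mr = \mo(\rho)$. In region I one has $\mr \asymp \rho$, so $\mr^2 \asymp \rho^2$ there, and the best available bound is $\mr^{2-k}\rho^{-\mu} \lesssim \mr^{-k}\rho^{2-\mu}$. For an arbitrary $\mu \in (2,3)$ this does not dominate $\mr^{-k}\rho^{-\delta}$; for example, $\mu = 2.1$ and $\delta = \tfrac12$ fails. The repair is to fix $\mu \in (2+\delta,3)$ at the outset. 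This is allowed because Lemma~\ref{lem: functions fs} holds for every $\mu \in (2,3)$ and $\delta < 1$. It is also the choice Proposition~\ref{prop: weighted L infinity bound} needs anyway (it requires $\delta < \mu - 2$) to supply your input $|u_s| \lesssim \rho^{-\delta}$. With this choice, $\mr^{2-k}\rho^{-\mu} \lesssim \mr^{-k}\rho^{2-\mu} \le \mr^{-k}\rho^{-\delta}$ and the argument closes.

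\textbf{Non-collapsing at scale $\mr$.} The local non-collapsing in Proposition~\ref{prop: Uniform SOB(6) property} only concerns balls of radius $r \le 1$. It therefore does not give volume control, and hence injectivity-radius control, at scale $\mr$. That control comes from any of the following:
\begin{itemize}
\item the explicit rescaled picture in Proposition~\ref{prop: covariant derivatives at infinity};
\item Bishop--Gromov applied downward from the large-scale non-collapsing, using $\m{Ric}_{\omega_s} \geq -C\rho^{-3}$ on the relevant balls;
\item your exponential-map lift, which sidesteps the issue entirely since $|\m{Rm}_{\omega_s}| \lesssim \mr^{-2}$ controls the conjugate radius.
\end{itemize}

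\textbf{Small Eguchi--Hanson cores.} This worry is unfounded. Where $\upsilon \ll \mr^4$ one has $\eta \asymp \mr^4$, so the core has size $(\eta+s^2)^{1/4} \asymp \mr$. Where $\upsilon \asymp \mr^4$, the point is at fibre-distance $\asymp \mr$ from the core. Both cases are consistent with $|\m{Rm}_{\omega_s}| \lesssim \mr^{-2}$.
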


\begin{proof}
    The basic idea is to rewrite the Monge--Amp\`ere equation as the linear elliptic equation $P_s u_s = (e^{f_s}-1)$ where 
    \begin{equation*}
        (P_s u_s) \omega_s^3= \iq \partial\overline{\partial} u_s \wedge (\omega_s^2 + 2 \omega_s \wedge (\omega_s + \iq\partial\overline{\partial}u_s) + (\omega_s + \iq \partial\overline{\partial} u_s)^2) .
    \end{equation*}
    Using the uniform estimates of Proposition \ref{prop: uniform higher-order estimates}, the weighted $L^\infty$-estimate of Proposition \ref{prop: Uniform decay of potential} and the bounds on $(1-e^{f_s})$ from Lemma \ref{lem: functions fs}, we can use the same scaling argument as in the proof of Proposition \ref{prop: covariant derivatives at infinity} to rewrite the equation $P_s u_s = (e^{f_s}-1)$ in the domain $\{\frac{1}{2} \lambda^4 \leq \upsilon, \eta \leq \lambda\}$ as a linear elliptic equation on the fixed domain $\{\frac{1}{2} \leq \upsilon, \eta \leq 1\}$, where all the coefficients (and their derivatives) of the rescaled linear elliptic operator are uniformly under control with respect to the scaling parameter $\lambda \asymp \mr$. The result then follows from the classical interior estimates for linear elliptic operators (see the proof of \cite[Th. 8.6.11]{joyce2000compact} for more details).
\end{proof}

This allows us to prove:

\begin{thm}        \label{thm: convergence to omegaCY0}
    Let $\delta \in (0,1)$. There exists a unique plurisubharmonic function
    \begin{equation*}
        \varphi_{\m{CY},0} \in \mathsf{PSH}(\mZ_+) \cap L^\infty_{\m{loc}}(\mZ_+) \cap C^\infty_{\m{loc}}(\mZ_+ \setminus \mC_+)
    \end{equation*}
    such that $u_0 \coloneqq \varphi_{\m{CY},0} - \phi_0$ satisfies
    \begin{equation*}
        |\nabla^k_{\omega_0} u_0 | = \mo(\rho^{-\delta} \mr^{-k}), ~~~~ \forall k \geq 0,
    \end{equation*}
    and $(\iq\partial \overline{\partial} \varphi_{\m{CY},0})^3 = \vol_+$ in the Bedford--Taylor sense. Moreover, $\varphi_{\m{CY},0}$ is $\Un(2)$-invariant, and $\{\omega_{\m{CY},s}\}_{s \in (0,s_0]}$ converges to the (possibly incomplete) Calabi--Yau metric $\omega_{\m{CY},0} \coloneqq \iq \partial\overline{\partial}\varphi_{\m{CY},0}$ in the $C^\infty_{\m{loc}}(\mZ_+ \setminus \mC_+)$-sense.
\end{thm}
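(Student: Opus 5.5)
The approach is a compactness argument for the family $\{u_s\}_{s\in(0,s_0]}$, followed by identification of the limit and a uniqueness argument. By Proposition \ref{prop: L infinity bounds for us}, $\|u_s\|_{L^\infty}\le Q_1$ uniformly. By Proposition \ref{prop: uniform higher-order estimates}, for every $R_0>0$ and $k$ the $C^k$ norms of $u_s$ on $\{R\ge R_0\}$ are uniformly bounded. Since $\omega_s\to\omega_0$ and $\phi_s\to\phi_0$ in $C^\infty_{\m{loc}}$ away from $\{R=0\}$ (Proposition \ref{prop: interpolation metric}(v)), these norms can be measured with respect to the fixed metric $\omega_0$. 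A diagonal Arzel\`a--Ascoli argument over $R_0=1/j$ then gives a sequence $s_i\to0$ and a function $u_0\in C^\infty_{\m{loc}}(\mZ^{\m{reg}})$ with $|u_0|\le Q_1$ and $u_{s_i}\to u_0$ in $C^\infty_{\m{loc}}(\mZ_+\setminus\mC_+)$.

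Set $\varphi_{\m{CY},0}\coloneqq\phi_0+u_0$. On $\mZ_+\setminus\mC_+$ we have $\omega_{\m{CY},s_i}=s_i\widetilde\omega^+_{\m{FS}}+\iq\partial\overline\partial(\phi_{s_i}+u_{s_i})$, and this converges smoothly to $\iq\partial\overline\partial\varphi_{\m{CY},0}$. Passing to the limit in the equation gives $(\iq\partial\overline\partial\varphi_{\m{CY},0})^3=\vol_+$ there. Positivity of the limit comes from Proposition \ref{prop: C2 bounds for viscosity solutions}, since $\omega_{\m{CY},s}\ge Q_{2,R_0}^{-1}\omega_s$ on $\{R\ge R_0\}$ gives $\iq\partial\overline\partial\varphi_{\m{CY},0}\ge Q^{-1}\omega_0$ there, so the limit is a genuine Kähler metric. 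The weighted decay follows from Proposition \ref{prop: uniform weighted estimates}, whose constants are independent of $s$: it gives $|\nabla^k_{\omega_s}u_s|\le C_{k,\delta}\rho^{-\delta}\mr^{-k}$ on $\{R\ge R_1\}$, and this passes to the limit. $\Un(2)$-invariance also passes to the limit, since every $u_s$ is invariant.

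Extension across $\mC_+$ is handled as follows. Near $\mC_+$, $\phi_0=\mathcal K_0+c_0$ is continuous and plurisubharmonic on $\mZ_+$ (it is the pullback of a continuous psh function on $\mZ$), and $u_0$ is bounded. Hence $\varphi_{\m{CY},0}$ is psh and locally bounded on $\mZ_+\setminus\mC_+$. Since $\mC_+$ is an analytic subset of codimension $\ge1$ (indeed $2$), the Grauert--Remmert/Riemann extension theorem for bounded-above psh functions extends it uniquely to $\mathsf{PSH}(\mZ_+)\cap L^\infty_{\m{loc}}$. Because Bedford--Taylor Monge--Amp\`ere measures of bounded psh functions put no mass on pluripolar sets such as $\mC_+$, the equation holds globally in the Bedford--Taylor sense.

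I expect uniqueness to be the main obstacle; it is also what upgrades subsequential convergence to convergence of the whole family. Suppose $\varphi,\varphi'$ are two such solutions and set $w=\varphi-\varphi'=u_0-u_0'$. Then $w$ is bounded, smooth off $\mC_+$, and $|w|=\mo(\rho^{-\delta})$ at infinity. I would run a comparison principle: for $\tau>0$ consider $w-\tau\tilde\rho^{\delta'}$ (or simply use that $w\to0$ at infinity) on the complement of a small neighbourhood of $\mC_+$. The difficulty is that the boundary at $\mC_+$ is not controlled. There are two ways I would try to handle it. The first is to use the Bedford--Taylor comparison principle directly on the relatively compact domains $\{\rho<T\}\subset\mZ_+$, with psh functions $\varphi$ and $\varphi'+\sup_{\{\rho=T\}}|w|$, noting that Bedford--Taylor comparison works through pluripolar sets for locally bounded psh functions. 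The second is to write $\omega^3-\omega'^3=\iq\partial\overline\partial w\wedge T=0$, with $T$ a positive closed current, and apply the domination principle. Letting $T\to\infty$ gives $w\le0$, and by symmetry $w=0$. With uniqueness in hand, every subsequence of $u_s$ has the same limit, so the whole family converges in $C^\infty_{\m{loc}}(\mZ_+\setminus\mC_+)$.
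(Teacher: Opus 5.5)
Your proposal is correct and follows essentially the same route as the paper. Both arguments use Arzel\`a--Ascoli from the uniform local and weighted estimates, then extend $\phi_0+u_0$ across the pluripolar curve $\mC_+$ as a bounded psh function satisfying the equation in the Bedford--Taylor sense, and finally use uniqueness (which the paper simply cites from \cite{collins2022degeneration,conlon2023degenerations}) to upgrade subsequential convergence to convergence of the whole family.

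One point to tighten in your comparison-principle sketch: equal Monge--Amp\`ere measures give $w\le\sup_{\{\rho=T\}}|w|$ only after the usual perturbation of $\varphi$ by $\epsilon(R^2-\sup_{\{\rho<T\}}R^2)$. Since no strictly psh function exists near the compact curve $\mC_+$, you need that $R^2$ is psh on $\mZ_+$ with $(\iq\partial\overline{\partial}R^2)^3>0$ off the null set $\mC_+$.
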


\begin{proof}
    Given Proposition \ref{prop: uniform higher-order estimates}, the Arzel\`a--Ascoli theorem immediately implies that for any sequence $s_j \to 0$, there exists a subsequence $\omega_{\m{CY},s^\prime_{j}}$ converging to a Kähler Ricci-flat metric $\omega_{\m{CY},0}$ in $C^\infty_{\m{loc}}(\mZ_+ \setminus \mC_+)$. Moreover, we can write $\omega_{\m{CY},0} = \iq \partial\overline{\partial} (\phi_0 + u_0)$ where $\phi_0 \in C^0_{\m{loc}}(\mZ_+) \cap C^\infty_{\m{loc}}(\mZ_+ \setminus \mC_+)$ is the potential introduced in Proposition \ref{prop: interpolation metric} and $u_0 \in L^\infty_{\m{loc}}(\mZ_+) \cap C^\infty_{\m{loc}}(\mZ_+ \setminus \mC_+)$ is the limit of the potentials $u_{s^\prime_{j}}$ in the $C^\infty_{\m{loc}}(\mZ_+ \setminus \mC_+)$-sense. Note that by Proposition \ref{prop: uniform weighted estimates}, $u_0$ satisfies the estimates
    \begin{equation*}
        |\nabla^k_{\omega_0} u_0 | = \mo(\rho^{-\delta} \mr^{-k}), ~~~~ \forall k \geq 0.
    \end{equation*}
    Since $\phi_0 + u_0 \in L^\infty_{\m{loc}}(\mZ_+)$ and is continuous (even smooth) on $\mZ_+ \setminus \mC_+$, it admits a unique plurisubharmonic extension $\varphi_{\m{CY},0} \in \mathsf{PSH}(\mZ_+)$ \cite[Prop. 1.7]{demailly1985mesures}. Moreover, since the equation $(\iq \partial\overline{\partial} \varphi_{\m{CY},0})^3 = \vol_+$ holds on $\mZ_+ \setminus \mC_+$, the Chern--Levine--Nirenberg inequalities \cite{chern1970intrinsic} imply that this equation also holds in the Bedford--Taylor sense \cite{bedford1976dirichlet} on $\mZ_+$.
    
    Thus in order to prove the theorem, it only remains to prove the uniqueness of such a plurisubharmonic function, which will automatically imply that all sequences $\omega_{\m{CY},s_j}$ converging in the $C^\infty_{\m{loc}}(\mZ_+ \setminus \mC_+)$-sense must have the same limit, and the $\Un(2)$-invariance of $\varphi_{\m{CY},0}$. Now since we are assuming enough decay on $u_0$ and its derivatives, the proof of uniqueness is the same as in \cite[Prop. 4.17]{collins2022degeneration}, cf. also \cite[Th. 7.4]{conlon2023degenerations}.
\end{proof}

    \subsection{Gromov--Hausdorff convergence and singular behaviour}   \label{subsec: GH convergence}

In this section, we study the convergence of $\{\omega_s\}_{s \in (0,s_0]}$ from the metric point of view. The following local volume lower bound will play a key role:

\begin{lem}
    Let $\Vol(B_{\R^6}(r))$ be the volume of the Euclidean ball with radius $r$. Then for any $x \in \mZ_+$, $r > 0$ and $s \in (0,s_0]$, 
    \begin{equation*}
        \frac{\Vol(B_{\omega_{\m{CY,s}}}(x,r))}{\Vol(B_{\R^6}(r))} \geq \frac{1}{2} \cdot
    \end{equation*}
\end{lem}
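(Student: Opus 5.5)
The plan is to combine Bishop--Gromov monotonicity for the Ricci-flat metric $\omega_{\m{CY},s}$ with a computation of its asymptotic volume ratio. Since $\m{Ric}_{\omega_{\m{CY},s}} = 0$, for any fixed $x \in \mZ_+$ the function
\begin{equation*}
    r \mapsto \frac{\Vol(B_{\omega_{\m{CY},s}}(x,r))}{\Vol(B_{\R^6}(r))}
\end{equation*}
is non-increasing on $(0,\infty)$. It tends to $1$ as $r \to 0$ because the metric is smooth, and it tends to the asymptotic volume ratio $\theta_s$ as $r \to \infty$. The quantity $\theta_s$ is independent of the base-point $x$, since $B(x,r) \subset B(o_+,r+d)$ and $B(o_+,r-d) \subset B(x,r)$ with $d = \dist(x,o_+)$. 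It therefore suffices to prove that $\theta_s \geq \frac{1}{2}$ for every $s \in (0,s_0]$; in fact I expect equality.

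To compute $\theta_s$, I will use Theorem \ref{thm: the new complete metrics}(ii): the tangent cone at infinity of $\omega_{\m{CY},s}$ is $\C \times (\C^2/\Z_2)$. The family is non-collapsed, since the uniform volume growth of Proposition \ref{prop: Uniform SOB(6) property} transfers to $\omega_{\m{CY},s}$ through the uniform equivalence of Proposition \ref{prop: C2 bounds for viscosity solutions} away from $\mC_+$. Colding's volume convergence theorem then applies to the rescalings $(\mZ_+, \lambda^{-2}\omega_{\m{CY},s}, o_+)$ as $\lambda \to \infty$, and gives that $\theta_s$ equals the volume of the unit ball centred at the vertex of the cone, divided by $\Vol(B_{\R^6}(1))$. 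For $\C \times (\C^2/\Z_2)$ this ratio is exactly $\frac{1}{2}$. Indeed, the unit ball at the vertex is the image of the Euclidean unit ball of $\C \times \C^2$ under the degree-two quotient by $\{\pm 1\}$ acting on the second factor. Equivalently, slicing by $z \in \C$ with $|z|<1$, each slice is a ball of radius $\sqrt{1-|z|^2}$ in $\C^2/\Z_2$, which has half the Euclidean four-dimensional volume.

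If one prefers to avoid invoking tangent-cone and volume-convergence theory, there is a more hands-on route using the explicit asymptotics. Outside a compact set, $\omega_{\m{CY},s}^3 = \vol_+$, and $\omega_{\m{CY},s}$ is asymptotic to the fibration $\tilde{\omega}_0 = \hat{\omega}_0 + \frac{\iq}{2}\diff\zeta\wedge\diff\overline{\zeta}$. This follows from the decay of $u_s$ in Theorem \ref{thm: the new complete metrics} and from Lemma \ref{lem: asymptotic equivalence of skrf metrics}. One then computes the volume of large $\tilde{\omega}_0$-balls fibrewise over the disc in the $\zeta$-plane, using the fact that Eguchi--Hanson fibres have asymptotic volume ratio $\frac{1}{2}$ (they are ALE asymptotic to $\C^2/\Z_2$). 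The core of the Eguchi--Hanson fibres has size $(\eta+s^2)^{1/4} \ll \rho$ and contributes negligibly.

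The main obstacle is justifying that the asymptotic volume ratio is computed by the tangent cone, i.e.\ that the Gromov--Hausdorff convergence of rescalings to $\C\times(\C^2/\Z_2)$ comes with convergence of volumes of balls. I would handle this first by citing Colding's volume convergence for non-collapsed Ricci-flat limits, which applies here because $\omega_{\m{CY},s}$ has Euclidean volume growth. Only if that citation seems insufficient would I fall back on the fibrewise computation above, which requires controlling the distance function sharply enough, not just up to the multiplicative constant $\kappa$ of Proposition \ref{prop: distance estimate}.
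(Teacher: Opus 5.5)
Your proof is correct and follows the paper's argument: Bishop--Gromov monotonicity for the Ricci-flat metric $\omega_{\m{CY},s}$, combined with the fact that its tangent cone at infinity $\C\times(\C^2/\Z_2)$ has volume ratio exactly $\frac{1}{2}$. Your appeal to Colding's volume convergence, to identify the asymptotic volume ratio with that of the tangent cone, makes explicit a step the paper leaves implicit, so the fibrewise fallback is unnecessary.
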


\begin{proof}
    This follows from Bishop--Gromov volume monotonicity since the tangent cone at infinity of $\omega_s$ is $\C \times (\C^2 / \Z_2)$, which has volume ratio exactly $\tfrac{1}{2}$.
\end{proof}

\begin{cor}     \label{cor: diameter estimate}
    There exists $D > 0$ such that $\m{diam}_{\omega_s}(\{R \leq R_1\}) \leq D$ for all $s \in (0,s_0]$.
\end{cor}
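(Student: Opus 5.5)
The plan is to prove the bound for the Calabi--Yau metrics $\omega_{\m{CY},s}$, which is the case that follows from the preceding lemma, and to handle the ansatz $\omega_s$ separately. For $\omega_s$ itself the claim is immediate: on $\{R \leq R_1\}$ we have $\omega_s = \omega_{\m{CdlO},s}$ and $\rho_s = \varrho_s$. Lemma \ref{lem: Distance for main ansatz} then gives $\dist_{\omega_s}(o_+,\cdot) \leq \kappa \rho_s + D s^{1/2}$, and $\rho_s$ is bounded above uniformly in $s\in(0,s_0]$ on $\{R\le R_1\}$. For $\omega_{\m{CY},s}$ the argument is a volume-packing argument combining two facts: the volume ratio lower bound of the previous lemma, and the $s$-independence of the volume form $\omega_{\m{CY},s}^3 = \vol_+$.

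\emph{Step 1 (bounded volume).} Since $\omega_{\m{CY},s}^3 = \vol_+$ for all $s$, the quantity $V \coloneqq \int_{\{R\le R_1\}} \vol_+$ is finite and independent of $s$. It is finite because $\vol_+$ is a smooth volume form on $\mZ_+$ and $\{R \leq R_1\}$ is compact.

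\emph{Step 2 (points are close to the boundary hypersurface).} Let $x \in \{R \leq R_1\}$ and let $d \coloneqq \dist_{\omega_{\m{CY},s}}(x,\{R = R_1\})$. Any curve leaving $\{R<R_1\}$ must cross $\{R=R_1\}$, so the ball $B_{\omega_{\m{CY},s}}(x,d)$ lies in $\{R \le R_1\}$. The previous lemma then gives $\tfrac12 \Vol(B_{\R^6}(d)) \leq V$, hence $d \leq D_0 \coloneqq (2V/\Vol(B_{\R^6}(1)))^{1/6}$, uniformly in $s$.

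\emph{Step 3 (uniformly bounded diameter of the hypersurface).} It remains to show that any two points of $\{R = R_1\}$ are joined by a curve of uniformly bounded $\omega_{\m{CY},s}$-length. By Proposition \ref{prop: C2 bounds for viscosity solutions} with $R_0 = R_1/2$, $\omega_{\m{CY},s}$ is uniformly equivalent to $\omega_s$ on $\{R \geq R_1/2\}$. The $\omega_s$ there are uniformly equivalent to $\omega_0$, by the $C^\infty_{\m{loc}}$ continuity in Proposition \ref{prop: interpolation metric}(v). The set $\{R = R_1\}$ is a compact connected orbit-type hypersurface ($\cong \mathbb{S}^2\times\mathbb{S}^3$) lying in $\{R\ge R_1/2\}$, so its intrinsic $\omega_0$-diameter $D_1$ is finite. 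Hence its $\omega_{\m{CY},s}$-diameter is at most $C D_1$. Combining the steps gives $\m{diam}_{\omega_{\m{CY},s}}(\{R\le R_1\}) \leq 2D_0 + C D_1$.

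I expect Step 3 to be the main point needing care. There one must use curves that stay inside the region where the $C^2$-comparison holds, rather than curves passing near $\mC_+$, where no uniform comparison is available. This is why I measure the diameter intrinsically on the hypersurface $\{R=R_1\}$ itself.
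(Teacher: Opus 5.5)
Your proof is correct and follows essentially the same route as the paper, which also works with $\omega_{\m{CY},s}$. There, the volume-ratio lemma together with the $s$-independence of $\omega_{\m{CY},s}^3 = \vol_+$ bounds the radius of balls contained in $\{R \leq R_1\}$, and a uniform bound on distances between points near $\{R = R_1\}$ closes the triangle inequality; the paper takes that bound from the smooth convergence of $\omega_{\m{CY},s}$ there, whereas you take it from the $C^2$-comparison of Proposition \ref{prop: C2 bounds for viscosity solutions}, which is an inessential variation (and your separate treatment of the ansatz $\omega_s$ via Lemma \ref{lem: Distance for main ansatz} is also fine).
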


\begin{proof}
    Let $x \in \{ R < R_1\}$, and for any $s \in (0,s_0]$ define
    \begin{equation*}
        D_{x,s} \coloneqq \sup \{ r > 0 \mid B_{\omega_{\m{CY},s}}(x,r) \subseteq \{R \leq R_1\} \} .
    \end{equation*}
    In particular, for any $s \in (0,s_0]$, there exists a point $y_{x,s} \in \mZ_+$ such that $R(y_{x,s}) = R_1$ and $\dist_{\omega_{\m{CY},s}}(x,y_{x,s}) = D_s$. Since $\omega_{\m{CY},s}$ converges smoothly in the region $\{R_1 \leq R \leq R_1 + 1\}$, it follows that there exists a constant $C > 0$ independent of $s$ such that $\dist_{\omega_{\m{CY},s}}(y,y^\prime) \leq C$ for any $y,y^\prime \in \{R_1 \leq R \leq R_1 + 1\}$. By the triangular inequality, it follows that
    \begin{align*}
        \dist_{\omega_{\m{CY},s}}(o_+,x) & \leq \dist_{\omega_{\m{CY},s}}(o_+,y_{o_+,s}) + \dist_{\omega_{\m{CY},s}}(y_{o_+,s},y_{x,s}) + \dist_{\omega_{\m{CY},s}}(y_{x,s},x) \\
            & \leq D_{o_+,s} + D_{x,s} + C .
    \end{align*}
    On the other hand, by the previous lemma
    \begin{equation*}
        \frac{\Vol_{\omega_s}(\{R \leq R_1\})}{\Vol(B_{\R^6}(1)) D_{x,s}^6} \geq \frac{\m{Vol}(B_{\omega_{\m{CY},s}}(x,D_s))}{\Vol(B_{\R^6}(1)) D_{x,s}^6} \geq \frac{1}{2}
    \end{equation*}
    which yields uniform bounds for $D_{o_+,s}$ and $D_{x,s}$ for any $x \in \{R \leq R_1\}$ since the volume form of $\omega_s$ is independent of $s$ in this region (this is just a constant multiple of $\vol_+$).
\end{proof}

We also record the following estimate for the diameter of the exceptional curve:

\begin{lem}     \label{lem: diameter of exceptional curve}
    There exists a constant $C > 0$ such that $\m{diam}_{\omega_{\m{CY},s}}(\mC_+) = C s^{1/2}$.
\end{lem}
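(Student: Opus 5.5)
The plan is to read the statement as a two-sided bound $C^{-1}s^{1/2}\le \m{diam}_{\omega_{\m{CY},s}}(\mC_+)\le C s^{1/2}$ for $s\in(0,s_0]$, and to combine three ingredients: the symmetry of $\omega_{\m{CY},s}$, a cohomological computation of the area of $\mC_+$, and a Lipschitz argument for the lower bound.

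\emph{Symmetry and area.} The action of $\Un(2)$ on $\mZ_+$ preserves $\mC_+=\pi_+^{-1}(0)$. Through $\SO(3)\cong \m{SU}(2)/\{\pm\m{Id}\}$ it acts transitively on $\mC_+\cong\mathbb{P}^1$, and the stabiliser of a point acts irreducibly on the tangent plane. By Theorem \ref{thm: the new complete metrics}(i), $\omega_{\m{CY},s}$ is $\Un(2)$-invariant. Hence its restriction to $\mC_+$ is an $\SO(3)$-invariant Kähler metric on $\mathbb{P}^1$, so it is a round metric, determined by its area. The area is topological:
\begin{equation*}
\int_{\mC_+}\omega_{\m{CY},s}=\int_{\mC_+}\big(s\,\widetilde{\omega}^+_{\m{FS}}+\iq\partial\overline{\partial}(\phi_s+u_s)\big)=s\int_{\mathbb{P}^1}\omega_{\m{FS}}=2\pi s .
\end{equation*}
The exact term integrates to zero because $\phi_s+u_s$ is smooth on $\mZ_+$ and $\mC_+$ is compact; we also use that $p_+|_{\mC_+}$ is an isomorphism. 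So $(\mC_+,\omega_{\m{CY},s}|_{\mC_+})$ is a round sphere of radius $\sqrt{s/2}$. Its intrinsic diameter $\pi\sqrt{s/2}$ bounds the ambient diameter from above.

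\emph{Lower bound.} I would bound a $\Un(2)$-equivariant Lipschitz map from below on $\mC_+$, namely the moment map $\mu_s\colon\mZ_+\to\mathfrak{su}(2)^*\cong\R^3$ of the $\m{SU}(2)$-action for $\omega_{\m{CY},s}$. This map exists because $\mZ_+$ is simply connected. Normalise it to be equivariant. Its restriction to $\mC_+$ is then an equivariant immersion onto a round sphere, and the Duistermaat--Heckman/area formula gives that sphere radius comparable to $s$. Antipodal points of $\mC_+$ are therefore sent to points at Euclidean distance $\gtrsim s$. Moreover $|\diff\langle\mu_s,X\rangle|_{\omega_{\m{CY},s}}=|X^\sharp|_{\omega_{\m{CY},s}}$ for the generating Killing fields $X^\sharp$. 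It therefore suffices to show $|X^\sharp|\lesssim s^{1/2}$ along a minimising geodesic between two antipodal points of $\mC_+$. Such a geodesic has length $\le\pi\sqrt{s/2}$, by the upper bound, and starts on $\mC_+$, where $|X^\sharp|\le \sqrt{s/2}\,|X|$ by roundness. The distance estimate then follows from
\begin{equation*}
s\lesssim |\mu_s(x)-\mu_s(y)|\le \sup_\gamma|X^\sharp|\cdot \dist(x,y)\lesssim s^{1/2}\dist(x,y).
\end{equation*}

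\emph{Main obstacle.} The difficulty is the bound on Killing fields in a $\sqrt{s}$-neighbourhood of $\mC_+$, since we have no a priori curvature control on $\omega_{\m{CY},s}$ near the curve. I would first try Killing field theory on Ricci-flat manifolds: $\nabla^2X^\sharp=-\m{Rm}(X^\sharp,\cdot)\cdot$ and $\Delta|X^\sharp|^2=2|\nabla X^\sharp|^2\ge0$. Subharmonicity, mean value inequalities (available thanks to the volume lower bound lemma above and the uniform Sobolev constant of Lemma \ref{lem: uniform sobolev constant A1}), and the scale invariance of the problem should give $\sup_{B(x,r)}|X^\sharp|^2\lesssim r^{-6}\int_{B(x,2r)}|X^\sharp|^2$ at $r\sim s^{1/2}$.

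If controlling the integral proves awkward, the fallback is a rescaling/compactness argument. Rescale the metrics to $s^{-1}\omega_{\m{CY},s}$ and use the noncollapsing (volume ratio $\ge\tfrac12$) and Ricci-flatness to extract Cheeger--Colding limits. If the diameter were $o(s^{1/2})$ along a sequence, the rescaled curves would collapse to points while carrying area $2\pi$. This is impossible by the monotonicity formula for holomorphic curves, applied in the $\epsilon$-regular region furnished by the volume ratio bound: the tangent cone at infinity has volume ratio exactly $\tfrac12$, which forces almost-Euclidean-orbifold regions in which curvature is controlled.
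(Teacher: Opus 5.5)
Your first step is exactly the paper's proof. $\Un(2)$-invariance forces $\omega_{\m{CY},s}|_{\mC_+}$ to be a multiple of the round metric, and the area $\int_{\mC_+}\omega_{\m{CY},s}=2\pi s$ is fixed by the Kähler class, so $(\mC_+,\omega_{\m{CY},s}|_{\mC_+})$ has intrinsic diameter $\pi\sqrt{s/2}$. That is the content of the lemma: an exact equality with a fixed constant $C=\pi/\sqrt{2}$ only makes sense for the intrinsic diameter. The only consequence used later, in the proof of Theorem \ref{thm: metric completion}, is the resulting upper bound on the ambient diameter, which you also record.

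The rest of your proposal, a lower bound $\gtrsim s^{1/2}$ on the ambient diameter, goes beyond the lemma and is not proved as written.
\begin{itemize}
\item The mean-value route is circular. To conclude $|X^\sharp|^2\lesssim s$ on a ball of radius $r\sim s^{1/2}$, you need $\int_{B(x,2r)}|X^\sharp|^2\lesssim s\,r^6$, which is the same estimate in integrated form. Nothing available controls it: the maximum principle for the subharmonic function $|X^\sharp|^2$ on $\{R\leq R_0\}$ only gives $|X^\sharp|\lesssim 1$, hence only $\m{diam}\gtrsim s$.
\item The fallback relies on a false step. A volume ratio $\geq\frac12$ gives non-collapsing but no $\epsilon$-regularity, since Anderson's theorem needs a ratio close to $1$. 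For instance, the Stenzel cone, which is the tangent cone of the limit at $o$, has ratio $\frac{16}{27}>\frac12$ and is singular.
\item The points where the rescaled curves would collapse are exactly where curvature may concentrate. The monotonicity formula for holomorphic curves needs curvature control at the relevant scale, so it is unavailable precisely there.
\end{itemize}
A genuine ambient lower bound would need new input. One option is a blow-up analysis identifying the limit of $s^{-1}\omega_{\m{CY},s}$ near $\mC_+$ with a Candelas--de la Ossa metric. Another is a uniform bound $\omega_{\m{CY},s}\geq c\,s\,\widetilde{\omega}^+_{\m{FS}}$, which would make $p_+\colon(\mZ_+,\omega_{\m{CY},s})\to(\mathbb{P}^1,c\,s\,\omega_{\m{FS}})$ distance non-increasing.
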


\begin{proof}
    The restriction of $\omega_{\m{CY},s}$ is a $\Un(2)$-invariant Kähler metric on $\mC_+ \cong \mathbb{P}^1$, and hence it is a constant multiple of the Fubini-Study metric. On the other hand, the area of $\mC_+$ proportional to $s$ since it is determined by the Kähler class, thus the result follows.
\end{proof}

Once these estimates are in place, we may deduce the following result from the arguments of \cite{song2015conjecture}:

\begin{thm}     \label{thm: metric completion}
    The metric completion of $(\mZ_+ \setminus \mC_+ \cong \mZ^{\m{reg}},\omega_{\m{CY},0})$ is a length space homeomorphic to $\mZ$, whose distance will be denoted by $d_0$. Moreover, $\{(\mZ_+,\omega_{\m{CY},s},o_+)\}_{s \in (0,s_0]}$ converges to $(\mZ,d_0,o)$ in the pointed Gromov--Hausdorff sense as $s \to 0$.
\end{thm}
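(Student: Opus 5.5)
The plan is to follow the strategy of Song \cite{song2015conjecture}, adapted to our non-compact setting by working on balls of fixed radius around the base point. There are three ingredients: smooth convergence of $\omega_{\m{CY},s}$ to $\omega_{\m{CY},0}$ on compact subsets of $\mZ_+ \setminus \mC_+$ (Theorem \ref{thm: convergence to omegaCY0}); a uniform non-collapsing lower bound on volume ratios (the lemma preceding Corollary \ref{cor: diameter estimate}); and the vanishing of the diameter of the region near the curve as $s \to 0$. The last should follow from Corollary \ref{cor: diameter estimate}, Lemma \ref{lem: diameter of exceptional curve} and a volume argument.

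\textbf{Step 1: uniform smallness near the curve.} First I will show that for every $\epsilon>0$ there exist $r_\epsilon>0$ and $s_\epsilon>0$ such that $\m{diam}_{\omega_{\m{CY},s}}(\{R\le r_\epsilon\})\le \epsilon$ for all $s\le s_\epsilon$. The argument runs as in the proof of Corollary \ref{cor: diameter estimate}. Fix $x\in\{R\le r\}$. If $B_{\omega_{\m{CY},s}}(x,\epsilon/4)$ were contained in $\{R\le 2r\}$, then the volume lower bound would give $\tfrac12\Vol(B_{\R^6}(\epsilon/4))\le \Vol_+(\{R\le 2r\})$. The right-hand side is $\mo(r^4)$, which is independent of $s$ and tends to $0$. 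Hence for $r$ small this fails, and every point of $\{R\le r\}$ lies within $\epsilon/4$ of the hypersurface $\{R = 2r\}$.

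It remains to bound distances within $\{R=2r\}$. On $\{R = 2r\}$ the metrics converge smoothly to $\omega_{\m{CY},0}$. By Proposition \ref{prop: C2 bounds for viscosity solutions} together with \eqref{eq: comparison with omegac}, we have $\omega_{\m{CY},s}\le C R^{-2}\omega_{\m{c}}$, and $\omega_{\m{c}}$ is Euclidean near the node. I would rather use the sharper comparison with the conical model: near $o$, both $\omega_{\m{CY},0}$ and $\omega_{\m{CY},s}$ are uniformly controlled in the Stenzel scaling. To get this, I would rerun the Chern--Lu argument of Lemma \ref{lem: lower bound for comparison form} with $\omega_{\m{CdlO},s}$ in place of $\omega_{\m{c}}$, which yields the bound $\omega_{\m{CY},s}\le C\,\omega_{\m{CdlO},s}$ on $\{R\le R_1\}$. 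Then the link $\{R=2r\}$ has $\omega_{\m{CY},s}$-diameter $\lesssim \varrho(2r)\sim r^{2/3}\to0$. This step, an $s$-uniform upper bound for $\omega_{\m{CY},s}$ near the curve, is the main obstacle. If the Chern--Lu route fails, I would fall back on Song's approach, which uses the uniform $L^\infty$ bound on $u_s$ (Proposition \ref{prop: L infinity bounds for us}) and volume estimates to control path lengths through small neighbourhoods. Lemma \ref{lem: diameter of exceptional curve} additionally ensures that $\mC_+$ itself collapses to a point.

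\textbf{Step 2: the completion and the Gromov--Hausdorff limit.} Step 1 applied to $\omega_{\m{CY},0}$ shows that the punctured neighbourhoods $\{0<R\le r\}$ have $\omega_{\m{CY},0}$-diameter tending to $0$. Hence the metric completion adds exactly one point $o$, and the completion is homeomorphic to $\mZ$. It is a length space because it is the completion of a length metric. For the convergence, fix $\rho_0$ large. I will build $\epsilon$-Gromov--Hausdorff approximations between $B_{\omega_{\m{CY},s}}(o_+,\rho_0)$ and $B_{d_0}(o,\rho_0)$ as follows: points of $\{R\le r_\epsilon\}$ are sent to $o$, and the identity map is used on $\{R\ge r_\epsilon\}$. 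Distortion is controlled in two ways. Distances realised by paths staying in $\{R\ge r_\epsilon\}$ converge by smooth convergence on this compact region; compactness here uses the uniform distance comparison of Lemma \ref{lem: Distance for main ansatz} together with Proposition \ref{prop: C2 bounds for viscosity solutions}. Paths entering $\{R\le r_\epsilon\}$ can be shortcut through the small region at cost at most $2\epsilon$ by Step 1.

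Two points still need care: almost-minimising paths for $d_0$ through $o$ can be approximated by paths in the regular part, and the base points $o_+$ and $o$ correspond. Once these are checked, letting $\epsilon\to0$ and $\rho_0\to\infty$ gives pointed Gromov--Hausdorff convergence.
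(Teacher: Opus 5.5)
Your volume argument in Step 1 is sound. Since $\omega_{\m{CY},s}^3=\vol_+$ and $\Vol_+(\{R\le 2r\})=\mo(r^4)$, the volume-ratio lower bound forces every point of $\{R\le r\}$ to lie within $\epsilon/4$ of $\{R=2r\}$, uniformly in $s$. However, this only moves the problem to the level set $\{R=2r\}$, and there you have no estimate.

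The Chern--Lu route cannot supply one. Chern--Lu bounds $\m{tr}_{\omega_{\m{CY},s}}\eta$ only when the reference form $\eta$ has bisectional curvature bounded above by a constant. A non-flat Ricci-flat Kähler metric always has positive bisectional curvature somewhere. For $\eta=\omega_{\m{CdlO},s}$, a rescaling of $\omega_{\m{CdlO},1}$, this positive part is of order $\varrho_s^{-2}$: it has size $s^{-1}$ along $\mC_+$ and is unbounded near the vertex even for the Stenzel cone. So the maximum principle does not close uniformly in $s$. This is exactly why the paper compares with the flat form $\omega_{\m{c}}$. That comparison only gives $\omega_{\m{CY},s}\le CR^{-2}\omega_{\m{c}}$ near $\mC_+$. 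Under that bound, $|\partial_R|_{\omega_{\m{CY},s}}\lesssim R^{-1}$ is not integrable, and the link $\{R=2r\}$ has diameter only $\mo(1)$.

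By smooth convergence on $\{r\le R\le 3r\}$, your reduction in fact only needs the $\omega_{\m{CY},0}$-diameter of $\{R=2r\}$ to tend to $0$. You cannot take this from Proposition \ref{prop: tangent cone at o}, though: it is proved afterwards, for the metric space $(\mZ,d_0)$ produced by the present theorem, so using it would be circular. Your fallback, the $L^\infty$ bound on $u_s$ plus volume estimates, is too unspecific to fill the gap. Neither ingredient gives pointwise control of $\omega_{\m{CY},s}$ near $\mC_+$ beyond $CR^{-2}\omega_{\m{c}}$.

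The missing ingredient is Song's local estimate \cite[Prop. 4.2]{song2015conjecture}, obtained by meromorphic slicing. Near $\mC_+$ and uniformly in $s$, it gives $|\partial_R|_{\omega_{\m{CY},s}}\le CR^{-1/2}$. With this the paper never needs to control level sets. Radial paths from any point of $\{R\le r\}$ reach $\mC_+$ with $\omega_{\m{CY},s}$-length $\lesssim r^{1/2}$. Combined with $\m{diam}_{\omega_{\m{CY},s}}(\mC_+)=Cs^{1/2}$ (Lemma \ref{lem: diameter of exceptional curve}), this gives $\m{diam}_{\omega_{\m{CY},s}}(\{R\le r\})\lesssim r^{1/2}+s^{1/2}$. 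Passing the estimate to $s=0$ also repairs your Step 2, where Bishop--Gromov is not directly available on the incomplete manifold $(\mZ^{\m{reg}},\omega_{\m{CY},0})$. The paper then follows \cite[Sec. 3]{song2013contracting} to define $d_0$ on $\mZ$ as the infimum of $\omega_{\m{CY},0}$-lengths of paths allowed to pass through $o$. Finally, it uses the blow-down map $\mZ_+\to\mZ$ to build the Gromov--Hausdorff approximations.
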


\begin{proof}
    We shall indicate how to adapt the arguments of \cite{song2015conjecture} (and references therein) to our specific situation to deduce the theorem:
    \begin{itemize}
        \item Thanks to Corollary \ref{cor: diameter estimate}, Cheeger--Colding theory \cite{cheeger1997structure} already implies that for any sequence $s_k \to 0$, $(\mZ_+,\omega_{\m{CY},s_k},o)$ sub-converges in the Gromov--Hausdorff sense. In fact, by standard arguments, one can embed $(\mZ^{\m{reg}},\omega_{\m{CY},0})$ isometrically as an open subset of any such limit, which must have full measure and therefore be dense. This implies genuine Gromov--Hausdorff convergence to the metric completion of $(\mZ^{\m{reg}},\omega_{\m{CY},0})$, but this argument fails to show that the limit is homeomorphic to $\mZ$ because we cannot prevent the following bad situation from occurring: there could exist a sequence of points $x_k \in \mZ_+$, such that $x_k \to o_+$ in the topology of $\mZ_+$, and such that $\dist_{\omega_{\m{CY},s_k}}(x_k,o_+) \geq \delta > 0$ as $s_k \to 0$.
        
        \item Thus the crucial part is to improve the $C^2$-estimates near the exceptional curve $\mC_+$. Indeed, if we examine the proof of Proposition \ref{prop: C2 bounds for viscosity solutions}, we showed that $\omega_{\m{CY},s} \leq C R^{-2} \omega_{\m{c}}$ near $\mC_+$, where $\omega_{\m{c}}$ coincides with the restriction of the Euclidean metric. From this inequality we can only deduce that $|\partial_R|_{\omega_{\m{CY},s}} \leq C^{1/2} R^{-1}$, which just about fails to prove that the velocity of radial paths is integrable. In \cite[Prop. 4.2]{song2015conjecture}, Song used a meromorphic slicing argument to prove that in a neighbourhood of the exceptional curve the following estimates hold uniformly with respect to $s \in (0,s_0]$:
        \begin{equation*}
            C^{-1} \omega_{\m{c}} \leq \omega_{\m{CY},s} \leq C R^{-2} \omega_{\m{c}} ~~ \text{and} ~~ |\partial_{R}|_{\omega_{\m{CY},s}} \leq C R^{-1/2},
        \end{equation*}
        where $C > 0$ is a constant independent of $s$ and $\partial_{R}$ the radial vector field\footnote{The restriction of the Euclidean metric is denoted by $\omega_{\hat{E}}$ and its potential by $e^{\rho}$ in Song's notations, and therefore our $R$ is proportional to his $e^{\frac{\rho}{2}}$. Moreover, the vector field denoted by $W$ in Song's paper is (up to some scaling conventions) the $(1,0)$-part of $\partial_R$.}. Note that Song's result is stated in the compact case but his estimates are purely local and therefore apply to our situation. Crucially, the estimate on the norm of $\partial_R$ now implies that the $\omega_{\m{CY},s}$-velocity of radial paths coming out of $\mC_+$ is uniformly integrable with respect to $s$, and together with Lemma \ref{lem: diameter of exceptional curve} (which plays a similar role as Corollary 4.3 in Song's paper, but with a stronger statement) this shows that the diameters $\m{diam}_{\omega_{\m{CY},s}}(\{R \leq R_0\})$ converge to zero as $R_0 \to 0$ uniformly with respect to $s$. This rules out the bad situation described above.

        \item Using the arguments of \cite[Sec. 3]{song2013contracting}, one can then show that $\mZ$ can be endowed with a distance $d_0$, such that $d_0(x,y)$ is the infimum of the $\omega_{\m{CY},0}$-length over piecewise smooth paths connecting $x$ and $y$ in $\mZ$ (cf. Sec. 5 in \cite{song2015conjecture}).
    \end{itemize}
    It is then immediate to conclude that $(\mZ_+,\omega_{\m{CY},s},o_+)$ converges to $(\mZ,d_0,o)$ in the pointed Gromov--Hausdorff sense as $s \to 0$, since the blow-down map $\mZ_+ \to \mZ$ provides a family of quasi-isometries.
\end{proof}

While the proof of Theorem \ref{thm: metric completion} above does not give any information as to the behaviour of $\omega_{\m{CY},0}$ near $o$ besides the fact that the completion of $(\mZ^{\m{reg}},\omega_{\m{CY},0})$ is homeomorphic to $\mZ$ itself, we can give a much more precise description using the recent work of Zhang \cite{zhang2024polynomial}, which generalises some aspects of Hein--Sun's construction of compact Calabi--Yau manifolds with conical singularities \cite{hein2017calabi} to certain singular Kähler--Einstein metrics that are not necessarily obtained by \emph{polarised} non-collapsed degenerations of Kähler--Einstein manifolds (when the Kähler form is the curvature of a line bundle, which is not the case of our degeneration since the Kähler class $[\omega_{\m{CY},s}] \in \m{H}^2(\mZ_+)$ goes to $0$ as $s \to 0$).

\begin{prop}        \label{prop: tangent cone at o}
    $(\mZ,d_0)$ has a unique tangent cone at $o$, isometric to the Stenzel cone $(\mZ,\omega_{\m{Stz}})$. Moreover, $\omega_{\m{CY},0}$ is conically singular in the following sense: there exist neighbourhoods $U,V$ of $o$ in $\mZ$, $\nu > 0$ and a biholomorphism $\Phi \colon U \to V$ such that
    \begin{equation*}
        |\nabla^k_{\omega_{\m{Stz}}}(\Phi^* \omega_{\m{CY},0} - \omega_{\m{Stz}})| = \mo(\varrho^{\nu-k})
    \end{equation*}
    as $\varrho \to 0$ for all $k \geq 0$ (where as before $\varrho$ is the radius function of the Stenzel cone).
\end{prop}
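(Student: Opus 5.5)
The plan is to reduce the statement to the main theorem of Zhang \cite{zhang2024polynomial}, the non-polarised analogue of Hein--Sun \cite{hein2017calabi}, and to use the explicit local structure of our family near $\mC_+$ to check its hypotheses. Locally near $o$, that theorem says the following. Let $(X,d)$ be a non-collapsed Gromov--Hausdorff limit of Ricci-flat Kähler metrics which is homeomorphic to a normal variety with an isolated singularity $o$. Assume the metric is smooth on the regular part and is given there by $\iq\partial\overline{\partial}$ of a bounded plurisubharmonic potential solving the Monge--Amp\`ere equation with the canonical volume form. If the germ $(X,o)$ is isomorphic to a cone admitting a Ricci-flat Kähler cone metric, then the tangent cone at $o$ is unique, isometric to that cone, and the metric converges to it at a polynomial rate after a holomorphic change of coordinates. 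So the proof amounts to verifying these inputs for $(\mZ,d_0,o)$ and identifying the cone as the Stenzel cone.

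First I would collect the analytic inputs. By Theorem \ref{thm: metric completion}, $(\mZ,d_0,o)$ is the pointed Gromov--Hausdorff limit of the complete Ricci-flat manifolds $(\mZ_+,\omega_{\m{CY},s},o_+)$, homeomorphic to $\mZ$ through the blow-down map. Non-collapsing follows from the volume lower bound $\Vol(B_{\omega_{\m{CY},s}}(x,r)) \geq \tfrac12\Vol(B_{\R^6}(r))$ established just before Corollary \ref{cor: diameter estimate}. By Theorem \ref{thm: convergence to omegaCY0}, the limit is smooth on $\mZ^{\m{reg}}$ with potential $\varphi_{\m{CY},0}=\phi_0+u_0$, which is locally bounded and solves $(\iq\partial\overline{\partial}\varphi_{\m{CY},0})^3=\vol_+ = \tfrac34|\psi|^2\,\Omega\wedge\overline{\Omega}$ in the Bedford--Taylor sense. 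This is exactly the canonical volume form of the ODP. Moreover, near $o$ we have $\phi_0=\mathcal{K}_0+c_0$ by Proposition \ref{prop: interpolation metric}(iii), while $u_0$ is bounded by Proposition \ref{prop: L infinity bounds for us}. The potential is therefore a bounded perturbation of the Stenzel potential. I expect this also gives the continuity of the potential that the Donaldson--Sun type partial $C^0$ arguments need.

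Second, I would identify the tangent cone. By Cheeger--Colding, every tangent cone at $o$ is a metric cone. The two-step degeneration theory, in the form Zhang adapts to non-polarised limits, realises any such cone as an affine Kähler cone $W$ obtained from $(\mZ,o)$ via a degeneration through a K-semistable cone. The germ $(\mZ,o)$ is itself a cone, the affine cone over the Segre quadric, and it carries the Ricci-flat cone metric $\omega_{\m{Stz}}$. Its Reeb vector field therefore minimises the normalized volume, and by the uniqueness of the minimiser (Li--Xu, Donaldson--Sun) there is no nontrivial degeneration, so $W\cong(\mZ,\omega_{\m{Stz}})$. As a consistency check, the volume density at $o$ must equal that of the Stenzel cone; the fixed normalisation $\vol_+=\omega_{\m{Stz}}^3$ near $o$ makes this compatible. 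Uniqueness of the tangent cone then follows.

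Finally, Zhang's polynomial convergence, which parallels Hein--Sun's argument based on the Łojasiewicz-type rigidity of the Stenzel cone and the absence of small holomorphic deformations of the ODP compatible with the cone structure, yields a biholomorphism $\Phi\colon U\to V$ and some $\nu>0$ with $|\nabla^k_{\omega_{\m{Stz}}}(\Phi^*\omega_{\m{CY},0}-\omega_{\m{Stz}})|=\mo(\varrho^{\nu-k})$. The higher-order bounds come from the $k=0$ bound by standard elliptic regularity on rescaled annuli, since both metrics are Ricci-flat.

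The main obstacle is verifying Zhang's hypotheses in our non-compact, non-polarised setting. His results are stated for limits of compact or global families, so I would need to check that all ingredients are local near $o$. This means: the uniform local non-collapsing; the local partial $C^0$ estimate, which I would derive from the Hörmander $L^2$ technique on the balls $B_{\omega_{\m{CY},s}}(o_+,r)$, using that the $\omega_{\m{CY},s}$ have local potentials $\phi_s+u_s$ uniformly bounded near $\mC_+$; and the diameter control of Theorem \ref{thm: metric completion}. If the partial $C^0$ step proves awkward, I would instead try to exploit the $\Un(2)$-invariance of $\varphi_{\m{CY},0}$ to restrict the possible degenerations directly.
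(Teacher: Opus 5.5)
Your proposal is correct and follows essentially the same route as the paper. Both arguments use the locally bounded potential $\varphi_{\m{CY},0}$ from Theorem \ref{thm: convergence to omegaCY0}, apply Zhang's polynomial-convergence result at the isolated klt (ordinary double point) germ $(\mZ,o)$, and observe that this germ is already the Calabi--Yau cone $\mZ$, so both cones in the two-step degeneration coincide with the Stenzel cone. The paper applies Zhang's theorem directly to singular Calabi--Yau metrics with locally bounded potentials on such germs, so it does not need the extra Gromov--Hausdorff and partial $C^0$ verifications you single out as the main obstacle, although including them does no harm.
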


\begin{proof}
    This follows from the results of \cite{zhang2024polynomial}. Indeed, by Theorem \ref{thm: convergence to omegaCY0} the singular Calabi--Yau metric $\omega_{\m{CY},0}$ admits a potential $\varphi_{\m{CY},0} \in C^\infty_{\m{loc}}(\mZ^{\m{reg}}) \cap L^\infty_{\m{loc}}(\mZ)$ which remains bounded in a neighbourhood of $o$. On the other hand, $\mZ$ has an isolated three-dimensional ordinary double point singularity at $o$, and in particular $(\mZ,o)$ is normal, Gorenstein and terminal. Hence $o$ is an isolated Kawamata log terminal algebraic singularity \cite[Th. 2.4]{kollar1998real}. For such singularities there is a well-established version of Donaldson--Sun $2$-step degeneration process \cite{donaldson2014gromovi,li2019kahler,li2021algebraicity,li2018stability}, and as explained in the discussion after \cite[Th. 1.4]{zhang2024polynomial}, since the germ $(\mZ,o)$ is biholomorphic to the Calabi--Yau cone $\mZ$ itself the K-semistable and K-polystable Fano cones in this $2$-step degeneration must coincide with $\mZ$ and this implies the polynomial convergence of $\omega_{\m{CY},0}$ to $\omega_{\m{Stz}}$ in a holomorphic gauge.
\end{proof}

%%%%% End of Section 5 %%%%%
%%%%%%%%%%%%%%%%%%%%%%%%%%%%

%% file: section6_discussion.tex
%%%%%%%%%%%%%%%%%%%%%%%%%%%%%%%%%%
%%%%% Beginning of Section 6 %%%%%

    \section{Discussion and open questions}    \label{sec: Discussion and open questions}

    \subsection{Motivating adiabatic problem}       \label{subsec: motivating problem}

As mentioned in the introduction, the adiabatic problem motivating the present paper arises from a dimensional-reduction of the construction of compact $\m{G}_2$-manifolds due to Joyce and Karigiannis \cite{joyce2017new}, which has been recently extended by the second-named author \cite{majewski2025spin7orbifoldresolutions}. In its simplest form, this construction resolves a $\m{G}_2$-orbifold $M^7$ whose singular set $S^3$ is associative and has normal bundle $\H / \Z_2$ (where $\H$ is the space of quaternions), by gluing in a family of Eguchi--Hanson spaces depending on the data of a \emph{nowhere vanishing} harmonic $1$-from $\eta \in \Omega^1(S)$. Under local McKay duality, the direction of the $1$-form $\eta$ selects the hyperkähler complex structure on the Eguchi--Hanson space used to resolve the normal fibres, while its norm determines the size of the exceptional sphere. This produces a family of $\m{G}_2$-manifolds $N^7_t$ degenerating towards the original orbifold $M$ as the adiabatic parameter $t$ goes to $0$.

If the original $\m{G}_2$-orbifold is isometric to $\mathbb{S}^1 \times Y^6$, where $Y$ is a Calabi--Yau orbifold of complex dimension $3$ singular along a complex curve $\Sigma$, and the whole resolution data is invariant under a circle action, then the resulting $\m{G}_2$-manifolds will split a circle factor and be of the form $N_t = \mathbb{S}^1 \times Z_t$, where $Z_t$ is a family of Calabi--Yau manifolds degenerating towards the orbifold $Y$.

In this $\mathbb{S}^1$-invariant setting, a natural choice of nowhere-vanishing harmonic $1$-form $\eta$ on the singular set $S = \mathbb{S}^1 \times \Sigma$ is simply to take $\eta = \diff \theta$. This choice recovers a holomorphic resolution of $Y$ with respect to its given complex structure. For a general choice of resolution data, however, the degeneration of $Z_t$ towards $Y$ will usually involve deforming both the Kähler class and the complex structure jointly. In particular, when some connected components of the curve $\Sigma$ have genus greater than $1$, we can use a $1$-parameter family of $1$-forms $\eta_s = s \diff \theta + \Re(\alpha)$ where $\alpha \in H^0(K_\Sigma)$ is a non-trivial holomorphic $1$-form on $\Sigma$, and obtain a corresponding $2$-parameter family of Calabi--Yau manifolds $(Z_{s,t},I_{s,t},\omega_{s,t})$.  Note that $\alpha$ always admits zeroes, which are simple if $\alpha$ is sufficiently generic, and therefore $\eta_s$ is nowhere-vanishing if and only if $s \neq 0$. It is therefore natural to expect that conically singular Calabi--Yau manifolds should arise as directional limits of the two-parameter family of Calabi--Yau manifolds $(Z_{s,t},I_{s,t},\omega_{s,t})$.

The question motivating the construction of the present paper was to find a local model for such a singular limit near a simple zero of the holomorphic $1$-form $\alpha$, and to understand the geometry of the transition between $Z_{s,t}$ and $Z_{-s,t}$. In particular, we expect that the singular metric $\omega_{\m{CY},0}$ can be used in order to carry out the singular gluing construction, which should produce families of conically singular Calabi--Yau manifolds degenerating towards orbifolds, in analogy with Li's construction of collapsing Calabi--Yau metrics \cite{li2018gluing}.

    \subsection{Related questions}

\paragraph{Relation to the Conlon--Rochon metrics.} It seems likely that the singular Calabi--Yau metric $\omega_{\m{CY},0}$ coincides (at least up to biholomorphisms) with the singular metric constructed by Conlon--Rochon \cite{conlon2023degenerations} on $\mZ$. The latter is constructed by considering the smoothings of $\mZ$ instead of degenerating the Kähler class of the small resolutions $\mZ_\pm$. Their construction works for a wide class of affine varieties and uses rather different analytical machinery (namely, the notions of QAC manifolds with fibred corners and warped-QAC metrics). The downside is that the asymptotic description of the metrics that can be obtained using warped-QAC analysis is much less explicit than what we can achieve, making it difficult to compare the metrics asymptotically.

For the analogous question on $\C^3$, Sz\'ekelyhidi \cite{szekelyhidi2020uniqueness} proved the uniqueness (up to biholomorphisms) of a Calabi--Yau metric on $\C^3$ with tangent cone $\C \times (\C^2 / \Z_2)$ at infinity (or more generally, $\C \times A_1$ on $\C^n$), so one could possibly adapt these techniques to show that our singular metric on $\mZ$ coincides with Conlon--Rochon's. This would be very interesting, for it would yield a continuous `conifold transition' for Calabi--Yau manifolds with maximal volume growth and singular tangent cones at infinity, a phenomenon which has been extensively studied in the compact setting \cite{rong2011continuity,ruan2011convergence,song2015conjecture}. On the other hand, remark that the uniqueness property is highly non-trivial, and it already fails on $\C^3$ when the tangent cone at infinity is $\C \times A_2$, as was explicitly shown by Chiu \cite{chiu2022nonuniqueness}.

Similarly, when $s \neq 0$ it would be interesting to know if there is a unique complete Calabi--Yau metric on $\mZ_+$ with tangent cone $\C \times (\C^2/\Z_2)$ at infinity in a given Kähler class.

\paragraph{Generalisations to other quaternionic representations.} It is possible that the construction of this paper could be extended to other hyperkähler reductions of quaternionic representations. One natural class of representations would be the ones considered by Kronheimer \cite{kronheimer1989construction} in his general construction of the ALE gravitational instantons by hyperkähler reduction, which have tangent cone at infinity $\C^2 / \Gamma$ for some finite subgroup $\Gamma \subset \SU(2)$.
It is natural to ask whether the methods of this paper can be extended to construct complete Calabi--Yau threefolds with tangent cone at infinity $\mathbb C \times \left(\mathbb C^2/\Gamma\right)$. In particular for $\Gamma = \mathbb Z_{k+1}$, this would produce families of non-affine Calabi--Yau threefolds with tangent cone $\mathbb C\times A_k$ at infinity.

In this more general situation, we can still split the hyperkähler moment map into a real and a complex component upon fixing a preferred quaternionic complex structure, and the threefolds on which those metrics would live will depend on the choice of a base-point for the real moment map and the preimage of an affine complex line for the complex moment map. Note that the base space of the fibration induced by the moment map has a chambered structure, with chambers corresponding to the regular values, and therefore the threefolds obtained in this way would only be smooth for a suitably generic choice of parameters. When the base point of the real moment map approaches a wall, one expects the metrics to become singular, the limit $\omega_{\m{CY},0}$ constructed in this article being the simplest instance of this phenomenon for $\Gamma = \Z_2$. For more general groups $\Gamma$, the main new analytical difficulties are the absence of the symmetry available in the $A_1$-case and the possibility that several exceptional cycles might shrink simultaneously.

It is also possible to consider higher-dimensional generalisations of this picture beyond threefolds, with for instance Nakajima quiver varieties providing a large and particularly structured class of examples \cite{nakajima1994instantons}.

\paragraph{Resolutions of compact orbifolds.} The construction fits naturally into the universal resolution scheme for special-holonomy orbifolds developed in \cite{majewski2025spin7orbifoldresolutions}. Given a singular stratum, the scheme begins with its intrinsic normal cone bundle. Local McKay duality assembles the ALE resolutions of the fibres $\mathbb R^4/\Gamma$, together with their hyperkähler structures, into an equivariant universal resolution of the normal cone bundle. A particular resolution is obtained only afterwards, by pulling back this universal family along a section of the bundle of McKay parameters. Because the universal family is equivariant under the normaliser of $\Gamma$, this construction is natural and globalises over the possibly twisted normal cone bundle. In this sense, it is not merely a collection of fibrewise resolutions, but a universal resolution of the special-holonomy normal model.

Crucially, the scheme does not fix either a complex structure or a symplectic structure in advance. The fundamental resolution datum is the full hyperkähler structure on the resolved normal fibres. The same principle governs the Calabi--Yau construction in this paper. In the $A_1$-case, a transverse section of the McKay parameter through the discriminant produces the family $\mathcal Z_s$. Its natural geometric enhancement is therefore not a resolution in a fixed complex or symplectic category, but the family of complete Calabi--Yau structures. The two smooth sides give the two small resolutions related by the Atiyah flop, while the central fibre is the singular conifold. Thus the metrics constructed here provide the natural Calabi--Yau local model for extending the universal resolution of the normal cone bundle across the discriminant.

\paragraph{Exotic $\m{G}_2$-metrics on the universal Eguchi--Hanson family.} From the perspective of the universal resolution picture, it is natural to expect that there should be exotic metrics with exceptional holonomy analogues to the Calabi--Yau metrics constructed here.

In the $\m{G}_2$ setting, the local model near an isolated zero of the harmonic $1$-form encoding the resolution data \cite{joyce2017new,majewski2025spin7orbifoldresolutions} would presumably be an exotic torsion-free $\m{G}_2$-structure with an isolated conical singularity on the universal Eguchi--Hanson family, that is, the manifold $\M \cong (0,\infty) \times \C\mathbb{P}^3$ in our notations. One expects the fibration $\mu \colon \M \to \R^3$ to become coassociative asymptotically, with the induced fibrewise geometry approaching the hyperkähler Eguchi--Hanson metrics. However, neither coassociativity nor hyperkählerity of the fibres are necessarily expected in the interior. Although we do not pursue this question here, the desired local model in the $\m{G}_2$-case is closely analogous in spirit and purpose to the Calabi--Yau model constructed in this paper. To close this discussion, let us mention that there is ongoing work by Jason Lotay and Spiro Karigiannis investigating the $\m{G}_2$-case.

%%%%% End of Section 6 %%%%%
%%%%%%%%%%%%%%%%%%%%%%%%%%%%

%% file: appendixA_moseriteration.tex
    \section{Quantitative Moser iteration for \texorpdfstring{$\mathsf{SOB}(2m)$ $m$-folds}{SOB(2m) m-folds}}     \label{app: quantitative moser iteration}

In this appendix, we let $(M,\omega)$ be a non-compact K\"ahler manifold of complex dimension $m \geq 2$ satisfying the $\mathsf{SOB}(2m)$ property. We will prove two quantitative $L^\infty$-bounds underlying Proposition \ref{prop: L infinity bounds for us} and Proposition \ref{prop: Uniform decay of potential}, using Moser iteration as outlined in \cite{hein2010gravitational}.

    \subsection{Quantitative $L^\infty$-bounds}

By \cite[Prop. 3.2]{hein2010gravitational}, there exists a constant $A_1 > 0$ such that
\begin{equation}        \label{eq: Sobolev constant A1}
    \left( \int_M |u|^{\frac{2m}{m-1}} \omega^m \right)^{\frac{m-1}{m}} \leq A_1 \int_M |\diff u|^2 \omega^m , ~~~~ \forall u \in C^\infty_c(M) .
\end{equation}
Taking a cutoff function, it is not difficult to extend this inequality to $u \in L^2_1(M)$. Let us also fix in this section a function $\rho \geq 1$ on $M$ such that (outside some compact subset $K \subset M$) we have $\rho \asymp \mathrm{dist}(x_0,\cdot)$ for some base-point $x_0 \in M$, and $|\diff \rho| \lesssim 1$. By a result of Schoen--Yau, such a function always exists.

Then Hein proved that if $f \in C^\infty(M)$ is a function on $M$ whose derivatives of all orders are bounded\footnote{H\"older bounds for two derivatives would be sufficient, but in our situation we do have bounds on all derivatives, so we do not need to consider this more general case.} and such that $\sup \{\rho^\mu |f|\} < \infty$ for some $\mu > 2$, then there exists $u \in C^\infty(M)$, with derivatives of all orders bounded, solution of
\begin{equation}    \label{eq: MA for SOB(2m)}
    (\omega + \iq \partial \overline{\partial} u)^m = e^f \omega^m .
\end{equation}
His method is to show that one can solve the viscosity equation
\begin{equation}    
\label{eq: viscosity for SOB(2m)}
    (\omega + \iq \partial \overline{\partial} u_\varepsilon)^m = e^{f + \varepsilon u_\varepsilon} \omega^m
\end{equation}
where $u_\varepsilon \in C^\infty(M)$ with uniform bounds in $\varepsilon \in (0,1)$, and the solution $u$ of \eqref{eq: MA for SOB(2m)} is obtained as a limit of $u_\varepsilon$ using the Arzel\`a--Ascoli theorem. We claim the following result, which can be deduced from Hein's arguments and immediately implies Proposition \ref{prop: L infinity bounds for us}:

\begin{prop}        \label{prop: Quantitative L infinity bound}
    There exists a universal exponent $q_* > 2m$, depending only on $m$ and $\mu$, and a constant $Q_1 > 0$, depending only on $m$, $\mu$ and the constants $A_1$, $A_2$, $A_3$, where $A_1$ is as in \eqref{eq: Sobolev constant A1}, 
    \begin{equation*}
        A_2 = \sup \{ \rho^\mu |1-e^f|\}, ~~  A_3 = \int_M \rho^{-q_*} \omega^m,
    \end{equation*}
    such that for all $\varepsilon \in (0,1)$,  
    \begin{equation*}
        \|u_\varepsilon\|_{L^\infty} \leq Q_1, ~~ \text{and} ~~ \|u\|_{L^\infty} \leq Q_1 .
    \end{equation*}
\end{prop}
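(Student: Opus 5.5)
The proof will be a Moser iteration in the style of Hein, tracking the constants so that they depend only on $m,\mu,A_1,A_2,A_3$. We treat $\sup u_\varepsilon$ and $\inf u_\varepsilon$ separately, the two cases being symmetric up to signs.

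The first step is the basic integral identity. Write $\omega_\varepsilon=\omega+\iq\partial\overline{\partial}u_\varepsilon$. From \eqref{eq: viscosity for SOB(2m)} we have
\begin{equation*}
(1-e^{f+\varepsilon u_\varepsilon})\,\omega^m = -\iq\partial\overline{\partial}u_\varepsilon\wedge T_\varepsilon, \qquad T_\varepsilon=\sum_{j=0}^{m-1}\omega^j\wedge\omega_\varepsilon^{m-1-j}\geq \omega^{m-1}.
\end{equation*}
For $p\geq 1$ we multiply by $u_+|u_+|^{p-1}$, where $u_+=\max(u_\varepsilon,0)$ (suitably smoothed), and integrate by parts. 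The boundary terms vanish because $u_\varepsilon$ and its derivatives are bounded and the integrands will be shown to be integrable. This gives
\begin{equation*}
\frac{c\,p}{(p+1)^2}\int|\diff (u_+^{(p+1)/2})|^2\omega^m \leq \int (1-e^{f+\varepsilon u_\varepsilon})\,u_+^p\,\omega^m .
\end{equation*}
On $\{u_\varepsilon>0\}$ we have $1-e^{f+\varepsilon u_\varepsilon}\leq 1-e^{f}\leq A_2\rho^{-\mu}$, so the sign of the $\varepsilon u_\varepsilon$ term helps and no dependence on $\varepsilon$ enters. The negative part is handled in the same way, using $u_-$ and reversing inequalities.

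The second step is to close the iteration. Applying \eqref{eq: Sobolev constant A1} to $u_+^{(p+1)/2}$ gives
\begin{equation*}
\|u_+\|_{L^{(p+1)\kappa}}^{p+1}\leq C A_1 A_2\, p\int \rho^{-\mu}u_+^p\,\omega^m, \qquad \kappa=\tfrac{m}{m-1}.
\end{equation*}
The right-hand side is controlled by Hölder's inequality: the factor $\rho^{-\mu}$ is put in $L^{r}$ with $\mu r>2m$, which is finite by $A_3$ once $q_*=\mu r$, and $u_+^p$ goes into $L^{r'}$. Since $\mu>2$, one can choose $r$ with $r'<\kappa$. Then the gain of integrability $(p+1)\kappa$ against $p r'$ gives a geometric iteration, and the standard Moser bookkeeping shows
\begin{equation*}
\sup u_+\leq C(m,\mu,A_1,A_2,A_3)\,\bigl(1+\|u_+\|_{L^{p_0}}\bigr)^{\theta}
\end{equation*}
for some starting exponent $p_0$ and some $\theta$ depending only on $m$ and $\mu$.

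The third step, which we expect to be the main obstacle, is bounding the starting norm $\|u_+\|_{L^{p_0}}$ uniformly. We would follow Hein's trick and take $p=p_0$ small, close to $\kappa/(r'\ldots)$, in the first inequality. We then apply Hölder with weight $\rho^{-\mu}$ and exponent chosen so that $\int\rho^{-\mu}u_+^{p_0}\leq A_3^{a}\|u_+\|_{L^{(p_0+1)\kappa}}^{p_0}$. Because $p_0<p_0+1$, this absorbs into the left side, giving $\|u_+\|_{L^{(p_0+1)\kappa}}\leq C(A_1,A_2,A_3)$. The exponents must be reconciled so that $\mu>2$ exactly allows this; this is where $q_*$ is fixed.

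The bound for $u_\varepsilon$ then follows. The bound for $u$ follows by letting $\varepsilon\to0$, since $u_\varepsilon\to u$ locally uniformly.
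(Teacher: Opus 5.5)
Your overall architecture is the paper's: test the $\varepsilon$-equation against powers of $u_\varepsilon$, apply Sobolev, use Hölder with $\rho^{-\mu}\in L^{q}$ and $\mu q>2m$, absorb to get a uniform starting norm, then run Moser iteration. Splitting into $u_\pm$ is equivalent to the paper's use of $u_\varepsilon|u_\varepsilon|^{p-2}$, and keeping the weight at every iteration step instead of just using $\rho\geq 1$ is harmless.

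\textbf{The gap: integrability of $|u_\varepsilon|^p$.} You never establish that $|u_\varepsilon|^p$ is integrable on $M$. This does not follow from boundedness of $u_\varepsilon$ and its derivatives, because $M$ has volume growth $r^{2m}$. Integrability is needed in two places.
\begin{enumerate}
  \item \emph{Integration by parts.} With a cutoff $\chi(\rho/R)$, the error term is of size $R^{-1}\int_{\{R\leq\rho\leq 2R\}}|u_\varepsilon|^{p}$. For a merely bounded function this can grow like $R^{2m-1}$, so the boundary terms need not vanish.
  \item \emph{Your third step.} The absorption $X^{p_0+1}\leq C X^{p_0}\Rightarrow X\leq C$, with $X=\|u_+\|_{L^{(p_0+1)\kappa}}$, is only valid if you already know $X<\infty$.
\end{enumerate}

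Your approach also discards the only ingredient that can supply this, since you use the term $\varepsilon u_\varepsilon$ only through its sign. The paper keeps it. For fixed $\varepsilon$, boundedness of $u_\varepsilon$ gives $u_\varepsilon(e^{\varepsilon u_\varepsilon}-1)\geq C_\varepsilon|u_\varepsilon|^2$. Testing against $\chi(\rho/R)\rho^k$ then yields
\[
\int_M\rho^k|u_\varepsilon|^p\,\omega^m\leq C_{k,\varepsilon}\Big(\int_M\rho^k|u_\varepsilon|^{p-1}|e^f-1|\,\omega^m+\int_M\rho^{k-1}|u_\varepsilon|^{p-1}\,\omega^m\Big).
\]
Hölder's inequality and an ascending induction in $k$, starting from $k\ll 0$ where integrability is trivial, then give $\rho^k|u_\varepsilon|^p\in L^1$ for $p>4m$ and $k\leq\frac12$ (Lemma \ref{lem: integrability of viscosity solutions}). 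The constant depends badly on $\varepsilon$, but only finiteness is used. Once finiteness is known, the cutoff errors vanish and your absorption step goes through with $\varepsilon$-independent constants.

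\textbf{A second, fixable error: $p_0$ must be large, not small.} In step 3, pair $\rho^{-\mu}\in L^q$ with $u_+^{p_0}\in L^{q'}$, where $p_0q'=(p_0+1)\kappa$. This forces $q=\frac{m(p_0+1)}{p_0+m}$, which increases from $1$ to $m$ as $p_0$ grows. Hence $\mu q>2m$ holds if and only if $p_0(\mu-2)>2m-\mu$. This is exactly where $\mu>2$ enters and where $q_*=\mu q$ is fixed, as in Lemma \ref{lem: constant Q2}.
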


To prove this, let us recall a few facts about the viscosity solutions:
\begin{itemize}
    \item Each $u_\varepsilon \in C^\infty(M)$ is bounded, and has derivatives of any order bounded.
    \item The real $(1,1)$-forms $\omega_{u_\varepsilon} = \omega + \iq \partial \overline{\partial} u_\varepsilon$ are K\"ahler, and by the above they are comparable to $\omega$. 
\end{itemize}
As usual, all-important is the following identity, which can be obtained by multiplying \eqref{eq: viscosity for SOB(2m)} by $u_{\varepsilon} |u_\varepsilon|^{p-2}$:
\begin{equation*}
    u_\varepsilon |u_\varepsilon|^{p-2} (e^{f+\varepsilon u_\varepsilon} - 1) \omega^m = \frac{1}{2}u_\varepsilon |u_\varepsilon|^{p-2} \diff\diff^c u_\varepsilon \wedge T_{u_{\varepsilon}}
\end{equation*}
where $T_{u_\varepsilon} = \omega^{m-1} + \omega^{m-2} \wedge \omega_{u_\varepsilon} + \cdots + \omega_{u_\varepsilon}^{m-1}$. Multiplying the previous identity by a compactly supported function $v \in C^\infty_c(M)$ which is non-negative and integrating by parts one obtains the inequality
\begin{multline}        \label{eq: master inequality}
    \int_M v |\diff |u_\varepsilon|^{\frac{p}{2}}|^2 \omega^m + \frac{mp^2}{2(p-1)} \int_M v u_\varepsilon |u_\varepsilon|^{p-2} (e^{\varepsilon u_\varepsilon}-1) e^f \omega^m \\ 
        \leq - \frac{mp^2}{2(p-1)} \left( \int_M v u_\epsilon |u_\varepsilon|^{p-2} (e^f-1) \omega^m + \frac{1}{2} \int_M u_\varepsilon |u_\varepsilon|^{p-2} \diff v \wedge \diff^c u_\varepsilon \wedge T_{u_\varepsilon} \right)
\end{multline}
for any $p > 1$. Crucially, the second term on the left-hand side of the inequality is non-negative, and moreover there exists a constant $C_\varepsilon$ depending on $\varepsilon$ and $\|u_\varepsilon\|_{L^\infty}$ (which we know is finite, but at this point we do not have a uniform bound), such that $u_\varepsilon(e^{\varepsilon u_\varepsilon}-1) \geq C_\varepsilon |u_\varepsilon|^2$. Taking this into account and applying this inequality to the functions $\chi(\frac{\rho}{R})\rho^k$ for any $k \in \Z$, and letting $R \to \infty$ one obtains
\begin{equation}
    \int_M \rho^k |u_\varepsilon|^p \omega^m \leq C_{k,\varepsilon} \left( \int_M \rho^k |u_\varepsilon|^{p-1}|e^f-1| \omega^m + \int_M \rho^{k-1} |u_\varepsilon|^{p-1} \omega^m  \right) 
\end{equation}
for any $k \in \R$ and $p > 1$, where the constant $C_{k,\varepsilon}$ may depend on $k$ and $\varepsilon$, and we have used that $\|\diff \rho\|_{L^\infty}$, $\|\diff u_\varepsilon\|_{L^\infty}$ and $\|T_{u_\varepsilon}\|_{L^\infty}$ are all finite. From this one may first deduce that the functions $u_\varepsilon$ are very integrable in the following sense\footnote{In \cite[Sec. 4.3]{hein2010gravitational}, it is temporarily assumed that $f$ is compactly supported to prove much more integrability, but this is not needed for our purpose.}:

\begin{lem}     \label{lem: integrability of viscosity solutions}
    For any $\varepsilon \in (0,1)$, $p > 4m$ and $k \leq \frac{1}{2}$, $\int |u_\varepsilon|^p \rho^k \omega^m < \infty$.
\end{lem}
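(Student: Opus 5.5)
The plan is a bootstrap in the weight $\rho^k$ and the exponent $p$, starting from the weighted integral inequality
\[
\int_M \rho^k |u_\varepsilon|^p \omega^m \leq C_{k,\varepsilon} \Big( \int_M \rho^k |u_\varepsilon|^{p-1}|e^f-1| \omega^m + \int_M \rho^{k-1} |u_\varepsilon|^{p-1} \omega^m \Big)
\]
derived just above, with $\varepsilon$ fixed throughout. Since $u_\varepsilon$ is bounded, we may write $|u_\varepsilon|^{p-1}\leq \|u_\varepsilon\|_{L^\infty}^{p-1}$. Combined with $|e^f-1|\leq A_2\rho^{-\mu}$, this gives
\[
\int \rho^k|u_\varepsilon|^p \lesssim \int \rho^{k-\mu}\omega^m + \int \rho^{k-1}|u_\varepsilon|^{p-1}\omega^m.
\]
The first integral is finite as soon as $k-\mu<-2m$, by the volume growth in $\mathsf{SOB}(2m)$. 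This alone is useless for $k$ near $1/2$, so the argument must be iterated.

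The iteration trades weight for exponent. By H\"older (or Young) with exponents $\frac{p}{p-1}$ and $p$,
\[
\int \rho^{k-1}|u_\varepsilon|^{p-1} \leq \Big(\int \rho^{k}|u_\varepsilon|^{p}\Big)^{\frac{p-1}{p}}\Big(\int \rho^{k-p}\Big)^{\frac1p}.
\]
The second factor is finite when $k-p<-2m$, which holds automatically for $p>4m$ and $k\le 1/2$. We can absorb the first factor into the left-hand side, but only if we already know a priori that $\int\rho^k|u_\varepsilon|^p<\infty$; otherwise the absorption is circular. To make it legitimate, I would rerun the derivation of the weighted inequality with the truncated test functions $v=\chi(\rho/R)\rho^k$ kept at finite $R$, absorb at that level, and only then let $R\to\infty$. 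The same H\"older splitting handles the $|e^f-1|$ term, using $\int\rho^{k-\mu p}<\infty$.

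The main obstacle is the cutoff error $\int u_\varepsilon|u_\varepsilon|^{p-2}\,\diff v\wedge \diff^c u_\varepsilon\wedge T_{u_\varepsilon}$. When $\diff$ falls on $\chi(\rho/R)$, it contributes a term bounded by
\[
R^{-1}\int_{\{R\le\rho\le 2R\}}\rho^k|u_\varepsilon|^{p-1}.
\]
This is dominated by $\int_{\{R\le\rho\le 2R\}}\rho^{k-1}|u_\varepsilon|^{p-1}$, the same type of term as before. It can therefore be absorbed by the same H\"older argument applied to the truncated quantity $I_R=\int\chi(\rho/R)\rho^k|u_\varepsilon|^p$, yielding an estimate of the form
\[
I_R\le C + C I_R^{(p-1)/p}
\]
with $C$ independent of $R$. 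Since $I_R$ is finite for every $R$ (compact support), this forces $I_R\le C'$, and monotone convergence as $R\to\infty$ concludes. The restriction $k\le\frac12$ keeps the H\"older companion weights $\rho^{k-p}$ and $\rho^{k-\mu p}$ integrable, and $p>4m$ ensures $p-k>2m$. All constants may depend on $\varepsilon$, which the lemma allows.
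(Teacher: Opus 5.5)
Your H\"older splitting $\int\rho^{k-1}|u_\varepsilon|^{p-1}\omega^m\le(\int\rho^k|u_\varepsilon|^p\omega^m)^{(p-1)/p}(\int\rho^{k-p}\omega^m)^{1/p}$ is correct. You are also right that absorbing it requires a priori finiteness. However, the truncation you propose does not supply that finiteness.

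With $v=\chi(\rho/R)\rho^k$, the left-hand side of the truncated inequality is $I_R=\int\chi(\rho/R)\rho^k|u_\varepsilon|^p\omega^m$. Once you bound $|\chi'|$ by a constant, the cutoff error becomes an integral of $\rho^{k-1}|u_\varepsilon|^{p-1}$ over the annulus $\{R\le\rho\le 2R\}$, where $\chi(\rho/R)$ decreases to $0$. H\"older on that region gives $(\int_{\{\rho\le 2R\}}\rho^k|u_\varepsilon|^p\omega^m)^{(p-1)/p}$, and this is not controlled by $I_R^{(p-1)/p}$. What you actually obtain is $I_R\le C+C\,I_{2R}^{(p-1)/p}$, which couples two different scales. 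The conclusion ``$I_R\le C+CI_R^{(p-1)/p}$, hence $I_R\le C'$'' therefore does not follow. This is exactly the step you flagged as the main obstacle, and it is left unresolved.

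The gap can be repaired in two ways.
\begin{itemize}
\item Take $v=\chi(\rho/R)^N\rho^k$ with $N\ge p$. On its support $|\diff v|\lesssim\chi(\rho/R)^{N-1}\rho^{k-1}$, using $|\diff\rho|\lesssim1$ and $R^{-1}\le2\rho^{-1}$ on the annulus. Since $\chi^{N-1}\le\chi^{N(p-1)/p}$, H\"older against $\int\chi(\rho/R)^N\rho^k|u_\varepsilon|^p\omega^m$ now closes at a single scale.
\item Alternatively, iterate the two-scale inequality, using the a priori polynomial growth of $\int_{\{\rho\le R\}}\rho^k|u_\varepsilon|^p\omega^m$ that comes from $\|u_\varepsilon\|_{L^\infty}<\infty$ and the volume growth.
\end{itemize}

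The paper avoids absorption altogether. It splits $\rho^{k-1}=\rho^{k-\frac12}\cdot\rho^{-\frac12}$ and uses $\rho\ge1$, $k\le\frac12$ to get $\int\rho^k|u_\varepsilon|^p\lesssim(\int\rho^{k-\frac12}|u_\varepsilon|^p)^{1-\frac1p}$. It then inducts upward in steps of $\frac12$, starting from a very negative $k_0$ where integrability is trivial because $u_\varepsilon$ is bounded. At each step the right-hand side is already known to be finite, so monotone convergence in $R$ is all that is needed. This splitting is also the source of the restriction $k\le\frac12$. Your splitting, once the cutoff is fixed, reaches every $k<p-2m$ in one step.
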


\begin{proof}
    By assumption $|e^f - 1| \lesssim \rho^{-\mu}$ and thus (since $\mu > 1$) using the H\"older's inequality we have
    \begin{equation*}
        \int_M \rho^k |u_\varepsilon|^p \omega^m  \lesssim \int_M \rho^{k-1} |u_\varepsilon|^{p-1} \omega^m \leq \left( \int_M \rho^{\frac{p(2k-1)}{2(p-1)}} |u_\varepsilon|^p \omega^m\right)^{\frac{p-1}{p}} \left( \int_M \rho^{- \frac{p}{2}} \omega^m \right)^{\frac{1}{p}}
    \end{equation*}
    and since $\rho^{-\frac{p}{2}}$ is integrable (as $p > 4m$) and $k \leq \frac{1}{2}$, we deduce that
    \begin{equation*}
        \int_M \rho^k |u_\varepsilon|^p \omega^m \lesssim \left( \int_M \rho^{k-\frac{1}{2}} |u_\varepsilon|^p \omega^m \right)^{1-\frac{1}{p}} .
    \end{equation*}
    On the other hand since $\|u_\varepsilon\|_{L^\infty} < \infty$ there exists $k_0 \in \frac{1}{2}\Z$ negative enough that $\rho^k |u_\varepsilon|^p$ is integrable, and the result follows easily by ascending induction from $k_0$ up to $k = \frac{1}{2}$.
\end{proof}

\begin{cor}
    For any $\varepsilon \in (0,1)$ and $p > 4m+1$, 
    \begin{equation*}
        \int_M |\diff |u_\varepsilon|^{\frac{p}{2}}|^2 \omega^m \leq \frac{mp^2}{2(p-1)} \int_M u_\varepsilon |u_\varepsilon|^{p-2} (1-e^f) \omega^m .
    \end{equation*}
    In particular, $|u_\varepsilon|^\frac{p}{2} \in L^2_1(M,\omega)$ for any $p > 4m+1$.
\end{cor}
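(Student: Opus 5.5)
The plan is to go back to the master inequality \eqref{eq: master inequality} with the test functions $v_R = \chi(\tfrac{\rho}{R})$, where $\chi$ is the standard cutoff equal to $1$ on $[0,1]$ and $0$ on $[2,\infty)$, and then let $R \to \infty$.

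Three facts make this work.

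First, the second term on the left-hand side is non-negative. This is because $u_\varepsilon(e^{\varepsilon u_\varepsilon}-1) \geq 0$ and $e^f > 0$, so we may drop it.

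Second, the first term on the right converges by monotone or dominated convergence. It tends to $\frac{mp^2}{2(p-1)}\int_M u_\varepsilon|u_\varepsilon|^{p-2}(1-e^f)\,\omega^m$, once we check that this integral is finite. Using $|1-e^f| \lesssim \rho^{-\mu}$ and $|u_\varepsilon|^{p-1}$, finiteness follows from Lemma \ref{lem: integrability of viscosity solutions} with exponent $p-1 > 4m$ and weight $k = 0$. This is where the hypothesis $p > 4m+1$ enters.

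Third, we must show the boundary term vanishes. It is
\begin{equation*}
    \int_M u_\varepsilon |u_\varepsilon|^{p-2}\, \diff v_R \wedge \diff^c u_\varepsilon \wedge T_{u_\varepsilon} ,
\end{equation*}
and we bound it pointwise by $R^{-1}\|\chi'\|_\infty \|\diff\rho\|_\infty \|\diff u_\varepsilon\|_\infty \|T_{u_\varepsilon}\|_\infty\, |u_\varepsilon|^{p-1}$. This bound is supported in the annulus $\{R \leq \rho \leq 2R\}$, where $R^{-1} \lesssim \rho^{-1}$. The term is therefore dominated by $\int_{\{\rho \geq R\}} \rho^{-1}|u_\varepsilon|^{p-1}\omega^m$. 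This is the tail of a convergent integral, again by Lemma \ref{lem: integrability of viscosity solutions} with exponent $p-1$ and weight $k = -1 \leq \tfrac12$, so it tends to $0$. The constants here depend on $\varepsilon$, but that is harmless since $\varepsilon$ is fixed.

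With these in place, the left-hand side $\int_M v_R |\diff|u_\varepsilon|^{p/2}|^2\omega^m$ increases to $\int_M|\diff|u_\varepsilon|^{p/2}|^2\omega^m$ by monotone convergence, since $v_R \uparrow 1$. Passing to the limit gives the inequality in the statement.

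For the final claim, note that the right-hand side is finite, so $\diff|u_\varepsilon|^{p/2} \in L^2$. Moreover $|u_\varepsilon|^{p/2} \in L^2$, because $\int_M |u_\varepsilon|^p \omega^m < \infty$ by Lemma \ref{lem: integrability of viscosity solutions} with $k = 0$. Hence $|u_\varepsilon|^{p/2} \in L^2_1(M,\omega)$.

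The only delicate point is matching the exponents and weights in the tail estimate with those allowed by Lemma \ref{lem: integrability of viscosity solutions}. The rest is routine.
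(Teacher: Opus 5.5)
Your proposal is correct and follows the paper's proof. Like the paper, you put the cutoffs $\chi(\rho/R)$ into the master inequality, drop the non-negative $(e^{\varepsilon u_\varepsilon}-1)$ term, and kill the $R^{-1}$ boundary term using Lemma~\ref{lem: integrability of viscosity solutions}. Your version is slightly more careful than the written one: you correctly use integrability of $|u_\varepsilon|^{p-1}$, which is what the boundary term needs and where $p>4m+1$ enters (the paper's text says $|u_\varepsilon|^{p}$), and you justify the limits of the other terms and the $L^2$ membership of $|u_\varepsilon|^{p/2}$ explicitly.
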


\begin{proof}
    Apply \eqref{eq: master inequality} with the function $v = \chi(\frac{\rho}{R})$ and keeping only the first term on the left-hand side (which we can do since the second term is non-negative) one obtains, for any $R > 1$,
    \begin{equation*}
        \int_M \chi(\tfrac{\rho}{R}) |\diff |u_\varepsilon|^{\frac{p}{2}}|^2 \omega^m \leq \frac{mp^2}{2(p-1)} \int_M \chi(\tfrac{\rho}{R})u_\varepsilon |u_\varepsilon|^{p-2} (1-e^f) \omega^m + a_R
    \end{equation*}
    where 
    \begin{equation*}
        |a_R| \leq  \frac{C}{R} \int_{\rho \leq R} |u_\varepsilon|^{p-1} \omega^m,
    \end{equation*}
    $C$ being a constant depending on $p$, $\|\nabla u_\varepsilon\|_{L^\infty}$, $\|T_{u_\varepsilon}\|_{L^\infty}$ and $\sup |\chi^\prime|$ but not on $R$. But since $p > 4m+1$, the previous lemma implies that $|u_\varepsilon|^p$ is integrable and thus $a_R \to 0$ as $R \to 0$. Given the boundedness of $f$ this proves the lemma.
\end{proof}

Thanks to the previous lemma, we can use \eqref{eq: Sobolev constant A1} and the inequality $|e^f-1| \leq A_2 \rho^{-\mu}$ to obtain, for any $p > 4m +1$,
\begin{equation}    \label{eq: Ineq for iteration}
    \begin{aligned}
        \left( \int_M |u_\varepsilon|^{\frac{mp}{m-1}} \omega^m \right)^{\frac{m-1}{m}} \leq A_1 \int_M |\diff |u_\varepsilon|^{\frac{p}{2}}|^2 \omega^m & \leq \frac{mp^2A_1}{2(p-1)} \int_M |u_\varepsilon|^{p-1} |e^f-1| \omega^m \\
        & \leq \frac{mp^2 A_1A_2}{2(p-1)} \int_M |u_\varepsilon|^{p-1} \rho^{-\mu} \omega^m .
    \end{aligned}
\end{equation}

\begin{lem}     \label{lem: constant Q2}
    There exist exponents $p_* > 4m+1$, $q_* > 2m$, depending only on $m$ and $\mu$, and a constant $Q^\prime_1$ depending only on $m$, $\mu$, $A_1$, $A_2$, and $A_3$ (defined as in Proposition \ref{prop: Quantitative L infinity bound} for this choice of $q_*$) such that for any $\varepsilon \in (0,1)$,
    \begin{equation*}
        \|u_\varepsilon\|_{L^{p_*}} \leq Q^\prime_1 .
    \end{equation*}
\end{lem}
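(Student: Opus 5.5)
We close the iteration inequality \eqref{eq: Ineq for iteration} at a single exponent by splitting the right-hand side with Hölder's inequality. Set $\kappa = \frac{m}{m-1}$ and, for a fixed $p > 4m+1$ still to be chosen, write $X \coloneqq \|u_\varepsilon\|_{L^{\kappa p}}$ and $Y \coloneqq \|u_\varepsilon\|_{L^p}$. The left-hand side of \eqref{eq: Ineq for iteration} is $X^p$.

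For the right-hand side we apply Hölder with exponents $\frac{p}{p-1}$ and $p$:
\begin{equation*}
    \int_M |u_\varepsilon|^{p-1}\rho^{-\mu}\omega^m \leq Y^{p-1} \Big(\int_M \rho^{-\mu p}\omega^m\Big)^{1/p}.
\end{equation*}
The last factor is controlled by $A_3$ as soon as $\mu p \geq q_*$, using $\rho \geq 1$. This gives
\begin{equation*}
    X^p \leq \frac{mp^2A_1A_2}{2(p-1)}A_3^{1/p}Y^{p-1}.
\end{equation*}

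On its own this bounds the higher norm by the lower one, so we need a second relation going the other way. To get it we interpolate $Y$ between $X$ and a low, weighted norm, and control that low norm by integrating the equation once. Testing \eqref{eq: master inequality} with $p=2$ and $v\equiv 1$ (through the cutoff limit, which Lemma \ref{lem: integrability of viscosity solutions} justifies) gives
\begin{equation*}
    \|\diff u_\varepsilon\|_{L^2}^2 \lesssim A_2 \int_M |u_\varepsilon|\rho^{-\mu}\omega^m.
\end{equation*}
By the Sobolev inequality \eqref{eq: Sobolev constant A1}, this yields
\begin{equation*}
    \|u_\varepsilon\|_{L^{2\kappa}}^2 \lesssim A_1A_2 \|u_\varepsilon\|_{L^{2\kappa}}\|\rho^{-\mu}\|_{L^{(2\kappa)'}}.
\end{equation*}
The weight norm is finite precisely because $\mu(2\kappa)' > 2m$: we have $(2\kappa)' = \frac{2m}{m+1}$, so the condition reads $\mu > m+1$. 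This need not hold in general. If it fails, we instead use the weighted Sobolev inequality of Hein with $\beta = 2m$, where the weight $(1+r)^{\alpha(\beta-2)-\beta}$ provides the required room. Either way we obtain a bound $\|u_\varepsilon\|_{L^{2\kappa}} \leq C(m,\mu,A_1,A_2,A_3)$. The same argument has to be run at $p=2$, where \eqref{eq: master inequality} still holds since the $p$-dependent constant stays finite. This is the first delicate point: we must check that the integrability needed at $p=2$ follows from the polynomial decay argument of Lemma \ref{lem: integrability of viscosity solutions}, possibly after replacing $2$ by some $p_0 > 1$ close to $2$.

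Next, since $2\kappa < p < \kappa p$, log-convexity of $L^q$ norms gives $Y \leq \|u_\varepsilon\|_{L^{2\kappa}}^{1-\theta}X^{\theta}$ for some $\theta \in (0,1)$ depending only on $m$ and $p$. Substituting into the Hölder bound,
\begin{equation*}
    X^p \leq C\, X^{\theta(p-1)}.
\end{equation*}
Because $\theta(p-1) < p$ and $X < \infty$ (as $u_\varepsilon$ is bounded), this closes to a uniform bound on $X = \|u_\varepsilon\|_{L^{\kappa p}}$. Interpolating once more then bounds $Y$ as well. We therefore set $p_* = p$ (or $\kappa p$) and $q_* = \max(\mu p_*, 2m+1)$, so that both exponents depend only on $m$ and $\mu$, and all constants depend only on $m$, $\mu$, $A_1$, $A_2$, $A_3$.

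The main obstacle is the low-exponent anchor, namely controlling some $L^{q}$ norm of $u_\varepsilon$ independently of $\varepsilon$ when only the decay $|e^f-1|\lesssim\rho^{-\mu}$ with $\mu>2$ is available. The first thing to try is the $p$ close to $1$ version of \eqref{eq: master inequality} combined with Hein's weighted Sobolev inequality, choosing the weight exponent so that $\rho^{-\mu}$ lands in the dual space. If that fails, we fall back on Hein's original argument, which bounds $\int \rho^{-\mu}|u_\varepsilon|$ by a telescoping sum over annuli using the $\mathsf{SOB}(2m)$ Neumann–Poincaré inequalities.
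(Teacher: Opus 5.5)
There is a genuine gap, and it is at the step you flagged as the main obstacle. The low-exponent anchor is not merely delicate; it is unavailable in the range that matters, so your fallbacks cannot succeed.

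\textbf{Why the anchor fails.} Your anchor goes through a uniform bound on $\|\diff u_\varepsilon\|_{L^2}^2\lesssim\int_M|u_\varepsilon|\rho^{-\mu}\omega^m$. As you computed yourself, closing this with \eqref{eq: Sobolev constant A1} requires $\mu>m+1$.

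Hein's weighted inequality gives no extra room. Set $w=\alpha(2m-2)-2m$ and pair $\rho^{w/(2\alpha)}|u_\varepsilon|$ against $\rho^{-\mu-w/(2\alpha)}$ by H\"older. The weight factor is integrable iff $\frac{2\alpha\mu+w}{2\alpha-1}>2m$, which is again $\mu>m+1$, for every $\alpha\in[1,\frac{m}{m-1}]$.

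This is not an artefact of the method. A uniform energy bound would pass to the limit solution $u$ by lower semicontinuity. But suppose $e^f-1$ is comparable to $\rho^{-\mu}$ near infinity. The flux identity $\int_{B_R}(e^f-1)\omega^m=\frac12\int_{\partial B_R}\diff^c u\wedge T_u$, with $T_u$ bounded, then forces $\int_{\partial B_R}|\diff u|\gtrsim R^{2m-\mu}$. Applying Cauchy--Schwarz on dyadic annuli gives $\int_M|\diff u|^2=\infty$ whenever $\mu\le m+1$. Heuristically $u\sim\rho^{2-\mu}$, which also fails to lie in $L^{2m/(m-1)}$.

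In the paper's application $m=3$ and $\mu\in(2,3)$, so you are always in this bad range. Your remaining fallback, the telescoping argument, is not specified.

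\textbf{The missing idea.} No anchor is needed; what is wrong is the H\"older split in your first step. Write $\kappa=\frac{m}{m-1}$ and $X=\|u_\varepsilon\|_{L^{\kappa p}}$. Apply H\"older to $\int_M|u_\varepsilon|^{p-1}\rho^{-\mu}\omega^m$ with exponents $\frac{\kappa p}{p-1}$ and its conjugate $q_0$, where $\frac{1}{q_0}=1-\frac{(m-1)(p-1)}{mp}$. Then the $u$-factor is exactly $X^{p-1}$, and
\[
X^p\le\frac{mp^2A_1A_2}{2(p-1)}\Big(\int_M\rho^{-\mu q_0}\omega^m\Big)^{1/q_0}X^{p-1}.
\]
Since $q_0\to m$ as $p\to\infty$, the condition $\mu q_0>2m$ holds for large $p$ exactly because $\mu>2$. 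Equivalently, it holds once $p>\frac{2(m-1)}{\mu-2}$.

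Choose such a $p>4m+1$ and set $q_*=\mu q_0$, so the weight factor is $A_3^{1/q_0}$. For each fixed $\varepsilon$, $X<\infty$ by Lemma \ref{lem: integrability of viscosity solutions}, since $\kappa p>4m$. Dividing by $X^{p-1}$ gives $X\le\frac{mp^2A_1A_2}{2(p-1)}A_3^{1/q_0}$, which is the lemma with $p_*=\kappa p$. This is the paper's argument.

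Your split with exponents $\frac{p}{p-1}$ and $p$ put $\rho^{-\mu}$ into $L^p$, spending far more decay than necessary. It also left $u_\varepsilon$ in the weaker norm $L^p$ rather than $L^{\kappa p}$. That is what forced you to look for an anchor that does not exist.
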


\begin{proof}
    Because we assumed $\mu > 2$, there exists $p_0 > 4m+1$ such that the conjugate exponent $q_0$ of $\frac{mp_0}{(m-1)(p_0-1)}$ satisfies $q_0 > \frac{2m}{\mu}$. Thus if we let $q_* = \mu q_0 > 2m$, then the inequality \eqref{eq: Ineq for iteration} together with H\"older's imply
    \begin{align*}
        \left( \int_M |u_\varepsilon|^{\frac{mp_0}{m-1}} \omega^m \right)^{\frac{m-1}{m}} & \leq \frac{mp^2_0 A_1A_2}{2(p_0-1)} \int_M |u_\varepsilon|^{p_0-1} \rho^{-\mu} \omega^m \\
            &\leq \frac{mp_0^2A_1A_2A_3^{\frac{1}{q_0}}}{2(p_0-1)} \left( \int_M |u_\varepsilon|^{\frac{mp_0}{m-1}} \omega^m \right)^{\frac{m-1}{m}\cdot \frac{p_0-1}{p_0}} .
    \end{align*}
    Choosing $p_* = \frac{mp_0}{m-1} > p_0 > 4m+1$ and $Q^\prime_1$ as the constant in front of the integral in the second line above, we obtained the desired result.
\end{proof}

We can now prove the quantitative $L^\infty$-bounds claimed earlier:

\begin{proof}[Proof of Proposition \ref{prop: Quantitative L infinity bound}]
    By the previous lemma, $\|u_\varepsilon\|_{L^{p_*}} \leq Q^\prime_1$. On the other hand, since $\rho \geq 1$ we deduce by applying \eqref{eq: Ineq for iteration} (and swapping $p$ for $p+1$) that for any $p \geq p_*$,
    \begin{equation*}
        \|u_\varepsilon\|_{L^{\frac{m(p+1)}{m-1}}} \leq \left( \frac{m(p+1)^2A_1A_2}{2p}\right)^{\frac{1}{p+1}} \|u_\varepsilon\|_{L^p}^\frac{p}{p+1} .
    \end{equation*}
    So the result follows by a classical induction starting at $p = p_*$.
\end{proof}

    \subsection{Weighted $L^\infty$-bounds}

Let us now introduce the following additional data and assumptions:
\begin{itemize}
    \item $K \subset M$ is a compact subset with non-empty interior and smooth boundary in $M$, such that the weight function $\rho$ satisfies $\left. \rho \right|_K \equiv 1$.
    \item There exists a constant $A_4 > 0$ such that $\|\diff \diff^c u_\varepsilon\|_{C^0(M \backslash K)} \leq A_4$ for the viscosity solutions on $M \backslash K$, for any $\varepsilon \in (0,1)$.
    \item There exists a constant $A_5 > 0$ such that $|\diff \rho|_\omega +\rho |\diff \diff^c \rho |_\omega \leq A_5$.
\end{itemize}
Throughout this part, we shall also fix $\nu_0 \in (0, \min\{1,\mu-2\})$. Then in this general setup, we prove the following proposition, which implies Proposition \ref{prop: Uniform decay of potential}:

\begin{prop}    \label{prop: weighted L infinity bound}
    In the situation described above, there exists a universal exponent $\tilde{q}_* > 2m$, depending only on $m$, $\mu$ and $\nu_0$, and a constant $Q_{\nu_0} > 0$, depending only on $m$, $\mu$, $\nu_0$ and the constants $A_1$, $A_2$, $A_3$ (defined in Section \ref{subsec: L infinity bounds}), $A_4$, $A_5$ (defined above), and 
    \begin{equation*}
        A_6 \coloneqq \int \rho^{-\tilde{q}_*} \omega^m,
    \end{equation*}
    such that the solution $u$ of \eqref{eq: MA for SOB(2m)} satisfies the weighted $L^\infty$ bound 
    \begin{equation*}
        \|\rho^{\nu_0} u\|_{L^\infty} \leq Q_{\nu_0} .
    \end{equation*}
\end{prop}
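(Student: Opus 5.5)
The plan is a weighted Moser iteration in the spirit of Hein's argument: first a weighted $L^p$-bound on the viscosity solutions $u_\varepsilon$ with constants independent of $\varepsilon$, then a local-to-global argument turning it into pointwise decay. The starting point is the master inequality \eqref{eq: master inequality}, tested against $v = \chi(\rho/R)\rho^{k}$ with $k>0$ and $R\to\infty$; this is justified by Lemma \ref{lem: integrability of viscosity solutions} together with the uniform bound $\|u_\varepsilon\|_{L^\infty}\le Q_1$ from Proposition \ref{prop: Quantitative L infinity bound}. The second term on the left-hand side is non-negative and will be discarded. The boundary term $\int u_\varepsilon|u_\varepsilon|^{p-2}\diff v\wedge \diff^c u_\varepsilon\wedge T_{u_\varepsilon}$ will not be estimated using $\|\diff u_\varepsilon\|_{L^\infty}$, which is not uniform. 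Instead I will integrate by parts once more, moving $\diff^c$ onto the weight, which gives
\[
\tfrac1p\int |u_\varepsilon|^p\, \diff\diff^c v\wedge T_{u_\varepsilon}.
\]
This is controlled uniformly: on $M\setminus K$ we have $|T_{u_\varepsilon}|\lesssim (1+A_4)^{m-1}$, and $|\diff\diff^c \rho^k|\lesssim k^2 A_5^2\rho^{k-2}$. On $K$ one has $\rho\equiv 1$, so $\diff v$ vanishes in the interior and there is no contribution.

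The key weighted inequality is then
\[
\int |\diff(\rho^{k/2}|u_\varepsilon|^{p/2})|^2\omega^m \le C(p,k)\Big(\int \rho^{k-\mu}|u_\varepsilon|^{p-1}\omega^m + \int \rho^{k-2}|u_\varepsilon|^p\omega^m\Big),
\]
with $C$ depending only on $m,A_2,A_4,A_5$. The gradient of the weight is absorbed using $|\diff\rho|\le A_5$. Next I apply the Sobolev inequality \eqref{eq: Sobolev constant A1} to the left-hand side to gain integrability, and H\"older's inequality with $A_6=\int\rho^{-\tilde q_*}\omega^m$ to handle the $\rho^{k-\mu}$ term, using $\mu>2+\nu_0$.

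The hardest point is the $\rho^{k-2}|u|^p$ term, which has the same scaling as the gradient term. It can only be absorbed via a Hardy-type inequality, which is available on $\mathsf{SOB}(2m)$ spaces: Hein's weighted Sobolev inequality with $\alpha=1$ gives $\int \rho^{-2}|w|^2 \le A\int|\diff w|^2$. This absorption works only for small weights, with $p\nu$ below a threshold. That restriction is why $\nu_0<1$ is fixed and why $p$ must be tied to $\nu_0$.

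I will therefore run an induction in steps of the weight exponent. At each step I bound $\|\rho^{\nu}u_\varepsilon\|_{L^{p}}$ in terms of the previous step, starting from the unweighted bound of Lemma \ref{lem: constant Q2}, and increase $\nu$ in bounded increments until $\nu_0$ is reached, in the manner of Hein's Chapter 4. This yields $\|\rho^{\nu_0}u_\varepsilon\|_{L^{\tilde p}}\le C$ for some $\tilde p>2m$.

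To pass to $L^\infty$ I localise. On a ball $B(x,r(x)/2)$ with $r(x)=\dist(x_0,x)$ large, the $\mathsf{SOB}(2m)$ properties give a scale-invariant local Sobolev inequality, so local Moser iteration for $\Delta_{\omega}$-type inequalities gives
\[
\sup_{B(x,r/4)}|u_\varepsilon|\lesssim r^{-2m/\tilde p}\|u_\varepsilon\|_{L^{\tilde p}(B(x,r/2))} + r^{2-\mu}.
\]
Since $\rho\asymp r$ on that ball, the weighted $L^{\tilde p}$-bound gives $|u_\varepsilon(x)|\lesssim r^{-\nu_0-2m/\tilde p}+r^{2-\mu}\lesssim \rho^{-\nu_0}$. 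Near $K$ the plain $L^\infty$-bound suffices. Finally I let $\varepsilon\to0$.
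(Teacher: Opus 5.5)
The gap is the step you yourself flag as the hardest: the induction in $\nu$ that is supposed to give $\|\rho^{\nu_0}u_\varepsilon\|_{L^{\tilde p}}\le C$. You give no mechanism for it, and neither available mechanism works. The rest is fine: your weighted energy inequality is the paper's (moving $\diff^c$ onto the weight is exactly the second master inequality behind Lemma~\ref{lem: constant A_8}), and your localisation is sound, using the operator $v\mapsto\diff\diff^c v\wedge T_{u_\varepsilon}$, uniformly elliptic far out by $A_2$, $A_4$, $Q_1$, with $u_\varepsilon^\pm$ as subsolutions.

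\emph{Hardy absorption.} This needs $p^2\nu^2A_7\bigl(1+\tfrac{mp}{4(p-1)}\bigr)A<1$, with $A$ your Hardy constant. That is a smallness condition on the \emph{total} weight $p\nu$, because the coefficient comes from $|\diff\rho^{p\nu/2}|^2$ and $\diff\diff^c\rho^{p\nu}$, not from an increment. So it caps $\nu\lesssim c_0/p$ and cannot be iterated. Moreover, already on $\C^m$ with $e^f-1\sim\rho^{-\mu}$ the solution decays exactly like $\rho^{2-\mu}$. Hence an $\varepsilon$-uniform bound on $\|\rho^{\nu_0}u_\varepsilon\|_{L^{\tilde p}}$ forces $\tilde p>2m/(\mu-2-\nu_0)$, and then $p\nu_0$ exceeds any fixed threshold as $\nu_0\to\mu-2$.

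\emph{Feeding the critical term from the previous step.} Writing $\int\rho^{p\nu-2}|u_\varepsilon|^p=\|\rho^{\nu-2/p}u_\varepsilon\|_{L^p}^p$ gains $2/p$ of weight. But the output lives in $L^{mp/(m-1)}$, and returning to $L^p$ by H\"older against $\rho^{-2m}$, which is borderline non-integrable, costs strictly more than $2/p$. This is exactly the criticality you noticed: for $u=\rho^{-e}$, both $\rho^{\nu-2/p}u\in L^p$ and $\rho^{\nu}u\in L^{mp/(m-1)}$ mean $e>\nu+2(m-1)/p$. So the energy inequality alone never improves the decay rate. Incidentally, $\nu_0<1$ has nothing to do with this threshold; it comes from the qualitative integrability lemma.

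\emph{The paper's route.} The paper avoids Hardy by taking $\nu_*=2/p$, so the critical term is the already controlled $\|u_\varepsilon\|_{L^p}^p$. It then iterates globally at fixed weight, peeling off the factor $\rho^{\nu_*-2}|u_\varepsilon|\le Q_1$. That only yields the small weight $\nu_*$. Its final upgrade to $\nu_0$ bounds the critical term by $\rho^{-2-\delta}(\rho^{\nu_0}|u_\varepsilon|)^{p-1}$, which amounts to $|u_\varepsilon|\lesssim\rho^{-\nu_0-\delta}$. From $|u_\varepsilon|\le Q_{\nu_*}\rho^{-\nu_*}$ one only gets the weight $\rho^{\nu_0-\nu_*-2}$, which is weaker than $\rho^{-2}$, so you cannot borrow that step either.

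\emph{What rescues your argument.} Keep the factor $r^{-2m/\tilde p}$ in your own mean-value inequality, which you discarded. With no weight at all there is no critical term. The argument of Lemma~\ref{lem: constant Q2} then gives an $\varepsilon$-uniform bound on $\|u_\varepsilon\|_{L^{\tilde p}}$, $\tilde p=mp/(m-1)$, exactly when $\tilde p>2m/(\mu-2)$, since that is the condition for its H\"older step. Take $\tilde p\in(2m/(\mu-2),2m/\nu_0]$, which is nonempty because $\nu_0<\mu-2$. Your local estimate then gives $|u_\varepsilon(x)|\lesssim r^{-2m/\tilde p}+r^{2-\mu}\lesssim\rho^{-\nu_0}$.

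The remaining point is qualitative integrability of $|u_\varepsilon|^p$ at $p=(m-1)\tilde p/m$, which may lie below the range $p>4m+1$ of Lemma~\ref{lem: integrability of viscosity solutions}. It can be obtained from the coercive term $u_\varepsilon(e^{\varepsilon u_\varepsilon}-1)\ge C_\varepsilon u_\varepsilon^2$ in \eqref{eq: master inequality}.
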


We first need to adapt Lemma \ref{lem: integrability of viscosity solutions} to a statement on the weighted integrability of the viscosity solutions.

\begin{lem}     \label{integrability of the viscosity solutions, II}
    There exists $p_0 > 4m$, depending only on $m$ and $\nu_0$, such that $\|\rho^\nu u_\varepsilon\|_{L^p} < \infty$ for all $\varepsilon > 0$, $\nu \leq \nu_0$ and $p \geq p_0$.
\end{lem}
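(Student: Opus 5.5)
We follow the strategy of Lemma \ref{lem: integrability of viscosity solutions}: first derive a weighted version of the integral inequality obtained from \eqref{eq: master inequality}, then bootstrap in the weight exponent. We apply \eqref{eq: master inequality} with the test function $v = \chi(\tfrac{\rho}{R})\rho^{k}$ and let $R\to\infty$. The second term on the left-hand side dominates $C_\varepsilon\int \rho^k|u_\varepsilon|^p\omega^m$, since $u_\varepsilon(e^{\varepsilon u_\varepsilon}-1)\geq C_\varepsilon|u_\varepsilon|^2$ and $e^f$ is bounded below. On the right-hand side we use three ingredients:
\begin{itemize}
\item the bound $|e^f-1|\leq A_2\rho^{-\mu}$;
\item the bounds $|\diff\rho|\leq A_5$ and $\|\diff^c u_\varepsilon\|_{L^\infty},\|T_{u_\varepsilon}\|_{L^\infty}<\infty$;
\item the fact that $\diff v$ contributes a factor $\rho^{k-1}$.
\end{itemize}
This gives, for all real $k$ and $p>1$,
\begin{equation*}
\int_M \rho^k|u_\varepsilon|^p\omega^m \leq C_{k,\varepsilon}\int_M \rho^{k-\min\{1,\mu\}}|u_\varepsilon|^{p-1}\omega^m = C_{k,\varepsilon}\int_M\rho^{k-1}|u_\varepsilon|^{p-1}\omega^m .
\end{equation*}
Here $C_{k,\varepsilon}$ may depend on $\varepsilon$, which is fine since only finiteness is claimed. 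The boundary terms coming from the cutoff $\chi(\rho/R)$ are of size $R^{-1}\int_{\rho\leq 2R}\rho^{k}|u_\varepsilon|^{p-1}$, so they vanish as $R\to\infty$ once we know a priori integrability at a slightly worse weight. We will arrange this inductively.

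To exploit the inequality, we set $k=p\nu$ and apply H\"older with exponents $\tfrac{p}{p-1}$ and $p$, exactly as in Lemma \ref{lem: integrability of viscosity solutions}:
\begin{equation*}
\int \rho^{p\nu}|u_\varepsilon|^p \lesssim \Big(\int \rho^{\frac{p(p\nu-1+\theta)}{p-1}}|u_\varepsilon|^p\Big)^{\frac{p-1}{p}}\Big(\int\rho^{-p\theta}\Big)^{\frac1p},
\end{equation*}
with $\theta=\tfrac12$. The last factor is finite as soon as $p\theta>2m$, i.e. $p>4m$; this is where the volume growth $\mathsf{SOB}(2m)$ enters, via integrability of $\rho^{-q}$ for $q>2m$. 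Next we check that the new weight exponent $\frac{p(p\nu-\frac12)}{p-1}$ is strictly smaller than $p\nu$. This holds iff $p\nu<\tfrac{p}{2}$, i.e. $\nu<\tfrac12$. For larger $\nu\leq\nu_0<1$ we instead take $\theta$ close to $0$, say $\theta=\tfrac{1-\nu_0}{2}$, and $p_0>2m/\theta$. The requirement becomes $p\nu<p(1-\theta)$, i.e. $\nu<1-\theta$, which holds for all $\nu\leq\nu_0$. This fixes $p_0=p_0(m,\nu_0)>4m$.

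The bootstrap then runs as follows. Since $\|u_\varepsilon\|_{L^\infty}<\infty$ and $\rho^{-q}$ is integrable for $q>2m$, the quantity $\int\rho^{k}|u_\varepsilon|^p$ is finite for $k$ sufficiently negative, for instance $k<-2m$. Each step of the H\"older inequality raises the admissible weight by a fixed positive amount, bounded below by $p(1-\theta-\nu_0)/(p-1)>0$ once we rewrite in terms of $k$. After finitely many steps we reach $k=p\nu$ for every $\nu\leq\nu_0$. This is the same ascending induction as before, and it also legitimises the passage $R\to\infty$ at each step.

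We expect the main obstacle to be the bookkeeping of exponents: choosing $\theta$ and $p_0$ so that the H\"older step genuinely gains weight uniformly for all $\nu\leq\nu_0$, while keeping $\rho^{-p\theta}$ integrable. The lower-order terms from the cutoff also need care, which is why the induction step must be carried out before letting $R\to\infty$.
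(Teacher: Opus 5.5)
Your proposal is correct and follows essentially the same route as the paper. You use the same weighted inequality from the master inequality with test function $\chi(\rho/R)\rho^{k}$, the same H\"older splitting with $\theta=\frac{1-\nu_0}{2}$ (the paper's $\delta$) and $p_0>\frac{4m}{1-\nu_0}$, and the same ascending induction from very negative weights; the paper just writes the step as $\rho^{p\nu-1}\leq\rho^{(p-1)(\nu-\delta/p)}\rho^{-\delta}$, a uniform weight gain of $\delta/p$ per step, which is equivalent to your exponent bookkeeping.
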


\begin{proof}
    Let $\delta \coloneqq \frac{1-\nu_0}{2}$ and let $p_{\nu_0} \coloneqq \frac{2m}{\delta} = \frac{4m}{1-\nu_0} > 4m$. Let us fix $p > p_{\nu_0}$; in particular, $\rho^{-\delta p}$ is integrable. Now as in the proof of Lemma \ref{lem: integrability of viscosity solutions} we have, for any $\nu \leq \nu_0$,
    \begin{align*}
        \int_M \rho^{p \nu} |u_\varepsilon|^p & \lesssim \int_M \rho^{p\nu-1} |u_\varepsilon|^{p-1} \omega^m \\
            & \lesssim \int_M \rho^{(p-1)(\nu-\tfrac{\delta}{p})} \rho^{\nu-1+\delta} |u_\varepsilon|^{p-1} \omega^m  \\
            & \lesssim \int_M \rho^{(p-1)(\nu-\tfrac{\delta}{p})} \rho^{-\delta} |u_\varepsilon|^{p-1} \omega^m \\
            & \lesssim \left( \int_M \rho^{p(\nu-\tfrac{\delta}{p})}|u_\varepsilon|^p \omega^m \right)^{1-\frac{1}{p}} \left( \int_M \rho^{-p\delta} \omega^m \right)^{\frac{1}{p}}
    \end{align*}
    and since for $\nu$ negative enough $\rho^{\nu}u_\varepsilon$ is $L^p$, the result follows by ascending induction (and we choose for instance $p_0 \coloneqq p_{\nu_0} + 1$).
\end{proof}

Before stating the next lemma, let us remark that if $p \geq 2$ and $v \in C^\infty_c(M)$ is a smooth compactly supported function, we have (using the fact that $T_{u_\varepsilon}$ is a $(m-1,m-1)$-form)
\begin{align*}
    p \int u_\varepsilon |u_\varepsilon|^{p-2} \diff v \wedge \diff^c u_\varepsilon \wedge T_{u_\varepsilon} = \int \diff v \wedge \diff^c (|u_\varepsilon|^p) \wedge T_{u_\varepsilon} & = \int \diff(|u_\varepsilon|^p) \wedge \diff^c v \wedge T_{u_\varepsilon} \\
        & = - \int |u_\varepsilon|^p \diff\diff^c v \wedge T_{u_\varepsilon}
\end{align*}
where in the last line we integrate by parts using $\diff T_{u_\varepsilon} = 0$.
Thus we can rewrite the inequality \eqref{eq: master inequality} as
\begin{multline}        \label{eq: second master inequality}
    \int_M v |\diff |u_\varepsilon|^{\frac{p}{2}}|^2 \omega^m + \frac{mp^2}{2(p-1)} \int_M v u_\varepsilon |u_\varepsilon|^{p-2} (e^{\varepsilon u_\varepsilon}-1) e^f \omega^m \\ 
        \leq - \frac{mp^2}{2(p-1)} \int_M v u_\epsilon |u_\varepsilon|^{p-2} (e^f-1) \omega^m - \frac{mp}{4(p-1)} \int_M |u_\varepsilon|^p \diff \diff^c v \wedge T_{u_\varepsilon} .
\end{multline}
Using the previous inequality we shall deduce:

\begin{lem} \label{lem: constant A_8}
    There exists a constant $A_7 > 0$, depending only on $m$, $A_4$ and $A_5$, such that for any $\nu \leq \nu_0$ and $p \geq p_{0}$,
    \begin{multline*}
        \int_M |\diff (\rho^{\frac{p\nu}{2}} |u_\varepsilon|^{\frac{p}{2}})|^2 \omega^m \leq \frac{mp^2 A_1}{2(p-1)} \int |u_\varepsilon|^{p-1} \rho^{p\nu-\mu} \omega^m \\ + p^2 \nu^2 A_7 \left( 1 + \frac{mp}{4(p-1)} \right) \int \rho^{p\nu-2} |u_\varepsilon|^p \omega^m .
    \end{multline*}
    In particular, $\rho^{\frac{p\nu}{2}} |u_\varepsilon|^\frac{p}{2} \in L^2_1(M,\omega)$.
\end{lem}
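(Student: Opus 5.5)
The plan is to apply the second master inequality \eqref{eq: second master inequality} with the test function $v = \chi(\tfrac{\rho}{R})\rho^{p\nu}$, where $\chi$ is a standard cutoff, and then let $R \to \infty$. The integrability provided by Lemma \ref{integrability of the viscosity solutions, II} will guarantee that the boundary contributions vanish in this limit.

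\emph{Step 1: left-hand side.} The second term on the left is non-negative, so it can be dropped. To handle the first term, expand
\begin{equation*}
    \diff(\rho^{\frac{p\nu}{2}}|u_\varepsilon|^{\frac{p}{2}}) = \rho^{\frac{p\nu}{2}}\diff|u_\varepsilon|^{\frac{p}{2}} + \tfrac{p\nu}{2}\rho^{\frac{p\nu}{2}-1}|u_\varepsilon|^{\frac{p}{2}}\diff\rho .
\end{equation*}
Combining this with $(a+b)^2 \leq 2a^2+2b^2$ and $|\diff\rho| \leq A_5$ gives
\begin{equation*}
    \int |\diff(\rho^{\frac{p\nu}{2}}|u_\varepsilon|^{\frac{p}{2}})|^2 \leq 2\int \rho^{p\nu}|\diff|u_\varepsilon|^{\frac{p}{2}}|^2 + \tfrac{p^2\nu^2}{2}A_5^2\int\rho^{p\nu-2}|u_\varepsilon|^p .
\end{equation*}
The last integral here is the source of the "$1$" inside the bracket $\bigl(1+\tfrac{mp}{4(p-1)}\bigr)$ in the statement. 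The factor $2$ in front of the first integral must be absorbed into the constants. (The stated coefficient $A_1$ in front of the first integral on the right presumably should read $A_2$ or a multiple of it; I will only aim for the form of the inequality up to such constants.)

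\emph{Step 2: the $f$-term.} The first term on the right of \eqref{eq: second master inequality} is bounded directly by $|e^f-1| \leq A_2\rho^{-\mu}$, which yields $\frac{mp^2}{2(p-1)}A_2\int|u_\varepsilon|^{p-1}\rho^{p\nu-\mu}$.

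\emph{Step 3: the main obstacle, the $\diff\diff^c v$ term.} One computes
\begin{equation*}
    \diff\diff^c(\rho^{p\nu}) = p\nu\rho^{p\nu-1}\diff\diff^c\rho + p\nu(p\nu-1)\rho^{p\nu-2}\diff\rho\wedge\diff^c\rho .
\end{equation*}
The assumption $\rho|\diff\diff^c\rho| \leq A_5$ bounds both pieces by a multiple of $(p\nu + p^2\nu^2)\rho^{p\nu-2}$. The difficulty is that the form $T_{u_\varepsilon}$ must be bounded uniformly in $\varepsilon$, whereas we only control $\diff\diff^c u_\varepsilon$ on $M\setminus K$ (via $A_4$). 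This is resolved as follows. On $K$ we have $\rho \equiv 1$, so $\diff\rho$ and $\diff\diff^c\rho$ vanish in the interior of $K$, and hence $\diff\diff^c v$ is supported in $M\setminus K$ (away from the cutoff region). There $|T_{u_\varepsilon}| \leq C(m)(1+A_4)^{m-1}$. The term linear in $p\nu$ is also controlled by $p^2\nu^2$ up to the factor $\nu^{-1}$, and I expect this is exactly the reason the paper fixes $\nu \leq \nu_0 < 1$. If $\nu$ could be very small, I would instead use the sign of $\diff\diff^c\rho$ or simply keep a term of the form $p\nu A_7$, since the statement is needed only for $\nu$ near $\nu_0$ in the subsequent iteration. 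Collecting the constants defines $A_7$.

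\emph{Step 4: removing the cutoff.} The cutoff derivatives produce terms bounded by $R^{-1}\int_{R\leq\rho\leq 2R}\rho^{p\nu-1}|u_\varepsilon|^p$ and similar expressions. These tend to $0$ as $R \to \infty$ by Lemma \ref{integrability of the viscosity solutions, II}. Monotone convergence on the left then gives the inequality, and finiteness of the right-hand side (again by Lemma \ref{integrability of the viscosity solutions, II}) gives $\rho^{\frac{p\nu}{2}}|u_\varepsilon|^{\frac{p}{2}} \in L^2_1$.
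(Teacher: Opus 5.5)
Your proposal is correct and follows essentially the same route as the paper: test the second master inequality against $\chi(\rho/R)\rho^{p\nu}$, split the gradient of the product with $(a+b)^2\le 2a^2+2b^2$, control $\diff\diff^c v$ by $A_5$ and $T_{u_\varepsilon}$ by $A_4$ on $M\setminus K$ (where $\diff\diff^c v$ is supported, since $\rho\equiv 1$ on $K$), and remove the cutoff using the weighted integrability lemma. The slips you flag, namely $A_1$ in place of $A_2$ and the loose constants such as the dropped factor $2$, are also present in the paper's own proof. One correction: the linear term $p\nu\rho^{p\nu-1}\diff\diff^c\rho$ is dominated by a multiple of $p^2\nu^2\rho^{p\nu-2}$ when $p\nu\ge 1$, and this has nothing to do with $\nu_0<1$; the lemma is also used with $\nu_*=2/p$, which is small rather than near $\nu_0$, but $p\nu\ge 1$ holds in both later applications ($p\nu_*=2$, and $\nu_0$ is used with $p$ large), so your fallback of keeping a separate $p\nu$ term is not needed there.
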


\begin{proof}
    Let us consider the compactly supported function $v_R = \chi(\frac{\rho}{R})\rho^{\nu}$. Then we have
    \begin{equation*}
        \int_M |\diff ((v_R|u_\varepsilon|)^{\frac{p}{2}})|^2 \omega^m \leq 2 \int |u_\varepsilon|^p |\diff v_R^{\frac{p}{2}}|^2 \omega^m +2 \int v_R^p |\diff |u_\varepsilon|^{\frac{p}{2}}|^2 \omega^m.
    \end{equation*}
    Let us estimate each term on the right-hand side separately.

    For the first term, we have
    \begin{align*}
        |\diff v_R^{p/2}|^2 & \leq 2 \rho^{p\nu} |\diff (\chi(\tfrac{\rho}{R})^{\frac{p}{2}})|^2 + \frac{p^2\nu^2}{2} \chi(\tfrac{\rho}{R})^p |\diff \rho|^2 \\
            & \leq \frac{p^2\nu^2}{2}(R^{-2}|\chi^\prime(\tfrac{\rho}{R})|^2\rho^{p\nu} + \rho^{p\nu-2}) |\diff \rho|^2
    \end{align*}
    and thus we obtain
    \begin{equation*}
        2 \int |u_\varepsilon|^p |\diff v_R^{\frac{p}{2}}|^2 \omega^m \leq  p^2\nu^2 \int \rho^{p\nu-2} |u_\varepsilon|^p |\diff \rho|^2 \omega^m + a_R
    \end{equation*}
    where
    \begin{equation*}
        a_R \coloneqq \frac{p^2\nu^2}{R^2} \int \rho^{p\nu} |u_\varepsilon|^p |\chi^\prime(\tfrac{\rho}{R})|^2 \omega^m \to 0
    \end{equation*}
    as $R \to \infty$ since $\rho^{p\nu} |u_\varepsilon|^p$ is integrable.

    Now for the second term we use \eqref{eq: second master inequality} to obtain
    \begin{equation*}
        \int v_R^p |\diff |u_\varepsilon|^\frac{p}{2}| \leq \frac{mp^2A_1}{2(p-1)} \int \rho^{p\nu-\mu} |u_\varepsilon|^{p-1} \omega^m  + \frac{mp}{4(p-1)} \int |u_\varepsilon|^p |\diff \diff^c v_R^p|_\omega |T_{u_\varepsilon}|_\omega \omega^m .
    \end{equation*}
    Now (at least for $R$ large enough) $|\diff\diff^c(v^p_R)|$ is supported in $M \backslash K$, where we have a uniform estimate $\|\diff\diff^c u_\varepsilon\|_{C^0(M\backslash K)} \leq A_4$, so that there exists a constant $A^\prime_4 > 0$, depending only on $m$ and $A_4$, such that $|T_{u_\varepsilon}|_\omega \leq A^\prime_4$ on $M \backslash K$. Using the integrability of $\rho^{p\nu}|u_\varepsilon|^p$ and the bound $|\diff \rho| + \rho |\diff \diff^c \rho| \leq A_5$, it is not difficult to conclude that 
    \begin{equation*}
        \int |u_\varepsilon|^p |\diff \diff^c v_R^p|_\omega |T_{u_\varepsilon}|_\omega \omega^m \leq p^2\nu^2 A^\prime_4A_5  \int |u_\varepsilon|^p \rho^{p\nu-2} \omega^m + b_R
    \end{equation*}
    where $b_R$ is a boundary term such that $b_R \to 0$ as $R \to \infty$. Gathering the previous estimates and letting $R \to \infty$ this proves the lemma.
\end{proof}

\begin{lem}
    There exists $\nu_* \in (0,\nu_0)$ and exponents $\tilde{p}_*  \geq p_0$, $\tilde{q}_* > 2m$, depending only on $m$, $\mu$ and $\nu_0$, and a constant $Q_3$, depending on only $m$, $\nu_0$, $A_1,\ldots,A_7$ (where $A_6$ is defined for this choice of $\tilde{q}_*$) such that for all $\varepsilon > 0$,
    \begin{equation*}
        \|\rho^{\nu_*} u_\varepsilon\|_{L^{\tilde{p}_*}} \leq Q_3 .
    \end{equation*}
\end{lem}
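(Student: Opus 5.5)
We will mimic the proof of Lemma \ref{lem: constant Q2}, now with the weight $\rho^{\nu}$ inserted. The input is the inequality of Lemma \ref{lem: constant A_8}, which we combine with the Sobolev inequality \eqref{eq: Sobolev constant A1} applied to $\rho^{\frac{p\nu}{2}}|u_\varepsilon|^{\frac{p}{2}}$. This application is legitimate because that lemma shows the function lies in $L^2_1$. For fixed $\nu \le \nu_0$ and $p \ge p_0$ this gives
\begin{equation*}
    \left( \int_M \rho^{\frac{mp\nu}{m-1}} |u_\varepsilon|^{\frac{mp}{m-1}} \omega^m \right)^{\frac{m-1}{m}} \leq C_p A_1 \left( \int_M \rho^{p\nu-\mu}|u_\varepsilon|^{p-1}\omega^m + \nu^2 \int_M \rho^{p\nu-2}|u_\varepsilon|^p \omega^m\right),
\end{equation*}
where $C_p$ depends only on $m$, $p$, $A_1$, $A_2$ and $A_7$.

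The first term on the right is handled as in Lemma \ref{lem: constant Q2}. We write $\rho^{p\nu-\mu}|u_\varepsilon|^{p-1} = (\rho^{\frac{mp\nu}{m-1}}|u_\varepsilon|^{\frac{mp}{m-1}})^{\frac{(m-1)(p-1)}{mp}} \cdot \rho^{-\mu + p\nu - \frac{(p-1)p\nu}{p}}$ and apply Hölder with exponents $\frac{mp}{(m-1)(p-1)}$ and its conjugate $q_0$. The leftover weight is $\rho^{\nu-\mu}$ raised to the power $q_0$. Because $\mu > 2$ and $\nu_0 < \mu - 2$, we can first choose $p = \tilde p_*\ge p_0$ close enough to $4m+1$ (as in Lemma \ref{lem: constant Q2}) and then $\nu_*$ small enough that $\tilde q_* := (\mu-\nu_*)q_0 > 2m$. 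The leftover integral is then bounded by $A_6$. The result is a bound of the form $C\,X^{\frac{m-1}{m}\cdot\frac{p-1}{p}}$, where $X = \int \rho^{\frac{mp\nu}{m-1}}|u_\varepsilon|^{\frac{mp}{m-1}}\omega^m$.

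The second term is the main obstacle: it has the same homogeneity $p$ in $u_\varepsilon$, so it cannot be absorbed by the power trick. We plan to interpolate instead. Split $\rho^{p\nu-2}|u_\varepsilon|^p = (\rho^{\nu}|u_\varepsilon|)^{p\theta}\cdot\rho^{p\nu(1-\theta)-2}|u_\varepsilon|^{p(1-\theta)}$. The second factor is controlled using $|u_\varepsilon| \le Q_1$ from Proposition \ref{prop: Quantitative L infinity bound}, which leaves $Q_1^{p(1-\theta)}\rho^{p\nu(1-\theta)-2}$. Then apply Hölder with the exponent $\frac{mp}{(m-1)p\theta}$ on the first factor, and absorb the remaining weight integral into $A_6$. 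For this we need $\theta<\frac{m-1}{m}$, so that the power of $X$ stays strictly below $\frac{m-1}{m}$, and $\nu_*$ small enough that $(2 - p\nu_*(1-\theta))$ times the conjugate exponent exceeds $2m$. If these two requirements turn out to be incompatible, the fallback is to shrink $\nu_*$ further, since the factor $\nu^2$ makes the term small anyway. In either case the two terms together give an inequality of the form $X^{\frac{m-1}{m}} \le C(X^{a}+X^{b})$ with $a,b<\frac{m-1}{m}$.

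Finiteness of $X$ follows from Lemma \ref{integrability of the viscosity solutions, II}, so the inequality above yields a bound $X\le Q$. Here $Q$ depends only on the listed constants, and $Q_1$ itself depends only on $A_1$, $A_2$ and $A_3$. Finally, since $\frac{mp}{m-1} \ge p$, we can relabel the exponent, taking $\tilde p_*$ to be $\frac{m\tilde p_*}{m-1}$ in the lemma. This gives the claimed bound $\|\rho^{\nu_*}u_\varepsilon\|_{L^{\tilde p_*}}\le Q_3$.
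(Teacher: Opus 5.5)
\textbf{There is a genuine gap, in your treatment of the second term} $\nu^2\int_M\rho^{p\nu-2}|u_\varepsilon|^p\omega^m$. The rest of your setup is fine: Lemma~\ref{lem: constant A_8} combined with the Sobolev inequality, H\"older on the first term, and finiteness of $X$.

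\textbf{Your interpolation cannot work for any choice of parameters.} H\"older with exponent $\frac{m}{(m-1)\theta}$ on $(\rho^\nu|u_\varepsilon|)^{p\theta}$ produces $X^{\frac{(m-1)\theta}{m}}$. This exponent is below $\frac{m-1}{m}$ only if $\theta<1$. But then the conjugate exponent is $q'=\frac{m}{m-(m-1)\theta}<m$. The leftover weight is therefore $\rho^{-\lambda}$ with $\lambda=(2-p\nu(1-\theta))q'<2q'<2m$, which is not integrable under $\mathsf{SOB}(2m)$ volume growth. Shrinking $\nu_*$ only moves $\lambda$ up towards $2q'$, never past $2m$. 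This is a scaling obstruction: $\rho^{-2}$ is the critical Hardy weight, and the pointwise bound $|u_\varepsilon|\le Q_1$ gives no integrability at infinity.

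\textbf{Your fallback does not repair this.} A small factor $\nu^2$ lets you absorb the term only if you already know a Hardy inequality $\int_M\rho^{-2}w^2\omega^m\lesssim\int_M|\diff w|^2\omega^m$ for $w=\rho^{p\nu/2}|u_\varepsilon|^{p/2}$. That inequality is not among your tools. Deriving it from the unweighted Sobolev inequality would need $\rho^{-2}\in L^m$, which fails logarithmically. Its constant is also not among $A_1,\dots,A_7$. Nor would it give an inequality of the form $X^{\frac{m-1}{m}}\le C(X^a+X^b)$.

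\textbf{The missing idea is that this term need not be absorbed at all.} Take $p$ large and $\nu_*\le\frac{2}{p}$; the paper takes $\nu_*=\frac{2}{p}<\nu_0$. Since $\rho\ge1$, this gives $\rho^{p\nu_*-2}\le1$. The term is then bounded by the \emph{unweighted} integral
\[
\int_M|u_\varepsilon|^p\omega^m\le Q_1^{p-p_*}(Q_1')^{p_*}, \qquad p\ge p_*,
\]
using Lemma~\ref{lem: constant Q2} together with Proposition~\ref{prop: Quantitative L infinity bound}. Combined with your treatment of the first term, this gives
\[
X^{\frac{m-1}{m}}\le C_1'X^{\frac{m-1}{m}\cdot\frac{p-1}{p}}+C_2',
\]
and finiteness of $X$ concludes. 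The key input is the uniform unweighted $L^{p_*}$ bound, not the pointwise one.

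\textbf{A smaller slip in the first term.} The conjugate exponent $q_0=\frac{mp}{p+m-1}$ increases with $p$. So you need $p$ large, not close to $4m+1$. For example, $p>\frac{2(m-1)}{\mu-\nu_0-2}$ suffices, after bounding $\rho^{\nu-\mu}\le\rho^{-(\mu-\nu_0)}$.
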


\begin{proof}
    Let $\nu \in (0,\nu_0)$ to be chosen later and $p \geq p_0$. Then the previous lemma, the assumption $\nu \leq \nu_0 < \mu-2$, the Sobolev inequality \eqref{eq: Sobolev constant A1} and H\"older's inequality imply: 
    \begin{align*}
        \left(\int (\rho^\nu |u_\varepsilon|)^{\frac{m p}{m-1}} \omega^m \right)^{\frac{m-1}{m}} & \leq C_1 \int (\rho^\nu |u_\varepsilon|)^{p-1} \rho^{-(\mu-\nu_0)} \omega^m + C_2 \int \rho^{p\nu-2} |u_\varepsilon|^p \omega^m \\
            & \leq C_1 \left( \int \rho^{q(\mu-\nu_0)} \omega^m \right)^{\frac{1}{q}} \left( \int (\rho^\nu |u_\varepsilon|)^{\frac{mp}{m-1}} \omega^m\right)^{\frac{m-1}{m}\cdot \frac{p-1}{p}} \\
            & ~~~~ + C_2 \int (\rho^{\nu- \frac{2}{p}} |u_\varepsilon|)^p \omega^m
    \end{align*}
    where $q$ is the conjugate exponent of $\frac{mp}{(m-1)(p-1)}$ and $C_1, C_2 >0$ constants which can depend on $m$, $p$, $\nu$, any of the constants $A_1,\ldots,A_5$ and $A_7$, but not $\varepsilon$. As in the proof of Lemma \ref{lem: constant Q2}, because $\mu-\nu_0 > 2$ we may choose $p \geq p_0$ large enough that $\tilde{q}_* \coloneqq q(\mu-\nu_0) > 2m$, and thus
    \begin{equation*}
        \left(\int (\rho^\nu |u_\varepsilon|)^{\frac{m p}{m-1}} \omega^m \right)^{\frac{m-1}{m}} \leq C^\prime_1 \left( \int (\rho^\nu |u_\varepsilon|)^{\frac{m p}{m-1}} \omega^m\right)^{\frac{m-1}{m} \cdot \frac{p-1}{p}} + C_2 \int (\rho^{\nu- \frac{2}{p}} |u_\varepsilon|)^p \omega^m
    \end{equation*}
    where $C^\prime_1$ depends on $m$, $\mu$, $\nu_0$, $p$, $A_1,\ldots,A_6$ but not on $\varepsilon$. Now choosing $\tilde{p}_* \coloneqq \frac{mp}{m-1}$ and $\nu_* = \frac{2}{p}$ for this value of $p$ and using the fact that $\|u_\varepsilon\|_{L^\infty} \leq Q_1$ for all $\varepsilon > 0$ (at least if we chose $p$ large enough, which we can always assume), this proves the lemma.
\end{proof}

We can now prove the uniform weighted $L^\infty$-bound:

\begin{proof}[Proof of Proposition \ref{prop: weighted L infinity bound}]
    We first prove a sub-optimal uniform bound $\|\rho^{\nu_*}u_\varepsilon\|_{L^\infty} < Q_{\nu_*}$. Using the uniform $L^\infty$-bound $\|u_\varepsilon\|_{L^\infty} \leq Q_1$ and the fact that $\rho \geq 1$, it is easy to deduce from Lemma \ref{lem: constant A_8} that for any $p \geq \tilde{p}_*$,
    \begin{equation*}
        \|\rho^{\nu_*} u_\varepsilon\|_{L^{\frac{m(p+1)}{m-1}}} \leq \left( \frac{m(p+1)^2A_1}{2p} + (p+1)^2\nu^2_* A_7 Q_1 \left(1+ \frac{m(p+1)}{4p}\right)\right)^{\frac{1}{p+1}} \|\rho^{\nu_*} u_\varepsilon\|_{L^p}^{\frac{p}{p+1}} .
    \end{equation*}
    Using the previous lemma, the uniform weighted bound $\|\rho^{\nu_*} u_\varepsilon\|_{L^\infty} < Q_{\nu_*}$ easily follows by induction starting from $\tilde{p}_*$ as in the proof of Proposition \ref{prop: Quantitative L infinity bound}.

    We now improve this to find a uniform bound on $\|\rho^{\nu_0} u_\varepsilon\|_{L^\infty}$. Using the previous bound, we may deduce from Lemma \ref{lem: constant A_8} and the uniform Sobolev bound \eqref{eq: Sobolev constant A1} that for any $p \geq p_0$, 
    \begin{equation*}
        \left( \int (\rho^{\nu_0} |u_\varepsilon|)^{\frac{mp}{m-1}} \omega^m \right)^{\frac{m-1}{m}} \leq C \int \rho^{-2-\delta} (\rho^\nu |u_\varepsilon|)^{p-1} \omega^m
    \end{equation*}
    where $\delta \coloneqq \min \{\nu_*, \mu-2-\nu_0\}$ and $C$ is a constant which may depend on $m$, $\mu$, $\nu_0$, $\nu_*$, $p$, the constants $A_i$, and $Q_{\nu_*}$, but not on $\varepsilon$. Since $\delta > 0$, we may chose $p$ large enough such that the conjugate exponent $q$ of $\frac{mp}{(m-1)(p-1)}$ satisfies $(2+\delta)q > 2m$, and by Holder's inequality we deduce that $\|\rho^{\nu_0} u_\varepsilon\|_{L^{\frac{mp}{m-1}}}$ has a uniform bound. Then Proposition \ref{prop: weighted L infinity bound} follows by induction starting at this value of $p$, just as we did above.
\end{proof}